\documentclass{amsart}

\usepackage{amsmath,amsthm,amssymb}

\usepackage{graphicx}

\usepackage{hyperref}

\newcommand{\one}{\mathit{1}}
\newcommand{\two}{\mathit{2}}
\newcommand{\three}{\mathit{3}}
\newcommand{\four}{\mathit{4}}

\newcommand{\onep}{\mathbf{1}}
\newcommand{\twop}{\mathbf{2}}

\newcommand{\unit}{\varepsilon}

\newcommand{\C}{\mathbb{C}}
\newcommand{\D}{\mathcal{D}}
\newcommand{\G}{\Gamma}
\newcommand{\M}{\mathcal{M}}
\newcommand{\R}{\mathbb{R}}
\newcommand{\Z}{\mathbb{Z}}
\newcommand{\T}{\mathcal{T}}
\newcommand{\X}{\mathcal{X}}

\newcommand{\bim}{\mathfrak{M}}
\newcommand{\nuke}{\mathcal{N}}
\newcommand{\img}[1]{\mathop{\mathrm{IMG}}(#1)}
\newcommand{\arr}{\rightarrow}
\newcommand{\alb}{\mathsf{X}}
\newcommand{\xs}{\alb^*}

\newcommand{\xmo}{\alb^{-\omega}}
\newcommand{\symm}[1][\alb]{\mathfrak{S}\left(#1\right)}

\newcommand{\CP}{\mathbb{P}\C^2}
\newcommand{\CS}{\widehat{\C}}
\newcommand{\wt}[1]{\widetilde{#1}}
\newcommand{\wh}[1]{\widehat{#1}}
\newcommand{\lims}[1][G]{\mathcal{J}_{#1}}
\newcommand{\limg}[1][G]{\mathcal{X}_{#1}}
\newcommand{\si}{\mathsf{s}}

\newcommand{\til}{\mathcal{T}}

\newcommand{\mapdown}[1]%
{\Big\downarrow\rlap{$\vcenter{\hbox{$\scriptstyle#1$}}$}}

\title{The Julia set of a post-critically finite endomorphism of
$\CP$}
\author{Volodymyr Nekrashevych}\thanks{This paper is based upon work supported by the
National Science Foundation under Grants DMS0605019 and
DMS0800085.}

\newtheorem{theorem}{Theorem}[section]
\newtheorem{proposition}[theorem]{Proposition}
\newtheorem{corollary}[theorem]{Corollary}
\newtheorem{lemma}[theorem]{Lemma}
\newtheorem*{conjecture}{Conjecture}

\theoremstyle{definition}
\newtheorem{defi}{Definition}[section]

\begin{document}
\maketitle
\begin{abstract}
We construct a combinatorial model of the Julia set of the
endomorphism $f(z, w)=((1-2z/w)^2, (1-2/w)^2)$ of $\CP$.
\end{abstract}
\tableofcontents

\section{Introduction}

J.~E. Forn{\ae}ss and N.~Sibony studied in~\cite{fornsibon:crfin}
two post-critically finite endomorphisms of $\CP$. The Julia set
of one of them is $\CP$, while the Julia set of the other has no
interior.

The latter map appeared independently in~\cite{bartnek:rabbit} as
a natural skew product map in the study of Thurston equivalence of
topological polynomials with the post-critical dynamics of
$z^2+i$. It is written in affine coordinates as
\[f\left(z, w\right)=
\left(\left(1-\frac{2z}{w}\right)^2,\quad
\left(1-\frac{2}{w}\right)^2\right).\]

As a development of~\cite{bartnek:rabbit}, group theoretic aspects
of the map $f$ were studied in~\cite{nek:ssfamilies}
and~\cite{nek:nonuniform}. The iterated monodromy group of $f$ was
used to construct an uncountable family of three-generated groups
with interesting properties, and later a new group of non-uniform
exponential growth was found in this family (the first examples of
groups of non-uniform exponential growth were found
in~\cite{wilson:nonuniform}).

In the current paper we apply our group theoretic knowledge to
description of the Julia set of $f$. We construct a combinatorial
model of the Julia set in the spirit of Hubbard trees. Of course,
here the Hubbard trees become ``Hubbard complexes'' (actually even
complexes of groups); but due to a particular skew product
structure of the map, the Hubbard complex of $f$ is a bundle of
``Hubbard tripods'', and the combinatorial model of the Julia set
can be described in terms of a ``folding map'' on the bundle of
tripods. The Julia set of $f$ is represented then as the
projective limit of a sequences of three-dimensional Hubbard
complexes, which are homeomorphic to subsets of the Julia set.

A general method of constructing similar combinatorial models of
expanding dynamical systems is described in~\cite{nek:models}.
Finding simple and elegant models is still of interest, since
construction from~\cite{nek:models} depends on some choices, which
can be made in different ways, leading to models of different
complexity.

Interesting examples of post-critically finite multi-dimensional
maps come from the study of correspondences on the moduli space of
a punctured sphere,
see~\cite{bartnek:rabbit,koch:french,nek:models}.

Unfortunately, an important ingredient of our analysis is missing.
I was not able to prove that $f$ is sub-hyperbolic, i.e., to
construct a singular metric on a neighborhood of the Julia set of
$f$, such that $f$ is uniformly expanding with respect to it.

\subsection{Overview of the paper}

The second section of the paper collects elementary and previously
known properties of the function $f$. We describe its action on
the projective plane $\CP$, the structure of the post-critical set
of $f$, recall the results of J.~E. Forn{\ae}ss and N.~Sibony and
discuss the skew product structure of the map.

Section ``Techniques'' is an overview of the theory of
self-similar groups, iterated monodromy groups and their limit
spaces. There are no proofs in it, which can be found either
in~\cite{nek:book}, or in~\cite{nek:models}. In particular, the
general notion of combinatorial models of expanding dynamical
systems is described in this section. More on combinatorial models
of hyperbolic dynamical systems, see~\cite{ishiismillie}
and~\cite{nek:models}.

We compute the iterated monodromy group of $f$ in
Section~\ref{s:imgcomputation}. We use an interpretation of the
map $f$ given in~\cite{bartnek:rabbit}, which makes it possible to
compute the iterated monodromy group $\img{f}$ in a relatively
easy way.

The combinatorial model of $f$ is constructed in
Section~\ref{s:polyhedralmodel}. It is convenient to pass to an
index 2 extension $\G$ of $\img{f}$. This extension can be defined
as the iterated monodromy group of the quotient of the dynamical
system $(f, \CP)$ by the group of order two generated by the
transformation $(z, w)\mapsto (\overline z, \overline w)$. The
group $\G$ is generated by a relatively small automaton (of 12
states). It has appeared for the first time
in~\cite{nek:ssfamilies}, where it was used to study a Cantor set
of groups associated with $f$. We continue the study of the group
$\G$ in Section~\ref{s:polyhedralmodel} of our paper. In
particular, we describe its nucleus, which happens to be a union
of six finite groups. We use the poset of subgroups of the nucleus
to construct a simplicial complex, serving as the first
approximation of the limit space of $\G$.

This complex consists of three tetrahedra with one common face.
The corresponding approximation of the Julia set of $f$ is
obtained by pasting together two copies of this complex. We
construct the combinatorial model of the Julia set (in
Subsection~\ref{ss:iota}); give an inductive cut-and-paste rule
for constructing a sequences of polyhedra approximating the Julia
set (Theorem~\ref{th:pastingTn} and
Proposition~\ref{pr:whMnpasting}); and prove that Julia set is the
inverse limit of the constructed polyhedra
(Theorem~\ref{th:JuliasetMn}).

The cut-and-paste rule works as follows. The $n$th level
approximation $\M_n$ of the Julia set is obtained by pasting
together two copies of a complex $\T_n$ along their ``boundary''.
The boundary of $\T_n$ is decomposed into a union of 11 regions,
which are domains of involutive maps $\kappa_{g, n}$ (``pasting
rules''). The next complex $\T_{n+1}$ is obtained by pasting four
copies of $\T_n$ using five of the maps $\kappa_{g, n}$. The 11
pieces of the boundary of $\T_{n+1}$ and the corresponding maps
$\kappa_{g, n+1}$ are defined then as unions of the pieces of the
boundaries of the copies of $\T_n$ according to rules described by
the finite automaton generating the group $\G$.

The rules are not very complicated, but since the complexes are
three dimensional and can not be embedded into $\R^3$ without
self-intersections, it is hard to visualize them.

In order to understand better the constructed polyhedral model of
the Julia set, we use the skew product structure of $f$ (and of
the model), and study it ``fiberwise'' in Section~\ref{s:tripods}.
We show that the fibers of the complexes $\M_n$ are trees that can
be constructed using natural ``folding'' and ``unfolding''
transformation (Theorem~\ref{th:fibersrhon} and
Proposition~\ref{pr:fiberinvlim}). As a limit of iterations of the
folding and unfolding procedures we get dendrites homeomorphic to
the intersections of the Julia set of $f$ with the lines $w=w_0$.

Our models are very similar to the usual Hubbard trees, since the
approximating complexes $\M_n$ are homeomorphic to the subsets
$f^{-n}(\M)$, where $\M$ is a ``span'' of the post-critical set of
$f$ inside the Julia set of $f$ (Proposition~\ref{pr:MninJulia}).

The last section of our paper contains additional results deduced
from the model of the Julia set and from properties of the
iterated monodromy group of $f$. We construct a family of length
metrics on the slices of the Julia set of $f$; define a family of
natural surjections of the slices onto an isosceles right triangle
(this includes, for instance, the Sierpi\'nski plane-filling curve
as a particular case); describe when the slices are finite trees;
and describe the action of $f$ on the manifold of ``external
rays'' of the Julia set of $f$. It is shown that the manifold of
external rays is an orbispace with the universal covering
identified with the real Heisenberg group, so that the action of
$f$ is induced by an expanding automorphism of the Heisenberg
group. Note that in the classical case of polynomials of degree
$d$ the space of external rays is $\R/\Z$ with the action of the
polynomial induced by the automorphism $x\mapsto d\cdot x$ of
$\R$.

\section{The rational function}
\label{s:thefunction}

Consider the transformation of $\C^2$
\[f\left(z, w\right)=
\left(\left(1-\frac{2z}{w}\right)^2,\quad
\left(1-\frac{2}{w}\right)^2\right).\] It can be extended to a map
$f:\mathbb{PC}^2\arr\mathbb{PC}^2$ as
\[f:[z:w:u]\mapsto[(w-2z)^2:(w-2u)^2:w^2].\]
The Jacobian of the map $f$ is then
\[\left|\begin{array}{ccc}-4(w-2z) & 0 & 0\\
2(w-2z) & 2(w-2u) & 2w\\
0 & -4(w-2u) & 0\end{array}\right|=-32(w-2z)(w-2u)w,\] Hence, the
critical locus is the union of the lines $w=2z$, $w=2u$, and
$w=0$. Their orbits are
\[\{w=2z\}\mapsto\{z=0\}\mapsto\{z=u\}\mapsto\{z=w\}\mapsto\{z=u\}\]
and
\[\{w=2u\}\mapsto\{w=0\}\mapsto\{u=0\}\mapsto\{w=u\}\mapsto\{w=u\}.\]

Consequently, the post-critical set is the union of six lines
\[z=0,\quad z=u,\quad z=w,\quad w=0,\quad w=u,\quad u=0,\] or, in affine coordinates: $z=0$,
$z=1$, $z=w$, $w=0$, $w=1$, and the line at infinity.

The function $f$ appeared in~\cite{bartnek:rabbit}, where it was
used to answer a question of J.~Hubbard and A.~Duady
from~\cite{DH:Thurston} on combinatorial equivalence of some
branched coverings of the plane. Groups associated with it were
studied in~\cite{nek:ssfamilies} and~\cite{nek:nonuniform}.

This function is conjugate to the function
\[\wt f([z:w:t])=[(z-2w)^2:z^2:(z-2t)^2],\]
considered by J.~E. Forn{\ae}ss and N.~Sibony
in~\cite{fornsibon:crfin}. The conjugating map is
\[z\mapsto w,\quad w\mapsto u,\quad t\mapsto z,\]
where the variables on the left-hand side are
from~\cite{fornsibon:crfin}, while the variables from the
right-hand side are the ones used in our paper.

The following properties of the map $f$ are proved
in~\cite{fornsibon:crfin}. Denote by $V$ the post-critical set of
$f$ and by $W$ its full preimage $f^{-1}(V)$.

\begin{theorem}
\label{th:fornsibon} The sets $\CP\setminus V$ and $\CP\setminus
W$ are Kobayashi hyperbolic and the map $f:\CP\setminus
W\arr\CP\setminus V$ is noncontracting in the infinitesimal
Kobayashi metric on $\CP\setminus V$.

The point $[1:0:0]$ is a superattracting fixed point. Let $U$ be
its basin of attraction and let $J=\CP\setminus U$ be its
complement. Then $U$ is a topological cell and a Kobayashi
hyperbolic domain of holomorphy.

The set $J$ has no interior and is the Julia set of $f$. The map
$f$ is topologically transitive on $J$ and repelling periodic
points are dense in $J$.
\end{theorem}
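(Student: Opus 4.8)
This is the theorem of Forn{\ae}ss and Sibony from~\cite{fornsibon:crfin}; starting only from the critical and post-critical data computed above, the route I would take is the following.

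\textbf{The super-attracting point and the covering.} First I would record that $[1:0:0]$ is fixed and that a direct computation in the affine chart $z\neq0$ (in which $[1:0:0]$ is the origin $(w/z,u/z)=(0,0)$) shows that $f$ vanishes there to second order, so $[1:0:0]$ is super-attracting; let $U$ denote its basin. Since the critical locus $C=\{w=2z\}\cup\{w=2u\}\cup\{w=0\}$ is mapped by $f$ into $V$, one has $C\subseteq f^{-1}(V)=W$, so $f$ restricts to a proper unramified surjection $f\colon\CP\setminus W\arr\CP\setminus V$, i.e.\ a finite covering map.

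\textbf{Hyperbolicity and non-contraction.} Dehomogenising by $u=1$ identifies $\CP\setminus V$ with $\{(z,w):z,w\in\CS\setminus\{0,1,\infty\},\ z\neq w\}$, the moduli space of five marked points on the sphere; projection to the first coordinate exhibits it as a holomorphic fibre bundle over $\CS\setminus\{0,1,\infty\}$ with four-times punctured spheres as fibres. As base and fibres are hyperbolic, $\CP\setminus V$ is Kobayashi hyperbolic, and in fact hyperbolically embedded in $\CP$ (this last point being what is actually used below); and a covering of a space hyperbolically embedded in $\CP$ is again such, which handles $\CP\setminus W$. For non-contraction I would use that a covering map is an infinitesimal isometry for the Kobayashi--Royden metric (extremal disks lift through the simply connected disk), so $f^{*}\kappa_{\CP\setminus V}=\kappa_{\CP\setminus W}$, while the inclusion $\CP\setminus W\hookrightarrow\CP\setminus V$ only enlarges that metric, so $\kappa_{\CP\setminus W}\ge\kappa_{\CP\setminus V}$ on $\CP\setminus W$; chaining the two shows that $f$ is non-contracting for the Kobayashi metric of $\CP\setminus V$.

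\textbf{The Fatou set equals $U$.} Always $U\subseteq\mathrm{Fatou}(f)$. For the reverse inclusion I would exploit the skew-product form $f(z,w)=(g_w(z),h(w))$ with base map $h(w)=(1-2/w)^2$, which satisfies $\pi\circ f=h\circ\pi$ for the projection $\pi\colon[z:w:u]\mapsto[w:u]$. The critical points $w=2$ and $w=0$ of $h$ have orbits $2\mapsto0\mapsto\infty\mapsto1$ and $0\mapsto\infty\mapsto1$, with $1$ a repelling fixed point, so $h$ is post-critically finite with all critical points strictly preperiodic; hence $\julia{h}=\CS$, and consequently no subsequence of the iterates $h^{n}$ is equicontinuous on a nonempty open subset of $\CS$ (by the equidistribution of iterated preimages). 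Now if $x_0\notin U$, some subsequence $f^{n_k}(x_0)$ stays a fixed distance from $[1:0:0]$; were $\{f^{n}\}$ normal near $x_0$, a sub-subsequence $f^{n_{k_j}}$ would converge locally uniformly near $x_0$ to a map $\phi$ with $\phi(x_0)\neq[1:0:0]$, so on a small ball $B_0\ni x_0$ the compositions $\pi\circ f^{n_{k_j}}=h^{n_{k_j}}\circ\pi$ would be defined for large $j$ and $\{h^{n_{k_j}}\}$ would converge locally uniformly on the nonempty open set $\pi(B_0)$ --- a contradiction. Hence $\mathrm{Fatou}(f)=U$ and $J=\CP\setminus U$ is the Julia set.

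\textbf{The remaining assertions, and the main obstacle.} Given $\mathrm{Fatou}(f)=U$, the basin $U=\bigcup_n f^{-n}(B)$ (with $B$ a small ball at $[1:0:0]$ pulled back along the finite maps $f^{n}$) is an increasing union of Stein domains, hence Stein, i.e.\ a domain of holomorphy; its being a topological cell and Kobayashi hyperbolic I would deduce by examining the basin directly, using the attracting normal form at $[1:0:0]$ and the hyperbolicity obtained above. Topological transitivity of $f|_J$ and density of repelling periodic points in $J$ then follow from the ergodic theory of endomorphisms of $\CP$ once $J=\operatorname{supp}\mu$ for the measure of maximal entropy $\mu=T\wedge T$: $\mu$ is mixing, and repelling cycles equidistribute towards $\mu$ (Briend--Duval). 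The genuine difficulty --- and the step I expect to be hardest --- is the assertion that $J$ has empty interior, equivalently that $U$ is dense, equivalently $J=\operatorname{supp}\mu$: the soft hyperbolicity arguments do not suffice here, since $f^{-1}([1:0:0])=\{[1:0:0]\}$, so the super-attracting point is totally invariant and density of $U$ cannot be extracted from equidistribution of backward orbits; instead one must bring in pluripotential theory, showing that post-critical finiteness forces the Green current $T$ of $f$ to have continuous local potentials, from which $\operatorname{supp}(T\wedge T)=J$ has empty interior. Establishing that continuity is exactly where the post-critically finite hypothesis is really spent.
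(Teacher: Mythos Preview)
The paper does not give a proof of this theorem: it is stated as a result of Forn{\ae}ss and Sibony, with the sentence ``The following properties of the map $f$ are proved in~\cite{fornsibon:crfin}'' immediately preceding the statement, and no argument is supplied. You recognise this yourself in your opening line. There is therefore no proof in the paper to compare your proposal against.

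That said, your sketch is a reasonable outline of the Forn{\ae}ss--Sibony argument and correctly locates the difficulty. The hyperbolicity of $\CP\setminus V$ via the fibration over a thrice-punctured sphere, the covering/inclusion argument for non-contraction, and the use of the skew-product structure together with $\julia{\wh f}=\CS$ (the base is Latt\`es) to rule out Fatou components other than $U$ are all the right moves. Your identification of ``$J$ has empty interior'' as the step where post-critical finiteness is genuinely used---via continuity of the Green potential and the support of $T\wedge T$---is accurate; this is indeed the heart of the Forn{\ae}ss--Sibony paper. The parts you leave vague (that $U$ is a topological cell and is itself Kobayashi hyperbolic) are also not entirely soft in the original, so flagging them as needing direct analysis of the basin is honest.
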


The aim of our paper is to describe the combinatorics and topology
of the Julia set $J$ in the spirit of Hubbard trees
(see~\cite{DH:orsayI,DH:orsayII,poirer:classification2}).

Theorem~\ref{th:fornsibon} is not quite what we need to be able to
apply the techniques of the iterated monodromy groups to the study
of the Julia set of $f$. The results of our paper are therefore
true only modulo the following conjecture.

\begin{conjecture}
There exists an orbispace metric on $\CP\setminus\{[1:0:0]\}$ such
that $f$ is uniformly expanding with respect to this metric on a
neighborhood of the Julia set.
\end{conjecture}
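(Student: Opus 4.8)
We outline a possible route to the conjecture, modelled on the proof that a post-critically finite rational map of $\CS$ is sub-hyperbolic. \emph{Step 1: the ramification orbifold.} The first step is to promote the post-critical arrangement $V$ to an orbifold $\mathcal O$. Along each of the six post-critical lines $f$ has a well-defined local degree, read off from the ramification computation above; attach to each line $L\subset V$ the smallest weight $\nu(L)\in\{2,3,\dots\}$ such that, for every component $L'$ of $W=f^{-1}(V)$ lying over $L$, the product of the local degree of $f$ along $L'$ with the weight of the line of $V$ containing $L'$ divides $\nu(L)$, iterating this requirement until it stabilises. (Since every critical orbit of $f$ is preperiodic to a repelling set, all the weights remain finite.) By construction $f\colon\mathcal O\to\mathcal O$ is an orbifold covering away from the singular strata; this is the higher-dimensional incarnation of the statement in Theorem~\ref{th:fornsibon} that $f\colon\CP\setminus W\to\CP\setminus V$ is a covering.

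\emph{Step 2: the metric and non-contraction.} For the candidate metric take the orbifold Kobayashi metric $k_{\mathcal O}$ of $\mathcal O$, with underlying space $\CP\setminus\{[1:0:0]\}$. Off $V$ it dominates the Kobayashi metric of $\CP\setminus V$, which is a genuine metric by Theorem~\ref{th:fornsibon}; near $V$ it should have only conical singularities, arising from the local models $(\text{polydisk})/(\text{finite group})$ attached at the strata of the line arrangement, so that it is an orbispace metric of the required type. Since $f$ is orbifold-\'etale, $f^{*}k_{\mathcal O}$ is the Kobayashi metric of the pulled-back orbifold $f^{*}\mathcal O$, whose weights are everywhere at least as large as those of $\mathcal O$; hence $f^{*}k_{\mathcal O}\ge k_{\mathcal O}$, i.e.\ $f$ is non-contracting for $k_{\mathcal O}$. \emph{Step 3: uniform expansion.} To obtain a uniform factor $\lambda>1$ on a neighbourhood of $J$, note that $f^{*}\mathcal O$ is a \emph{strict} enlargement of $\mathcal O$ (for instance the critical line $w=2z$ acquires a cone although it is not post-critical); a normal-families argument, as in the one-dimensional case, should then give $f^{*}k_{\mathcal O}\ge\lambda\,k_{\mathcal O}$ with $\lambda>1$ on the preimage of any compact subset of the regular part. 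Since $\overline J$ is compact, and $f$ is already expanding combinatorially ($\img f$, equivalently $\G$, whose nucleus is computed in Section~\ref{s:polyhedralmodel}, being contracting), the expansion factor would then stay bounded away from $1$ on a whole neighbourhood of $J$.

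The skew-product structure suggests a parallel route and, more to the point, locates the difficulty. In the base coordinate the dynamics is the rational map $g(w)=(1-2/w)^{2}$, whose critical points $0$ and $2$ are strictly preperiodic to the repelling fixed point $1$; thus $g$ has empty Fatou set, its canonical orbifold is the Euclidean orbifold $(2,4,4)$, and $g$ is therefore (M\"obius conjugate to) a Latt\`{e}s map of degree $2$, uniformly expanding for the associated flat metric on $\CS$. The fibre dynamics is carried by the quadratic polynomials $z\mapsto(1-2z/w)^{2}$, whose compositions along $g$-orbits have dendrite Julia sets (Section~\ref{s:tripods}); combining a fibrewise orbifold metric with the flat base metric into a warped product would furnish the metric directly. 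Either way, the step that resists the classical techniques — and the reason the statement is only a conjecture — is the local geometry of $f$ near the singular strata of $V$: at the intersection points of the six post-critical lines (and, in the fibred picture, over the degenerate fibres $w\in\{0,\infty\}$, where the family of fibre maps blows up) the local orbifold is a higher-dimensional quotient singularity, for which there is no classification of the universal cover, and the normal-families argument of Step~3 has no obvious substitute. Establishing that these local models are ``good'' and that $f$ expands through them — presumably by explicit coordinate estimates, which $f$ being an explicit polynomial map at least makes accessible — is the main obstacle.
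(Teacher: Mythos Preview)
The statement is a \emph{Conjecture}, and the paper does not prove it: the author writes explicitly that he ``was not able to prove that $f$ is sub-hyperbolic'' and that the later results hold only modulo this conjecture. There is therefore no proof in the paper to compare your proposal against.

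Your proposal is not a proof either, and you are candid about this: every substantive step is hedged (``should have only conical singularities'', ``a normal-families argument \dots\ should then give'', ``would furnish'', ``presumably''), and you close by naming ``the main obstacle''. What you have written is a reasonable heuristic for why the conjecture ought to be true, together with an accurate diagnosis of where the one-dimensional argument fails to transfer; it is not a proof, and it does not claim to be one. The decisive gap is precisely where you place it. In one complex variable the orbifold Kobayashi metric exists and has the required conical local form because of the uniformization theorem; in two variables there is no substitute, and it is not established that the Kobayashi pseudometric of an orbifold whose singular locus is a normal-crossing line arrangement extends across the arrangement to a genuine length metric with the local models you need in Step~2. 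Even granting that, the compactness/normal-families step in Step~3 must control the metric at the intersection points of the six post-critical lines, where the local orbifold is a two-dimensional quotient singularity and the one-variable argument has no direct analogue. In the skew-product approach the same difficulty reappears as the degeneration of the fibre metric over $w\in\{0,1,\infty\}$. These are not technicalities to be filled in but the substance of the open problem; your outline correctly isolates them rather than resolving them.
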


In fact, some weaker results would be sufficient, but they are
probably equivalent to the above conjecture. It would be very nice
to have a general statement about sub-hyperbolicity of
post-critically finite endomorphisms of complex projective spaces.

Note that the map $f$ has a skew product structure: the second
coordinate is a rational function depending only on the second
coordinate. On the first coordinates of iterations of $f$ we get
compositions of quadratic polynomials $f_w(z)=\left(1-\frac 2w
z\right)^2$, i.e., non-autonomous iteration of quadratic
polynomials. The critical point $z=w/2$ of the polynomial $f_w$ is
mapped to $0$, $f_w(0)=1$, and $f_w(1)$ is equal to the next value
$\left(1-\frac 2w\right)^2$ of the second coordinate in the
iteration of $f$. We see that the non-autonomous iteration on the
first coordinate is post-critically finite: the set of critical
values of the composition $f_{w_n}\circ
f_{w_{n-1}}\circ\cdots\circ f_{w_1}$ belongs to the set $\{0, 1,
w_{n+1}\}$, where $w_{i+1}=\left(1-\frac 2{w_i}\right)^2$ for
$i=1, \ldots, n$. For more on post-critically finite
non-autonomous iterations of polynomials, see~\cite{nek:polynom}.

It follows from the skew-product structure of the map $f$ that it
agrees with the projection $P:[z:w:u]\mapsto [w:u]$ (defined on
the complement of the point $[1:0:0]$), which is written in affine
coordinates as $P:(z, w)\mapsto w$. Namely, the fibers of the
projection are mapped by $f$ to fibers. In particular, by
Theorem~\ref{th:fornsibon}, the fiber $P^{-1}(w)\cap J$ of the
projection $P:J\arr\widehat\C$ of the Julia set of $f$ onto the
sphere (which is the Julia set of $\left(1-\frac 2w\right)^2$) is
the Julia set of the non-autonomous iteration
\[\C\xrightarrow{f_{w_0}}\C\xrightarrow{f_{w_1}}\C\xrightarrow{f_{w_2}}\cdots,\]
where $w_0=w$ and $w_{i+1}=\left(1-\frac 2w\right)^2$.

This makes it possible to draw the slices $P^{-1}(w)\cap J$ of the
Julia set of $f$ in $z$-planes. See some of such slices on
Figure~\ref{fig:limspaces}.

\begin{figure}
\centering \includegraphics[width=4in]{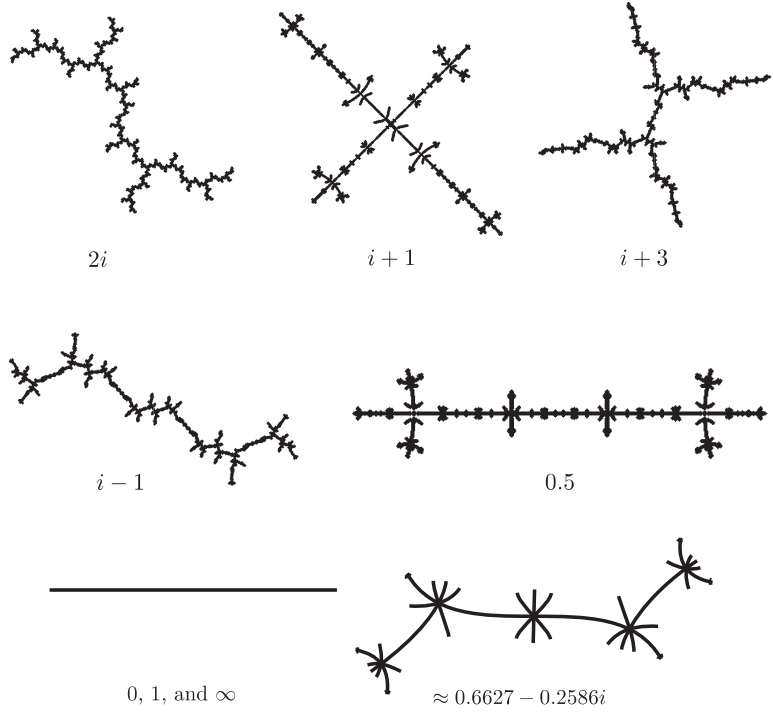}\\
 \caption{Intersections of the Julia set of $f$ with the $z$-planes}\label{fig:limspaces}
\end{figure}

The rational function appearing on the second coordinate of $f$ is
a Latt\`es example. Namely, it is semiconjugate to the map
$z\mapsto (i-1)z$ on $\C$, where the semiconjugacy is the map
\[\C\arr\mathbb{PC}^1:z\mapsto (\wp(z))^2,\]
where $\wp$ is the Weierstrass' function associated with the
lattice of Gaussian integers $\Z[i]$. See a proof of this fact
in~\cite{milnor:dragons} and~\cite{bartnek:rabbit}. In other
words, the rational function $\left(1-\frac 2w\right)^2$ is
conjugate to the map induced by $z\mapsto (i-1)z$ on the orbifold
of the action of the group of orientation-preserving isometries of
the lattice $\Z[i]$ of Gaussian integers.

\section{Techniques}

\subsection{Self-similar groups}

\begin{defi}
A \emph{wreath recursion} is a homomorphism $\psi:G\arr\symm\wr G$
from a group $G$ to the wreath product of $G$ with the symmetric
group $\symm$.
\end{defi}

Here symmetric group $\symm$ acts on $\alb$ from the left. We will
denote the identity element of a group by $\unit$.

The wreath product $\symm\wr G=\symm\ltimes G^\alb$ is the set of
pairs $(\sigma, (g_x)_{x\in\alb})$, where $\sigma\in\symm$ and
$(g_x)_{x\in\alb}$ is an element of the direct product $G^\alb$.
The elements of the wreath product are multiplied by the rule
\[(\sigma, (g_x)_{x\in\alb})\cdot(\pi, (h_x)_{x\in\alb})=
(\sigma\pi, (g_{\pi(x)}h_x)_{x\in\alb}).\]

We will write the pair $(\sigma, (g_x)_{x\in\alb})$ as a product
$\sigma(g_x)_{x\in\alb}$, identifying $\sigma$ with $(\sigma,
(\unit)_{x\in\alb})$ and $(g_x)_{x\in\alb}$ with $(\unit,
(g_x)_{x\in\alb})$. It is easy to see that this identification
agrees with the multiplication rule. If $\alb=\{\one, \two,
\ldots, d\}$, then we write $\sigma(g_x)_{x\in\alb}$ as a sequence
$\sigma(g_{\one}, g_{\two}, \ldots, g_d)$.

Let $\psi:G\arr\symm\wr G$ be a wreath recursion. Then the
\emph{associated permutation $G$-bimodule} $\bim$ is the set
$\alb\times G$ together with two (left and right) actions of $G$
on it that are given by the rules
\[(x, h)\cdot g=(x, hg),\quad g\cdot (x, h)=(\sigma(x), g_xh),\]
for $\psi(g)=\sigma(g_x)_{x\in\alb}$. We will identify $x\in\alb$
with $(x, \unit)\in\bim$ and write $(x, g)=x\cdot g$.

If we compose a wreath recursion $\psi:G\arr\symm\wr G$ with an
inner automorphism of $\symm\wr G$, then we do not change the
isomorphism class of the associated bimodule
(see~\cite[Proposition~2.3.4]{nek:book} or~\cite[Proposition~2.22]{nek:filling}).

\begin{defi}
A \emph{self-similar group} $(G, \alb)$ is a group $G$ together
with a wreath recursion $\psi:G\arr\symm\wr G$. Self-similar
groups defined by wreath recursions that differ by an inner
automorphism of $\symm\wr G$ are called \emph{equivalent}.
\end{defi}

Equivalently, a self-similar group is a group $G$ together with a
\emph{covering bimodule} $\bim$, as it is defined below.

\begin{defi}
\label{def:bimodules} Let $G$ be a group. A \emph{permutational
$G$-bimodule} is a set $\bim$ with commuting left and right
actions of $G$ on it, i.e., maps $G\times\bim\arr\bim:(g,
x)\mapsto g\cdot x$ and $\bim\times G\arr\bim:(x, g)\mapsto x\cdot
g$ such that $\unit\cdot x=x\cdot\unit=x$ for all $x\in\bim$ and
\begin{gather*}g_1\cdot (g_2\cdot
x)=g_1g_2\cdot x,\qquad (x\cdot g_1)\cdot g_2=x\cdot g_1g_2,\\
(g_1\cdot x)\cdot g_1=g_1\cdot (x\cdot g_2),
\end{gather*}
for all $g_1, g_2\in G$ and $x\in\bim$.

A permutational $G$-bimodule is called a \emph{covering $d$-fold}
bimodule if the right action of $G$ on $\bim$ has $d$ orbits and
is free, i.e., if $x\cdot g=x$ implies $g=\unit$.
\end{defi}

It is easy to see that the bimodule associated with a wreath
recursion $\psi:G\arr\symm\wr G$ is a covering $d$-fold bimodule
for $d=|\alb|$. In the other direction, if $\bim$ is a covering
$G$-bimodule, then for a given right orbit transversal $\alb$
(i.e., such a subset $\alb\subset\bim$ that every right orbit
contains exactly one element of $\alb$), we get the associated
wreath recursion
\[g\mapsto\sigma(g|_x)_{x\in\alb},\]
where $\sigma\in\symm$ and $g|_x$ are given by the condition
\[g\cdot x=\sigma(x)\cdot g|_x\]
in $\bim$. The permutation $\sigma$ is the associated action of
$g$ on $\alb$. We get in this way an action of $G$ on $\alb$
defined by the wreath recursion. More formally, this action is
obtained by composing the wreath recursion with the projection of
$\symm\wr G$ onto $\symm$. We will usually denote
$g(x)=\sigma(x)$.

We say that a subset $\alb\subset\bim$ is a \emph{basis} of the
bimodule $\bim$, if it a right orbit transversal. We will usually
label the letters of $\alb$ by integers $\one, \two, \ldots, d$,
thus identifying $\symm$ with $\symm[d]$. It is not hard to prove
that changing the basis $\alb$ amounts to composing the associated
wreath recursion by an inner automorphism of $\symm[d]\wr G$
(see~\cite[Subsection~2.3.4]{nek:book}
and~\cite[Proposition~2.22]{nek:filling}).

The following result is proved in~\cite[Section~2.5]{nek:book}.

\begin{proposition}
\label{pr:virtwreath} Let $G$ be a self-similar group with the
associated wreath recursion $\psi:G\arr\symm\wr G$. Suppose that
the associated action on $\alb$ is transitive. Then the wreath
recursion, up to composition with an inner automorphism of
$\symm\wr G$, is uniquely determined for any $x\in\alb$ by the
homomorphism
\[g\mapsto g_x,\]
from the stabilizer of $x$ into $G$. Here $g_x$ is the coordinate
of $\psi(g)=\sigma(g_x)_{x\in\alb}$ corresponding to $x$.
\end{proposition}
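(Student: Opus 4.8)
The plan is to reconstruct the whole wreath recursion from the single homomorphism $\phi\colon G_x\arr G$, $g\mapsto g_x$, using transitivity of the action on $\alb$ to propagate the data across all letters. First I would fix a letter $x\in\alb$ and, for each $y\in\alb$, choose an element $r_y\in G$ with $r_y(x)=y$, taking $r_x=\unit$; this is possible precisely because the action on $\alb$ is transitive. In bimodule language these choices amount to picking, for each $y$, an element $b_y\in\bim$ in the right orbit of $x$ with $r_y\cdot x = b_y$ (so $b_x=x$), i.e.\ $b_y = r_y\cdot x\cdot (r_y|_x)^{-1}$; the point is that $\{b_y\}$ is again a basis, and passing to it is an inner automorphism of $\symm\wr G$ by the remark recalled before Proposition~\ref{pr:virtwreath}. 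So without loss of generality I may assume the transversal is chosen compatibly with the $r_y$, which makes the bookkeeping below clean.

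Next I would show that for an arbitrary $g\in G$ the coordinates $g_y$ are forced. Given $g$, let $\sigma$ be its permutation on $\alb$, so $g(x)=\sigma(x)$. For a fixed $y$, write $z=\sigma(y)$; then the element $h := r_z^{-1}\, g\, r_y$ satisfies $h(x) = r_z^{-1}(g(y)) = r_z^{-1}(z) = x$, so $h\in G_x$ and $h_x = \phi(h)$ is known. Now expand $h_x$ using the cocycle identity $(ab)_w = a_{b(w)}\,b_w$ (which follows directly from the multiplication rule in $\symm\wr G$): applied to $h = r_z^{-1}\cdot g\cdot r_y$ at the letter $x$, and using $r_y(x)=y$, $g(y)=z$, $r_z(x)=z$, this gives
\[
\phi(h) = h_x = (r_z^{-1})_{z}\; g_{y}\; (r_y)_{x}.
\]
Solving, $g_y = \big((r_z^{-1})_{z}\big)^{-1}\,\phi(h)\,\big((r_y)_x\big)^{-1}$. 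Every factor on the right is determined by $\phi$ and by the fixed choices $r_y, r_z$ (note $(r_z^{-1})_z = (r_z|_x)^{-1}$ after one more application of the cocycle identity to $r_z^{-1}r_z = \unit$, so it too is expressible via the chosen data). Hence the full wreath recursion is determined. Doing this for the two possible wreath recursions $\psi, \psi'$ that restrict to the same $\phi$ on $G_x$, and chasing through that they differ only by the inner automorphism coming from the choice of transversal, yields the uniqueness up to inner automorphism of $\symm\wr G$.

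The main obstacle, and the step requiring genuine care rather than routine computation, is the interplay between \emph{two} sources of inner automorphisms: changing the transversal (basis) of $\bim$, and conjugating the wreath recursion directly. I expect the cleanest route is the bimodule formulation: realize $\psi$ abstractly via its covering bimodule $\bim$, observe that $\phi$ determines the left $G$-action on the single basis element $x$ (and the stabilizer $G_x$ is exactly where that action is "internal" to a right orbit), then use transitivity to spread the left action to all of $\bim$ — the spreading is canonical once the $r_y$ are fixed, and different choices of $r_y$ give isomorphic bimodules. Translating "isomorphic covering bimodule" back to "wreath recursions differing by an inner automorphism of $\symm\wr G$" is exactly the content already cited from \cite[Subsection~2.3.4]{nek:book} and \cite[Proposition~2.22]{nek:filling}, so I would invoke that rather than redo it. The only real content beyond bookkeeping is the observation that the left action of $G_x$ on the right orbit of $x$, which is all that $\phi$ records, together with transitivity, pins down $\bim$ up to isomorphism.
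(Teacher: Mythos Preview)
The paper does not actually prove this proposition: it simply cites \cite[Section~2.5]{nek:book} and moves on. Your proposal supplies an argument where the paper gives none, and the argument you outline is essentially the standard one --- indeed, your derivation of $g_y$ in terms of $\phi(r_z^{-1}gr_y)$ is precisely the content of Proposition~\ref{prop:formulawreathrec}, which the paper states immediately afterwards (again without proof). So your approach is correct and matches what one finds in the cited reference.

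One small point worth tightening: when you write that $(r_y)_x$ and $(r_z^{-1})_z$ are ``expressible via the chosen data,'' this is slightly misleading, since those sections genuinely depend on the wreath recursion $\psi$ you are trying to pin down. The clean way to phrase it --- which you essentially arrive at in your bimodule paragraph --- is: given any $\psi$ with virtual endomorphism $\phi$, pass to the basis $\{r_y\cdot x\}_{y\in\alb}$ of $\bim$; in that basis the sections $(r_y)_x$ are trivial by construction, and then $\psi$ is given exactly by the formula of Proposition~\ref{prop:formulawreathrec}. Since that formula depends only on $\phi$ and the transversal $\{r_y\}$, any two such $\psi$ agree after their respective basis changes, hence differ by an inner automorphism. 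Your bimodule reformulation at the end says this correctly; I would lead with it rather than present it as an afterthought.
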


The homomorphism $g\mapsto g_x$ defined on the stabilizer of $x$
is called the \emph{virtual endomorphism} associated with the
wreath recursion (or with self-similarity of the group).

The following proposition gives formulae for the wreath recursion
with a given associated virtual endomorphism.

\begin{proposition}
\label{prop:formulawreathrec} Let $\phi:G_1\arr G$ be a virtual
endomorphism of a group $G$, where $G_1$ is a subgroup of finite
index in $G$. Let $\{r_1, r_2, \ldots, r_d\}$ be a left coset
transversal of $G$ modulo $G_1$ (i.e., a set such that $G$ is a
disjoint union of the cosets $r_i\cdot G_1$). Let
$\{x_i\}_{i=1,\ldots, d}=\alb$ be an alphabet of size $d$. For
$g\in G$ put $\psi(g)=\sigma(g_x)_{x\in\alb}$, where
$\sigma(x_i)=x_j$ if $gr_i\in r_j G_1$, and
$g_{x_i}=\phi(r_j^{-1}gr_i)$. Then $\psi:G\arr\symm\wr G$ is a
wreath recursion such that $\phi$ is associated with it.
\end{proposition}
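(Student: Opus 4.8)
The plan is to verify directly from the definitions that the assignment $g \mapsto \psi(g) = \sigma(g_x)_{x\in\alb}$ described in the statement is (a) a well-defined map, (b) a homomorphism into $\symm\wr G$, and (c) has the prescribed virtual endomorphism $\phi$ associated with it, in the sense of Proposition~\ref{pr:virtwreath}. First I would fix notation: for each $g\in G$ and each index $i$, let $\sigma_g$ be the permutation of $\{1,\dots,d\}$ determined by the condition $g r_i \in r_{\sigma_g(i)} G_1$, which is well defined because the cosets $r_j G_1$ partition $G$; then $r_{\sigma_g(i)}^{-1} g r_i \in G_1$, so $\phi(r_{\sigma_g(i)}^{-1} g r_i)$ makes sense and $g_{x_i}$ is unambiguously defined. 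That $\sigma_g \in \symm$ (a genuine permutation, not just a function) follows since $\sigma_{g^{-1}}$ is its inverse: $g r_i \in r_j G_1$ iff $g^{-1} r_j \in r_i G_1$.

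Next I would check the homomorphism property. Using the multiplication rule in $\symm\wr G$ recalled in the excerpt, $\psi(g)\psi(h) = (\sigma_g \sigma_h,\ (g_{\sigma_h(x_i)} h_{x_i})_i)$, and I must show this equals $\psi(gh) = (\sigma_{gh},\ ((gh)_{x_i})_i)$. The permutation part is immediate: if $h r_i \in r_j G_1$ and $g r_j \in r_k G_1$ then $gh r_i \in g r_j G_1 \subseteq r_k G_1$, so $\sigma_{gh} = \sigma_g\sigma_h$. For the coordinate part, write $j = \sigma_h(i)$ and $k = \sigma_g(j) = \sigma_{gh}(i)$; then
\[
(gh)_{x_i} = \phi\big(r_k^{-1} g h r_i\big) = \phi\big(r_k^{-1} g r_j \cdot r_j^{-1} h r_i\big) = \phi\big(r_k^{-1} g r_j\big)\,\phi\big(r_j^{-1} h r_i\big) = g_{x_j} h_{x_i} = g_{\sigma_h(x_i)} h_{x_i},
\]
where the middle equality uses that $\phi$ is a homomorphism on $G_1$ together with the fact that both $r_k^{-1} g r_j$ and $r_j^{-1} h r_i$ lie in $G_1$ (so their product does too, and $\phi$ respects it). This gives $\psi(gh) = \psi(g)\psi(h)$.

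Finally I would identify the virtual endomorphism. Choose the letter $x_{i_0}$ corresponding to the coset $r_{i_0} G_1$ with $r_{i_0} = \unit$ (or, if one prefers an arbitrary transversal, relabel so that some $r_{i_0}$ represents $G_1$ — any inner automorphism adjustment is harmless by the discussion after Definition~\ref{def:bimodules}). For $g$ in the stabilizer of $x_{i_0}$ under the associated action — i.e., $\sigma_g(i_0) = i_0$, which by definition means $g r_{i_0} \in r_{i_0} G_1$, i.e., $g \in G_1$ — the coordinate is $g_{x_{i_0}} = \phi(r_{i_0}^{-1} g r_{i_0}) = \phi(g)$. Thus the homomorphism $g \mapsto g_{x_{i_0}}$ from $\mathrm{Stab}(x_{i_0})$ to $G$ is exactly $\phi$, so $\phi$ is the virtual endomorphism associated with $\psi$ as claimed.

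I do not expect a serious obstacle here; the content is entirely bookkeeping with cosets and the wreath-product multiplication law. The one point that needs a little care — and the thing I would state explicitly — is that every group element whose image under $\phi$ is taken does in fact lie in the domain $G_1$, so that the cocycle-type identity $\phi(r_k^{-1} g r_j \cdot r_j^{-1} h r_i) = \phi(r_k^{-1} g r_j)\phi(r_j^{-1} h r_i)$ is legitimate; this is where the specific choice of $j = \sigma_h(i)$ and $k = \sigma_g(j)$ is used. A secondary point worth a sentence is that the conclusion "$\phi$ is associated with $\psi$" is only meaningful up to inner automorphisms of $\symm\wr G$ and up to choice of basepoint letter, matching the formulation of Proposition~\ref{pr:virtwreath}; no transitivity of the action is needed for the construction itself, only for the uniqueness clause of that proposition.
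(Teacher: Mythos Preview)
Your proof is correct and complete. The paper itself does not prove this proposition; it is stated as a known preliminary result (the surrounding text attributes the material to \cite[Section~2.5]{nek:book}), so there is no in-paper argument to compare against. Your direct verification --- well-definedness via the coset partition, the homomorphism property via the cocycle identity for $\phi$, and the identification of the virtual endomorphism at the letter corresponding to the trivial coset --- is exactly the standard argument one would give, and your remarks about the domain check for $\phi$ and the role of inner automorphisms are appropriate.
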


\subsection{Iteration of the wreath recursion}

Let $(G, \alb)$ be a self-similar group and let $\bim=\alb\times G$
be the associated permutational bimodule. For $x\in\alb$ and $g\in
G$ we denote
\[g\cdot x=g(x)\cdot g|_x.\]
If $\psi$ is the associated wreath recursion, then for
$\psi(g)=\sigma(g_x)_{x\in\alb}$ we have $g(x)=\sigma(x)$ and
$g|_x=g_x$.

We define then inductively, for a finite word $v=x_1x_2\ldots
x_n\in\xs$ and $g\in G$, a word $g(v)$ and an element $g|_v\in G$
by the rule
\[g(xv)=g(x)g|_x(v),\qquad g|_{xv}=g|_x|_v.\]
It is easy to see that for every $n\ge 1$ the map $\sigma_{n,
g}:v\mapsto g(v)$ is a permutation of the set $\alb^n$ and that
the map
\[\psi^{\otimes n}:G\arr\symm[\alb^n]\wr G:g\mapsto
\sigma_{n, g}(g|_v)_{v\in\alb^n}\] is a homomorphism. The wreath
recursion $\psi^{\otimes n}$ is called the \emph{$n$th iteration}
of the wreath recursion $\psi$.

Let $\xs=\bigsqcup_{n\ge 0}\alb^n$ be the rooted tree of finite
words over $\alb$, where every word $v\in\xs$ is connected to the
words of the form $vx$ for $x\in\alb$. The empty word is the root
of the tree $\xs$. It is easy to check that for every $g\in G$ the
permutation $v\mapsto g(v)$ of $\xs$ is an automorphism of the
rooted tree $\xs$, and that in this way we get an action of the
group $G$ on the tree $\xs$. It is called the \emph{action
associated with the bimodule (with the wreath recursion)}. This
action, up to conjugacy of actions, depends only on the associated
permutational bimodule (does not depend on the choice of the basis
$\alb$).

\begin{defi}
Let $\psi:G\arr\symm\wr G$ be a wreath recursion. A \emph{faithful
quotient of $G$} (with respect to $\psi$) is the quotient of $G$
by the kernel of the action on $\xs$ associated with the wreath recursion
$\psi$.
\end{defi}

The wreath recursion is interpreted then as a recurrent
description of the action of the group elements on the tree $\xs$.
For $g\in G$ such that $\psi(g)=\sigma(g|_x)_{x\in\alb}$, the
permutation $\sigma\in\symm$ describes the action of $g$ on the
first level $\alb$ of the tree $\xs$, while the coordinates $g|_x$
describe the action on the subtree $x\xs$, so that
\[g(xv)=\sigma(x)g|_x(v)\]
for all $v\in\xs$. In general, we have
\[g(vw)=g(v)g|_v(w)\]
for all $v, w\in\xs$ and $g\in G$. Note that if the action of $G$
on $\xs$ is faithful, then the above equality uniquely determines
$g|_v$. The elements $g|_v$ are called the \emph{sections} of the
element $g\in G$.

If the action is faithful, then we identify the elements of the
group $G$ with the corresponding automorphisms of the rooted tree
$\xs$. We will usually omit then the letter denoting the wreath
recursion and write $g=\sigma(g_1, \ldots, g_d)$ instead of
$\psi(g)=\sigma(g_1, \ldots, g_d)$, naturally identifying the
automorphism group $\mathrm{Aut}(\xs)$ with the wreath product
$\symm\wr\mathrm{Aut}(\xs)$.

Iterations of wreath recursions correspond to tensor
powers of the associated bimodule.

\begin{defi}
Let $\bim_1$ and $\bim_2$ be permutational $G$-bimodules (i.e.,
sets with commuting left and right actions of $G$). Then their
\emph{tensor product} $\bim_1\otimes\bim_2$ is the quotient of the
direct product $\bim_1\times\bim_2$ by the identifications
\[x_1\cdot g\otimes x_2=x_1\otimes g\cdot x_2\]
together with the actions \[g_1\cdot (x_1\otimes x_2)\cdot
g_2=(g_1\cdot x_1)\otimes (x_2\cdot g_2).\] If $\bim_2$ is a set
with a right (resp.\ left) action of $G$ and $\bim_1$ is a
$G$-bimodule, then the right $G$-space $\bim_1\otimes\bim_2$
(resp.\ left $G$-space $\bim_2\otimes\bim_1$) are defined in a
similar way.
\end{defi}

One can show that the bimodule associated with the $n$th iteration
$\psi^{\otimes n}$ of a wreath recursion $\psi$ is isomorphic to
the $n$th tensor power $\bim^{\otimes n}$ of the bimodule $\bim$
associated with $\psi$.

If the associated action on $\xs$ is \emph{level-transitive}
(i.e., transitive on the levels $\alb^n$ of $\xs$), then the
virtual endomorphism associated with the $n$th iterate
$\psi^{\otimes n}$ of the wreath recursion is conjugate (i.e., is
equal, up to inner automorphisms of the group) to the $n$th
iterate of the virtual endomorphism associated with $\psi$.

\begin{defi}
Let $(G, \alb)$ be a self-similar group. A subset $A\subset G$ is
said to be \emph{state-closed} (or \emph{self-similar}) if for
every $g\in A$ and $x\in\alb$ we have $g|_x\in A$.
\end{defi}

If $A$ is a state-closed subset of $A$, then it can be considered
as an \emph{automaton}, which being in a state $g\in A$ and
reading a letter $x\in\alb$ on input, gives on output the letter
$g(x)$ and changes its internal state to $g|_x$. It is easy to see
that if it processes a word $v\in\xs$ in this way, then it will
give on output the word $g(v)$.

\begin{defi}
Let $A\subset G$ be a state-closed subset of a self-similar group
$(G, \alb)$. Then its \emph{Moore diagram} is the oriented graph
with the set of vertices $A$, where for every $x\in\alb$ and $g\in
G$ there is an arrow starting in $g$, ending in $g|_x$, and
labeled by $x$. Every vertex $g$ of the Moore diagram is labeled
by the permutation $x\mapsto g(x)$ of $\alb$.
\end{defi}

\subsection{Contracting self-similar groups and their limit
spaces}

\begin{defi}
A self-similar group $(G, \alb)$ is called \emph{contracting} if
there exists a finite set $\nuke\subset G$ such that for every
$g\in G$ there exists $n_0$ such that for every $v\in\alb^n$ for
$n\ge n_0$ we have $g|_v\in\nuke$. The smallest set $\nuke$
satisfying this condition is called the \emph{nucleus} of the
group $(G, \alb)$.
\end{defi}

Let us fix some contracting self-similar group $(G, \alb)$. Denote
by $\xmo$ the space of sequences $\ldots x_2x_1$, $x_i\in\alb$,
with the direct product topology. By $\xmo\times G$ we denote the
direct product of the space $\xmo$ with the discrete group $G$. We
write the elements of $\xmo\times G$ in the form $\ldots
x_2x_1\cdot g$ for $x_i\in\alb$ and $g\in G$.

\begin{defi}
\label{def:limspace} Two sequences $\ldots x_2x_1, \ldots
y_2y_1\in\xmo$ are said to be \emph{asymptotically equivalent}
(with respect to $(G, \alb)$) if there exists a sequence $g_n\in
G$ taking values in a finite subset of $G$ such that
\[g_n(x_n\ldots x_2x_1)=y_n\ldots y_2y_1,\]
for all $n\ge 1$. Two sequences $\ldots x_2x_1\cdot g, \ldots
y_2y_1\cdot h\in\xmo\times G$ are \emph{asymptotically equivalent}
if there exists a sequence $g_n\in G$ taking values in a finite
subset of $G$ such that
\[g_n\cdot x_n\ldots x_2x_1\cdot g=y_n\ldots y_2y_1\cdot h,\] in
$\bim^{\otimes n}$ for all $n\ge 1$.
\end{defi}

Here $x_n\ldots x_2x_1$ denotes the element
$x_n\otimes\cdots\otimes x_1\otimes x_1$ of $\bim^{\otimes n}$.
Note that the last equality in the definition is equivalent to the
conditions
\[g_n(x_n\ldots x_2x_1)=y_n\ldots y_2y_1,\qquad g_n|_{x_n\ldots
x_2x_1}\cdot g=h,\] for the $n$th iteration of the associated
wreath recursion.

The following description of the asymptotic equivalence relations
is proved in~\cite[Proposition~3.2.6 and Theorem~3.6.3]{nek:book}.

\begin{proposition}
Sequences $\ldots x_2x_1, \ldots y_2y_1\in\xmo$ are asymptotically
equivalent if and only if there exists a sequence $g_n$, $n\ge 0$,
of elements of the nucleus of $G$ such that $g_n\cdot x_n=y_n\cdot
g_{n-1}$ for all $n\ge 1$.

Sequences $\ldots x_2x_1\cdot g, \ldots y_2y_1\cdot h\in\xmo\times
G$ are asymptotically equivalent if and only if there exists a
sequence $g_n$, $n\ge 0$, of elements of the nucleus of $G$ such
that $g_n\cdot x_n=y_n\cdot g_{n-1}$ for all $n\ge 1$, and
$g_0g=h$.
\end{proposition}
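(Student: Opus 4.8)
The plan is to prove each of the two equivalences by establishing the two implications separately, treating the $\xmo\times G$ statement as a mild refinement of the $\xmo$ statement in which the only extra ingredient is bookkeeping of the group tail. For the ``if'' direction, suppose we are given a sequence $(g_n)_{n\ge 0}$ of elements of the nucleus $\nuke$ with $g_n\cdot x_n=y_n\cdot g_{n-1}$ in $\bim$ for all $n\ge 1$ (and, in the second statement, $g_0g=h$). I would show by induction on $n$ that
\[g_n\cdot(x_n\otimes\cdots\otimes x_1)=(y_n\otimes\cdots\otimes y_1)\cdot g_0\]
in $\bim^{\otimes n}$: the case $n=1$ is the hypothesis, and the inductive step is the one-line computation $g_n\cdot(x_n\otimes\cdots\otimes x_1)=(y_n\cdot g_{n-1})\otimes(x_{n-1}\otimes\cdots\otimes x_1)=y_n\otimes\bigl(g_{n-1}\cdot(x_{n-1}\otimes\cdots\otimes x_1)\bigr)$, after which the inductive hypothesis and the tensor identification $a\otimes(b\cdot g)=(a\otimes b)\cdot g$ finish it. By the remark following Definition~\ref{def:limspace} this identity says exactly that $g_n(x_n\ldots x_1)=y_n\ldots y_1$ and $g_n|_{x_n\ldots x_1}=g_0$; in the second statement, right multiplication by $g$ turns the latter into $g_n|_{x_n\ldots x_1}\cdot g=g_0g=h$. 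Since $\nuke$ is finite, $(g_n)$ is precisely the finite-valued witness demanded by Definition~\ref{def:limspace}.

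For the ``only if'' direction, suppose the two sequences are asymptotically equivalent, witnessed by $h_n\in F$ for some finite $F\subset G$ with $h_n(x_n\ldots x_1)=y_n\ldots y_1$ (and $h_n|_{x_n\ldots x_1}\cdot g=h$ in the second statement). Because $(G,\alb)$ is contracting there is an $N$ such that every section of every element of $F$ at a word of length $\ge N$ lies in $\nuke$; hence for $n-m\ge N$ the ``deep section'' $k^{(n)}_m:=h_n|_{x_n\ldots x_{m+1}}$ lies in $\nuke$. Restricting the equality $h_n(x_n\ldots x_1)=y_n\ldots y_1$ to prefixes and using $g|_{uv}=g|_u|_v$ yields $k^{(n)}_m(x_m)=y_m$ and $(k^{(n)}_m)|_{x_m}=k^{(n)}_{m-1}$, that is,
\[k^{(n)}_m\cdot x_m=y_m\cdot k^{(n)}_{m-1}\quad\text{in }\bim,\ \text{for all }m\ge 1,\]
while the hypothesis $h_n|_{x_n\ldots x_1}\cdot g=h$ reads $k^{(n)}_0\cdot g=h$.

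The only real obstacle is that $k^{(n)}_m$ still depends on $n$, whereas the statement asks for a single coherent sequence. This I would resolve by a diagonal argument: since $\nuke$ is finite, for each fixed $m$ the values $k^{(n)}_m$ (for $n>m+N$) lie in a finite set, so there is a strictly increasing sequence $(n_j)$ along which $k^{(n_j)}_m$ is eventually constant for every $m$, say equal to $g_m\in\nuke$. For each fixed $m\ge 1$ the displayed relation holds for all large $j$ with $k^{(n_j)}_m=g_m$ and $k^{(n_j)}_{m-1}=g_{m-1}$, whence $g_m\cdot x_m=y_m\cdot g_{m-1}$; likewise $g_0\cdot g=h$ in the second statement. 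Thus $(g_m)_{m\ge 0}$ is the desired nucleus sequence, and, as promised, passing from the $\xmo$ statement to the $\xmo\times G$ statement costs nothing beyond carrying this one extra relation through the limit.
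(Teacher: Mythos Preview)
Your proof is correct. The paper does not give its own argument for this proposition; it simply cites \cite[Proposition~3.2.6 and Theorem~3.6.3]{nek:book}. Your proof is the standard one that appears there: the ``if'' direction is the straightforward induction you wrote, and the ``only if'' direction combines the contracting property (to force deep sections of the finite witnessing set into $\nuke$) with a compactness/diagonal extraction to pass from the $n$-dependent sections $k^{(n)}_m$ to a single coherent nucleus sequence $(g_m)$. Nothing is missing.
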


\begin{defi}
The quotient of the space $\xmo$ by the asymptotic equivalence
relation is called the \emph{limit space} of the group $(G, \alb)$
and is denoted $\lims$. The quotient of $\xmo\times G$ by the
asymptotic equivalence relation is called the \emph{limit
$G$-space} and is denoted $\limg$.
\end{defi}

The asymptotic equivalence relations on $\xmo$ and $\xmo\times G$
are invariant with respect to the shift $\ldots
x_2x_1\mapsto\ldots x_3x_2$ and the right $G$-action $g:\ldots
x_2x_1\cdot h\mapsto\ldots x_2x_1\cdot (hg)$, respectively. Hence
we get a continuous map $\si:\lims\arr\lims$ induced by the shift,
and a natural right action of $G$ on $\limg$. The space of orbits
$\limg/G$ of the action is naturally homeomorphic to $\lims$.

For every element $x\cdot g$ of the bimodule associated with $(G,
\alb)$ we have a continuous map $\xi\mapsto\xi\otimes x\cdot g$
mapping a point $\xi$ represented by a sequence $\ldots
x_2x_1\cdot h$ to the point represented by
\[\ldots x_2x_1h(x)\cdot h|_xg.\]
Recall that $h\cdot x\cdot g=h(x)\cdot h|_xg$ in the bimodule
$\alb\cdot G$ associated with $(G, \alb)$.

For more on contracting groups and their limit spaces, in
particular for examples, see~\cite[Section~2.11 and Chapter~6]{nek:book}.

\subsection{Approximation of $\limg$ by $G$-spaces}
For more on the subject of this subsection (in particular for
proofs) see~\cite{nek:models}.

Let $(G, \alb)$ be a self-similar contracting group with the
associated wreath recursion $\psi:G\arr\symm\wr G$ and the
permutational bimodule $\bim=\alb\cdot G$.

If $\X$ is a topological space on which $G$ acts from the right
side by homeomorphisms, then we denote by $\X\otimes\bim$ the
quotient of the direct product of the topological spaces
$\X\times\bim$ (where $\bim$ is discrete) by the identifications
\[\xi\cdot g\otimes x=\xi\otimes g\cdot x\]
for $\xi\in\X$, $g\in G$, and $x\in\bim$. It is a right $G$-space
with respect to the action
\[(\xi\otimes x)\cdot g=\xi\otimes (x\cdot g).\]

A map $I:\X\otimes\bim\arr\X$ is said to be \emph{equivariant} if
$I(\xi\otimes x\cdot g)=I(\xi\otimes x)\cdot g$ for all
$\xi\otimes x\in\X\otimes\bim$ and $g\in G$.

For example, consider the limit $G$-space $\limg$. Then there is a
canonical equivariant homeomorphism between $\limg\otimes\bim$ and
$\limg$ induced by the map
\[\xmo\times G\times\bim\arr\xmo\times G:
(\ldots x_2x_1\cdot h, x\cdot g)\mapsto \ldots x_2x_1h(x)\cdot
(h|_xg)\] already mentioned above (see
also~\cite[Section~3.4]{nek:book}).

If $I:\X\otimes\bim\arr\X$ is a $G$-equivariant map, then we
denote by $I^{(n)}$ the map from $\X\otimes\bim^{\otimes n}$ to
$\X$ given by
\[I^{(n)}(\xi\otimes x_1\otimes x_2\otimes\cdots\otimes x_n)=
I(\ldots I(I(\xi\otimes x_1)\otimes x_2)\ldots\otimes x_n),\] for
$x_i\in\bim$, and by $I_n:\X\otimes\bim^{\otimes
(n+1)}\arr\X\otimes\bim^{\otimes n}$ the map given by
\[I_n(\xi\otimes x\otimes v)=I(\xi\otimes x)\otimes v\]
for $v\in\bim^{\otimes n}$ and $x\in\bim$. It is not hard to see
that $I^{(n)}$ and $I_n$ are $G$-equivariant.

\begin{defi}
Suppose that $(G, \alb)$ is a self-similar group, and let $G$ act
on the metric space $(\X, d)$ by isometries properly and
co-compactly. An equivariant map $I:\X\otimes\bim\arr\X$ is
\emph{contracting} if there exists an integer $n\ge 1$ and a
number $0<\lambda<1$ such that
\[d(I^{(n)}(\xi_1\otimes v), I^{(n)}(\xi_2\otimes
v))\le\lambda d(\xi_1, \xi_2),\] for all $\xi_1, \xi_2\in\X$ and
$v\in\bim^{\otimes n}$.
\end{defi}

An action of $G$ on $\X$ is said to be \emph{proper} if for every
compact subset $C\subset\X$ the set of elements $g\in G$ such that
$C\cdot g\cap C\ne\emptyset$ is finite. It is called \emph{co-compact}
if there exists a compact subset $C\subset\X$ such that every $G$-orbit
contains a point in $C$.

If there exists a contracting equivariant map
$I:\X\otimes\bim\arr\X$, then $\X\otimes\bim^{\otimes n}$ are
approximations of the limit $G$-space $\limg$ in the following
sense.

\begin{theorem}
\label{th:approximationlimg} Let $(G, \alb)$ be a contracting
group and let $\bim$ be the associated permutational $G$-bimodule.
Suppose that $\X$ is a locally compact metric space with a
co-compact proper right $G$-action by isometries, and let
$I:\X\otimes\bim\arr\X$ be a contracting equivariant map. Then the
inverse limit of the $G$-spaces and the $G$-equivariant maps
\[\X\xleftarrow{I_1}\X\otimes\bim\xleftarrow{I_2}\X\otimes\bim^{\otimes 2}
\xleftarrow{I_3}\X\otimes\bim^{\otimes 3}\xleftarrow{I_4}\cdots,\]
is homeomorphic as a $G$-space to the limit $G$-space $\limg$
(i.e., there exists an equivariant homeomorphism between the
inverse limit and $\limg$).
\end{theorem}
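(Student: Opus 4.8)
The plan is to construct an explicit $G$-equivariant continuous map $\Lambda\colon\mathcal{L}\to\limg$, where $\mathcal{L}$ denotes the inverse limit in the statement, to prove that $\Lambda$ is a bijection, and then to upgrade it to a homeomorphism by a soft properness argument. The one genuine computation underlying everything is a normal-form description. Fix a compact $K\subset\X$ with $K\cdot G=\X$. Since $\alb$ is a right orbit transversal of $\bim$ and $G$ acts freely on the right on each $\bim^{\otimes n}$, every element of $\X\otimes\bim^{\otimes n}$ can be written as $\xi\otimes x_1\otimes\cdots\otimes x_n\cdot g$ with $\xi\in K$, $x_i\in\alb$ and $g\in G$, uniquely up to a simultaneous change of $\xi$, of the letters $x_i$ and of $g$ dictated by an element of $G$ and its sections; and equivariance of $I$ translates into a description of $I_n$ on normal forms, namely: apply $I$ to the $\xi$-factor together with the leftmost letter, delete that letter, renormalize the $\X$-coordinate back into $K$, and push the resulting group element through the remaining letters by the wreath recursion.

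\emph{The map $\Lambda$.} Given $(\zeta_n)\in\mathcal{L}$, write each $\zeta_n$ in normal form. The relation $I_n(\zeta_{n+1})=\zeta_n$, read through the description of $I_n$, produces the $j$-th-from-deepest letter of $\zeta_n$ from that of $\zeta_{n+1}$ by applying a section of depth about $n-j$ of a renormalizing group element; as $(G,\alb)$ is contracting, these sections eventually belong to the nucleus $\nuke$. Therefore, for each fixed $j$, the $j$-th-from-deepest letters of the $\zeta_n$ differ, for $n$ large, only by elements of $\nuke$, and reading off these letters yields a point $\Lambda((\zeta_n))\in\limg$ that is well defined: the indeterminacy left by the normal-form choices is exactly of the nucleus type appearing in Definition~\ref{def:limspace}, hence is divided out in $\limg$, and the leftover group coordinate is recorded alongside. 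It is then straightforward that $\Lambda$ is $G$-equivariant and continuous---if $(\zeta_n)$ and $(\zeta_n')$ agree through level $N$, then $\Lambda((\zeta_n))$ and $\Lambda((\zeta_n'))$ have representatives agreeing on deep segments whose length tends to $\infty$ with $N$.

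\emph{Bijectivity.} Injectivity and surjectivity both come from running the construction backwards: a point of $\limg$ represented by a sequence $\ldots x_2x_1\cdot g$ determines a coherent tower, built level by level, choosing at stage $n+1$ an element $\zeta_{n+1}$ with $I_n(\zeta_{n+1})=\zeta_n$ whose letters agree with the prescribed word and whose newly exposed letter equals $x_{n+1}$ up to an element of $\nuke$. The $\X$-coordinate is not pinned down at any finite stage, but the contracting hypothesis---the composites $I^{(m)}$ shrink $\X$-distances to $0$, uniformly over $\bim^{\otimes m}$, as $m\to\infty$---forces the $\X$-coordinates produced by any two admissible systems of choices to converge to the same point, so the tower is well defined, lies in $\mathcal{L}$, and is carried by $\Lambda$ back to the given point. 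Performing both constructions shows $\Lambda$ is a bijection. (Alternatively one can avoid an explicit inverse by a telescoping estimate: for $m$ large the composition $I_n\circ\cdots\circ I_{n+m-1}\colon\X\otimes\bim^{\otimes(n+m)}\to\X\otimes\bim^{\otimes n}$ depends on the $\X$-coordinate only up to the contraction factor, so modulo asymptotic equivalence it factors through a purely combinatorial map, and passing to the limit identifies $\mathcal{L}$ with $\limg$.)

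\emph{From bijection to homeomorphism; the main obstacle.} The right $G$-action on each $\X\otimes\bim^{\otimes n}$ is free, proper and co-compact---inherited from the action on $\X$ together with the finiteness of the set of right $\bim$-orbits---hence the diagonal action on $\mathcal{L}$ is again proper and co-compact (a compact fundamental domain for $\mathcal{L}$ is obtained from one for $\X$ by pulling back through the proper maps $I_n$), so $\mathcal{L}/G$ is compact Hausdorff; similarly the action on $\limg$ is proper and co-compact with compact quotient $\lims$. Consequently $\Lambda$ descends to a continuous bijection $\mathcal{L}/G\to\limg/G$ of compact Hausdorff spaces, which is automatically a homeomorphism, and since both quotient maps are proper $\Lambda$ is itself proper, hence a closed continuous bijection onto the locally compact Hausdorff space $\limg$---hence a homeomorphism. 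The delicate part is not this last formality but the construction of $\Lambda$ and its inverse: one must check that the normal-form ambiguities and the nucleus-valued adjustments accumulated along the tower are exactly those absorbed by the asymptotic equivalence of Definition~\ref{def:limspace}. This is precisely where the contracting hypothesis---finiteness of the nucleus and uniform decay of the $I^{(m)}$---does the real work.
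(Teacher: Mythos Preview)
The paper does not contain a proof of this theorem. It appears in Section~3 (``Techniques''), whose opening paragraph says explicitly that ``there are no proofs in it, which can be found either in~\cite{nek:book}, or in~\cite{nek:models}.'' So there is nothing in the paper to compare your argument against; the result is quoted from~\cite{nek:models}.

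On the merits of your sketch: the overall shape is right, and your ``alternative'' telescoping description is in fact the cleanest way to organize the argument. Two places deserve more care. First, the construction of $\Lambda$ as written is vague---``reading off these letters'' does not by itself produce a well-defined element of $\xmo\times G$; it is cleaner to build the map in the opposite direction (as you essentially do under ``Bijectivity''), sending $\ldots x_2x_1\cdot g$ to the tower whose $n$th entry is $\lim_{m\to\infty} I^{(m)}(\xi_0\otimes x_{n+m}\cdots x_{n+1})\otimes x_n\cdots x_1\cdot g$ for any basepoint $\xi_0$, the limit existing by contraction of $I$. Second, your claim that the diagonal $G$-action on $\mathcal{L}$ is co-compact ``by pulling back through the proper maps $I_n$'' is not justified: the $I_n$ are not assumed proper, and the preimage of a compact set under $I$ can be non-compact. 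The correct argument again uses contraction of $I$: once $\zeta_0$ lies in a fixed compact $K$, the coherence relations together with the uniform shrinking of $I^{(m)}$ force all higher $\zeta_n$ (in normal form) to have $\X$-coordinate in a fixed compact enlargement of $K$, and the letter and group data range over finite sets controlled by the nucleus, so the set of such towers is compact.
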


It follows that, in the setting of the previous theorem, the
orbispaces $\M_n=\X\otimes\bim^{\otimes n}/G$ are approximations
of the limit space $\lims$. More precisely we have the following.

\begin{corollary}
\label{cor:approximatinglims} In conditions of
Theorem~\ref{th:approximationlimg} the limit space $\lims$ is
homeomorphic to the inverse limit of the quotients
$\M_n=\X\otimes\bim^{\otimes n}/G$ with respect to the maps
$\iota_n:\M_{n+1}\arr\M_n$ induced by $I_n$.

The shift map $\si:\lims\arr\lims$ is the limit of the maps
$p_n:\M_{n+1}\arr\M_n$ induced by the correspondence $\xi\otimes
x_1\otimes\cdots\otimes x_n\mapsto\xi\otimes
x_1\otimes\cdots\otimes x_{n-1}$.
\end{corollary}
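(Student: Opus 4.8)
The plan is to deduce everything from Theorem~\ref{th:approximationlimg} by passing to the quotient by the $G$-action. First I would record the two facts I need about the inverse system appearing in the theorem: (i) each map $I_n:\X\otimes\bim^{\otimes(n+1)}\to\X\otimes\bim^{\otimes n}$ is $G$-equivariant (stated just before the theorem), so it descends to a well-defined map $\iota_n:\M_{n+1}\to\M_n$ on the orbit spaces $\M_n=\X\otimes\bim^{\otimes n}/G$; and (ii) the inverse limit $\varprojlim(\X\otimes\bim^{\otimes n},I_n)$ is $G$-equivariantly homeomorphic to $\limg$, with the $G$-action on the inverse limit being the coordinatewise one. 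Granting (i)–(ii), the content of the first assertion is that taking $G$-orbits commutes with the inverse limit, i.e.
\[
\varprojlim\bigl(\X\otimes\bim^{\otimes n}/G,\ \iota_n\bigr)\ \cong\ \Bigl(\varprojlim(\X\otimes\bim^{\otimes n},\ I_n)\Bigr)\big/ G\ \cong\ \limg/G\ \cong\ \lims,
\]
where the last homeomorphism is the one noted in the text ($\limg/G$ is naturally homeomorphic to $\lims$).

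The one nontrivial point is the first homeomorphism above: in general the orbit-space functor does not commute with inverse limits. I would prove it by hand. There is an obvious continuous surjection $\pi:\varprojlim(\X\otimes\bim^{\otimes n},I_n)\to\varprojlim(\M_n,\iota_n)$ induced by the quotient maps $\X\otimes\bim^{\otimes n}\to\M_n$ in each coordinate; it is constant on $G$-orbits, hence factors through a continuous bijection $\bar\pi:\varprojlim(\X\otimes\bim^{\otimes n},I_n)/G\to\varprojlim(\M_n,\iota_n)$. For injectivity of the induced map I must check that two threads $(\xi_n),(\eta_n)$ with $\xi_n,\eta_n$ in the same $G$-orbit of $\X\otimes\bim^{\otimes n}$ for every $n$ actually lie in one $G$-orbit of the whole inverse limit; this is where I use the hypotheses: the $G$-action on $\X$ is \emph{proper} and the maps are contracting, so the sets $\{g\in G: \xi_n\cdot g=\eta_n\}$ are finite, nonempty, and nested (the $n$th one maps into the $(n-1)$st since $I_n$ is equivariant), hence their intersection is nonempty and any $g$ in it conjugates the threads. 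Surjectivity and continuity are formal. Finally, since both spaces are compact Hausdorff (the inverse limit of compact Hausdorff spaces; compactness of $\M_n=\X\otimes\bim^{\otimes n}/G$ follows from co-compactness of the $G$-action), the continuous bijection $\bar\pi$ is a homeomorphism.

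For the second assertion, the maps $p_n:\M_{n+1}\to\M_n$ are induced (again using $G$-equivariance) by the "forget-the-last-letter" maps $\X\otimes\bim^{\otimes(n+1)}\to\X\otimes\bim^{\otimes n}$. Under the identification $\limg\cong\varprojlim(\X\otimes\bim^{\otimes n})$ and the canonical equivariant homeomorphism $\limg\otimes\bim\cong\limg$ from the text, these forgetful maps are exactly the levelwise approximations of the shift $\si$ on $\limg$, and passing to the $G$-quotient turns $\si$ on $\limg$ into $\si$ on $\lims$. Hence $\varprojlim p_n$ is the shift map, as claimed. The main obstacle is really just the orbit-space-vs-inverse-limit interchange in the previous paragraph; everything else is bookkeeping with the already-established Theorem~\ref{th:approximationlimg} and the standard homeomorphism $\limg/G\cong\lims$.
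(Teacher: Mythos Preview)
Your argument is correct and is the natural way to pass from Theorem~\ref{th:approximationlimg} to the corollary. The paper itself gives no proof here: the entire ``Techniques'' section explicitly defers proofs to \cite{nek:book} and \cite{nek:models}, so there is no in-paper argument to compare against. Your handling of the one genuine issue---that taking $G$-orbits commutes with the inverse limit---via the decreasing chain of nonempty finite sets $\{g:\xi_n\cdot g=\eta_n\}$ is exactly right. One small remark: you invoke both properness and contraction for the finiteness of these sets, but contraction is not needed; finiteness of stabilizers in $\X\otimes\bim^{\otimes n}$ follows from properness of the $G$-action on $\X$ together with the fact that $\bim^{\otimes n}$ is a covering bimodule (if $\xi\otimes v\cdot g=\xi\otimes v$ then $g=h|_v$ for some $h$ in the finite stabilizer of $\xi$). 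Similarly, surjectivity of $\pi$ is not entirely formal---one lifts a thread $([\xi_n])$ inductively using equivariance of $I_n$---but this is routine.
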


\subsection{Polyhedral models of the limit space}
\label{ss:polyhedralmodels}

There is a standard procedure of constructing a $G$-space
satisfying conditions of Theorem~\ref{th:approximationlimg}.

Let $(G, \alb)$ be a contracting finitely generated group. Suppose
that it is also \emph{self-replicating} (or \emph{recurrent} in
terms of~\cite{nek:book}), i.e., that it is transitive on the
first level of the tree $\xs$ and the associated virtual
endomorphism is onto. Let $\nuke$ be the nucleus of $(G, \alb)$.
It is a generating set of $G$,
by~\cite[Proposition~2.11.3]{nek:book}.

Denote by $\overline\Xi$ Cayley-Rips complex of $G$ with respect
to the generating set $\nuke$, i.e., the simplicial complex with
the set of vertices $G$ in which a subset $A\subset G$ is a
simplex if $A\cdot g^{-1}\subset\nuke$ for all $g\in A$. The
action of $G$ on itself by right translations is simplicial on
$\overline\Xi$, and we get in this way a right proper co-compact
$G$-space $\overline\Xi$.

The map $I:\overline\Xi\otimes\bim\arr\overline\Xi$ given by the
rule
\[I(g\otimes x\cdot h)=g|_xh\]
is a well defined and $G$-equivariant continuous map. Its
iteration $I^{(n)}:\overline\Xi\otimes\bim^{\otimes
n}\arr\overline\Xi$ is defined by
\[I(g\otimes v\cdot h)=g|_vh\]
for $g, h\in G$ and $v\in\alb^n$.

It is proved in~\cite[Theorem~6.6]{nek:models} that there exists
$k$ such that the map $I^{(k)}$ is homotopic (through equivariant
maps) to a contracting map, hence the spaces
$\overline\Xi\otimes\bim^{\otimes n}$ converge to the limit
$G$-space $\limg$, by Theorem~\ref{th:approximationlimg}.

We can replace $\overline\Xi$ by any sub-complex $\Xi$ of
$\overline\Xi$ (or of the barycentric subdivision of
$\overline\Xi$) such that $\Xi\cdot g=\Xi$ for all $g\in G$, and
$I(\Xi\otimes x)\subseteq\Xi$ for all $x\in\alb$. For instance, it
is natural to consider the complex $\bigcap_{n\ge
1}I^{(n)}(\overline\Xi\otimes\bim^{\otimes n})$.

If the map $I$ is contracting, then the complexes
$J_n=\overline\Xi\otimes\bim^{\otimes n}/G$ approximate the limit
space $\lims$.

\subsection{Iterated monodromy groups}

\begin{defi}
A \emph{partial self-covering} is a covering map $f:\M_1\arr\M$,
where $\M$ is a topological space and $\M_1$ is a subset of $\M$.
\end{defi}

More generally, a \emph{topological automaton} is a covering of
orbispaces $f:\M_1\arr\M$ together with a morphism
$\iota:\M_1\arr\M$ (which is an embedding in the case of a partial
self-covering). For details on the definition of coverings and
morphisms of orbispaces, see~\cite{nek:book}
and~\cite{nek:models}.

The iterated monodromy group of a partial self-covering is defined
in the following way.

\begin{defi}
Let $f:\M_1\arr\M$ be a partial self-covering of a path-connected
and locally path connected topological space $\M$. Let $t\in\M$ be
a base-point. Denote by $K_n$ the kernel of the monodromy action
of $\pi_1(\M, t)$ on the fiber $f^{-n}(t)$ of the $n$th iteration
of $f$. Then the iterated monodromy group $\img{f}$ is the
quotient of the group $\pi_1(\M, t)$ by the intersection
$\bigcap_{n\ge 0}K_n$.
\end{defi}

The iterated monodromy group acts naturally by automorphisms on
the rooted tree of inverse images $\bigsqcup_{n\ge 0}f^{-n}(t)$ of
$t$ under the iterations of the partial self-covering $f$. The
action can be computed using the following permutational
$\pi_1(\M, t)$-bimodule.

Define $\bim_f$ as the set of homotopy classes in $\M$ of the
paths starting in $t$ and ending in a preimage $z\in f^{-1}(t)$.
Then the right action of $\pi_1(\M, t)$ on $\bim_f$ is given by
pre-pending the loops:
\[\ell\cdot\gamma=\ell\gamma,\]
for all $\ell\in\bim_f$ and $\gamma\in\pi_1(\M, t)$. We compose
paths as maps: in a product $\ell\gamma$ the path $\gamma$ is
passed before $\ell$. The left action is given by taking lifts of
loops by $f$:
\[\gamma\cdot\ell=f^{-1}(\gamma)_\ell\ell,\]
where $f^{-1}(\gamma)_\ell$ is the lift of $\gamma$ by $f$
starting at the end of $\ell$.

Let $\alb\subset\bim_f$ be a right orbit transversal, i.e., a
collection of paths $\{\ell_z\}_{z\in f^{-1}(t)}$ starting at $t$
and ending in each of the preimages of $t$. The transversal
defines a wreath recursion $\psi_f$ on $\pi_1(\M, t)$, as it is
described above (just after Definition~\ref{def:bimodules}). This
recursion is the main method of encoding the iterated monodromy
group.

It is sufficient, by Propositions~\ref{pr:virtwreath}
and~\ref{prop:formulawreathrec}, to know the virtual endomorphism
associated with the wreath recursion in order to be able to
reconstruct the wreath recursion. In many cases this is a
convenient way to compute the iterated monodromy group. One can
use the following proposition (see~\cite{nek:book}).

\begin{proposition}
\label{pr:fstar} Let $f:\M_1\arr\M$ be a partial self-covering and
suppose that $\M_1$ and $\M$ are path connected and locally path
connected. Then the virtual endomorphism of $\pi_1(\M)$ associated
with the permutational bimodule $\bim_f$ is equal to the
composition $\iota_*\circ f^{-1}_*$, where $f^{-1}_*$ is the
virtual homomorphism $\pi_1(\M)\arr\pi_1(\M_1)$ lifting loops by
$f$, and $\iota:\M_1\arr\M$ is the identical embedding. All
morphisms of the fundamental groups are defined here up to inner
automorphisms.
\end{proposition}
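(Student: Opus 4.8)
The plan is to compute the virtual endomorphism of $\pi_1(\M, t)$ attached to the bimodule $\bim_f$ straight from the definitions of the left and right actions on $\bim_f$, and then to recognize the resulting formula as $\iota_*\circ f^{-1}_*$ up to conjugation. First I would fix one preimage $z_0\in f^{-1}(t)$ together with the corresponding element $\ell_0=\ell_{z_0}\in\alb$ (a path from $t$ to $z_0$, which exists since $\M$ is path connected). Because $\M_1$ is path connected, the monodromy action of $\pi_1(\M, t)$ on $f^{-1}(t)$ is transitive, and this action is exactly the associated action on the transversal $\alb$ transported along $z\leftrightarrow\ell_z$; hence Proposition~\ref{pr:virtwreath} applies and the virtual endomorphism associated with $\bim_f$ is the homomorphism $\phi\colon\gamma\mapsto\gamma|_{\ell_0}$ defined on the stabilizer of $\ell_0$. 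That stabilizer is the monodromy stabilizer of $z_0$, which by covering space theory equals $f_*\bigl(\pi_1(\M_1, z_0)\bigr)\le\pi_1(\M, t)$, i.e.\ precisely the domain of the virtual homomorphism $f^{-1}_*$; so both maps have the same domain.

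Next, for $\gamma$ in this stabilizer I would unwind the left action: by definition $\gamma\cdot\ell_0=f^{-1}(\gamma)_{\ell_0}\,\ell_0$, where $f^{-1}(\gamma)_{\ell_0}$ is the $f$-lift of $\gamma$ starting at the end $z_0$ of $\ell_0$. Since $\gamma$ fixes $z_0$, this lift is a loop at $z_0$ whose image lies in $\M_1$, and its class in $\pi_1(\M_1, z_0)$ is by definition $f^{-1}_*(\gamma)$. On the other hand $\ell_0\cdot\gamma|_{\ell_0}=\ell_0\,\gamma|_{\ell_0}$, so the equation $\gamma\cdot\ell_0=\ell_0\cdot\gamma|_{\ell_0}$ that defines $\gamma|_{\ell_0}$ becomes $f^{-1}(\gamma)_{\ell_0}\,\ell_0\simeq\ell_0\,\gamma|_{\ell_0}$ rel endpoints in $\M$, whence $\gamma|_{\ell_0}=\ell_0^{-1}\,f^{-1}(\gamma)_{\ell_0}\,\ell_0$ in $\pi_1(\M, t)$. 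But the right-hand side is exactly the image of the loop $f^{-1}(\gamma)_{\ell_0}$ under $\iota_*\colon\pi_1(\M_1, z_0)\arr\pi_1(\M, z_0)$ followed by the change-of-basepoint isomorphism $\pi_1(\M, z_0)\arr\pi_1(\M, t)$ given by conjugation by $\ell_0$. Thus $\phi$ agrees with $\iota_*\circ f^{-1}_*$ up to that change of basepoint, hence up to inner automorphisms; since moreover a different choice of $z_0$ or of $\ell_0$ only conjugates everything, this is precisely the asserted equality modulo inner automorphisms.

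I expect the only genuinely delicate part to be the bookkeeping of basepoints and of the ``up to inner automorphism'' identifications: one has to be sure that every ambiguity --- the choice of preimage $z_0$, the choice of connecting path $\ell_0$, and the fact that $f^{-1}_*$ and $\iota_*$ are themselves defined only up to inner automorphisms --- is absorbed into conjugation, so that ``$\phi=\iota_*\circ f^{-1}_*$'' is both well posed and correct. One must also keep the path-composition convention (in a product $\ell\gamma$ the path $\gamma$ is traversed first) consistent throughout the manipulation of the equation $\gamma\cdot\ell_0=\ell_0\cdot\gamma|_{\ell_0}$, since a slip there would spoil the identification.
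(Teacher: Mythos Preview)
Your argument is correct and is exactly the standard unwinding of the definitions. The paper does not actually prove this proposition: it is stated with a reference to~\cite{nek:book} and used as a black box, so there is no in-paper proof to compare against. What you wrote is the natural proof one would supply, and your care about basepoints and the path-composition convention is appropriate.
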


The associated self-similar action of $\pi_1(\M, t)$ on $\xs$ is
conjugate to the action of $\pi_1(\M, t)$ on the tree of preimages
of $t$, hence the iterated monodromy group $\img{f}$ coincides
with the faithful quotient of $\pi_1(\M, t)$ with respect to the
wreath recursion $\psi_f$.

The main application of the iterated monodromy groups is based on
the following theorem, proved in~\cite{nek:book} (which can also
be deduced from Theorem~\ref{th:approximationlimg} above).

\begin{theorem}
\label{th:main} Let $f:\M_1\arr\M$ be a partial self-covering of a
path-connected and locally simply connected orbispace $\M$ with a
complete length metric. Suppose that the fundamental group of $\M$
is finitely generated and $f$ is uniformly expanding on $\M$.

Then the iterated monodromy group $\img{f}$ is contracting and the
restriction of $f$ onto the set of the accumulation points of
$\bigcup_{n\ge 0}f^{-n}(t)$ is topologically conjugate with the
limit dynamical system $\si:\lims[\img{f}]\arr\lims[\img{f}]$.
\end{theorem}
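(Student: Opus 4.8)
The plan is to deduce the statement from Theorem~\ref{th:approximationlimg} by building an explicit $\img{f}$-space out of a covering of $\M$ together with the lift of $f$, the one extra ingredient being a direct proof that $\img{f}$ is contracting (which Theorem~\ref{th:approximationlimg} takes as a hypothesis). First I set up the space: let $N=\bigcap_{n\ge 0}K_n$, so $\img{f}=\pi_1(\M,t)/N$, and let $p\colon\X\to\M$ be the (orbispace) covering corresponding to $N$, carrying the length metric lifted from $\M$. Since $\M$ has a complete length metric and (the case of interest) is compact, $\X$ is a proper geodesic space and $G:=\img{f}$ acts on it by isometries, properly and co-compactly, the covering being regular with deck group $G$. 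The key point is that $N$ is, by its very definition, invariant under lifting loops by $f$, i.e.\ under the inverse of the virtual endomorphism $\iota_*\circ f^{-1}_*$ of Proposition~\ref{pr:fstar}; this is precisely the obstruction to lifting $f$, so $f$ lifts to a $d$-fold covering $F\colon\X_1\to\X$, where $\X_1=p^{-1}(\M_1)$ is open in $\X$, and $F$ is equivariant for the $G$-action up to the virtual endomorphism. Fixing $\tilde t\in p^{-1}(t)$ and a right orbit transversal $\alb\subset\bim_f$, lifting the transversal paths from $\tilde t$ and using equivariance produce a well-defined $G$-equivariant map $I\colon\X\otimes\bim_f\to\X$ which agrees on $\limg[\img{f}]$ with the canonical map $\xi\mapsto\xi\otimes x$ and which, on the orbit $p^{-1}(t)$, sends $\tilde t\cdot h\otimes x$ to $\tilde z_{h(x)}\cdot h|_x$.

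Because $f$ is uniformly expanding on $\M$, say $d(f(a),f(b))\ge\lambda\,d(a,b)$ locally with $\lambda>1$, the lift $F$ expands by the same factor, each branch of $F^{-1}$ is $\lambda^{-1}$-Lipschitz, and $I^{(n)}(\,\cdot\otimes v)$ is $\lambda^{-n}$-Lipschitz, so $I$ is contracting in the sense of Theorem~\ref{th:approximationlimg}. To see that $\img{f}$ itself is contracting, for $h\in G$ set $\ell(h)=d(\tilde t,h\tilde t)$, which equals the least length of a loop at $t$ representing $h$; by properness of $\X$ the set $\{h:\ell(h)\le R\}$ is finite for each $R$. Two estimates, both consequences of uniform expansion. (i) The composed transversal paths $\ell_v$, $v\in\alb^n$, have length at most $D':=D/(1-\lambda^{-1})$ uniformly in $n$, since the $k$-th factor of $\ell_v$ is the lift by $f^{k}$ of some transversal path and so has length at most $\lambda^{-k}D$, with $D=\max_z\mathrm{len}(\ell_z)$. (ii) A loop representing the section $g|_v$ may be taken to be $\ell_{g(v)}^{-1}\cdot(\text{lift of a loop }\gamma_g\text{ by }f^{n})\cdot\ell_v$, so $\ell(g|_v)\le 2D'+\lambda^{-n}\ell(g)$. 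Choosing $R$ slightly above $2D'/(1-\lambda^{-1})$, the finite set $\nuke=\{h:\ell(h)\le R\}$ is state-closed by (ii) with $n=1$, and for every $g$ all sections $g|_v$ with $v\in\alb^n$ lie in $\nuke$ as soon as $\lambda^{-n}\ell(g)\le 1$. Hence $\img{f}$ is contracting with nucleus contained in $\nuke$ (finite generation of $\pi_1(\M,t)$ then guarantees $\nuke$ generates it).

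With $\img{f}$ contracting and $I$ a contracting $G$-equivariant map, Theorem~\ref{th:approximationlimg} gives an equivariant homeomorphism $\limg[\img{f}]\cong\varprojlim_n\X\otimes\bim^{\otimes n}$, and Corollary~\ref{cor:approximatinglims} gives $\lims[\img{f}]\cong\varprojlim_n(\M_n,\iota_n)$ with $\si=\lim p_n$, where $\M_n=\X\otimes\bim^{\otimes n}/G$. It remains to identify $\varprojlim_n(\M_n,\iota_n)$ with the set $\mathcal{A}$ of accumulation points of $\bigcup_{n\ge 0}f^{-n}(t)$ in $\M$. A point of $\varprojlim(\M_n,\iota_n)$ corresponds to a word $w=\ldots x_2x_1\in\xmo$; sending it to $\lim_{N}p\big(I^{(N)}(\tilde t\otimes x_N\otimes\cdots\otimes x_1)\big)$ — which exists since $I$ is contracting — exhibits it as the limit in $\M$ of a backward orbit $t=a_0,a_1,a_2,\ldots$ with $f(a_{k+1})=a_k$ and $a_k\in f^{-k}(t)$. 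This map is a continuous surjection onto $\mathcal{A}$, it intertwines $\lim p_n$ with $f$ (truncating $w$ corresponds to applying $f$, since $f(a_{k+1})=a_k$), and it is injective: two backward orbits with the same limit eventually stay within an arbitrarily small distance, and uniform expansion forces two such close preimages of $t$ to differ by a deck transformation of length at most $R$, i.e.\ by an element of the nucleus — which is exactly the criterion for asymptotic equivalence in the Proposition preceding the definition of $\lims$. A continuous bijection from the compact space $\lims[\img{f}]$ onto $\mathcal{A}$ is a homeomorphism, and it conjugates $\si$ to $f|_{\mathcal{A}}$, which is the assertion.

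The main obstacle is this last block of steps taken together: defining $F$ and $I$ correctly in the presence of the proper inclusion $\M_1\subsetneq\M$ and of orbispace isotropy, and, above all, the quantitative length estimates (i)–(ii), which are where uniform expansion of $f$ is used in an essential way and which (together with properness of $\X$, hence local compactness and completeness) are what make ``finitely many bounded deck transformations'' true. Once Theorem~\ref{th:approximationlimg} is in hand, everything else is formal.
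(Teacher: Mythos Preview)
The paper does not prove this theorem: it sits in the ``Techniques'' section, which the introduction says contains no proofs, and the sentence introducing it reads ``proved in~\cite{nek:book} (which can also be deduced from Theorem~\ref{th:approximationlimg} above)''. Your proposal carries out precisely that parenthetical deduction --- build the regular cover $\X\to\M$ with deck group $\img{f}$, lift $f$ to an equivariant contracting map $I\colon\X\otimes\bim_f\to\X$, verify contraction of the group via the length estimate $\ell(g|_v)\le 2D'+\lambda^{-n}\ell(g)$, and then invoke Theorem~\ref{th:approximationlimg} and Corollary~\ref{cor:approximatinglims} --- so you are following exactly the route the paper indicates.

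Two places deserve more care if you want a complete argument rather than a sketch. First, Theorem~\ref{th:approximationlimg} requires the $G$-action on $\X$ to be co-compact, which amounts to $\M$ being compact; the hypotheses of Theorem~\ref{th:main} do not literally say this, and in~\cite{nek:book} the argument is run on (a neighborhood of) the compact invariant set rather than on all of $\M$, so your ``(the case of interest) is compact'' hides a genuine reduction step. Second, your injectivity argument in the last paragraph is the right idea but compressed: you should spell out that two backward orbits with the same limit have $d(a_k,a_k')\to 0$, hence for large $k$ the lifts $\tilde a_k,\tilde a_k'$ differ by an element of $\nuke$, and then trace this back through the wreath recursion to produce the bounded sequence $g_n$ required by Definition~\ref{def:limspace}. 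With those two points filled in, the deduction is sound.
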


\section{Computation of the iterated monodromy group}
\label{s:imgcomputation} Recall that the post-critical set $V$ of
$f$ is the union of the line at infinity and the lines $z=0$,
$z=1$, $w=0$, $w=1$, and $z=w$. It follows that the complement
$\CP\setminus V$ can be interpreted as the configuration space of
a pair of points $(z, w)$ in $\C$ that are different from $0$,
$1$, and from each other.

The rational map $\left(1-\frac 2w\right)^2$ appearing in the
second coordinate of $f$ has three fixed points: $w=1$, $w=2i$,
and $w=-2i$. The polynomial $f_w$ for $w=2i$ is conjugate to the
polynomial $z^2+i$.

The polynomial $f_{2i}$ has two fixed points $z_1\approx 0.3002 +
0.3752i$ and $z_2\approx -1.3002 + 1.6248i$. Let us take $(z,
w)=(z_1, 2i)$ as a base-point in the space $\CP\setminus V$.

Let $\alpha, \beta, \gamma$ be the loops in the configuration
space $\CP\setminus V$ obtained by moving $z$ around $0, 1$, and
$2i$, respectively; and let $s$ and $t$ be the loops obtained by
moving $w$ around $0$ and $1$, respectively. Then the fundamental
group of $\CP\setminus V$ is generated by the loops $\alpha,
\beta, \gamma, s$, and $t$ (see Figure~\ref{fig:generators}).

\begin{figure}
\centering
  \includegraphics{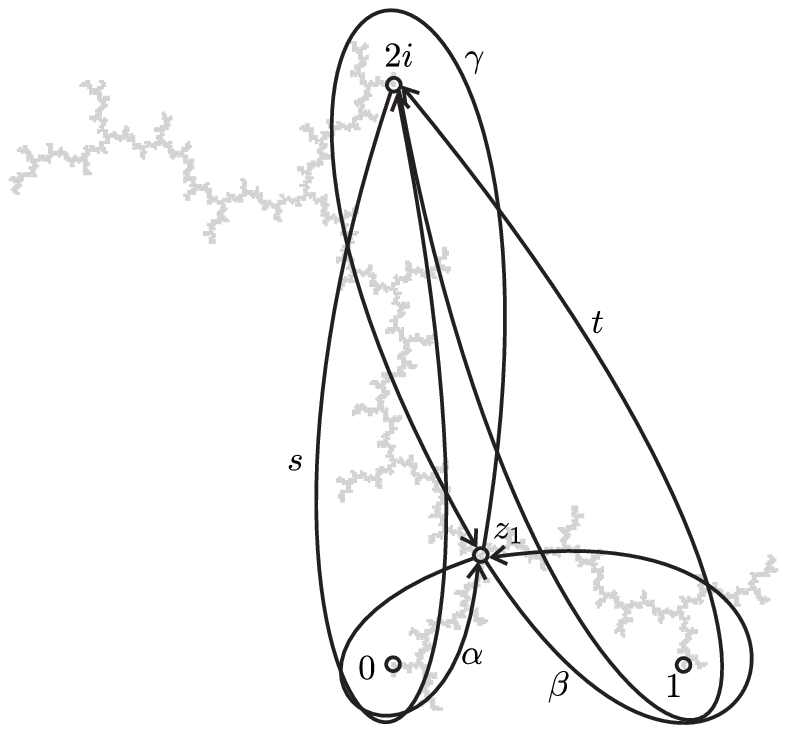}\\
  \caption{Generators of $\pi_1(\CP\setminus V)$}\label{fig:generators}
\end{figure}

We have the following relations between these loops
\begin{align}\label{eq:conj1}
t\alpha t^{-1}= & \alpha,
&\quad s\alpha s^{-1}= & \alpha\gamma\alpha\gamma^{-1}\alpha^{-1},\\
\label{eq:conj2} t\beta t^{-1}= & \gamma\beta\gamma^{-1},&\quad
s\beta s^{-1}= & \beta,\\
\label{eq:conj3} t\gamma t^{-1}= &
\gamma\beta\gamma\beta^{-1}\gamma^{-1},&\quad s\gamma s^{-1}= &
\alpha\gamma\alpha^{-1},
\end{align}
since $s$ and $t$ correspond to the Dehn twists around the curves
shown on the left-hand side part of
Figure~\ref{fig:imgcomputation}.

\begin{figure}
\centering\includegraphics{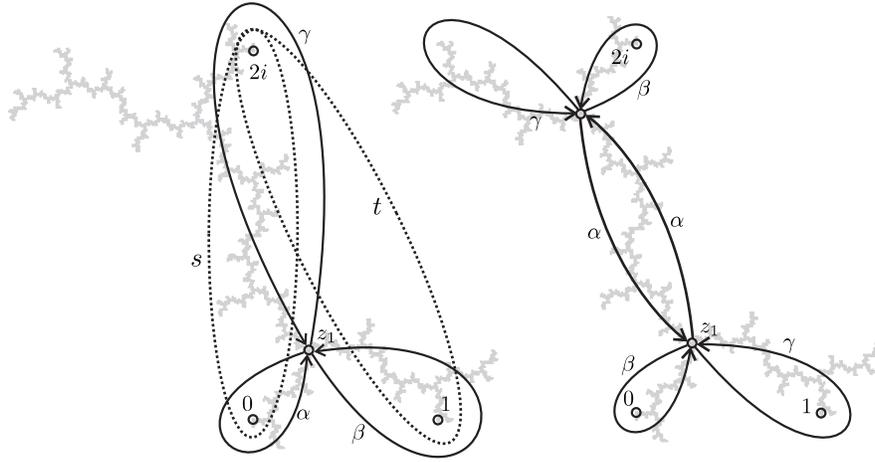}\\
  \caption{Computation of $\img{f}$}\label{fig:imgcomputation}
\end{figure}

\begin{lemma}
\label{l:normalsbgrp} The subgroup of $\pi_1(\CP\setminus V)$
generated by $\alpha, \beta$, and $\gamma$ is normal and has
trivial centralizer.
\end{lemma}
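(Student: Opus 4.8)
The plan is to work with the explicit relations \eqref{eq:conj1}--\eqref{eq:conj3}. Write $N$ for the subgroup generated by $\alpha,\beta,\gamma$. Normality is essentially immediate: $\pi_1(\CP\setminus V)$ is generated by $\alpha,\beta,\gamma,s,t$, so it suffices to check that conjugation by each generator (and its inverse) maps each of $\alpha,\beta,\gamma$ into $N$. For conjugation by $\alpha,\beta,\gamma$ this is trivial. For conjugation by $s$ and $t$, formulas \eqref{eq:conj1}--\eqref{eq:conj3} exhibit $s\alpha s^{-1},\,s\beta s^{-1},\,s\gamma s^{-1}$ and $t\alpha t^{-1},\,t\beta t^{-1},\,t\gamma t^{-1}$ all as words in $\alpha,\beta,\gamma$, hence in $N$; applying the same relations (solved for $s^{-1}\cdot s$) gives the images under conjugation by $s^{-1}$ and $t^{-1}$ as well. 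Thus $gNg^{-1}=N$ for every generator $g$, so $N$ is normal.

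For the centralizer statement, suppose $g\in\pi_1(\CP\setminus V)$ centralizes $N$; I want $g=\unit$ in $\pi_1(\CP\setminus V)$ (equivalently, the center of $\pi_1(\CP\setminus V)$ meets $N'$-centralizer trivially). The key geometric input is that $N$ is itself a surface-type group: $\langle\alpha,\beta,\gamma\rangle$ is the fundamental group of the thrice-punctured plane (the $z$-fiber with $z\neq 0,1,2i$), which is free of rank $3$ — or, more to the point, it is a free group and in a free group of rank $\geq 2$ the centralizer of the whole group is trivial. So any element of $\pi_1(\CP\setminus V)$ centralizing $N$ acts trivially by conjugation on $N$, i.e.\ lies in the kernel of the map $\pi_1(\CP\setminus V)\to\mathrm{Aut}(N)$ given by conjugation. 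I would then argue that this kernel is trivial: writing an arbitrary element as $w(\alpha,\beta,\gamma)\cdot s^a t^b\cdots$ and using the fiber-bundle structure (the projection $P$ to the $w$-line realizes $\pi_1(\CP\setminus V)$ as an extension of $\langle s,t\rangle$ by $N$), the conjugation action of the quotient $\langle s,t\rangle$ on $N$ is faithful because the Dehn twists $s,t$ act by nontrivial, infinite-order outer automorphisms of the free group $N$ (visible already from \eqref{eq:conj1}--\eqref{eq:conj3}: e.g.\ $t$ sends $\beta\mapsto\gamma\beta\gamma^{-1}$, and no power of this automorphism is inner). Combining: the centralizer of $N$ lies in $N$, and $Z(N)=\unit$ since $N$ is free nonabelian, so the centralizer is trivial.

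The routine half is normality; the substantive half is the triviality of the centralizer, and within that the main obstacle is establishing that the extension $1\to N\to\pi_1(\CP\setminus V)\to\langle s,t\rangle\to 1$ has faithful outer conjugation action — i.e.\ that no nontrivial word in $s,t$ induces an inner automorphism of $N$. I expect to handle this by passing to abelianizations or to a convenient quotient of $N$ on which the images of the $s,t$-twists are manifestly of infinite order and non-inner (for instance, tracking the total exponent sum or a suitable homology class that the twist permutes), so that any element of $\pi_1(\CP\setminus V)$ centralizing all of $N$ must have trivial $\langle s,t\rangle$-component and hence lie in $Z(N)=\unit$.
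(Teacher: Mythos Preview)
Your overall strategy matches the paper's: establish normality (the paper cites the Fadell--Neuwirth fibration from Birman's book rather than computing with \eqref{eq:conj1}--\eqref{eq:conj3}, but this is cosmetic), then argue that the centralizer of $N$ lies in $N$ because the outer action of the quotient on $N$ is faithful, and finish with $Z(N)=\unit$ since $N$ is free nonabelian.

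The gap is in your proposed method for the faithfulness step. Your suggestion to ``pass to abelianizations'' or track ``a suitable homology class'' will not work here: from \eqref{eq:conj1}--\eqref{eq:conj3} one sees immediately that both $s$ and $t$ act \emph{trivially} on $N^{\mathrm{ab}}\cong\Z^3$ (each conjugate is a conjugate of the same generator), as Dehn twists always do on first homology. So no abelian invariant distinguishes a nontrivial word in $s,t$ from an inner automorphism. Your parenthetical example is also misleading: $t$ sends $\beta\mapsto\gamma\beta\gamma^{-1}$, which \emph{is} the effect of conjugation by $\gamma$ on $\beta$ --- you need to look at all three generators simultaneously to see $t$ is not inner, and for arbitrary words in $s,t$ this becomes genuinely delicate. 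The paper sidesteps this by invoking the classical theorem (Birman, Corollary~1.8.3) that the braid group $B_n$ acts faithfully on $F_n$; since $\langle s,t\rangle$ sits inside $B_4$ acting on $F_3\cong N$ in the standard way, faithfulness follows. If you want a self-contained argument you would need something substantially stronger than homology --- for instance, a nilpotent quotient of $N$ deep enough to detect the twists, or a direct combinatorial argument in the free group --- but citing the braid-group result is the clean route.
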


\begin{proof}
The group $G=\langle\alpha, \beta, \gamma\rangle$ is the
fundamental group of the configuration space of one point $z$ in
$\C\setminus\{0, 1, w\}$, where $w\in\C$ is an arbitrary point
different from $0$ and $1$. By~\cite[Theorem~1.4]{birman:book},
the subgroup $G<\pi_1(\CP\setminus V)$ is normal with the quotient
isomorphic to the configuration space of one point $p$ in
$\C\setminus\{0, 1\}$. It follows that $G$ is the fundamental
group of a three-punctured plane, and the quotient
$\pi_1(\CP\setminus V)/G$ is the fundamental group of a
two-punctured plane. Consequently $G$ and $\pi_1(\CP\setminus
V)/G$ are free of rank 3 and 2, respectively. Hence the group
$\langle s, t\rangle$ is \emph{a fortiori} free.

It is known (see, for
instance~\cite[Corollary~1.8.3]{birman:book}) that the braid group
$B_n$ acts faithfully on the free group $F_n$ by automorphisms in
the natural way. In particular, the action of $\langle s,
t\rangle$ on $G$ by conjugation is faithful. It follows that if
$g\in\pi_1(\CP\setminus V)$ acts trivially by conjugation on $G$,
then it belongs to $G$. But $G$ is free, hence only the trivial
element of $\pi_1(\CP\setminus V)$ centralizes $G$.
\end{proof}

In other words, if we know that two elements $g_1$ and $g_2$ of
$\pi_1(\CP\setminus V)$ define the same automorphism on
$\langle\alpha, \beta, \gamma\rangle$ by conjugation, then we know
that $g_1=g_2$. We will use this fact to identify the elements of
the fundamental group by their action on the free group
$\langle\alpha, \beta, \gamma\rangle$.

\begin{proposition}
\label{pr:imgvirtend} The values of the virtual endomorphism of
$\pi_1(\CP\setminus V)$ associated with the partial self-covering
$f$ on the generators of its domain are
\begin{gather*}
\phi(\alpha^2)=\unit,\quad\phi(\beta)=\alpha,\quad\phi(\gamma)=\beta,
\quad\phi(\alpha\beta\alpha^{-1})=\gamma,\quad\phi(\alpha\gamma\alpha^{-1})=\unit\\
\phi(s^2)=\unit,\quad\phi(t)=\beta\alpha\beta^{-1}\gamma\beta
t^{-1}s^{-1},\quad\phi(sts^{-1})=t.
\end{gather*}
\end{proposition}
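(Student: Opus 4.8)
The plan is to apply Proposition~\ref{pr:fstar}. Writing $W=f^{-1}(V)$ and $\iota:\CP\setminus W\hookrightarrow\CP\setminus V$ for the inclusion, the map $f$ is a partial self-covering of $\CP\setminus V$ with $\M_1=\CP\setminus W$, and the associated virtual endomorphism is $\phi=\iota_*\circ f^{-1}_*$. The key preliminary observation is that $t=(z_1,2i)$ is a \emph{fixed point} of $f$: indeed $h(2i)=2i$ for $h(w)=(1-2/w)^2$, and $z_1$ is a fixed point of $f_{2i}$. Thus the marked preimage in $f^{-1}(t)$ may be taken to be $t$ itself, so that the domain of $\phi$ is exactly the subgroup of loops at $t$ whose $f$-lift starting at $t$ is again a loop, and $\phi$ sends such a loop to the $\iota_*$-image of that lift. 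Since $f:\CP\setminus W\to\CP\setminus V$ is a degree-$4$ covering and $\CP\setminus W$ is connected, the domain has index $4$; as $\phi$ is a homomorphism, it suffices to compute it on a generating set.

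I would then use the skew-product structure to locate such a generating set. The fiber $\{w=2i\}$ is $h$-invariant, $f$ restricts on it to $f_{2i}(z)=(1+iz)^2$, and the fundamental group of this fiber with the post-critical points $z=0,1,2i$ removed (these lie on the lines $z=0$, $z=1$, $z=w$) is the free normal subgroup $G=\langle\alpha,\beta,\gamma\rangle$ of Lemma~\ref{l:normalsbgrp}; meanwhile $s,t$ project to free generators $\bar s,\bar t$ of $\pi_1$ of the base $\widehat{\C}\setminus\{0,1,\infty\}$, where $\{0,1,\infty\}$ is the post-critical set of the Latt\`es map $h$. Correspondingly the domain of $\phi$ splits. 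Its intersection with $G$ has index $2$, since the monodromy of the degree-$2$ covering $f_{2i}$ sends $\alpha$ to a transposition and $\beta,\gamma$ to the identity (because $f_{2i}$ has critical value $0$), so by Nielsen--Schreier it is free of rank $5$ with basis $\alpha^{2},\beta,\gamma,\alpha\beta\alpha^{-1},\alpha\gamma\alpha^{-1}$. And the domain projects onto the index-$2$ subgroup $\langle\bar s^{2},\bar t,\bar s\bar t\bar s^{-1}\rangle$ of $\langle\bar s,\bar t\rangle$, whose three generators lift to $s^{2},t,sts^{-1}$. This yields precisely the eight generators appearing in the statement.

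Next I would compute the lifts. For the fiber part, $f_{2i}^{-1}(\{0,1,2i\})=\{i,0,1,2i,2i-1\}$, where the critical point $z=i$ lies on the line $z=w/2$ and the point $z=2i-1$ lies on the line $z=w-1$ --- both components of $W$ that do \emph{not} belong to $V$, so $\iota_*$ kills small loops around $i$ and around $2i-1$. Lifting by $f_{2i}$, with the connecting paths read off from the left-hand picture in Figure~\ref{fig:imgcomputation}: $\alpha^{2}$ lifts to a loop around $i$, $\beta$ to a loop around $0$, $\alpha\beta\alpha^{-1}$ to a loop around $2i$, $\gamma$ to a loop around $1$, and $\alpha\gamma\alpha^{-1}$ to a loop around $2i-1$; applying $\iota_*$ (which identifies loops around $0,1,2i$ in the fiber with $\alpha,\beta,\gamma$ and kills the others) gives the first line of formulas. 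For the base part, $h^{-1}(0)=\{2\}$ is a double point on the line $w=2$, which lies in $W\setminus V$, so $s^{2}$ lifts to a loop around $w=2$ and $\phi(s^{2})=\unit$. Since $h^{-1}(1)=\{1,\infty\}$, the two lift-variants of the $w$-loop around $1$ wind, respectively, around the line $w=1$ and around the line at infinity; as going once around $w=0$ interchanges the two sheets of the $h$-cover, $sts^{-1}$ lifts to a loop around $w=1$ and $t$ lifts to a loop around the line at infinity. Because the line at infinity together with $w=0$ and $w=1$ form the pencil of lines through the singular point $[1:0:0]$ of $V$, a loop around the line at infinity equals $t^{-1}s^{-1}$ up to a vertical ($z$-fiber) correction; combining this with the $z$-monodromy $\beta\alpha\beta^{-1}\gamma\beta$ acquired by dragging the moving punctures $z=0,1,w$ along the lifted $w$-path yields $\phi(t)=\beta\alpha\beta^{-1}\gamma\beta\,t^{-1}s^{-1}$ and $\phi(sts^{-1})=t$.

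Finally, every claimed identity of elements of $\pi_1(\CP\setminus V)$ is verified by comparing the induced conjugation automorphisms of the free group $\langle\alpha,\beta,\gamma\rangle$, which is legitimate by Lemma~\ref{l:normalsbgrp}; concretely this reduces the bookkeeping to the braid action of $\langle s,t\rangle$ on $\langle\alpha,\beta,\gamma\rangle$ recorded in~\eqref{eq:conj1}--\eqref{eq:conj3}. The main obstacle is the base part: pinning down which lift-variant of the $w$-loop around $1$ winds around $w=1$ and which around the line at infinity, keeping track of the braiding induced on the three moving punctures $z=0,1,w$ as $w$ traverses the lifted path, and handling the line at infinity near the singular point $[1:0:0]$, where three post-critical lines of $V$ meet. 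The fiber part is an entirely standard post-critically finite ($z^{2}+i$-type) computation.
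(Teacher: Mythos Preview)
Your setup via Proposition~\ref{pr:fstar}, the identification of the index-$4$ domain, and the fiber computation through lifts by $f_{2i}$ are exactly what the paper does; that part is fine and matches.

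The difference is in the base part. You attempt a direct geometric computation: trace the $h$-lifts of the $w$-loops, identify which component of $W$ each encircles, and track the braiding of the punctures $0,1,w$ along the way to extract the vertical correction. The paper instead uses Lemma~\ref{l:normalsbgrp} as the \emph{primary} computational tool rather than a post-hoc check. Since $\phi|_G$ is already known from the fiber step and the conjugation action of $s,t$ on $G=\langle\alpha,\beta,\gamma\rangle$ is given by the Dehn-twist relations~\eqref{eq:conj1}--\eqref{eq:conj3}, one computes $\phi(g)\,x\,\phi(g)^{-1}=\phi(gxg^{-1})$ for $g\in\{s^2,t,sts^{-1}\}$ and $x\in\{\alpha,\beta,\gamma\}$ purely algebraically, and then Lemma~\ref{l:normalsbgrp} (trivial centralizer of $G$) pins down $\phi(g)$ uniquely. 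This completely bypasses the obstacles you list: no need to decide which lift of $t$ winds around $w=1$ versus the line at infinity, no need to analyze the neighborhood of $[1:0:0]$, and no need to compute the braid monodromy along the lifted $w$-path.

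Your geometric sketch does not actually derive the word $\beta\alpha\beta^{-1}\gamma\beta$; it is asserted and then left to the conjugation check. So in effect your ``verification'' step is doing all the real work, and once you notice that, your argument collapses to the paper's. The paper's route is shorter and avoids the delicate path-tracing entirely; your route would be correct if carried out, but the hard step you flag is precisely the one the paper's method makes unnecessary.
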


Recall that $\unit$ denotes the identity element of the group.

\begin{proof}
The domain of $\phi$ is the subgroup of loops such that their
$f$-preimages starting in $(z_1, 2i)$ are again loops. It is a
subgroup of index 4, since the covering is $4$-fold.

The right-hand side of Figure~\ref{fig:imgcomputation} shows the
preimages of the loops $\alpha, \beta, \gamma$ under the action of
the polynomial $f_{2i}$ (the labels show the images of the
corresponding paths under the action of $f_{2i}$). We see that
$\alpha^2$, $\beta$, $\gamma$, $\alpha\beta\alpha^{-1}$ and
$\alpha\gamma\alpha^{-1}$ belong to the domain of $\phi$. It is
also clear that $s^2$, $t$ and $sts^{-1}$ belong to the domain of
$\phi$. These elements already generate a subgroup of index 4 in
the fundamental group of $\CP\setminus V$, due to
relations~\eqref{eq:conj1}--\eqref{eq:conj3} between $\alpha,
\beta, \gamma$, and $s, t$. Consequently, these elements generate
the domain of $\phi$.

We see from Figure~\ref{fig:imgcomputation} that
\[\phi(\alpha^2)=\unit,\quad\phi(\beta)=\alpha,\quad\phi(\gamma)=\beta,
\quad\phi(\alpha\beta\alpha^{-1})=\gamma,\quad\phi(\alpha\gamma\alpha^{-1})=\unit.\]

It is more convenient to find the action of $\phi$ on rest of the
generators of the domain using the action of $s$ and $t$ on
$\langle\alpha, \beta, \gamma\rangle$ (see
Lemma~\ref{l:normalsbgrp}).

We have
\[\phi(s^2)\alpha\phi(s^2)^{-1}=\phi(s^2\beta
s^{-2})=\phi(\beta)=\alpha,\]
\begin{multline*}\phi(s^2)\beta\phi(s^{-2})=\phi(s^2\gamma
s^{-2})=\phi(\alpha\gamma\alpha\gamma\alpha^{-1}\gamma^{-1}\alpha^{-1})=\\
\phi(\alpha\gamma\alpha^{-1}\cdot\alpha^2\cdot\gamma\cdot\alpha^{-2}\cdot
\alpha\gamma^{-1}\alpha^{-1})=\beta,\end{multline*} and
\begin{multline*}\phi(s^2)\gamma\phi(s^{-2})=\phi(s^2\alpha\beta\alpha^{-1}s^2)=\\
\phi((\alpha\gamma)^2\alpha(\alpha\gamma)^{-2}\beta(\alpha\gamma)^2\alpha^{-1}
(\alpha\gamma)^{-2})=\\
\phi(\alpha\gamma\alpha^{-1}\cdot\alpha^2\cdot\gamma\cdot\alpha\gamma^{-1}\alpha^{-1}\cdot
\gamma^{-1}\cdot\alpha^{-1}\beta\alpha\cdot\gamma\cdot\alpha\gamma\alpha^{-1}\cdot\gamma^{-1}
\cdot\alpha^{-2}\cdot\alpha\gamma^{-1}\alpha^{-1})=\\
\beta\beta^{-1}\gamma\beta\beta^{-1}=\gamma,
\end{multline*}
which implies that
\[\phi(s^2)=\unit.\]

We have
\[\phi(t)\alpha\phi(t)^{-1}=\phi(t\beta
t^{-1})=\phi(\gamma\beta\gamma^{-1})=\beta\alpha\beta^{-1},\]
\[\phi(t)\beta\phi(t)^{-1}=\phi(t\gamma
t^{-1})=\phi(\gamma\beta\gamma\beta^{-1}\gamma^{-1})=\beta\alpha\beta\alpha^{-1}\beta^{-1},\]
and
\[\phi(t)\gamma\phi(t)^{-1}=\phi(t\alpha\beta\alpha^{-1}t^{-1})=
\phi(\alpha\gamma\beta\gamma^{-1}\alpha^{-1})=\gamma.\] It follows
that
\[\phi(t)=r=\beta\alpha\beta^{-1}\gamma\beta t^{-1}s^{-1},\]
since direct computations show that
\[r\alpha r^{-1}=\beta\alpha\beta^{-1},\quad r\beta
r^{-1}=\beta\alpha\beta\alpha^{-1}\beta^{-1},\quad r\gamma
r^{-1}=\gamma.\]

It remains to compute $\phi(sts^{-1})$. We have
\begin{multline*}\phi(sts^{-1})\alpha\phi(st^{-1}s^{-1})=\phi(sts^{-1}\beta
st^{-1}s^{-1})=\phi(s\gamma\beta\gamma^{-1}s^{-1})=\\
\phi(\alpha\gamma\alpha^{-1}\beta\alpha\gamma^{-1}\alpha^{-1})=\alpha,
\end{multline*}
\begin{multline*}
\phi(sts^{-1})\beta\phi(st^{-1}s^{-1})=\phi(sts^{-1}\gamma
st^{-1}s^{-1})=\\ \phi(st\gamma^{-1}\alpha^{-1}\gamma\alpha\gamma
t^{-1}s^{-1})=\\
\phi(s\gamma\beta\gamma^{-1}\beta^{-1}\gamma^{-1}\alpha^{-1}
\gamma\beta\gamma\beta^{-1}\gamma^{-1}
\alpha\gamma\beta\gamma\beta^{-1}\gamma^{-1}s)=\\
\phi(\alpha\gamma\alpha^{-1}\beta\alpha\gamma^{-1}\alpha^{-1}
\beta^{-1}\alpha^{-1}\beta\alpha\gamma\alpha^{-1}\beta^{-1}\alpha\beta\alpha\gamma\alpha^{-1}
\beta^{-1}\alpha\gamma^{-1}\alpha^{-1})=\gamma\beta\gamma^{-1},
\end{multline*}
and
\begin{multline*}
\phi(sts^{-1})\gamma\phi(sts^{-1})=\phi(sts^{-1}\alpha\beta\alpha^{-1}st^{-1}s^{-1})=\\
\phi(\alpha\gamma\alpha^{-1}\beta\alpha\gamma^{-1}\alpha^{-1}\beta^{-1}
\alpha\beta\alpha\gamma\alpha^{-1}\beta\alpha\gamma^{-1}\alpha^{-1}
\beta^{-1}\alpha^{-1}\beta\alpha\gamma\alpha^{-1}\beta^{-1}\alpha\gamma^{-1}\alpha^{-1})=\\
\gamma\beta\gamma\beta^{-1}\gamma^{-1},
\end{multline*}
which implies that
\[\phi(sts^{-1})=t,\]
which finishes the proof of the proposition.
\end{proof}

\begin{theorem}
\label{th:imgrecursion} The iterated monodromy group $\img{f}$ is
given by the wreath recursion
\begin{gather*}\alpha=\sigma,\quad\beta=(\alpha, \gamma, \alpha,
\beta^{-1}\gamma\beta),\quad\gamma=(\beta, \unit, \unit, \beta),\\
t=(r, r, t, t),\quad s=\pi(\unit, \beta^{-1}, \unit,
\beta),\end{gather*} where $\sigma=(\one\two)(\three\four)$,
$\pi=(\one\four)(\two\three)$, and
\[r=\beta\alpha\beta^{-1}\gamma\beta t^{-1}s^{-1}.\]
\end{theorem}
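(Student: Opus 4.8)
The plan is to reconstruct the wreath recursion of $\img{f}$ from the virtual endomorphism $\phi$ computed in Proposition~\ref{pr:imgvirtend}, using the machinery of Proposition~\ref{prop:formulawreathrec}. Since the covering $f$ is $4$-fold, the domain $G_1$ of $\phi$ has index $4$ in $\pi_1(\CP\setminus V)$, and I first need to fix a left coset transversal $\{r_1,r_2,r_3,r_4\}$ of $G_1$. A natural choice is $r_1=\unit$, $r_2=\alpha$, and two further representatives distinguished by the $s$-coordinate, chosen so that the permutations $\sigma$ and $\pi$ associated with $\alpha$ and $s$ come out to be $(\one\two)(\three\four)$ and $(\one\four)(\two\three)$ respectively; this amounts to labelling the four sheets by the preimages of the base-point drawn on the right-hand side of Figure~\ref{fig:imgcomputation}. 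With the transversal fixed, Proposition~\ref{prop:formulawreathrec} gives, for each generator $g\in\{\alpha,\beta,\gamma,s,t\}$, the formula $\psi(g)=\sigma_g(g|_{x_i})_{i}$ with $\sigma_g(x_i)=x_j$ whenever $gr_i\in r_jG_1$ and $g|_{x_i}=\phi(r_j^{-1}gr_i)$.

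The computation then proceeds generator by generator. For $\alpha$: $\alpha r_1=\alpha=r_2$, $\alpha r_2=\alpha^2\in G_1$ so $\alpha r_2\in r_1G_1$, and similarly $\alpha$ swaps sheets $3$ and $4$; the sections are $\phi(\alpha^{-1}\alpha\cdot\alpha)=\phi(\alpha^2)=\unit$ in each slot where a section is produced, giving $\alpha=\sigma$ with all sections trivial. For $\beta$ and $\gamma$, which lie in the free normal subgroup $G=\langle\alpha,\beta,\gamma\rangle$ and stabilize each sheet (their preimages are loops for every choice of base-point preimage, as seen on Figure~\ref{fig:imgcomputation}), the associated permutation is trivial and the four sections are $\phi(r_i^{-1}\beta r_i)$ and $\phi(r_i^{-1}\gamma r_i)$. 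Using $\phi(\beta)=\alpha$, $\phi(\gamma)=\beta$, $\phi(\alpha\beta\alpha^{-1})=\gamma$, $\phi(\alpha\gamma\alpha^{-1})=\unit$ from Proposition~\ref{pr:imgvirtend}, together with the conjugation relations \eqref{eq:conj1}--\eqref{eq:conj3} to rewrite $r_i^{-1}\beta r_i$ and $r_i^{-1}\gamma r_i$ in terms of elements whose $\phi$-image is known, one reads off $\beta=(\alpha,\gamma,\alpha,\beta^{-1}\gamma\beta)$ and $\gamma=(\beta,\unit,\unit,\beta)$. For $s$: since $s^2\in G_1$ and $sts^{-1}\in G_1$ but $s\notin G_1$, the sheet permutation of $s$ is a product of two transpositions, namely $\pi=(\one\four)(\two\three)$ for the chosen transversal; the sections are $\phi$ of the appropriate $r_j^{-1}sr_i$, which by $\phi(s^2)=\unit$ and the relations giving $s\beta s^{-1}=\beta$, $s\gamma s^{-1}=\alpha\gamma\alpha^{-1}$, $s\alpha s^{-1}=\alpha\gamma\alpha\gamma^{-1}\alpha^{-1}$ reduce to $\unit,\beta^{-1},\unit,\beta$ in the four slots. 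For $t$: here $t\in G_1$ (its preimage is four loops, as indicated), so the permutation is trivial and the sections are $\phi(r_i^{-1}t r_i)$; using $\phi(t)=r$ and $\phi(sts^{-1})=t$ from Proposition~\ref{pr:imgvirtend}, together with $t\in G_1$ and the precise coset labelling, one obtains $t=(r,r,t,t)$ with $r=\beta\alpha\beta^{-1}\gamma\beta t^{-1}s^{-1}$.

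Finally, one invokes Lemma~\ref{l:normalsbgrp}: the self-similar action so defined is faithful, since an element acting trivially on the tree $\xs$ would in particular centralize (act trivially by conjugation on) the free subgroup $\langle\alpha,\beta,\gamma\rangle$ — this is verified by checking that the sections and permutations above are consistent with the action of $\pi_1(\CP\setminus V)$ on the tree of preimages of the base-point — and hence is trivial. Therefore $\img{f}$, being the faithful quotient of $\pi_1(\CP\setminus V)$ with respect to $\psi_f$, is precisely the group given by the stated wreath recursion.

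I expect the main obstacle to be the bookkeeping in the $t$ and $s$ generators: getting the coset representatives $r_i$ chosen \emph{consistently} so that all five recursions simultaneously come out in the clean form stated (in particular so that the permutations are exactly $\sigma$ and $\pi$ and the $\beta$-section of $\beta$ in the fourth slot is exactly $\beta^{-1}\gamma\beta$ rather than some conjugate of it) requires care — a different transversal would give an equivalent but differently-presented recursion. The underlying identities such as $r\alpha r^{-1}=\beta\alpha\beta^{-1}$, $r\beta r^{-1}=\beta\alpha\beta\alpha^{-1}\beta^{-1}$, $r\gamma r^{-1}=\gamma$, and the long reductions of $\phi(s^2)$, are routine but lengthy free-group manipulations, justified by Lemma~\ref{l:normalsbgrp} which lets me verify each section-identity by its action on $\langle\alpha,\beta,\gamma\rangle$ alone rather than in the full fundamental group.
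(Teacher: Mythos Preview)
Your proposal is correct and follows essentially the same route as the paper: apply Proposition~\ref{prop:formulawreathrec} to the virtual endomorphism of Proposition~\ref{pr:imgvirtend} with the explicit left coset transversal $\{r_1,r_2,r_3,r_4\}=\{\unit,\alpha,s,\alpha s\}$, and use relations~\eqref{eq:conj1}--\eqref{eq:conj3} to rewrite each $\phi(r_j^{-1}gr_i)$ in terms of the known values of $\phi$. The paper carries out exactly these five computations.

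One remark: your final paragraph on faithfulness is unnecessary, and the reasoning there is not sound. An element of $\pi_1(\CP\setminus V)$ acting trivially on the tree $\xs$ need not centralize $\langle\alpha,\beta,\gamma\rangle$ in $\pi_1(\CP\setminus V)$; trivial tree action only tells you that conjugation by it preserves the \emph{tree action} of each element, not the element itself. But this does not matter for the theorem: $\img{f}$ is by definition the faithful quotient of $\pi_1(\CP\setminus V)$ with respect to the associated tree action, so once the wreath recursion on $\pi_1$ is established, the statement follows immediately. Lemma~\ref{l:normalsbgrp} is used only inside the proof of Proposition~\ref{pr:imgvirtend} (to identify $\phi(t)$ and $\phi(sts^{-1})$ by their conjugation action on the free subgroup), not here.
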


\begin{proof}
We will use Proposition~\ref{prop:formulawreathrec} to find the
wreath recursion with the associated virtual endomorphism given in
Proposition~\ref{pr:imgvirtend}.

The set $\{\unit, \alpha, s, \alpha s\}$ is a left coset
transversal of $\pi_1(\CP\setminus V)$ modulo the domain of the
virtual endomorphism $\phi$. Let us take the transversal in the
given order. Using Propositions~\ref{prop:formulawreathrec}
and~\ref{pr:imgvirtend} and
relations~\eqref{eq:conj1}--\eqref{eq:conj3}, we get:
\begin{multline*}\alpha=\sigma(\phi(\alpha^{-1}\alpha),
\phi(\alpha^2), \phi((\alpha s)^{-1}(\alpha s)),
\phi(s^{-1}\alpha^2 s))=\\ \sigma(\phi(\unit), \phi(\alpha^2),
\phi(\unit), \phi(\gamma^{-1}\alpha^2\gamma))=\sigma,
\end{multline*}
\begin{multline*}\beta=(\phi(\beta), \phi(\alpha^{-1}\beta\alpha),
\phi(s^{-1}\beta s), \phi(s^{-1}\alpha^{-1}\beta\alpha s))=\\
(\phi(\beta), \phi(\alpha^{-1}\beta\alpha), \phi(\beta),
\phi(\gamma^{-1}\alpha^{-1}\gamma\beta\gamma^{-1}\alpha\gamma))=
(\alpha, \gamma, \alpha, \beta^{-1}\gamma\beta),
\end{multline*}
\begin{multline*}
\gamma=(\phi(\gamma), \phi(\alpha^{-1}\gamma\alpha),
\phi(s^{-1}\gamma s), \phi(s^{-1}\alpha^{-1}\gamma\alpha s))=\\
(\phi(\gamma), \phi(\alpha^{-1}\gamma\alpha),
\phi(\gamma^{-1}\alpha^{-1}\gamma\alpha\gamma),
\phi(\gamma^{-1}\alpha^{-2}\gamma\alpha^2\gamma))=(\beta, \unit,
\unit, \beta),
\end{multline*}
\begin{multline*}
t=(\phi(t), \phi(\alpha^{-1}t\alpha), \phi(s^{-1}ts),
\phi(s^{-1}\alpha^{-1}t\alpha s))=\\
(\phi(t), \phi(t), \phi(s^{-1}ts), \phi(s^{-1}ts))= (r, r, t, t),
\end{multline*}
\begin{multline*}
s=\pi(\phi(s^{-1}s), \phi(s^{-1}\alpha^{-1}s\alpha), \phi(s^2),
\phi(\alpha^{-1}s\alpha s))=\\
\pi(\phi(1), \phi(\gamma^{-1}\alpha^{-1}\gamma\alpha), \phi(s^2),
\phi(\gamma\alpha\gamma^{-1}\alpha^{-1}s^2))=\pi(\unit,
\beta^{-1}, \unit, \beta).
\end{multline*}
\end{proof}

\section{Polyhedral model of $f$}
\label{s:polyhedralmodel}
\subsection{Index two extension}
\label{ss:twoextension} The function $f$ has real coefficients,
hence complex conjugation of both coordinates is an automorphism
of the dynamical system $(f, \mathbb{PC}^2)$. We can take the
quotient of this dynamical system by this automorphism (i.e., by
the group of order two generated by it). The iterated monodromy
group of the quotient is, by general theory
(see~\cite[Theorem~3.7.1]{nek:book}
and~\cite[Subsection~3.8]{nek:filling}), an index two extension of
$\img{f}$.

This extension was considered in~\cite{nek:ssfamilies} and was
used to study the properties of the Cantor set of groups
associated with the iterations of the polynomials $f_{p_n}$. It is
the group $\G$ generated by the transformations
\begin{alignat}{2}\label{eq:extension1}
\alpha &= \sigma, &\qquad  a &=\pi,\\
\label{eq:extension2}\beta  &=\left(\alpha, \gamma, \alpha,
\gamma\right), &\qquad  b
&=\left(a\alpha, a\alpha, c, c\right),\\
\label{eq:extension3}\gamma &=\left(\beta, \unit, \unit,
\beta\right), &\qquad  c &=\left(b\beta, b\beta, b, b\right),
\end{alignat}
where $\sigma=(\one\two)(\three\four)$ and
$\pi=(\one\three)(\two\four)$, as before. We will not need the
fact that $\G$ is really the iterated monodromy group of the
quotient of $f$ by complex conjugation, so we will not present its
proof here.

\begin{figure}
\centering
  \includegraphics{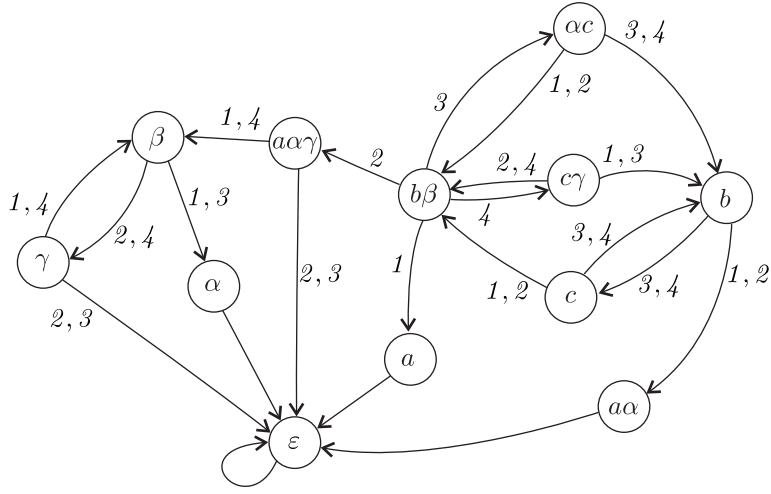}\\
  \caption{Automaton $\mathfrak{A}$ generating $\G$}\label{fig:automaton}
\end{figure}

Note that:
\begin{gather}
\label{eq:extension4}
b\beta=(a, a\alpha\gamma, \alpha c,
c\gamma),\quad a\alpha=\pi\sigma,\quad c\gamma=(b, b\beta, b,
b\beta),\\
\label{eq:extension5} a\alpha\gamma=\pi\sigma(\beta, \unit, \unit,
\beta),\quad \alpha c=\sigma(b\beta, b\beta, b, b).
\end{gather}
We see that the set $\{\alpha, \beta, \gamma, a, b, c, a\alpha,
b\beta, c\gamma, a\alpha\gamma, \alpha c, \unit\}$ is
state-closed, hence it is the set of states of an automaton
$\mathfrak{A}$ generating the group $\G$. The Moore diagram of the
automaton $\mathfrak{A}$ is shown on Figure~\ref{fig:automaton}.
The labels on the arrows show the input letters. An arrow with two
labels $i, j$ correspond to two arrows with labels $i$ and $j$.
Arrows without labels correspond to four arrows with labels $\one,
\two, \three$, and $\four$. The action of the states on the first
level is not shown on the figure (but it follows from their
labels).

Direct computation shows that all generators are involutions and
that the following relations hold
\begin{alignat*}{3}
\alpha^a &=\alpha, &\qquad\alpha^b &=\alpha, &\qquad
\alpha^c &=\alpha,\\
\beta^a  &=\beta, &\qquad\beta^b &=\beta, &\qquad\beta^c &=
\beta^\gamma,\\
\gamma^a &=\gamma^\alpha, &\qquad\gamma^b &=\gamma^\beta, &\qquad
\gamma^c &=\gamma.
\end{alignat*}

Let us show that the iterated monodromy group of $f$ is a
self-similar subgroup of index 2 in $\G$. It follows from the
relations mentioned above that the group generated by $\alpha,
\beta, \gamma, ac, bc$ is a subgroup of index two. Let us show
that it is also self-similar (i.e., becomes state-closed after
composition of the wreath recursion with an inner automorphism of
the wreath product $\symm\wr\G$). We have
\[ac=\pi(b\beta, b\beta, b, b),\quad cb=(b\beta a\alpha,
b\beta a\alpha, bc, bc).\] Let us conjugate the right-hand side by
$(\unit, \unit, b, b)$ (i.e., let us change the basis of the
permutational bimodule from $\{\one, \two, \three, \four\}$ to
$\{\one, \two, \three\cdot b, \four\cdot b\}$). We get
\begin{eqnarray*}
\alpha &=& \sigma\\
\beta  &=& \left(\alpha, \gamma, \alpha, \gamma^b\right)=
\left(\alpha, \gamma, \alpha, \gamma^\beta\right)\\
\gamma &=& \left(\beta, \unit, \unit, \beta^b\right)=\left(\beta,
\unit, \unit, \beta\right),
\end{eqnarray*}
and
\begin{eqnarray*}
ac &=&  \pi(\beta, \beta, \unit, \unit)\\
cb &=& (\beta\alpha ba, \beta\alpha ba, cb, cb).
\end{eqnarray*}

If we denote $s=ac\gamma$ and $t=cb$, then we have
\[s=ac\gamma=\pi(\unit, \beta, \unit, \beta)\]
and $\beta\alpha ba=\beta\alpha bc\gamma \gamma
ca=\beta\alpha\beta\gamma\beta bc\gamma
ca=\beta\alpha\beta\gamma\beta t^{-1}s^{-1}$, so that
\[t=(r, r, t, t),\]
where $r=\beta\alpha ba=\beta\alpha\beta\gamma\beta t^{-1}s^{-1}$,
as in Theorem~\ref{th:imgrecursion}.

We have just proved that the index two subgroup $\langle\alpha,
\beta, \gamma, ac, cb\rangle$ is isomorphic as a self-similar
group to $\img{f}$.

\subsection{Some finite subgroups of $\G$}

Direct computations (see also~\cite{nek:ssfamilies}) show that the
following relations hold in the group $\G=\langle\alpha, \beta,
\gamma, a, b, c\rangle$.
\[(\alpha\gamma)^4=\unit,\quad (\alpha\beta)^8=\unit,\quad
(\beta\gamma)^8=\unit\] and \begin{gather*}(ac)^2=(\beta, \beta,
\beta, \beta)=(\alpha\gamma)^2\\
(ab)^4=(ca\alpha, ca\alpha, a\alpha c, a\alpha
c)^2=((\alpha\gamma)^2, (\alpha\gamma)^2, (\alpha\gamma)^2,
(\alpha\gamma)^2)=
(\alpha\beta)^4\\
(bc)^4=((ab)^4(\alpha\beta)^4, (ab)^4(\alpha\beta)^4, (cb)^4,
(cb)^4)=((\alpha\beta)^8, (\alpha\beta)^8, (cb)^4, (cb)^4)=\unit.
\end{gather*}
Consequently, the products $ab$, $ac$, and $bc$ are of orders 8,
4, and 4, respectively. Note that the elements $a\alpha, b\beta,
c\gamma$ are of order 2 (since $[a, \alpha]=[b, \beta]=[c,
\gamma]=\unit$) and that we have
\[(ac\gamma)^2=ac\gamma ac\gamma=aca\alpha\gamma\alpha c\gamma=
acac\alpha\gamma\alpha\gamma=(\alpha\gamma)^4=\unit,\]
\[(a\alpha b\beta)^2=(ab)^2(\alpha\beta)^2,\quad
(a\alpha b\beta)^4=(ab)^4(\alpha\beta)^4= (\alpha\beta)^8=\unit\]
hence $a\cdot c\gamma$ is of order 2, while $a\alpha\cdot b\beta$
is of order $4$.

It follows that the group $\G$ contains the following finite
groups
\begin{eqnarray*}\G_{A_1}= &\langle\alpha, b,
c\rangle=\langle\alpha\rangle\times\langle b, c\rangle& \cong
C_2\times D_4,\\
\G_{B_1}= &\langle\beta, a, c\gamma\rangle
=\langle\beta\rangle\times\langle
a, c\gamma\rangle& \cong C_2\times D_2,\\
\G_{C_1}= &\langle\gamma, a\alpha, b\beta\rangle
=\langle\gamma\rangle\times\langle a\alpha, b\beta\rangle& \cong
C_2\times D_4,
\end{eqnarray*}
and
\begin{eqnarray*}
\G_A= & \langle\beta, \gamma, b, c\rangle=\langle\beta,
\gamma\rangle\rtimes\langle b, c\rangle & \cong D_8\rtimes D_4,\\
\G_B= & \langle\alpha, \gamma, a, c\rangle=\langle\alpha,
\gamma\rangle\rtimes\langle a, c\gamma\rangle & \cong D_4\rtimes
D_2,\\
\G_C= & \langle\alpha, \beta, a, b\rangle=\langle\alpha,
\beta\rangle\rtimes\langle a\alpha, b\beta\rangle & \cong
D_8\rtimes D_4,
\end{eqnarray*}
where in the last three cases in a semidirect product $\langle
x_1, y_1\rangle\rtimes\langle x_2, y_2\rangle$ the generators
$x_2$ and $y_2$ act on $\langle x_1, y_1\rangle$ as conjugation by
$x_1$ and $y_1$, respectively. (The group of inner automorphisms
of $D_{2n}$ is isomorphic to $D_n$.)

\subsection{Nucleus of $\G$}
\label{ss:nucleusofG} It was proved in~\cite{nek:ssfamilies} that
the group $\G$ is contracting without presenting the nucleus
explicitly. More careful analysis gives the following complete
description of the nucleus of $\G$.

\begin{proposition}
\label{pr:nucleusGamma} The set
$\nuke=\G_A\cup\G_B\cup\G_C\cup\G_{A_1}\cup\G_{B_1}\cup \G_{C_1}$
is the nucleus of the group $\G$.
\end{proposition}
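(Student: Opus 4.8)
The plan is to verify that the proposed set $\nuke = \G_A \cup \G_B \cup \G_C \cup \G_{A_1} \cup \G_{B_1} \cup \G_{C_1}$ is exactly the nucleus of $\G$. By the definition of the nucleus, I must establish two things: \textbf{(i) $\nuke$ is state-closed}, i.e., closed under taking sections, so that $\nuke$ contains the nucleus; and \textbf{(ii) every element of $\nuke$ actually occurs}, i.e., for each $g \in \nuke$ there is some $h \in \G$ and some word $v$ with $h|_v = g$ (equivalently, $g$ lies in the nucleus, which is the minimal such set). The bulk of the work is (i); part (ii) is comparatively soft given that the nucleus is already known to exist (by the result of \cite{nek:ssfamilies} cited just above).

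First I would prove state-closedness. Since the six groups $\G_A, \G_B, \G_C, \G_{A_1}, \G_{B_1}, \G_{C_1}$ are finite and explicitly described, $\nuke$ is a finite set, and checking state-closedness is a finite computation: for each of the (finitely many) elements $g$ of each of the six subgroups and each letter $x \in \{\one, \two, \three, \four\}$, compute $g|_x$ using the wreath recursion~\eqref{eq:extension1}--\eqref{eq:extension3} (together with the derived formulae~\eqref{eq:extension4}--\eqref{eq:extension5}) and confirm $g|_x$ again lands in one of the six subgroups. In practice one organizes this by noting which generators and short products ($a\alpha$, $b\beta$, $c\gamma$, $a\alpha\gamma$, $\alpha c$, $b\beta$, etc.) appear, reusing the already-identified state-closed generating set $\{\alpha, \beta, \gamma, a, b, c, a\alpha, b\beta, c\gamma, a\alpha\gamma, \alpha c, \unit\}$ of the automaton $\mathfrak{A}$. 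The sections of a product are read off via $g h|_x = g|_{h(x)} h|_x$, so it suffices to know the first-level action of each element (determined by its $\sigma$/$\pi$-content) and the sections of the generators. The key structural point that makes the six subgroups ``close up'' is the semidirect-product descriptions: e.g.\ in $\G_A = \langle \beta, \gamma\rangle \rtimes \langle b, c\rangle$, sections of $\beta,\gamma$ are among $\{\alpha,\gamma,\unit\}$ and sections of $b, c$ involve $a\alpha, c, b\beta, b$, all of which sit inside $\G_{A_1}, \G_B, \G_C$ and their subgroups — so I would present a table (or a short case analysis grouped by which of the three ``large'' groups $\G_A,\G_B,\G_C$ and which ``small'' group $\G_{A_1},\G_{B_1},\G_{C_1}$ one starts in) showing the section map permutes/descends within $\nuke$.

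For (ii), the minimality: since $\G$ is contracting (known), its nucleus $\nuke_0$ exists, is state-closed, and is contained in \emph{every} state-closed set that ``absorbs'' all sections eventually; in particular $\nuke_0 \subseteq \nuke$ by (i). For the reverse inclusion I would show each generator-type element and each element of the six finite groups appears as a section $g|_v$ of some group element along arbitrarily deep levels — concretely, the generators $\alpha,\beta,\gamma,a,b,c$ are themselves in $\nuke_0$ because the automaton $\mathfrak{A}$ is ``minimal'' in the sense that each of its states is reachable and recurs (one checks from the Moore diagram of Figure~\ref{fig:automaton} that starting from any state one can return to any other, so every state's label is realized at unboundedly deep levels), and then every other element of the six finite subgroups is a product of these that itself occurs as a section (using relations like $(ac)^2 = (\alpha\gamma)^2$ to see composite elements arising). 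The main obstacle is bookkeeping: the section computation in (i) is genuinely a large finite case-check across six groups of orders up to $64$ (the $D_8 \rtimes D_4$ groups), and the risk is an arithmetic slip in tracking the permutation-part and the coordinate-sections through products; I would mitigate this by exploiting the $C_2 \times (\cdot)$ and semidirect decompositions to cut the casework down to the behavior of a handful of generators and by cross-checking against the known relations in Subsection~\ref{ss:nucleusofG}.
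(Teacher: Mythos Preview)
Your plan has a genuine gap in part (i). State-closedness of $\nuke$ does \emph{not} by itself imply that the nucleus $\nuke_0$ is contained in $\nuke$: the singleton $\{\unit\}$ is already state-closed. What gives $\nuke_0 \subseteq \nuke$ is the \emph{absorption} property --- that for every $g \in \G$ the sections $g|_v$ lie in $\nuke$ once $|v|$ is large enough --- and this is precisely the substantial computation you have skipped. Since $\nuke$ is state-closed, absorption reduces (by induction on word-length, via $(gh)|_v = g|_{h(v)}\,h|_v$) to checking that sections of every element of $\nuke \cdot \{\alpha,\beta,\gamma,a,b,c\}$ eventually fall back into $\nuke$; and since $\alpha$ and $a$ have only trivial sections, one need only treat $\nuke \cdot \{\beta,\gamma,b,c\}$. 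The paper's proof is exactly this case analysis, carried out separately for each of the six groups $\G_A,\G_B,\G_C,\G_{A_1},\G_{B_1},\G_{C_1}$: for instance, first-level sections of $\G_B \cdot \{\beta,b\}$ already lie in $\G_C \cup \G_A$, while $\G_C \cdot \{\gamma,c\}$ requires up to three levels before one lands in $\nuke$. This contraction step is the heart of the argument, and your proposal replaces it with the false implication ``state-closed $\Rightarrow$ contains the nucleus'' (you even acknowledge in part (ii) that one needs a set that ``absorbs all sections eventually'', yet then invoke only (i)).

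Your part (ii), aimed at $\nuke \subseteq \nuke_0$, is the minimality direction; the paper's proof does not spell this out either. Your sketch is loose --- being a product of nucleus elements does not by itself put an element in the nucleus, so relations like $(ac)^2=(\alpha\gamma)^2$ do not directly help --- but the fix is short: since $\nuke$ is finite and state-closed, it suffices to check that every $g\in\nuke$ occurs as $h|_x$ for some $h\in\nuke$ and $x\in\alb$, after which pigeonhole yields occurrences at arbitrary depth.
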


\begin{proof} It is checked directly that the set $\nuke$ is
state-closed. It is sufficient to show that sections of
$\nuke\cdot\{\alpha, \beta, \gamma, a, b, c\}$ eventually belong
to $\nuke$. Since sections of $\alpha$ and $a$ are trivial, it is
sufficient to consider the set $\nuke\cdot\{\beta, \gamma, b,
c\}$.

The generators of $\G_A$ are $\beta, \gamma, b, c$, hence
$\G_A\cdot\{\beta, \gamma, b, c\}=\G_A$. The group $\G_B$ is
generated by $\alpha, \gamma=(\beta, \unit, \unit, \beta), a,
c=(b\beta, b\beta, b, b)$. It follows that the first level
sections of the elements of $\G_B$ belong to $\langle b,
\beta\rangle$. Consequently, the first level sections of
$\G_B\cdot\{\beta, \gamma, b, c\}=\G_B\cdot\{\beta, b\}$ belong
either to $\langle b, \beta, \alpha\rangle<\G_C$, or to $\langle
b, \beta, \gamma\rangle<\G_A$, or to $\langle b, \beta,
a\alpha\rangle<\G_C$, or to $\langle b, \beta, c\rangle<\G_A$.

Note that $b\beta=(a, a\alpha\gamma, c\alpha, c\gamma)$, therefore
the sections of $\G_B\cdot b\beta$ belong to \[\langle b, \beta,
a\rangle\cup\langle b, \beta, c\gamma\rangle\cup\langle b, \beta,
c\alpha\rangle\cup\langle b, \beta \gamma
a\alpha\rangle\subset\G_C\cup\G_A\cup\G_A\cdot\alpha\cup\G_A\cdot
a\alpha.\] Consequently, the second level sections of $\G_B\cdot
b\beta$ belong to $\nuke$.

The group $\G_C$ is generated by $\alpha, \beta=(\alpha, \gamma,
\alpha, \gamma), a, b=(a\alpha, a\alpha, c, c)$. The first level
sections of the elements of $\G_C$ belong to $\langle a, \alpha,
\gamma, c\rangle=\G_B$. The first level sections of
$\G_C\cdot\{\beta, \gamma, b, c\}=\G_C\cdot\{\gamma, c\}$ belong
to $\G_B\cdot\{\beta, b, b\beta\}$, hence the third level sections
of $\G_C$ belong to $\nuke$.

The group $\G_{A_1}$ is generated by $\alpha, b=(a\alpha, a\alpha,
c, c), c=(b\beta, b\beta, b, b)$, hence the first level sections
of $\G_{A_1}\cdot\{\beta, \gamma, b, c\}=\G_{A_1}\cdot\{\beta,
\gamma\}$ belong to
\[(\langle a\alpha, b\beta\rangle\cup\langle
b, c\rangle)\cdot\{\alpha, \beta, \gamma\}\subset \langle\alpha,
\beta, a, b\rangle\cup\langle\gamma, a\alpha,
b\beta\rangle\cup\langle\alpha, b, c\rangle\cup\langle\beta,
\gamma, b, c\rangle\subset\nuke.\]

The group $\G_{B_1}$ is generated by $\beta=(\alpha, \gamma,
\alpha, \gamma), a, c\gamma=(b, b\beta, b, b\beta)$. The first
level sections of $\G_{B_1}$ belong to $\langle\alpha,
b\rangle\cup\langle\gamma, b\beta\rangle$. Hence, the first level
sections of $\G_{B_1}\cdot\{\beta, \gamma, b,
c\}=\G_{B_1}\cdot\{\gamma, b, c\}$ belong to
\[(\langle\alpha,
b\rangle\cup\langle\gamma, b\beta\rangle)\cdot\{\beta, a\alpha,
b\beta, b, c\}\subset\langle\alpha, \beta, a,
b\rangle\cup\langle\alpha, b, c\rangle\cup\langle\beta, \gamma, b,
c\rangle\cup\langle\gamma, a\alpha, b\beta\rangle,\] which is a
subset of $\nuke$.

The group $\G_{C_1}$ is generated by $\gamma=(\beta, \unit, \unit,
\beta), a\alpha, b\beta=(a, a\alpha\gamma, c\alpha, c\gamma)$,
which implies that the first level sections of $\G_{C_1}$ belong
to $\langle\beta, a, c\gamma\rangle\cup\langle a\alpha\gamma,
c\alpha\rangle$. Therefore, the first level sections of the
elements of $\G_{C_1}\cdot\{\beta, \gamma, b,
c\}=\G_{C_1}\cdot\{\beta, b, c\}$ belong to
\[(\langle\beta, a, c\gamma\rangle\cup\langle a\alpha\gamma,
c\alpha\rangle)\cdot\{\alpha, \gamma, a\alpha, b\beta, b\}.\]
Since the sections of $a$ and $\alpha$ are trivial, the sections
of the elements of $\langle\beta, a, c\gamma\rangle\cdot\{\alpha,
\gamma, a\alpha, b\beta, b\}$ in non-empty words are the same as
sections of \[\langle\beta, a, c\gamma\rangle\cdot\{\gamma,
b\beta, b\}=\langle\beta, a, c\gamma\rangle\cdot\{\gamma,
b\}=\G_{B_1}\cdot\{\gamma, b\},\] but we have seen that sections
of the elements of this set in one-letter words belong to $\nuke$.

It remains to consider the set $\langle a\alpha\gamma,
c\alpha\rangle\cdot\{\alpha, \gamma, a\alpha, b\beta, b\}$. We
have
\[\langle a\alpha\gamma,
c\alpha\rangle\cdot\{\alpha, \gamma, a\alpha, b\beta,
b\}\subset\langle\alpha, \gamma, a, c\rangle\cdot\{\unit, b\beta,
b\}=\G_B\cdot\{\unit, b\beta, \beta\},\] but we have seen above
that the sections of the elements of the set $\G_B\cdot\{\unit,
b\beta, \beta\}$ in words of length two belong to $\nuke$.
\end{proof}

The sizes of the defined subgroups of $\nuke$ are $|\G_A|=128$,
$|\G_B|=32$, $|\G_C|=128$, $|\G_{A_1}|=16$, $|\G_{B_1}|=8$,
$|\G_{C_1}|=16$. Their pairwise intersections are (here we denote
by $\G_{XY}$ the subgroup $\G_X\cap\G_Y$):
\begin{alignat*}{3}
\G_{AB}&=\langle\gamma, c\rangle,&\quad \G_{AC}&=\langle\beta,
b\rangle,&\quad
\G_{BC}&=\langle\alpha, a\rangle,\\
\G_{AA_1}&=\langle b, c\rangle,&\quad \G_{AB_1}&=\langle \beta,
c\gamma\rangle,&\quad
\G_{AC_1}&=\langle\gamma, b\beta\rangle,\\
\G_{BA_1}&=\langle\alpha, c\rangle,&\quad \G_{BB_1}&=\langle a,
c\gamma\rangle,&\quad
\G_{BC_1}&=\langle\gamma, a\alpha\rangle,\\
\G_{CA_1}&=\langle\alpha, b\rangle,&\quad \G_{CB_1}&=\langle\beta,
a\rangle,&\quad \G_{CC_1}&=\langle a\alpha, b\beta\rangle.
\end{alignat*}
The intersections $\G_{A_1}\cap\G_{B_1}, \G_{A_1}\cap\G_{C_1},
\G_{B_1}\cap\G_{C_1}$ are trivial. All non-trivial pairwise
intersections are isomorphic to $C_2\times C_2$, except for
\[\G_{AA_1}\cong D_4,\quad\G_{BB_1}\cong D_2,\quad\G_{CC_1}\cong D_4.\]

The only non-trivial triple intersections (all of order 2) are
\[\G_{A_1BC}=\langle\alpha\rangle,\quad
\G_{AB_1C}=\langle\beta\rangle,\quad
\G_{ABC_1}=\langle\gamma\rangle\] and
\begin{alignat*}{3}
\G_{A_1AB}&=\langle c\rangle,&\quad\G_{A_1AC}&=\langle
b\rangle,&\quad\G_{B_1AB}&=\langle
c\gamma\rangle,\\
\G_{B_1BC}&=\langle a\rangle,&\quad\G_{C_1BC}&=\langle
a\alpha\rangle,&\quad \G_{C_1AC}&=\langle
b\beta\rangle.\end{alignat*}

Since all triple intersections are of order 2 and are pairwise
different, there are no non-trivial intersections of four or more
different groups $\G_X$.

Removing the identity and using the inclusion-exclusion formula,
we get that the size of the nucleus of $\G$ is
\[1+127+31+127+15+7+15-7-3-7-9\cdot 3+9=288.\]

\subsection{Sections of subgroups of
$\nuke$}\label{ss:sectionsofGs} Let us list sections $G|_x$ for
the groups $\G_*$.

\[\begin{array}{|c|c||c|c|c|c|}
\hline & G & G|_\one & G|_\two & G|_\three & G|_\four \\
\hline \G_A & \langle\beta, \gamma, b, c\rangle  & \langle\alpha, \beta, a, b\rangle & \langle\gamma, a\alpha, b\beta\rangle & \langle\alpha, b, c\rangle & \langle\beta, \gamma, b, c\rangle\\
\hline \G_B & \langle\alpha, \gamma, a, c\rangle & \langle\beta, b\rangle & \langle\beta, b\rangle & \langle\beta, b\rangle & \langle\beta, b\rangle \\
\hline \G_C & \langle\alpha, \beta, a, b\rangle & \langle\alpha, \gamma, a, c\rangle & \langle\alpha, \gamma, a, c\rangle & \langle\alpha, \gamma, a, c\rangle & \langle\alpha, \gamma, a, c\rangle\\
\hline \G_{A_1} & \langle\alpha, b, c\rangle & \langle a\alpha, b\beta\rangle & \langle a\alpha, b\beta\rangle & \langle b, c\rangle & \langle b, c\rangle\\
\hline \G_{B_1} & \langle\beta, a, c\gamma\rangle & \langle\alpha, b\rangle & \langle\gamma, b\beta\rangle & \langle\alpha, b\rangle & \langle\gamma, b\beta\rangle \\
\hline \G_{C_1} & \langle\gamma, a\alpha, b\beta\rangle & \langle\beta, a, c\gamma\rangle & \langle a\alpha\gamma, \alpha c\rangle & \langle a\alpha\gamma, \alpha c\rangle & \langle\beta, a, c\gamma\rangle \\
\hline
\end{array}\]

\[\begin{array}{|c|c||c|c|c|c|}
\hline & G & G|_\one & G|_\two & G|_\three & G|_\four \\
\hline \G_{AB} & \langle\gamma, c\rangle & \langle\beta, b\rangle & \langle b\beta\rangle & \langle b\rangle & \langle\beta, b\rangle \\
\hline \G_{AC} & \langle\beta, b\rangle & \langle\alpha, a\rangle & \langle\gamma, a\alpha\rangle & \langle\alpha, c\rangle & \langle\gamma, c\rangle \\
\hline \G_{BC} & \langle\alpha, a\rangle & \{\unit\} & \{\unit\} & \{\unit\} & \{\unit\} \\
\hline \G_{AA_1} & \langle b, c\rangle & \langle a\alpha, b\beta\rangle & \langle a\alpha, b\beta\rangle & \langle b, c\rangle & \langle b, c\rangle \\
\hline \G_{AB_1} & \langle\beta, c\gamma\rangle & \langle\alpha, b\rangle & \langle\gamma, b\beta\rangle & \langle\alpha, b\rangle & \langle\gamma, b\beta\rangle \\
\hline \G_{AC_1} & \langle\gamma, b\beta\rangle & \langle\beta, a\rangle & \langle a\alpha\gamma\rangle & \langle\alpha c\rangle & \langle\beta, c\gamma\rangle \\
\hline \G_{BA_1} & \langle\alpha, c\rangle & \langle b\beta\rangle & \langle b\beta\rangle & \langle b\rangle & \langle b\rangle \\
\hline \G_{BB_1} & \langle a, c\gamma\rangle & \langle b\rangle & \langle b\beta\rangle & \langle b\rangle & \langle b\beta\rangle \\
\hline \G_{BC_1} & \langle\gamma, a\alpha\rangle & \langle\beta\rangle & \{\unit\} & \{\unit\} & \langle\beta\rangle \\
\hline \G_{CA_1} & \langle\alpha, b\rangle & \langle\alpha, \gamma\rangle & \langle\alpha, \gamma\rangle & \langle\alpha, \gamma\rangle & \langle\alpha, \gamma\rangle \\
\hline \G_{CB_1} & \langle\beta, a\rangle & \langle a\alpha, c\rangle & \langle a\alpha, c\rangle & \langle a\alpha, c\rangle & \langle a\alpha, c\rangle \\
\hline \G_{CC_1} & \langle a\alpha, b\beta\rangle & \langle a, c\gamma\rangle & \langle a\alpha\gamma, \alpha c\rangle & \langle a\alpha\gamma, \alpha c\rangle & \langle a, c\gamma\rangle \\
\hline
\end{array}\]

The triple intersections $\G_{XYZ}$ are generated by single
elements $g\in\mathfrak{A}$, therefore their sections are
described on the Moore diagram of the automaton $\mathfrak{A}$
(see Figure~\ref{fig:automaton}).

\subsection{Complex associated with the nucleus of $\G$}
\label{ss:pasting} Denote by $\mathcal{G}$ the set of subgroups
$\G_A, \G_B, \G_C, \G_{A_1}, \G_{B_1}, \G_{C_1}$, and all their
pairwise and triple intersections.

Denote by $\Xi$ the simplicial complex associated with the poset
of the subsets of $\G$ of the form $G\cdot g$ for
$G\in\mathcal{G}$ and $g\in\G$. Then $\Xi$ is a $\G$-invariant
sub-complex of the barycentric subdivision of the Cayley-Rips
complex $\overline\Xi$ of $\G$ defined by the generating set
$\nuke$ (see Subsection~\ref{ss:polyhedralmodels}).

Denote by $\T_0$ the simplcial complex of the poset $\mathcal{G}$.
It is the subcomplex of $\Xi$ spanned by the vertices
corresponding to cosets containing the identity, i.e., it is the
union of the simplicies of $\Xi$ containing the vertex
corresponding to the trivial subgroup $\{\unit\}\in\mathcal{G}$ of
$\G$.

It follows from the description of the set of groups $\mathcal{G}$
that we can represent $\T_0$ as a union of three tetrahedra
$A_1ABC$, $B_1ABC$, $C_1ABC$ with a common face $ABC$, as it is
shown on Figure~\ref{fig:complex}. Every element $\G_X$,
$\G_{XY}$, or $\G_{XYZ}$ of $\mathcal{G}$ corresponds to (the
barycenter of) the corresponding vertex $X$, edge $XY$, or
triangle $XYZ$, respectively. We have labeled on
Figure~\ref{fig:complex} the triangles of $\T_0$ by the generators
of the respective groups $\G_{XYZ}$.

\begin{figure}
\centering
  \includegraphics{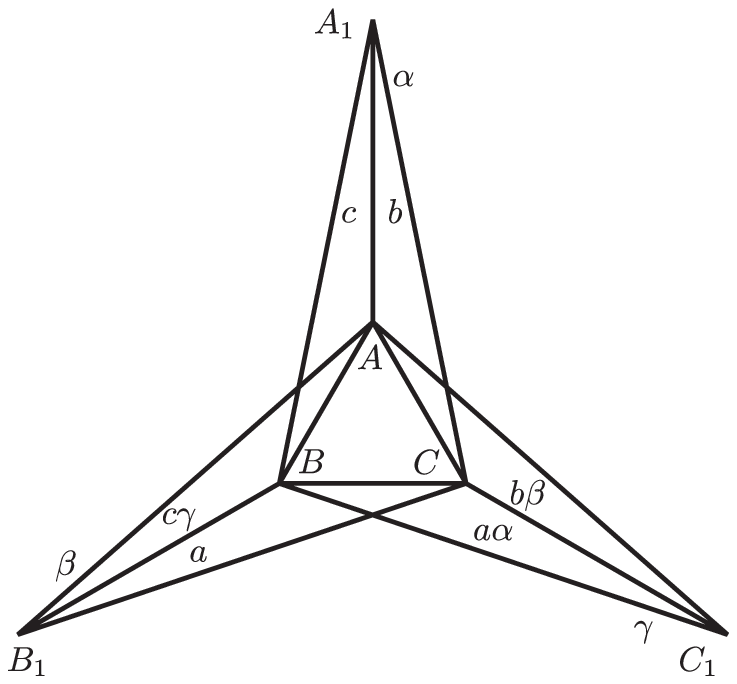}\\
  \caption{Complex $\T_0$ associated with $\nuke$}\label{fig:complex}
\end{figure}

Denote
\[\Xi_n=\Xi\otimes\bim^{\otimes
n},\quad\T_n=\T_0\otimes\alb^n\subset\Xi_n.\]

The set $\T_n$ is a fundamental domain of the action of $\G$ on
$\Xi_n$ (since $\alb^n=\alb^{\otimes n}$ is a right orbit
transversal of the action of $\G$ on $\bim^{\otimes n}$). The
space $\T_n$ is the quotient of the space $\T_0\times\alb^n$ by
the identifications
\[(\xi, v)\sim (\xi\cdot g^{-1}, g(v))\]
for all $g\in\nuke$, $\xi\in\T_0$, and $v\in\alb^n$ such that
$g|_v=\unit$ and $\xi\cdot g^{-1}\in\T_0$.

We will denote the points of $\T_n$ in the same way as the points
of $\Xi_n$: the point $\xi\otimes v$ is the equivalence class of
the point $(\xi, v)\in\T_0\times\alb^n$.

For $g\in\G$, denote $K_{g, n}=\T_n\cap\T_n\cdot g$, and let
$\kappa_{g, n}:K_{g, n}\arr K_{g^{-1}, n}$ be restriction onto
$K_{g, n}$ of the map $\xi\mapsto\xi\cdot g^{-1}$.

If $\T_0\cap\T_0\cdot g$ is non-empty for $g\in\G$, then there
exists $G\in\mathcal{G}$ such that $G\cdot g\in\mathcal{G}$. But
then $g\in G\subset\nuke$, and $G\cdot g=G$. It follows that
$K_{g, 0}$ is non-empty only for $g\in\nuke$; $K_{g, 0}=K_{g^{-1},
0}$; and $\kappa_{g, 0}$ is an identity map.

We have
\begin{gather*}K_{\alpha,
0}=A_1BC,\quad K_{\beta, 0}=B_1AC,\quad K_{\gamma, 0}=C_1AB,\\
K_{a, 0}=B_1BC,\quad K_{b, 0}=A_1AC,\quad K_{c, 0}=A_1AB,\\
K_{a\alpha, 0}=C_1CB,\quad K_{b\beta, 0}=C_1CA, \quad K_{c\gamma,
0}=B_1BA.
\end{gather*}
The remaining sets $K_{g, 0}$ for $g\in\mathfrak{A}$ are
one-dimensional:
\[K_{\alpha c, 0}=A_1B,\quad K_{a\alpha\gamma, 0}=C_1B.\]

\begin{proposition}
\label{prop:Kgn} The set $K_{g, n}$ is non-empty only for
$g\in\nuke$. The map $\kappa_{g, n}:K_{g, n}\arr K_{g^{-1}, n}$ is
given by the condition
\[\kappa_{g, n}(\xi\otimes v)=\xi\otimes h(v),\]
where $h\in\nuke$ is such that $h|_v=g$, and $\xi\in K_{h, 0}$.
\end{proposition}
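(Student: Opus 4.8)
The plan is to prove the two assertions of the proposition by induction on $n$, using the recursive structure of $\T_n = \T_0 \otimes \alb^n$ and the fact that the nucleus $\nuke$ is state-closed (Proposition~\ref{pr:nucleusGamma}). The base case $n=0$ is exactly the content of the discussion immediately preceding the statement: $\T_0 \cap \T_0\cdot g \neq \emptyset$ forces $g$ to lie in some $G \in \mathcal{G} \subset \nuke$, and $\kappa_{g,0}$ is the identity, which matches the claimed formula with $h = g$ and $v$ the empty word. So the work is entirely in the inductive step.

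First I would describe a point of $K_{g,n+1} = \T_{n+1} \cap \T_{n+1}\cdot g$ concretely. A point of $\T_{n+1}$ has the form $\xi \otimes x \otimes v$ with $\xi \in \T_0$, $x \in \alb$, $v \in \alb^n$, subject to the identifications $(\xi, x) \sim (\xi\cdot h^{-1}, h(x))$ for $h \in \nuke$ with $h|_x = \unit$. Suppose $(\xi\otimes x\otimes v)\cdot g = \xi\otimes x\otimes (v\cdot g)$ again lies in $\T_{n+1}$; since $\T_{n+1}$ is a fundamental domain for the $\G$-action on $\Xi_{n+1}$ and $\alb^{n+1}$ is a right transversal of $\bim^{\otimes(n+1)}$, the element $v\cdot g \in \bim^{\otimes n}$ must, after pulling the group part through $x$, land back in $\T_{n+1}$. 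Concretely, writing the right action in the bimodule, $x\otimes(v\cdot g)$ equals $x \otimes v \otimes g$; to move $g$ to the left past $x\otimes v$ we use the wreath-recursion identity $g\cdot(x\otimes v) = (g(x)\otimes g|_x(v))\cdot g|_{xv}$ read backwards, so the condition for the image to lie in $\T_{n+1}$ is that there is $h \in \G$ with $h(x\otimes v) = x \otimes v'$ for some $v' \in \alb^n$ and $h|_{xv} = g$; equivalently $h$ fixes the first coordinate $x$ up to the $\T_0$-identifications and $h|_v \in \nuke$ takes $v$ into $\T_n$. Because $\nuke$ is state-closed, $h|_{xv} = g \in \nuke$ forces $h \in \nuke$ as well (every section of a non-nucleus element eventually leaves any finite set, so an element with a section equal to $g \in \nuke$ that also satisfies the $\T_0$-constraint must itself be in $\nuke$ — this uses that $\nuke$ is exactly the set of all sections of all elements, i.e.\ $\nuke = \nuke|_{\alb}$ closure, and that $\T_0$ only sees cosets of $\mathcal{G}$). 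This shows $K_{g,n+1} \neq \emptyset$ implies $g \in \nuke$ and pins down $\xi \in K_{h,0}$ with $h|_{xv} = g$, and then $\kappa_{g,n+1}(\xi\otimes x\otimes v) = (\xi\otimes x\otimes v)\cdot g^{-1} = \xi \otimes h(x\otimes v) = \xi \otimes h(xv)$, which is the asserted formula with the word of length $n+1$ in place of $v$.

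The cleanest way to organize this is probably not a raw induction but a single direct argument: a point $\xi \otimes w \in \T_n$ (with $w \in \alb^n$) satisfies $(\xi\otimes w)\cdot g \in \T_n$ iff there exists $h \in \G$ with $h\cdot(\xi\otimes w) = \xi'\otimes w'$ in $\Xi_n$ for some $\xi' \in \T_0$, $w' \in \alb^n$, and $h|_w = g$; the $\T_0$-fundamental-domain property forces $\xi' = \xi$ and $\xi \in \T_0\cap\T_0\cdot h$, hence $\xi \in K_{h,0}$ and $h \in \nuke$ by the $n=0$ analysis, hence $g = h|_w \in \nuke$ by state-closedness. Then $\kappa_{g,n}(\xi\otimes w) = (\xi\otimes w)\cdot g^{-1} = \xi\otimes h(w)$ directly from the definition of the right $\G$-action on $\Xi_n$ pushed through $\xi$. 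One still must check this is well-defined modulo the $\T_n$-identifications — different representatives $(\xi, w)$ may give different $(h, h(w))$ — but any two choices differ by an element of $\nuke$ with trivial section at $w$, which does not change the image point.

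The main obstacle I expect is the bookkeeping around well-definedness and the precise statement "$h|_w = g \in \nuke$ and $\xi \in K_{h,0}$ imply $h \in \nuke$": this is where one genuinely uses the explicit nucleus $\nuke = \G_A\cup\dots\cup\G_{C_1}$ and the tables of sections in Subsection~\ref{ss:sectionsofGs}, rather than just the abstract contraction property, because we need that for $h$ with $\xi\cdot h^{-1} \in \T_0$ — i.e.\ $h$ stabilizing a coset of some $G \in \mathcal{G}$ — having a section in $\nuke$ already puts $h$ in $\nuke$. I would verify this by noting that the set of $h \in \G$ with $K_{h,0} \neq \emptyset$ is precisely $\nuke$ (established before the proposition) and that $\nuke$ is state-closed, so $h \in \nuke$ automatically, making the implication immediate once $\xi \in K_{h,0}$ is known. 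The remaining routine point is to match the nine two-dimensional and two one-dimensional sets $K_{g,0}$ listed above against the sections $h|_v$ read off the automaton $\mathfrak{A}$, confirming that for each $g \in \mathfrak{A}$ and each admissible $(h, v)$ with $h|_v = g$ one indeed has $\xi \in K_{h,0}$; this is a finite check against the tables and Figure~\ref{fig:automaton} and carries no conceptual difficulty.
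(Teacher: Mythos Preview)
Your ``single direct argument'' paragraph is exactly the paper's proof: unwind the tensor-product identification $\xi\otimes v\cdot g^{-1}=\xi_1\otimes v_1$ to produce $h\in\G$ with $\xi\in K_{h,0}$ and $h|_v=g$, invoke the $n=0$ analysis to get $h\in\nuke$ (and $\xi_1=\xi$), and conclude $g=h|_v\in\nuke$ by state-closedness of the nucleus; the formula for $\kappa_{g,n}$ then drops out. The surrounding material---the inductive detour, the well-definedness worry ($\kappa_{g,n}$ is \emph{defined} as restriction of the map $\xi\mapsto\xi\cdot g^{-1}$, so there is nothing to check), and the final ``routine'' table-matching against the automaton---is all unnecessary here; the paper's proof is four sentences, and the explicit section computations you anticipate belong to Lemma~\ref{lem:stabilizersA} and Theorem~\ref{th:pastingTn}, not to this proposition.
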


\begin{proof}
If $\kappa_{g, n}$ is defined on a point $\xi\otimes v$, for
$\xi\in\T_0$ and $v\in\alb^n$, then $\xi\otimes v\cdot
g^{-1}=\xi_1\otimes v_1$ for some $\xi_1\in\T_0$ and
$v_1\in\alb^n$. Then, by the definition of a tensor product, there
exists $h\in\G$ such that $\xi=\xi_1\cdot h$ and $h\cdot v\cdot
g^{-1}=v_1$ in $\bim^{\otimes n}$. The first equality implies, by
the argument above, that $h\in\nuke$ and $\xi=\xi_1\in K_{h, 0}$.
Then $v_1=h(v)$ and $h|_v\cdot g^{-1}=\unit$, which implies that
$g=h|_v\in\nuke$.
\end{proof}

The next technical lemma will be used several times in our paper.

\begin{lemma}
\label{lem:stabilizersA} For every point $\xi\in\T_n$ the
stabilizer $\G_\xi$ of $\xi$ in $\G$ belongs either to
$\mathcal{G}$ or to the set
\[\mathcal{G}_1=\left\{\langle a\alpha\gamma, \alpha c\rangle,\quad\langle a\alpha\gamma\rangle,\quad\langle\alpha
c\rangle\right\}.\] In particular, it is generated by a subset of
$\mathfrak{A}$.

If $\xi\in\T_n$ belongs to $K_{g, n}$ for
$g\in\nuke\setminus\mathfrak{A}$, then $\kappa_{g, n}(\xi)=\xi$,
i.e., $g$ belongs to $\G_\xi$.
\end{lemma}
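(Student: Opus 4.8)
I would prove Lemma~\ref{lem:stabilizersA} by reducing everything to the level-$0$ complex $\T_0$ via the tensor structure, and then simply enumerating the stabilizers of the (finitely many) faces of $\T_0$. First, fix $\xi\in\T_n$ and write $\xi=\xi_0\otimes v$ with $\xi_0\in\T_0$ and $v\in\alb^n$. If $g\in\G$ fixes $\xi$, then $\xi_0\otimes v=\xi_0\otimes v\cdot g=\xi_0\otimes g'\cdot v$ where\ldots more precisely, by Proposition~\ref{prop:Kgn} (with the r\^oles of $g$ and $g^{-1}$ swapped, i.e.\ applied to $\kappa_{g^{-1},n}$ which is defined on $\xi$ exactly when $g\in\G_\xi$), there is $h\in\nuke$ with $h|_v=g$, with $\xi_0\in K_{h,0}$, and with $h(v)=v$. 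Conversely every such $h$ gives an element of $\G_\xi$. So
\[
\G_\xi=\{\,h|_v : h\in\nuke,\ \xi_0\in K_{h,0},\ h(v)=v\,\}.
\]
Now $\{h\in\nuke : \xi_0\in K_{h,0}\}$ is precisely the stabilizer $(\G_0)_{\xi_0}$ of $\xi_0$ in $\G_0$ acting on $\T_0$ (here $\G_0$ is $\nuke$ as a partial action), because $K_{h,0}$ is exactly the fixed-point set of $\kappa_{h,0}=\mathrm{id}$, i.e.\ $\xi_0\in K_{h,0}\iff h\in\G_{\xi_0}$. The subgroup of that which also satisfies $h(v)=v$ is the stabilizer in $\G_{\xi_0}$ of $v\in\alb^n$, and $\G_\xi$ is the image of that subgroup under the section map $h\mapsto h|_v$.

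\emph{Step 1: stabilizers of faces of $\T_0$.} Since $\T_0$ is the simplicial complex of the poset $\mathcal G$ (three tetrahedra $A_1ABC,\,B_1ABC,\,C_1ABC$ sharing the face $ABC$), every point $\xi_0$ lies in the relative interior of a unique simplex, whose vertices are certain $\G_X$'s, and the stabilizer of that point equals the intersection of those subgroups, namely the smallest element of $\mathcal G$ containing them (a single $\G_{XYZ}$, $\G_{XY}$, $\G_X$, or $\{\unit\}$). By the computation in Subsection~\ref{ss:nucleusofG} every such intersection lies in $\mathcal G$, so $(\G_0)_{\xi_0}\in\mathcal G$ always, and in particular it is generated by a subset of $\mathfrak A$ (the barycenters of triangles are $\G_{XYZ}=\langle g\rangle$ for $g\in\mathfrak A$, the $\G_{XY}$ are generated by two elements of $\mathfrak A$, and the $\G_X$ by three).

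\emph{Step 2: pass to the section.} It remains to see that $\G_\xi$, the image under $h\mapsto h|_v$ of the $v$-stabilizer inside $(\G_0)_{\xi_0}\in\mathcal G$, again lies in $\mathcal G\cup\mathcal G_1$ and is generated by a subset of $\mathfrak A$. The $v$-stabilizer inside $(\G_0)_{\xi_0}$ is a subgroup of one of the finitely many groups $\G_*$ in $\mathcal G$, and its image under the $v$-section is contained in the image of the full group $\G_*$ under the $v$-section. So it suffices to check that, for each $\G_*\in\mathcal G$ and each $v\in\alb^n$, the section $(\G_*)|_v$ (the group generated by all $g|_v$, $g\in\G_*$ with $g(v)=v$) is one of the groups in $\mathcal G\cup\mathcal G_1$. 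This is a finite check using the section tables in Subsection~\ref{ss:sectionsofGs}: those tables record the first-level sections $(\G_*)|_x$ for $x\in\alb$, and since $\nuke$ is state-closed and all these first-level sections are again groups from $\mathcal G\cup\mathcal G_1$ (the only ``new'' groups appearing are $\langle a\alpha\gamma,\alpha c\rangle$, $\langle a\alpha\gamma\rangle$, $\langle\alpha c\rangle$, i.e.\ exactly $\mathcal G_1$), an easy induction on $|v|$ shows $(\G_*)|_v\in\mathcal G\cup\mathcal G_1$ for all $v$; one checks separately from the tables that the first-level sections of the three groups in $\mathcal G_1$ also stay inside $\mathcal G\cup\mathcal G_1$, and every group in $\mathcal G\cup\mathcal G_1$ is generated by a subset of $\mathfrak A$.

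\emph{Step 3: the fixed-point assertion.} Finally, suppose $\xi\in K_{g,n}$ with $g\in\nuke\setminus\mathfrak A$. By Proposition~\ref{prop:Kgn}, $g=h|_v$ for some $h\in\nuke$ with $\xi_0\in K_{h,0}$, $v=h(v)$. Then $g=h|_v\in\G_\xi$ by the description above, so $\kappa_{g,n}(\xi)=\xi\otimes g\,\text{-action}=\xi$; concretely $\kappa_{g,n}(\xi)=\xi_0\otimes h(v)=\xi_0\otimes v=\xi$. (One must also note $\G_\xi\subseteq\nuke$, since $g=h|_v$ and $\nuke$ is state-closed, so the statement ``$g\in\G_\xi$'' makes sense; this is already contained in the formula for $\G_\xi$ above.)

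\emph{Main obstacle.} The conceptual content is light; the real work is Step~2 --- verifying, via the section tables of Subsection~\ref{ss:sectionsofGs}, that the list $\mathcal G\cup\mathcal G_1$ is closed under taking sections (equivalently, that no further subgroups appear as iterated sections of the $\G_*$). The tables are arranged precisely to make this bookkeeping finite: one reads off that $(\G_A)|_x,(\G_B)|_x,\dots$ lie in $\mathcal G$, that the sections of the intersection groups $\G_{XY}$ lie in $\mathcal G\cup\mathcal G_1$, and that $\mathcal G_1$ is section-closed (its groups are generated by elements of $\mathfrak A$, whose sections are read from the Moore diagram of $\mathfrak A$). So the "obstacle'' is really just organizing this enumeration cleanly rather than any genuine difficulty.
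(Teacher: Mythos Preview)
Your Steps~1 and~2 are essentially the paper's argument. One nuance you gloss over in Step~2: the tables in Subsection~\ref{ss:sectionsofGs} list $G|_x$ in the sense of \emph{all} sections $\{g|_x:g\in G\}$, whereas your $(\G_*)|_v$ is the image of the $v$-\emph{stabilizer}. These coincide for $G\in\mathcal G$ precisely because the only generators acting nontrivially on the first level are $\alpha,a,a\alpha$, which have trivial sections; the paper states this explicitly, and you should too, since otherwise the tables don't literally compute what you need. For $G\in\mathcal G_1$ the two notions differ, but since every $G|_x\subseteq\langle\beta,b\rangle$ and \emph{all} subgroups of $\langle\beta,b\rangle$ lie in $\mathcal G$, your $\phi_x(G)$ lands in $\mathcal G$ anyway.

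Step~3, however, has a genuine gap. Proposition~\ref{prop:Kgn} does \emph{not} give you $h(v)=v$; it only gives $h\in\nuke$ with $\xi_0\in K_{h,0}$ and $h|_v=g$, together with $\kappa_{g,n}(\xi)=\xi_0\otimes h(v)$. Your task is exactly to show $\xi_0\otimes h(v)=\xi_0\otimes v$, and you have assumed the conclusion. Notice that your argument, as written, never uses the hypothesis $g\notin\mathfrak A$, so it would prove $\kappa_{g,n}$ is the identity for \emph{every} $g\in\nuke$, which is false. The paper's proof of this part is an honest induction on $n$: writing $v=v'x$, one uses that $\mathfrak A$ is state-closed to get $h|_{v'}\notin\mathfrak A$, applies the inductive hypothesis at level $n-1$ to conclude $h|_{v'}\in\G_{\xi_0\otimes v'}$, replaces $h$ by an element $h_1\in\G_{\xi_0}$ with $h_1(v')=v'$ and $h_1|_{v'}=h|_{v'}$, and then handles the last letter $x$ using part one of the lemma (the stabilizer $\G_{\xi_0\otimes v'}$ lies in $\mathcal G$, not $\mathcal G_1$, because sections of $\mathcal G_1$-elements land in $\mathfrak A$; and the elements of a $\mathcal G$-group moving $x$ differ from an element of $\{\alpha,a,a\alpha\}$ by something fixing $x$). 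You need this inductive structure; it cannot be collapsed into a single invocation of Proposition~\ref{prop:Kgn}.
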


\begin{proof}
We have seen above that stabilizer of every point $\xi\in\T_0$
belongs to $\mathcal{G}$.

Let us prove our lemma by induction. Suppose that it is true for
$n$, and let $g$ be an element of the stabilizer of a point
$\xi\otimes vx$ for $\xi\in\T_0$, $v\in\alb^n$, and $x\in\alb$.
Then $\xi\otimes vx\cdot g=\xi\otimes vx$, which means that there
exists $h\in\G$ such that $\xi\otimes v=\xi\otimes v\cdot h$ and
$h\cdot x=x\cdot g$. Consequently, elements of the stabilizer of
$\xi\otimes vx$ are sections at $x$ of the intersection of
$\G_{\xi\otimes v}$ with the stabilizer of $x$. In other words
\[\G_{\xi\otimes vx}=\phi_x(\G_{\xi\otimes v}),\]
where $\phi_x$ is the virtual endomorphism associated with $\G$
and $x\in\alb$.

The generators of $G\in\mathcal{G}$ (as they are listed in
Subsection~\ref{ss:nucleusofG}) acting non-trivially on the first
level belong to $\{\alpha, a, a\alpha\}$. Consequently, for every
$g\in G$ and $x\in\alb$ the section $g|_x$ is equal to $\phi_x(h)$
for some $h\in G$. Therefore, $\phi_x(G)=G|_x$ for all
$G\in\mathcal{G}$. The groups $G|_x$ are listed in
Subsection~\ref{ss:sectionsofGs} (one has also to add the sections
$\langle g\rangle|_x=\langle g|_x\rangle$ for $g\in\mathfrak{A}$).

We see that $G|_x\in\mathcal{G}\cup\mathcal{G}_1$ for all
$G\in\mathcal{G}$. Sections $g|_x$ of the elements of the groups
from $\mathcal{G}_1$ belong to $\langle \beta, b\rangle$. But all
subgroups of $\langle\beta, b\rangle$ belong to $\mathcal{G}$. It
follows that stabilizers of points of $\T_n$ belong to
$\mathcal{G}\cup\mathcal{G}_1$ for all $n$.

Let us prove the remaining part of the lemma. Let $\xi\in\T_0$,
$v\in\alb^n$, and $x\in\alb$ are such that $\xi\otimes vx$ belongs
to $K_{g, n+1}$ for $g\in\nuke\setminus\mathfrak{A}$. Then there
exists $h\in\nuke$ such that $\xi\cdot h=\xi$ and $h|_{vx}=g$.
Since $\mathfrak{A}$ is state-closed, $h|_v\notin\mathfrak{A}$. We
have then $\xi\otimes v=\xi\otimes h\cdot v=\xi\otimes h(v)\cdot
h|_v$, i.e., $\xi\otimes v\in K_{h|_v, n}$. By the inductive
assumption, $h|_v$ belongs to the stabilizer of $\xi\otimes v$.

Consequently, $\xi\otimes h(v)=\xi\otimes v\cdot
h|_v^{-1}=\xi\otimes v$. It follows that there exists $h'\in\nuke$
such that $\xi\cdot h'=\xi$, $h'h(v)=v$, and $h'|_{h(v)}=\unit$.
We have then for $h_1=hh'$:
\[\xi\cdot h_1=\xi,\qquad h_1\cdot v=h'h\cdot v=v\cdot
h'|_{h(v)}h|_v=v\cdot h|_v.\] We may assume therefore that
$h(v)=v$.

Note that $h|_v$ can not belong to any of the groups of the set
$\mathcal{G}_1$, since then $g=h|_{vx}\in\langle\beta,
b\rangle\subset\mathfrak{A}$. Consequently, $h|_v$ belongs to one
of the groups of the set $\mathcal{G}$.

If $h|_v(x)=x$, then $\xi\otimes vx=\xi\otimes h\cdot
vx=\xi\otimes vx\cdot g$, and $g$ belongs to the stabilizer of
$\xi\otimes vx$.

Suppose that $h|_v(x)\ne x$. All the generators of the groups in
the set $\mathcal{G}$ acting non-trivially on the first level
belong to the set $\{\alpha, a, \alpha a\}$ of elements having
trivial sections. Consequently, if $h|_v$ belongs to one of the
stabilizers from $\mathcal{G}$, then $h|_v(x)=\delta(x)$ for
$\delta\in\{\alpha, a, \alpha a\}$, and $\xi\otimes vx=\xi\otimes
v\cdot\delta\otimes x=\xi\otimes vh|_v(x)$, therefore $g$ belongs
to the stabilizer of $\xi\otimes vx$.
\end{proof}

Let us describe now a recursive procedure of constructing the
complexes $\T_n$.

\begin{theorem}
\label{th:pastingTn} The space $\T_{n+1}$ is the quotient of the
space $\T_n\times\alb$ by the equivalence relation generated by
the identifications
\[\xi\otimes x=\kappa_{g, n}(\xi)\otimes g(x)\]
for all $g\in\mathfrak{A}$, $x\in\alb$, $\xi\in K_{g, n}$, such
that $g|_x=\unit$.

The set $K_{g, n+1}$ for $g\in\mathfrak{A}$ is equal to
\[\bigcup_{h\in\mathfrak{A}, x\in\alb, h|_x=g} K_{h, n}\otimes
x.\] The map $\kappa_{g, n+1}:K_{g, n+1}\arr K_{g, n+1}$ for
$g\in\mathfrak{A}$ acts by the rule
\[\kappa_{g, n+1}(\xi\otimes x)=\kappa_{h, n}(\xi)\otimes h(x),\]
where $h\in\mathfrak{A}$ is such that $h|_x=g$, and $\xi\in K_{h,
n}$.
\end{theorem}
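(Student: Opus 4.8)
The plan is to unwind the two layers of definitions—tensor products and the fundamental-domain structure of $\T_n$—and reduce everything to the case $n=0$ together with the inductive description of $K_{g,n}$. First I would establish the identification of $\T_{n+1}$ with the stated quotient of $\T_n\times\alb$. By definition $\T_{n+1}=\T_0\otimes\alb^{n+1}$ sits inside $\Xi_{n+1}=\Xi\otimes\bim^{\otimes(n+1)}$, and using the associativity of the tensor product we can write $\bim^{\otimes(n+1)}\cong\bim^{\otimes n}\otimes\bim$, so a point of $\T_{n+1}$ is an equivalence class of $(\xi,v,x)$ with $\xi\in\T_0$, $v\in\alb^n$, $x\in\alb$. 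The only identifications among such triples with $v,x$ kept in the transversal $\alb^n\times\alb$ come from inserting a group element $g$ between the $\bim^{\otimes n}$-factor and the $\bim$-factor: $\xi\otimes v\cdot g\otimes x=\xi\otimes v\otimes g\cdot x$, which forces $g$ to fix $v$ (to stay in the transversal at level $n$) and then acts on $x$ by $g(x)$ while contributing a section. Unwinding this through Proposition~\ref{prop:Kgn}, the relevant $g$ at level $n$ that fix a point $\xi\otimes v$ and whose section at $x$ is trivial are exactly the $g\in\mathfrak A$ with $\xi\in K_{g,n}$ and $g|_x=\unit$; this yields the displayed identification $\xi\otimes x=\kappa_{g,n}(\xi)\otimes g(x)$.

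Next I would compute $K_{g,n+1}=\T_{n+1}\cap\T_{n+1}\cdot g$. A point $\xi\otimes vx$ lies in $K_{g,n+1}$ precisely when there is $h\in\G$ with $\xi\otimes vx=(\xi\otimes vx)\cdot g$ realized by an element acting on the tree; by the same coset argument used in the proof of Proposition~\ref{prop:Kgn} (stabilizers of points of $\T_0$ lie in $\mathcal G$, hence any such $h$ lies in $\nuke$), we must have $h\in\nuke$ with $\xi\in K_{h,0}$, $h(v)=v$—no, more precisely $h|_v$ must be defined and satisfy $h|_v(x)$ staying in the transversal—and $h|_{vx}=g$. Since $\mathfrak A$ is state-closed, $g\in\mathfrak A$ forces $h|_v\in\mathfrak A$, and then (invoking Lemma~\ref{lem:stabilizersA} to handle the case $h|_v\in\nuke\setminus\mathfrak A$, which cannot produce $g\in\mathfrak A$) we get $h|_v\in\mathfrak A$ and $\xi\otimes v\in K_{h|_v,n}$. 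Writing $h'=h|_v$, this says $\xi\otimes v\in K_{h',n}$ for some $h'\in\mathfrak A$ with $h'|_x=g$, i.e. $\xi\otimes vx\in K_{h',n}\otimes x$. The reverse inclusion is the same computation run backwards, giving $K_{g,n+1}=\bigcup_{h\in\mathfrak A,\,x\in\alb,\,h|_x=g}K_{h,n}\otimes x$.

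Finally, for the formula for $\kappa_{g,n+1}$: on a point $\xi\otimes vx\in K_{h,n}\otimes x$ with $h\in\mathfrak A$, $h|_x=g$, the map $\kappa_{g,n+1}$ is restriction of $\eta\mapsto\eta\cdot g^{-1}$. We have $(\xi\otimes vx)\cdot g^{-1}=\xi\otimes v\otimes(x\cdot g^{-1})$; pulling the group element to the left of the last $\bim$-factor, $x\cdot g^{-1}=h^{-1}\cdot(h(x))$ in $\bim$ when $h|_x=g$, so this equals $\xi\otimes v\cdot h^{-1}\otimes h(x)=(\kappa_{h,n}(\xi\otimes v))\otimes h(x)$, which is the claimed rule $\kappa_{g,n+1}(\xi\otimes x)=\kappa_{h,n}(\xi)\otimes h(x)$ (writing $\xi$ for $\xi\otimes v$ as in the statement's notational convention). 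Here one uses $\kappa_{h,n}$ is well-defined precisely because $\xi\otimes v\in K_{h,n}$, and that the result lands back in $K_{g,n+1}$ by the description just proved together with $h^{-1}|_{h(x)}=g^{-1}\in\mathfrak A$.

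\textbf{Main obstacle.} The genuinely delicate point is the bookkeeping in the second step: showing that \emph{every} element $h$ witnessing membership in $K_{g,n+1}$ can be taken in $\mathfrak A$ with $h(v)=v$, rather than merely in $\nuke$. This requires the normalization argument (replacing $h$ by $hh'$ to arrange $h(v)=v$) together with Lemma~\ref{lem:stabilizersA} to rule out sections landing in $\mathcal G_1$; the state-closedness of $\mathfrak A$ is what makes the induction close. The rest is a careful but routine unwinding of tensor-product identifications.
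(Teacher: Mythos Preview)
Your overall shape is right—replace $\mathfrak{A}$ by $\nuke$ and the statement is immediate from the tensor-product definitions, so the entire content is the reduction from $\nuke$ to $\mathfrak{A}$—but your argument for that reduction has a genuine gap.

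In the second step you write ``Since $\mathfrak A$ is state-closed, $g\in\mathfrak A$ forces $h|_v\in\mathfrak A$'' and then ``invoking Lemma~\ref{lem:stabilizersA} to handle the case $h|_v\in\nuke\setminus\mathfrak A$, which cannot produce $g\in\mathfrak A$''. Both assertions are false. State-closedness of $\mathfrak{A}$ says that sections of elements of $\mathfrak{A}$ lie in $\mathfrak{A}$; it says nothing about preimages of sections, and indeed most of the $288$ elements of $\nuke$ are not in the $12$-element set $\mathfrak{A}$ yet have sections in $\mathfrak{A}$. Lemma~\ref{lem:stabilizersA} tells you that an $h'\in\nuke\setminus\mathfrak{A}$ with $\xi\in K_{h',n}$ actually stabilizes $\xi$, and that the stabilizer is one of the groups in $\mathcal{G}\cup\mathcal{G}_1$; but those groups (e.g.\ $\G_A$ of order $128$) are full of elements outside $\mathfrak{A}$ whose first-level sections land in $\mathfrak{A}$. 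So neither tool gives you $h|_v\in\mathfrak{A}$. The same issue afflicts your first step: you simply assert that the identifications are generated by $g\in\mathfrak{A}$, but a priori they are generated by $g\in\nuke$, and you have not shown the extra ones are redundant.

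What the paper actually does for this reduction is an exhaustive finite check. Given a witness $h_0\in\nuke\setminus\mathfrak{A}$ with $h_0|_x=g\in\mathfrak{A}$, Lemma~\ref{lem:stabilizersA} places $h_0$ in the stabilizer $\G_\xi\in\mathcal{G}\cup\mathcal{G}_1$. One then must produce, for every $G\in\mathcal{G}\cup\mathcal{G}_1$, every $x\in\alb$, and every $g\in G|_x\cap\mathfrak{A}$, an element $\delta\in G$ with $\delta|_x=\unit$ and an $h\in G\cap\mathfrak{A}$ with $h|_{\delta(x)}=g$; this lets one replace $(\xi,x,h_0)$ by $(\xi,\delta(x),h)$ with $h\in\mathfrak{A}$. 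The paper carries this out group by group using the explicit section tables from Subsection~\ref{ss:sectionsofGs}. The analogous check for the identification part shows that any $g\in\nuke\setminus\mathfrak{A}$ with $g|_x=\unit$ either acts trivially on $x$ or can be replaced by some $\delta\in\{\alpha,a,a\alpha\}\subset\mathfrak{A}$ in the stabilizer. There is no soft argument here; the theorem encodes a finite combinatorial fact about the automaton $\mathfrak{A}$ relative to $\nuke$, and you have to verify it.
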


All the information used in the inductive pasting rule of
Theorem~\ref{th:pastingTn} is read directly from the wreath
recursion~\eqref{eq:extension1}--\eqref{eq:extension5}
(Subsection~\ref{ss:twoextension}) or from the structure of the
automaton $\mathfrak{A}$ on Figure~\ref{fig:automaton}.

For instance, the identification of the copies $\T_n\times x$ of
$\T_n$ are given by the maps:
\begin{eqnarray*}
(\kappa_{\alpha, n}, \sigma)&:&K_{\alpha, n}\times\alb\arr K_{\alpha, n}\times\alb,\\
(\kappa_{a, n}, \pi)&:&K_{a, n}\times\alb\arr K_{a, n}\times\alb,\\
(\kappa_{a\alpha, n}, \pi\sigma)&:&K_{a\alpha, n}\times\alb\arr
K_{a\alpha, n}\times\alb,
\end{eqnarray*}
and
\begin{eqnarray*}
(\kappa_{\gamma, n}, \unit)&:&K_{\gamma, n}\times\{\two,
\three\}\arr
K_{\gamma, n}\times\{\two, \three\},\\
(\kappa_{a\alpha\gamma, n}, \pi\sigma):&:&K_{a\alpha\gamma,
n}\times\{\two, \three\}\arr K_{a\alpha\gamma, n}\times\{\two,
\three\}.
\end{eqnarray*}

\begin{proof}
If we replace $\mathfrak{A}$ by $\nuke$ everywhere in the theorem,
then it will follow directly from the definition of the tensor
product $\Xi_{n+1}=\Xi_n\otimes\bim$.

Therefore, the space $\T_{n+1}$ is obtained by taking the quotient
of the space $\T_n\times\alb$ by the identifications
\begin{equation}
\label{eq:identification}
(\xi, x)\sim (\kappa_{g, n}(\xi), g(x)),
\end{equation} where
$g\in\nuke$, $g|_x=\unit$, and $\xi\in K_{g, n}$. Suppose that $g$
does not belong to $\mathfrak{A}$. Then, by
Lemma~\ref{lem:stabilizersA}, $g$ belongs to the stabilizer
$\G_\xi$ of $\xi$, and $\G_\xi\in\mathcal{G}\cup\mathcal{G}_1$.
Identification~\eqref{eq:identification} becomes $(\xi, x)\sim
(\xi, g(x))$. If $x=g(x)$, the identification is trivial. If
$g(x)\ne x$ and $g$ is an element of one of the groups in the set
$\mathcal{G}$, then there exists $\delta\in\{\alpha, a, \alpha
a\}$ such that $\xi\cdot\delta=\xi$ and $\delta(x)=g(x)$. Then
identification~\eqref{eq:identification} is made using elements of
$\mathfrak{A}$. If $g$ is an element of a group from the set
$\mathcal{G}_1$, then either $g\in\mathfrak{A}$, or
$g\in\{a\alpha\gamma\alpha c, ac\}$. But $a\alpha\gamma\alpha
c=\pi(b\beta, b, b\beta, b)$ and $ac=\pi(b\beta, b\beta, b, b)$,
which contradicts the condition $g|_x=\unit$. We see that
identification~\eqref{eq:identification} is either trivial, or can
implemented by an element of $\mathfrak{A}$.

It remains to prove that every point of $K_{g, n+1}$ for
$g\in\mathfrak{A}$ can be represented by $\xi\otimes x$ for
$\xi\in K_{h, n}$ and $x\in\alb$, where $h\in\mathfrak{A}$ is such
that $h|_x=g$.

Every point of $K_{g, n+1}$ can be written as $\xi\otimes x$ for
$\xi\in K_{h_0, n}$ and $x\in\alb$, where $h_0\in\nuke$ is such
that $h_0|_x=g$. Suppose that $h_0\notin\mathfrak{A}$. Then, by
Lemma~\ref{lem:stabilizersA}, $h_0$ belongs to the stabilizer
$\G_\xi\in\mathcal{G}\cup\mathcal{G}_1$. It is enough then to show
that there exist $h\in\G_\xi\cap\mathfrak{A}$ and $y\in\alb$ such
that $\xi\otimes x=\xi\otimes y$ and $h|_y=g$. We have $\xi\otimes
x=\xi\otimes y$ if there exists $\delta\in\G_\xi$ such that
$\delta\cdot x=y\cdot\unit$.

Thus, theorem is proved if we show that for all
$G\in\mathcal{G}\cup\mathcal{G}_1$, $x\in\alb$, and $g\in
G|_x\cap\mathfrak{A}$ there exists $\delta\in G$ and $h\in
G\cap\mathfrak{A}$ such that $\delta|_x=\unit$ and
$h|_{\delta(x)}=g$.

Let us consider all the cases. If $G=\G_A=\langle\beta, \gamma, b,
c\rangle$, then for every $x\in\alb$ and $g\in
G|_x\cap\mathfrak{A}$ there exists $h\in G$ such that $h|_x=g$ (so
we can take $\delta=\unit$):
\begin{enumerate}
\item For $x=\one$, $G|_\one\cap\mathfrak{A}=\{\unit, \alpha, \beta, a, b, a\alpha,
b\beta\}$, and
\[\alpha=\beta|_\one,\quad\beta=\gamma|_\one,\quad a=(b\beta)|_\one,\quad
b=(c\gamma)|_\one,\quad a\alpha=b|_\one, \quad b\beta=c|_\one.\]
\item For $x=\two$, $G|_\two\cap\mathfrak{A}=\{\unit, \gamma, a\alpha, b\beta,
a\alpha\gamma\}$, and
\[\gamma=\beta|_\two,\quad a\alpha=b|_\two,\quad b\beta=c|_\two,\quad
a\alpha\gamma=(b\beta)|_\two.\]
\item For $x=\three$, $G|_\three\cap\mathfrak{A}=\{\unit, \alpha, b, c,
\alpha c\}$, and
\[\alpha=\beta|_\three,\quad b=c|_\three,\quad c=b|_\three,\quad\alpha c=(b\beta)|_\three.\]
\item For $x=\four$, $G|_\four\cap\mathfrak{A}=\{\unit, \beta, \gamma, b,
c, b\beta, c\gamma\}$, and
\[\beta=\gamma|_\four,\quad\gamma=\beta|_\four,\quad b=c|_\four,\quad
c=b|_\four,\quad b\beta=(c\gamma)|_\four,\quad
c\gamma=(b\beta)|_\four.\]
\end{enumerate}

If $G=\G_B=\langle\alpha, \gamma, a, c\rangle$, then
$G|_x=\langle\beta, b\rangle$ for all $x\in\alb$. For any pair $x,
y\in\alb$ there exists $\delta\in\langle\alpha,
a\rangle\subset\G_B$ such that $\delta(x)=y$. Hence, equalities
\[\beta=\gamma|_\one,\quad b=c|_\one,\quad b\beta=(c\gamma)|_\one,\]
finish the proof for $G=\G_B$.

The case $G=\G_C=\langle\alpha, \beta, a, b\rangle$ is considered
in the same way. We have $G|_x=\langle\alpha, \gamma, a, c\rangle$
for all $x\in\alb$, and
\begin{gather*}
\alpha=\beta|_\one,\quad a=(b\beta)|_\one,\quad a\alpha=b|_\one,\\
\gamma=\beta|_\two,\quad c=b|_\three,\quad c\gamma=(b\beta)|_\four,\\
\alpha c=(b\beta)|_\three,\quad a\alpha\gamma=(b\beta)|_\two.
\end{gather*}

Consider the case $G=\G_{A_1}=\langle\alpha, b, c\rangle$. If
$x\in\{\one, \two\}$, then $G|_x\cap\mathfrak{A}=\{\unit, a\alpha,
b\beta\}$, and
\[a\alpha=b|_x,\quad b\beta=c|_x.\]
If $x\in\{\three, \four\}$, then $G|_x\cap\mathfrak{A}=\{\unit, b,
c\}$ and
\[b=c|_x,\quad c=b|_x.\]

Cases $G=\G_{B_1}$ and $G=\G_{C_1}$ are similar to $\G_{A_1}$.

Cases $G\in\{\G_{AB}, \G_{AC}, \G_{AA_1}, \G_{AB_1}, \G_{AC_1}\}$
are straightforward: $G$ acts trivially on the first level;
$G|_x\cap\mathfrak{A}$ coincides with the standard generating set;
and for every $x\in\alb$, $g\in G|_x\cap\mathfrak{A}$ there exists
$h\in G\cap\mathfrak{A}$ such that $h|_x=g$.

There is nothing to prove for $G=\G_{BC}$.

Cases $G\in\{\G_{BA_1}, \G_{BB_1}, \G_{BC_1}, \G_{CA_1},
\G_{CB_1}, \G_{CC_1}\}$ are similar to $G=\G_{A_1}$: the group $G$
contains an element $\delta\in\{\alpha, a, a\alpha\}$;
intersection of $G|_x$ with $\mathfrak{A}$ is the standard
generating set of $G|_x$; for every $x\in\alb$ and $g\in
G|_x\cap\mathfrak{A}$ there exists $h\in G\cap\mathfrak{A}$ such
that either $h|_x=g$, or $h|_{\delta(x)}=g$.

If $G=\langle g\rangle\in\mathcal{G}\cup\mathcal{G}_1$ is cyclic,
then $g, g|_x\in\mathfrak{A}$, and we are done.

The only remaining case is $G=\langle a\alpha\gamma, \alpha
c\rangle$. Since $a\alpha\gamma\cdot \alpha c=\pi(b\beta, b,
b\beta, b)=\gamma ac$ is an involution, we have
\[G=\{\unit,\quad a\alpha\gamma=\pi\sigma(\beta, \unit, \unit,
\beta),\quad \alpha c=\sigma(b\beta, b\beta, b, b),\quad \gamma
ac=\pi(b\beta, b, b\beta, b)\},\] hence
\[G|_\one=\{\unit, \beta=(a\alpha\gamma)|_\one, b\beta=(\alpha
c)|_\one\},\quad G|_\four=\{\unit, \beta=(a\alpha\gamma)|_\four,
b=(\alpha c)|_\four\}.\] We also have
$a\alpha\gamma\cdot\two=\three\cdot\unit$,
$a\alpha\gamma\cdot\three=\two\cdot\unit$, and
\[G|_\two=G|_\three=\{\unit, b=(\alpha c)|_\two,
b=(\alpha c)|_\three\},\] which finishes the proof.
\end{proof}

\subsection{Equivariant map}
\label{ss:iota} Let us construct the complex $\T_1$. By
Theorem~\ref{th:pastingTn}, it is obtained by gluing four copies
$\T_0\otimes x$, for $x=\one, \two, \three, \four$, of $\T_0=\M$
along the following faces:
\begin{alignat*}{2}
(\kappa_{\alpha, 0}, \sigma):A_1BC\times\one &\sim A_1BC\times\two,&\quad
A_1BC\times\three&\sim A_1BC\times\four,\\
(\kappa_{a, 0}, \pi):B_1BC\times\one &\sim B_1BC\times\three,&\quad B_1BC\times\two&\sim
B_1BC\times\four,\\
(\kappa_{a\alpha, 0}, \pi\sigma):C_1BC\times\one &\sim C_1BC\times
\four,&\quad C_1BC\times\two &\sim C_1BC\times\three,
\end{alignat*}
where in each case the identification is identical on the first
coordinate. Note that the identification $(\kappa_{a\alpha\gamma,
0}, \pi\sigma)$ of $C_1B\times\two$ with $C_1B\times\three$
follows from the identification of $C_1BC\times\two$ with
$C_1BC\times\three$. The identification $(\kappa_{\gamma, 0},
\unit)$ is trivial.

The resulting complex $\T_1$ consists of a square pyramid and two
tetrahedra such that one face of each tetrahedron is attached to a
diagonal of the square pyramid (see Figure~\ref{fig:covcomplex},
but ignore the labels of the vertices this time).
Figure~\ref{fig:covcomplexn} shows the parts $\T_0\otimes i$ of
$\T_1$ (the vertices of each part $\T_0\otimes i$ are labeled as
in $\T_0$).

The map $\xi\otimes x\mapsto\xi$ folds the square pyramid in four
using reflections with respect to the planes passing through the
point $C\otimes\one=C\otimes\two=C\otimes\three=C\otimes\four$ and
through midpoints of two opposite sides of the base of the
pyramid, and folds the two tetrahedra along the planes passing
through the images of $C_1\otimes i$ and the hight of the pyramid.

\begin{figure}
\centering
\includegraphics{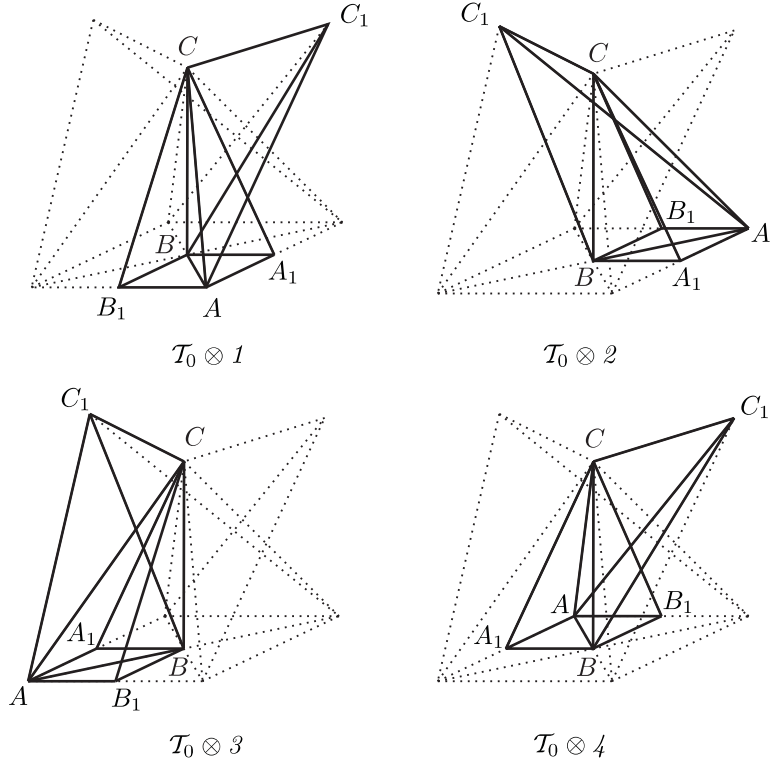}
\caption{Pasting together $\T_1$}\label{fig:covcomplexn}
\end{figure}

The elements of $\mathfrak{A}$ acting non-trivially on the first
level are $\alpha, a, a\alpha, \alpha c$, and $a\alpha\gamma$. The
first three elements have only trivial sections, hence they
produce only the identifications of the copies of $\T_0$ and no
maps $\kappa_{g, 1}$.

The element $\alpha c=\sigma(b\beta, b\beta, b, b)$ produces a
part of the map $\kappa_{b\beta, 1}$ switching $K_{\alpha c,
0}\times\one$ with $K_{\alpha c, 0}\times\two$ and a part of
$\kappa_{b, 1}$ switching $K_{\alpha c, 0}\times\three$ with
$K_{\alpha c, 0}\times\four$. But $K_{\alpha c, 0}$ is the segment
$A_1B$, for which we have identifications $A_1B\times\one\sim
A_1B\times\two$ and $A_1B\times\three\sim A_1B\times\four$ as
parts of the identifications $A_1BC\times i\sim
A_1BC\times\sigma(i)$.

The element $a\alpha\gamma=\pi\sigma(\beta, \unit, \unit, \beta)$
produces a part of the map $\kappa_{\beta, 1}$ switching
$K_{a\alpha\gamma, 0}\times\one=C_1B\times\one$ with
$K_{a\alpha\gamma, 0}\times\three=C_1B\times\three$. But these
sets are also identified in $\T_1$.

We see that all the maps $\kappa_{g, 1}$ are identical. Their
domains are
\begin{gather*}
K_{\alpha, 1}=B_1AC\otimes\{\one, \three\},\quad K_{\beta,
1}=C_1AB\otimes\{\one, \four\}, \quad K_{\gamma,
1}=B_1AC\otimes\{\two, \four\}\\
K_{a, 1}=C_1CA\otimes\one,\quad K_{c,
1}=A_1AC\otimes\{\three, \four\},\\
K_{b, 1}=A_1AB\otimes\{\three, \four\}\cup B_1BA\otimes\{\one,
\three\},\\
K_{a\alpha, 1}=A_1AC\otimes\{\one, \two\},\quad K_{c\gamma, 1}=C_1CA\otimes\four\\
K_{b\beta, 1}=A_1AB\otimes\{\one, \two\}\cup B_1BA\otimes\{\two,
\four\},\\
K_{a\alpha\gamma, 1}=C_1CA\otimes\two,\quad K_{\alpha c,
1}=C_1CA\otimes\three.
\end{gather*}

We have seen in Subsection~\ref{ss:sectionsofGs}
that for every $G\in\mathcal{G}$ and
every $x\in\alb$ there exists $H\in\mathcal{G}$ such that
$G|_x\subseteq H$.

For $G\in\{\G_A, \G_B, \G_C, \G_{A_1}, \G_{B_1}, \G_{C_1}\}$ and
for $x\in\alb$ denote by $I(G, x)$ the intersection of the groups
$H\in\mathcal{G}$ for which $G|_x\subseteq H$.

Denote for $G\in\{\G_A, \G_B, \G_C, \G_{A_1}, \G_{B_1},
\G_{C_1}\}$ and $g\in\G$:
\[I(G\cdot g, x)=I(G, g(x))\cdot g|_x.\]
Then $I(G\cdot g, x)$ is the intersection of the cosets $H\cdot
h$, for $H\in\mathcal{G}$ and $h\in\G$, containing the set
$(G\cdot g)|_x$. The maps $I(\cdot, x)$ satisfy the condition
\[I(U\cdot g, x)=I(U, g(x))\cdot g|_x,\]
for all cosets $U$ and for all $g\in\G$, $x\in\alb$.

Recall that $\Xi=\bigcup_{g\in\G}\T_0\cdot g$, where $\T_0$ is a
union of three tetrahedra $A_1ABC$, $B_1ABC$, $C_1ABC$ with
vertices corresponding to the groups $\G_A$, $\G_B$, $\G_C$, $\G_{A_1}$,
$\G_{B_1}$, and $\G_{C_1}$. It is checked directly that for every
$x\in\alb$ the image of the set of vertices of each of these
tetrahedra under $I(\cdot, x)$ is a subset of one of the
tetrahedra (we will see this also in the geometric description of
the maps $I(\cdot, x)$ below). Consequently, we can extend
$I(\cdot, x)$ by linearity to the whole complex $\Xi$. In this way
we get continuous maps satisfying the condition
\[I(\xi\cdot g, x)=I(\xi, g(x))\cdot g|_x,\]
for all $\xi\in\Xi$, $g\in\G$, and $x\in\alb$.
Hence, the map $I(\xi\otimes x)=I(\xi, x)$ is a well
defined continuous equivariant map from $\Xi_1=\Xi\otimes\bim$ to $\Xi$.

Figure~\ref{fig:covcomplex} shows the complex $\T_1$ in the same
way as it is shown on Figure~\ref{fig:covcomplexn}, but with
vertices labeled by their images under the map $I$ (except for
$B_1'$, which is mapped to $B$). One tetrahedron ($B_1'A_1BC_1$ on
Figure~\ref{fig:covcomplex}) is collapsed by $I$ onto the diagonal
of the square pyramid (triangle $A_1BC_1$). The remaining part of
$\T_1$ is a union of three tetrahedra and is mapped by a locally
affine homeomorphism onto $\T_0$.

\begin{figure}
\centering
\includegraphics{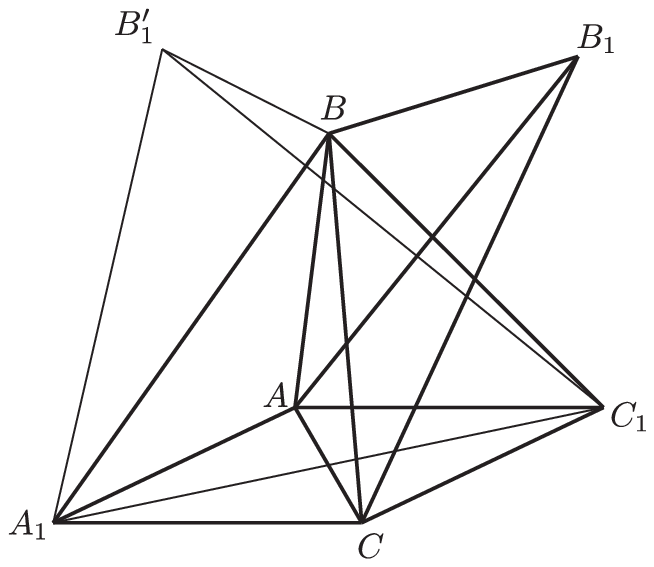}\\
\caption{The complex $\T_1$} \label{fig:covcomplex}
\end{figure}

We have the following formulae for $I$ (see
Figures~\ref{fig:complex}):
\begin{alignat*}{3}
I(A\otimes\one) &= C,   &\quad I(B\otimes\one) &= (AC), &\quad I(C\otimes\one) &= B,\\
I(A\otimes\two) &= C_1, &\quad I(B\otimes\two) &= (AC), &\quad I(C\otimes\two) &= B,\\
I(A\otimes\three) &= A_1, &\quad I(B\otimes\three) &= (AC), &\quad I(C\otimes\three) &= B,\\
I(A\otimes\four) &= A, &\quad I(B\otimes\four) &= (AC), &\quad
I(C\otimes\four) &= B,
\end{alignat*}
and
\begin{alignat*}{3}
I(A_1\otimes\one) &= (CC_1), &\quad I(B_1\otimes\one)
  &= (CA_1), &\quad I(C_1\otimes\one) &= B_1,\\
I(A_1\otimes\two) &=(CC_1), &\quad  I(B_1\otimes\two) &= (AC_1),
&\quad  I(C_1\otimes\two) &= B,\\
I(A_1\otimes\three) &= (AA_1), &\quad  I(B_1\otimes\three) &=
(CA_1), &\quad I(C_1\otimes\three) &= B,\\
I(A_1\otimes\four) &= (AA_1), &\quad I(B_1\otimes\four) &= (AC_1),
&\quad I(C_1\otimes\four) &= B_1,
\end{alignat*}
where $(XY)$ denotes the midpoint of the segment $XY$. The
vertices $X\otimes i$ are shown on Figure~\ref{fig:covcomplexn}: a
vertex $X\otimes i$ is labeled by $X$ on the part showing
$\T_0\otimes i$.

The map $I:\Xi\otimes\bim\arr\Xi$ is uniquely determined by its
restriction $I:\T_1\arr\T_0$ onto the fundamental domain, due to
equivariance.

Let us introduce a Euclidean structure on the complex $\Xi$ by
embedding the complex $\T_0$ into $\R^5$ in such a way that
\[\{\overrightarrow{BA},\quad\overrightarrow{BC},\quad\overrightarrow{AA_1},\quad\overrightarrow{BB_1},\quad
\overrightarrow{CC_1}\}\]
is the standard orthonormal basis of $\R^6$.

\begin{proposition}
\label{pr:Icontracting} The map $I:\Xi\otimes\bim\arr\Xi$ is
contracting.
\end{proposition}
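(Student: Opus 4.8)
The plan is to verify the contraction condition of the definition preceding Theorem~\ref{th:approximationlimg} directly, using the explicit formulas for $I$ computed above together with the Euclidean structure just fixed on $\Xi$. By equivariance it suffices to control the map $I:\T_1\arr\T_0$ on the fundamental domain $\T_1=\T_0\otimes\alb$, and then to iterate. The key point is that $I$ restricted to each affine piece $\T_0\otimes x$ of $\T_1$ is a locally affine map into $\T_0$, so its effect on distances is governed by the linear parts of these affine maps, i.e.\ by a finite collection of linear operators $L_{G,x}$ on $\R^6$ read off from the tables for $I(X\otimes i)$. I would first write down, for each of the three tetrahedra of $\T_0$ and each $x\in\alb$, the linear map sending the edge vectors of the tetrahedron at the appropriate vertex to the edge vectors of its image (using that the image of the vertex set of each tetrahedron lies in a single tetrahedron of $\T_0$, as noted in the text). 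Several of these images are degenerate — the tetrahedron $B_1'A_1BC_1$ collapses onto the triangle $A_1BC_1$ — so the corresponding operators have nontrivial kernel and in particular operator norm that must be checked against $1$.

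Next I would estimate the operator norms (equivalently, the largest singular values) of the finitely many linear maps $L_{G,x}$. Because the chosen basis $\{\overrightarrow{BA},\overrightarrow{BC},\overrightarrow{AA_1},\overrightarrow{BB_1},\overrightarrow{CC_1}\}$ (the text writes $\R^5$ but means $\R^6$, the sixth vector being implicit from the tetrahedral structure — I would state it cleanly) is orthonormal, each $L_{G,x}$ is an integer/half-integer matrix, and the midpoints $(XY)$ appearing in the formulas contribute factors of $1/2$ along several directions. The map visibly contracts the "fiber'' directions (those transverse to the base triangle $ABC$, e.g.\ $\overrightarrow{AA_1}$, $\overrightarrow{CC_1}$) by a factor $1/2$ in one step, since $A\otimes i$ and $C\otimes i$ all map to vertices or midpoints lying in or near the base. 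The directions along the base triangle, however, are \emph{not} contracted in a single step: for instance the square-pyramid part of $\T_1$ is folded in four, which along one diagonal is an isometry. So a one-step estimate gives $\lambda\le 1$ but not $\lambda<1$, and one genuinely needs to pass to a higher iterate $I^{(n)}$.

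The main obstacle, then, is exactly this: to find an $n$ and a $\lambda<1$ with $\|L_{G_1,x_1}L_{G_2,x_2}\cdots L_{G_n,x_n}\|\le\lambda$ for every admissible composition, i.e.\ to show the finite matrix semigroup generated by the $L_{G,x}$ has joint spectral radius strictly less than $1$. I would handle this combinatorially: track which tetrahedron-vertex of $\T_0$ a given base direction lands in after applying $I$, using the tables; show that along \emph{any} length-$n$ itinerary $x_1x_2\ldots x_n\in\alb^n$ (equivalently, any walk on the automaton $\mathfrak{A}$ of Figure~\ref{fig:automaton}), every direction must eventually be routed through a "fiber'' vertex $A_1$, $B_1$, or $C_1$, or through a midpoint, incurring a factor of $1/2$ — this is forced because the wreath recursion \eqref{eq:extension1}--\eqref{eq:extension5} has the property that sections of the generators acting nontrivially on $\alb$ are trivial, so one cannot stay on the base triangle forever without the section becoming a fiber element. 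A bounded number of such routings (independent of the itinerary, because the automaton is finite) gives a uniform bound $\lambda<1$ after $n$ steps for $n$ equal to that bound. Once the linear parts are controlled, the full map $I^{(n)}$ satisfies $d(I^{(n)}(\xi_1\otimes v),I^{(n)}(\xi_2\otimes v))\le\lambda\,d(\xi_1,\xi_2)$ for all $v\in\bim^{\otimes n}$ by the mean value inequality applied piecewise along the affine cells, and the proposition follows. I expect the bookkeeping of itineraries — organizing the case analysis over the six subgroups $\G_A,\G_B,\G_C,\G_{A_1},\G_{B_1},\G_{C_1}$ and the four letters so that it is both complete and readable — to be the real labor here, rather than any single inequality.
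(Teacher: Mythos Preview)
Your overall strategy---compute the linear parts of the affine pieces and bound the norms of their products---is exactly right, and is what the paper does. But you are missing the structural observation that makes the computation short, and your intuition about which directions contract first is inverted.

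First, there are only four linear operators, not twelve: the map $\xi\mapsto I(\xi\otimes x)$ is affine on \emph{all} of $\T_0$ for each $x\in\alb$, not merely on each tetrahedron separately. So you get one $5\times 5$ matrix $\mathcal{I}_x$ per letter (and it really is $\R^5$; the ``$\R^6$'' in the text is a typo---five vectors are listed and the matrices are $5\times 5$). Second, each $\mathcal{I}_x$ is block lower-triangular, $\mathcal{I}_x=\bigl(\begin{smallmatrix}U_x & 0\\ W_x & V_x\end{smallmatrix}\bigr)$, where the $2\times 2$ block $U_x$ acts on the base directions $\overrightarrow{BA},\overrightarrow{BC}$ and the $3\times 3$ block $V_x$ acts on the fibre directions $\overrightarrow{AA_1},\overrightarrow{BB_1},\overrightarrow{CC_1}$. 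Every $U_x$ is $1/\sqrt2$ times an orthogonal matrix, so the base directions contract by $1/\sqrt2$ in a single step---the opposite of what you guessed. It is the fibre block that fails to contract in one step (e.g.\ $\mathcal{I}_\one$ sends $\overrightarrow{CC_1}$ isometrically to $\overrightarrow{BB_1}$), but one checks directly that $\|V_{x_1}V_{x_2}\|\le 1/\sqrt2$ for every pair, giving $\|V_{x_1}\cdots V_{x_n}\|\le C\,2^{-n/4}$. A triangular estimate on the product then bounds the off-diagonal block $W$ of the composite by $nC\,2^{-n/4}$, and the whole norm goes to zero.

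So the itinerary-tracking and automaton bookkeeping you anticipate as ``the real labor'' is entirely unnecessary: the block-triangular structure decouples the two problems, each block is handled by a one-line norm computation, and the coupling term is controlled by a geometric sum. The whole proof is a page of explicit $5\times5$ matrices followed by three inequalities.
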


\begin{proof}
The map $\xi\mapsto I(\xi\otimes x)$ is affine on $\T_0$ for every
$x\in\alb$. Let $\mathcal{I}_x$ be its linear part.

We have
\begin{gather*}
\mathcal{I}_\one(\overrightarrow{BA})  = \frac
12\overrightarrow{AC}= -\frac 12\overrightarrow{BA}+\frac
12\overrightarrow{BC},\quad
\mathcal{I}_\one(\overrightarrow{BC})  = -\frac 12\overrightarrow{BA}-\frac 12\overrightarrow{BC},\\
\mathcal{I}_\one(\overrightarrow{AA_1})= \frac
12\overrightarrow{CC_1},\quad
\mathcal{I}_\one(\overrightarrow{BB_1})= \frac
12\overrightarrow{AA_1},\quad
\mathcal{I}_\one(\overrightarrow{CC_1})= \overrightarrow{BB_1},
\end{gather*}
hence
\[\mathcal{I}_\one=\left(\begin{array}{ccccc}
-1/2 & -1/2 & 0   & 0   & 0\\
1/2  & -1/2  & 0   & 0   & 0\\
0    &  0   & 0   & 1/2 & 0\\
0    &  0   & 0   & 0   & 1\\
0    &  0   & 1/2 & 0   & 0
\end{array}\right).\]

We have
\begin{gather*}
\mathcal{I}_\two(\overrightarrow{BA})   = \frac 12\overrightarrow{AC_1}+\frac 12\overrightarrow{CC_1}=-\frac 12\overrightarrow{BA}+\frac 12\overrightarrow{BC}+\overrightarrow{CC_1}\\
\mathcal{I}_\two(\overrightarrow{BC})   = \frac 12\overrightarrow{AB}+\frac 12\overrightarrow{CB}=-\frac 12\overrightarrow{BA}-\frac 12\overrightarrow{BC}\\
\mathcal{I}_\two(\overrightarrow{AA_1}) = -\frac
12\overrightarrow{CC_1},\quad
\mathcal{I}_\two(\overrightarrow{BB_1}) = \frac
12\overrightarrow{CC_1},\quad
\mathcal{I}_\two(\overrightarrow{CC_1}) = \overrightarrow 0,
\end{gather*}
hence
\[
\mathcal{I}_\two=\left(\begin{array}{ccccc}
-1/2 & -1/2 & 0    & 0   & 0\\
1/2  & -1/2 & 0    & 0   & 0\\
0    & 0    & 0    & 0   & 0\\
0    & 0    & 0    & 0   & 0\\
1    & 0    & -1/2 & 1/2 & 0
\end{array}\right).
\]

The map $\mathcal{I}_\three$ acts by
\begin{gather*}
\mathcal{I}_\three(\overrightarrow{BA})   = \frac
12\overrightarrow{AA_1}+\frac 12\overrightarrow{CA_1}=\frac
12\overrightarrow{BA}-\frac
12\overrightarrow{BC}+\overrightarrow{AA_1},\quad
\mathcal{I}_\three(\overrightarrow{BC})   = -\frac 12\overrightarrow{BA}-\frac 12\overrightarrow{BC},\\
\mathcal{I}_\three(\overrightarrow{AA_1}) = -\frac
12\overrightarrow{AA_1},\quad
\mathcal{I}_\three(\overrightarrow{BB_1}) = \frac
12\overrightarrow{AA_1},\quad
\mathcal{I}_\three(\overrightarrow{CC_1}) = \overrightarrow 0,
\end{gather*}
hence
\[
\mathcal{I}_\three=\left(\begin{array}{ccccc}
1/2  & -1/2 & 0    & 0   & 0\\
-1/2 & -1/2 & 0    & 0   & 0\\
1    & 0    & -1/2 & 1/2 & 0\\
0    & 0    & 0    & 0   & 0\\
0    & 0    & 0    & 0   & 0
\end{array}\right).
\]

Finally,
\begin{gather*}
\mathcal{I}_\four(\overrightarrow{BA})   = \frac
12\overrightarrow{CA}=\frac 12\overrightarrow{BA}-\frac
12\overrightarrow{BC},\quad
\mathcal{I}_\four(\overrightarrow{BC})   = -\frac 12\overrightarrow{BA}-\frac 12\overrightarrow{BC},\\
\mathcal{I}_\four(\overrightarrow{AA_1}) = \frac
12\overrightarrow{AA_1},\quad
\mathcal{I}_\four(\overrightarrow{BB_1}) = \frac
12\overrightarrow{CC_1},\quad
\mathcal{I}_\four(\overrightarrow{CC_1}) = \overrightarrow{BB_1},
\end{gather*}
hence
\[
\mathcal{I}_\four=\left(\begin{array}{ccccc}
1/2  & -1/2 & 0   & 0   & 0\\
-1/2 & -1/2 & 0   & 0   & 0\\
0    & 0    & 1/2 & 0   & 0\\
0    & 0    & 0   & 0   & 1\\
0    & 0    & 0   & 1/2 & 0
\end{array}\right).
\]

We see that all matrices $\mathcal{I}_x$ are of the
block-triangular form $\left(\begin{array}{c|c}U_x & 0\\ \hline
W_x & V_x\end{array}\right)$, where $U_x$ and $V_x$ are of size
$2\times 2$ and $3\times 3$, respectively. For every vector $\vec
v$ and every $x\in\alb$ the Euclidean length $\|U_x\vec v\|$ is
equal to $\|v\|/\sqrt{2}$. Consequently, the norm of any product
$U_{x_1}U_{x_2}\cdots U_{x_n}$ of length $n$ is equal to
$2^{-n/2}$.

It is straightforward to check that for any two indices $x_1,
x_2\in\alb$ the norm of $V_{x_1}V_{x_2}$ is not more than
$1/\sqrt{2}$. Consequently, there is a constant $C$ such that norm
of $V_{x_1}V_{x_2}\cdots V_{x_n}$ is not more than $C2^{-n/4}$.
Norm of $W_x$ does not exceed $1$.

The product
$\mathcal{I}_{x_1}\mathcal{I}_{x_2}\cdots\mathcal{I}_{x_n}$ is of
the form $\left(\begin{array}{c|c}U & 0\\ \hline W &
V\end{array}\right)$, where
\[U=U_{x_1}U_{x_2}\cdots U_{x_n},\quad V=V_{x_1}V_{x_2}\cdots
V_{x_n},\] and
\[W=\sum_{k=1}^nV_{x_1}\cdots V_{x_{k-1}}W_{x_k}U_{x_{k+1}}\cdots U_{x_n}.\]
The norm of $W$ is estimated then as follows
\begin{multline*}\|W\|\le\sum_{k=1}^n\|V_{x_1}\cdots
V_{x_{k-1}}\|\cdot\|W_{x_k}\|\cdot\|U_{x_{k+1}}\cdots U_{x_n}\|\le\\
\sum_{k=1}^n C2^{-k/4}\cdot
2^{-(n-k)/2}\le\sum_{k=1}^nC2^{-k/4}\cdot 2^{-(n-k)/4}=nC2^{-n/4}.
\end{multline*}

It follows that the norm of the product
$\mathcal{I}_{x_1}\mathcal{I}_{x_2}\cdots\mathcal{I}_{x_n}$
uniformly converges to $0$ as $n$ goes to infinity. Consequently,
there exists $n$ such that the map $\xi\mapsto I^{(n)}(\xi\otimes
v)$ contracts all distances in $\Xi$ at least by $1/2$ for all
$v\in\alb^n$.
\end{proof}

\subsection{Complexes approximating the Julia set}
\label{ss:compleximgf}
Recall (see~\ref{ss:twoextension}) that $\img{f}$ is the index two
subgroup of $\G$ generated by $\alpha, \beta, \gamma, ab, bc$.
Then $\Xi$ is also a co-compact proper $\img{f}$-space.

Let $\bim$ and $\bim_f$ be the self-similarity bimodules of $\G$
and $\img{f}$, respectively. Since $\img{f}$ is a subgroup of
$\G$, the bimodule $\bim_f$ is a subset of $\bim$. Let
$\mathsf{Y}=\{\one, \two, \three\cdot b, \four\cdot b\}$ be the
common basis of these bimodules, corresponding to the wreath
recursion in Theorem~\ref{th:imgrecursion} defining $\img{f}$.

\begin{lemma}
The identical map $\Xi\times\mathsf{Y}^n\arr\Xi\times\mathsf{Y}^n$
induces a homeomorphism $\Xi\otimes_{\G}\bim^{\otimes
n}\arr\Xi\otimes_{\img{f}}\bim_f^{\otimes n}$.
\end{lemma}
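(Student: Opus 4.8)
The plan is to present both sides as quotients of the single space $\Xi\times\mathsf{Y}^n$ and then to check that the two resulting gluing relations agree. First I would record that, since $\mathsf{Y}$ is a right orbit transversal both for the $\G$-bimodule $\bim$ and for the $\img{f}$-bimodule $\bim_f$, its $n$-th power $\mathsf{Y}^n=\mathsf{Y}^{\otimes n}$ is a right orbit transversal both for $\bim^{\otimes n}$ over $\G$ and for $\bim_f^{\otimes n}$ over $\img{f}$. Hence the tautological maps $\Xi\times\mathsf{Y}^n\to\Xi\otimes_{\G}\bim^{\otimes n}$ and $\Xi\times\mathsf{Y}^n\to\Xi\otimes_{\img{f}}\bim_f^{\otimes n}$ are quotient maps (in the quotient topology, as $\mathsf{Y}^n\subset\bim^{\otimes n}$ is a tile), both compatible with the identical map of $\Xi\times\mathsf{Y}^n$. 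It therefore suffices to show that the two equivalence relations induced on $\Xi\times\mathsf{Y}^n$ coincide; then the identical map descends to the asserted homeomorphism.

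Both relations are described in the same way: $\xi\otimes v=\xi'\otimes v'$, with $v,v'\in\mathsf{Y}^n$, holds in $\Xi\otimes_{\G}\bim^{\otimes n}$ precisely when there is $g\in\G$ with $g(v)=v'$, $g|_v=\unit$ and $\xi'=\xi\cdot g^{-1}$ (here $g(v)\in\mathsf{Y}^n$ and $g|_v\in\G$ refer to the action of $\G$ on $\bim^{\otimes n}$ in the basis $\mathsf{Y}^n$), and the same holds for $\img{f}$ with $g$ restricted to $\img{f}$; because $\bim_f$ carries the restrictions of the two actions of $\bim$, the quantities $g(v)$ and $g|_v$ are computed identically in either bimodule. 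Thus the $\img{f}$-relation is contained in the $\G$-relation. For the reverse inclusion I would use the homomorphism $\epsilon\colon\G\to\Z/2$ whose kernel is $\img{f}$ (so $\epsilon$ counts, modulo $2$, the occurrences of $a,b,c$ in a word), and prove that any $g\in\G$ witnessing a $\G$-identification as above automatically lies in $\img{f}$; then that same $g$ witnesses the identification in $\Xi\otimes_{\img{f}}\bim_f^{\otimes n}$ and the two relations agree.

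The heart of the matter is the observation that, \emph{in the basis $\mathsf{Y}$}, the character $\epsilon$ is invariant under passing to sections: $\epsilon(g|_y)=\epsilon(g)$ for every $g\in\G$ and every $y\in\mathsf{Y}$. Granting this, iteration along the letters of $v$ gives $\epsilon(g|_v)=\epsilon(g)$ for every word $v$, so the condition $g|_v=\unit$ forces $\epsilon(g)=\epsilon(\unit)=0$, i.e. $g\in\img{f}$, which is exactly what is needed. To prove the observation, one notes that the property ``$\epsilon(g|_y)=\epsilon(g)$ for all $y$'' is preserved under multiplication, since $(gh)|_y=g|_{h(y)}\cdot h|_y$ and $\epsilon$ is a homomorphism; as the generators $\alpha,\beta,\gamma,a,b,c$ are involutions, it is enough to check it on these six elements. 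Conjugating the wreath recursion of $\G$ by $(\unit,\unit,b,b)$ as in Subsection~\ref{ss:twoextension} gives $\alpha=\sigma$, $\beta=(\alpha,\gamma,\alpha,\gamma^\beta)$, $\gamma=(\beta,\unit,\unit,\beta)$, $a=\pi(b,b,b,b)$, $b=(a\alpha,a\alpha,c^b,c^b)$, $c=(b\beta,b\beta,b,b)$, and one reads off that all sections of $\alpha,\beta,\gamma$ lie in $\img{f}$ while all sections of $a,b,c$ lie in $\G\setminus\img{f}$, matching $\epsilon$ of the generator in each case. It is essential that this be done in the basis $\mathsf{Y}$ and not in $\alb$: in the original basis $a$ has trivial sections, so $\epsilon$ is not section-invariant there, which is exactly why the change of basis used to realize $\img{f}$ as a self-similar group is the relevant one here.

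The only place where genuine computation enters is this last step — rewriting the recursion in the basis $\mathsf{Y}$ and checking the six generators — and I expect that short verification, together with keeping the change of basis straight, to be the main (and essentially the only) obstacle; everything else is formal manipulation of tensor products of permutational bimodules. Once the two equivalence relations on $\Xi\times\mathsf{Y}^n$ are seen to coincide, the two quotient spaces are literally the same, and the identical map of $\Xi\times\mathsf{Y}^n$ descends to a homeomorphism $\Xi\otimes_{\G}\bim^{\otimes n}\to\Xi\otimes_{\img{f}}\bim_f^{\otimes n}$.
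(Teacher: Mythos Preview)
Your proposal is correct and rests on the same core idea as the paper: both arguments show that any $g\in\G$ witnessing an identification (so with $g|_v=\unit$) must already lie in $\img{f}$, by checking that the parity character $\epsilon:\G\to\Z/2$ (counting occurrences of $a,b,c$) is invariant under taking sections. The packaging differs slightly. The paper first uses self-replication to reduce every point to the form $(\xi,\one^n)$ and then verifies section-invariance of $\epsilon$ only at the letter $\one$ in the original basis $\alb$ (the constraint $g(\one)=\one$ forces the number of $a$'s to be even, whence $\epsilon(g)=\epsilon(g|_\one)$). You instead stay with a general $v\in\mathsf{Y}^n$, pass to the basis $\mathsf{Y}$, and verify $\epsilon(g|_y)=\epsilon(g)$ for all four letters at once by rewriting the recursions for $a,b,c$ after conjugation by $(\unit,\unit,b,b)$. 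Your route trades the reduction-to-$\one^n$ step for the extra computation of those three individual recursions; note, however, that you are still tacitly using self-replication when you assert that $\Xi\times\mathsf{Y}^n\to\Xi\otimes_\G\bim^{\otimes n}$ is surjective (hence a quotient map), so this ingredient is not actually avoided.
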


\begin{proof}
Every element of $\Xi_n=\Xi\otimes_{\G}\bim^{\otimes n}$ can be
represented by $(\xi, \one^n)$ for some $\xi\in\Xi$, since the
group $\G$ is self-replicating (i.e., the left action of $\G$ on
$\bim$, and hence on $\bim^{\otimes n}$, is transitive). The same
is true for $\Xi\otimes_{\img{f}}\bim_f$. Two pairs $(\xi_1,
\one^n)$ and $(\xi_2, \one^n)$ represent the same point of
$\Xi\otimes_{\G}\bim^{\otimes n}$ (resp.\ of
$\Xi\otimes_{\img{f}}\bim_f^{\otimes n}$) if and only if there
exists $g\in\G$ (resp.\ $g\in\img{f}$) such that
\[\xi_1\cdot g=\xi_2,\quad g(\one^n)=\one^n,\quad g|_{\one^n}=\unit.\]

Denote by $K$ the kernel of the virtual endomorphism of $\G$
associated with the word $\one^n\in\alb^n$ (i.e., the subgroup of the
elements of $\G$ such that $g(\one^n)=\one^n$ and $g|_{\one^n}=\unit$). It is
sufficient to prove that $K<\img{f}$.

It follows from~\cite[Proposition~4.7]{nek:ssfamilies} that if a
product of the generators $\alpha, \beta, \gamma, a, b, c$ is
trivial in $\G$, then the numbers of occurrences of each of the
letters $a, b, c$ are even. Consequently, a product of the
generators of $\G$ is an element of $\img{f}$ if and only if the
total number of occurrences of the letters $a, b, c$ is even.

It follows from the wreath recursion defining $\G$ that the parity
of the total number of occurrences of the letters $a, b, c$ in $g$
is the same as in $g|_\one$, if $g(\one)=\one$. It follows that if
$g(\one^n)=\one^n$, then the total number of occurrences of the
letters $a, b, c$ in $g$ is the same as in $g|_{\one^n}$.
Consequently, if $g|_{\one^n}=\unit$, then $g\in\img{f}$.
\end{proof}

As a corollary of the lemma, we get that the map
$I:\Xi\otimes\bim\arr\Xi$ can be seen as an $\img{f}$-equivariant
map $I:\Xi\otimes\bim_f\arr\Xi$, and that the induced maps
$I_n:\Xi\otimes\bim_f^{\otimes (n+1)}\arr\Xi\otimes\bim_f^{\otimes
n}$ are the same as the maps $I_n:\Xi\otimes\bim^{\otimes
(n+1)}\arr\Xi\otimes\bim^{\otimes n}$.

Consequently, just restricting our construction to the index two
subgroup $\img{f}$ of $\G$ we get approximations
$\M_n=\Xi_n/\img{f}$ of the limit space $\lims[\img{f}]$. Namely,
the following theorem follows directly from
Corollary~\ref{cor:approximatinglims}.

\begin{theorem}
\label{th:JuliasetMn} Let $\iota_n:\M_{n+1}\arr\M_n$ and
$p_n:\M_{n+1}\arr\M_n$ be the maps induced by
$I_n:\Xi_{n+1}\arr\Xi_n$ and the correspondence $\xi\otimes
x\mapsto\xi$ for $x\in\bim_f$, respectively. Let $p_\infty$ be the
map induced by the maps $p_n$ on the inverse limit $\M_\infty$ of
the sequence
\[\M_0\stackrel{\iota_0}{\longleftarrow}\M_1\stackrel{\iota_1}{\longleftarrow}\M_2
\stackrel{\iota_2}{\longleftarrow}\cdots.\] Then the dynamical
system $(\M_\infty, p_\infty)$ is topologically conjugate to the
limit dynamical system of $\img{f}$, which is conjugate to the
action of $f$ on its Julia set (if $f$ is sub-hyperbolic).
\end{theorem}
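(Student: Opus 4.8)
The plan is to deduce the statement from Corollary~\ref{cor:approximatinglims} and Theorem~\ref{th:main}. The proof then breaks into three parts: (i) check that $(\img{f}, \mathsf{Y})$, the space $\Xi$, and the map $I$ satisfy the hypotheses of Theorem~\ref{th:approximationlimg}, and apply Corollary~\ref{cor:approximatinglims}; (ii) invoke Theorem~\ref{th:main}, conditionally on the conjecture that $f$ is sub-hyperbolic; and (iii) identify the set produced by Theorem~\ref{th:main} with the Julia set $J$.

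For (i) I would first note that $\img{f}$ is contracting: by Theorem~\ref{th:imgrecursion} and the discussion in Subsection~\ref{ss:twoextension} the subgroup $\img{f}$ is self-similar inside $\G$ with respect to the basis $\mathsf{Y}=\{\one, \two, \three\cdot b, \four\cdot b\}$, and since $\G$ is contracting with nucleus $\nuke$ (Proposition~\ref{pr:nucleusGamma}), the sections of any $g\in\img{f}$ eventually lie in $\nuke$ and, as $\img{f}$ is closed under taking sections, in the finite set $\nuke\cap\img{f}$. Next, $\Xi$ is a locally finite simplicial complex; the Euclidean structure of Subsection~\ref{ss:iota} is $\G$-invariant, hence $\img{f}$-invariant; and the right $\G$-action on $\Xi$ (a $\G$-invariant subcomplex of the Cayley-Rips complex of $\G$) is proper and co-compact, so passing to the index-two subgroup $\img{f}$ keeps it proper and co-compact, with $\M_n=\Xi_n/\img{f}$ a union of two copies of $\T_n$. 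Finally, by the preceding lemma (and its corollary) $I$ may be read as an $\img{f}$-equivariant map $\Xi\otimes\bim_f\arr\Xi$, and it is contracting by Proposition~\ref{pr:Icontracting}, whose proof is a computation with the linear parts of $\xi\mapsto I(\xi\otimes x)$ on $\T_0$ and does not refer to the ambient group. Theorem~\ref{th:approximationlimg} and Corollary~\ref{cor:approximatinglims} then produce a homeomorphism between $\lims[\img{f}]$ and the inverse limit $\M_\infty$ of the $\M_n$ that conjugates $\si$ to $p_\infty$; thus $(\M_\infty, p_\infty)$ is topologically conjugate to the limit dynamical system $(\lims[\img{f}], \si)$.

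For (ii)--(iii), assume $f$ is sub-hyperbolic, so that $f\colon\CP\setminus W\arr\CP\setminus V$ is a partial self-covering with finitely generated fundamental group $\pi_1(\CP\setminus V)=\langle\alpha, \beta, \gamma, s, t\rangle$ that is uniformly expanding near $J$; Theorem~\ref{th:main} then identifies $(\lims[\img{f}], \si)$ with the restriction of $f$ to the set $\Lambda$ of accumulation points of $\bigcup_{n\ge 0}f^{-n}(t)$. It remains to prove $\Lambda=J$. The base-point $t=(z_1, 2i)$ lies in $J$, since $w=2i$ is a fixed point of the second coordinate and $z_1$ a fixed point of $f_{2i}$ (conjugate to $z^2+i$, whose Julia set is a dendrite containing all its fixed points), so the orbit of $t$ never enters the basin $U$ of $[1:0:0]$; as $U=f^{-1}(U)$, the Julia set is completely invariant, $f^{-n}(t)\subset J$, and $\Lambda\subseteq J$. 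Conversely $\Lambda$ is a nonempty closed completely invariant subset of $J$, and it is infinite because the sets $\bigcup_{k\le n}f^{-k}(t)$ have unbounded cardinality; hence $\Lambda\supseteq J$ by minimality of $J$ among closed completely invariant sets, a fact that follows from the topological transitivity of $f|_J$, the density of its repelling cycles (Theorem~\ref{th:fornsibon}), and the Forn{\ae}ss--Sibony analysis in~\cite{fornsibon:crfin}. Since $J$ is perfect it equals its own set of accumulation points, so $\Lambda=J$, and composing the conjugacies gives $(\M_\infty, p_\infty)\cong(J, f|_J)$.

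The step I expect to be the real obstacle is (iii) together with the dependence on the conjecture: Theorem~\ref{th:main} only produces a conjugacy with $f$ on the closure of a single backward orbit, so one must still invoke the Forn{\ae}ss--Sibony description of $J$ to know that this closure is all of $J$ and is perfect; and, as noted in the introduction, the whole argument is conditional on sub-hyperbolicity, which is not proved.
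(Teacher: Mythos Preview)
Your proposal is correct and follows essentially the same approach as the paper: the paper states that the theorem ``follows directly from Corollary~\ref{cor:approximatinglims}'' after the preceding lemma identifies $\Xi\otimes_{\G}\bim^{\otimes n}$ with $\Xi\otimes_{\img{f}}\bim_f^{\otimes n}$, and invokes Theorem~\ref{th:main} for the identification with the Julia set. Your step~(iii) supplies an explicit argument (via Theorem~\ref{th:fornsibon} and minimality) that the paper leaves implicit in its citation of~\cite{fornsibon:crfin}.
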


The approximations $\M_n$ of the limit space $\lims[\img{f}]$
(i.e., of the Julia set of $f$, if Conjecture from
Section~\ref{s:thefunction} is true) can be constructed from the
complexes $\T_n$ in the following way.

\begin{proposition}
\label{pr:whMnpasting} Let $\til_n$ and $\kappa_{g, n}$ be as in
Theorem~\ref{th:pastingTn}. The complex $\M_n$ is obtained by
pasting two copies $\til_n$ and $\til_n\cdot a$ of $\til_n$ along
the sets $K_{a, n}$, $K_{b, n}$, $K_{c, n}$, $K_{a\alpha, n}$,
$K_{b\beta, n}$, $K_{c\gamma, n}$, $K_{a\alpha\gamma, n}$, and
$K_{\alpha c, n}$ by the action of the respective maps $\kappa_{g,
n}$ (i.e., identifying a point $\xi$ of one copy with the point
$\kappa_{g, n}(\xi)\cdot a$ in the other copy) and pasting the
sets $K_{\alpha, n}, K_{\beta, n}$, and $K_{\gamma, n}$ to
themselves (inside each of the copies) by the respective
$\kappa_{g, n}$.
\end{proposition}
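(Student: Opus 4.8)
The plan is to realise $\M_n=\Xi_n/\img{f}$ as the quotient of a fundamental domain for the $\img{f}$-action and then read the pasting maps off Proposition~\ref{prop:Kgn} and Lemma~\ref{lem:stabilizersA}. First I would record the coset structure: as recalled in the proof of the lemma preceding Theorem~\ref{th:JuliasetMn}, a word in $\alpha,\beta,\gamma,a,b,c$ represents an element of $\img{f}$ exactly when the total number of occurrences of $a,b,c$ in it is even; hence $\img{f}$ has index two in $\G$, $a\notin\img{f}$, and $\G=\img{f}\sqcup a\img{f}$. Since $\T_n$ is a fundamental domain for the right action of $\G$ on $\Xi_n$, the set $\T_n\cup\T_n\cdot a$ is then a fundamental domain for the action of $\img{f}$ (using $\{\unit,a\}$ as coset representatives), so $\M_n$ is the quotient of the disjoint union $\til_n\sqcup(\til_n\cdot a)$, with structure maps $\xi\mapsto\xi$ and $\xi\mapsto\xi\cdot a$ into $\Xi_n$, by the relation ``$p\sim q$ iff the images of $p$ and $q$ in $\Xi_n$ lie in the same $\img{f}$-orbit''. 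It then remains to show that $\sim$ is exactly the equivalence relation generated by the pastings in the statement.

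Next I would classify the non-trivial pairs $p\sim q$. If $p=\xi$ and $q=\eta$ lie in the same copy of $\til_n$ with $\eta=\xi g$, $g\in\img{f}$, then $\eta\in\T_n\cap\T_n\cdot g=K_{g,n}$, so $g\in\nuke$ by Proposition~\ref{prop:Kgn} and $\xi=\kappa_{g,n}(\eta)$; by Lemma~\ref{lem:stabilizersA} the identification is trivial unless $g\in\mathfrak{A}$, and the only non-identity elements of $\mathfrak{A}\cap\img{f}$ are $\alpha,\beta,\gamma$, so these pairs are precisely the self-pastings of $K_{\alpha,n},K_{\beta,n},K_{\gamma,n}$ inside that copy (for the copy $\til_n\cdot a$ one conjugates by the involution $a$, which normalises $\img{f}$, reaching the same conclusion). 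If $p=\xi$ lies in $\til_n$ and $q=\eta\cdot a$ in $\til_n\cdot a$ with $\eta\cdot a=\xi g$, $g\in\img{f}$, set $g'=ag^{-1}\in\nuke\cap a\img{f}$ (the set $K_{g',n}$ must be non-empty); then $\eta=\kappa_{g',n}(\xi)$ and $\xi\in K_{g',n}$, so the pair is $\{\xi,\ \kappa_{g',n}(\xi)\cdot a\}$. For $g'\in\mathfrak{A}$ these elements are exactly the eight states $a,b,c,a\alpha,b\beta,c\gamma,a\alpha\gamma,\alpha c$, and we recover the eight inter-copy pastings listed.

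The only point that needs a real argument is a between-copy pair with $g'\in\nuke\setminus\mathfrak{A}$, and this is where I expect the main obstacle to be, though it follows quickly from Lemma~\ref{lem:stabilizersA}. In that case $\kappa_{g',n}(\xi)=\xi$, so $g'\in\G_\xi$, and the lemma tells us $\G_\xi$ is generated by a subset $S$ of $\mathfrak{A}$. Writing $g'$ as a product of elements of $S$ and counting occurrences of $a,b,c$ modulo $2$ shows that $S$ contains at least one element $g''$ with an odd number of such occurrences, i.e.\ one of the eight states above; then $g''\in\G_\xi$, so $\xi\in K_{g'',n}$ with $\kappa_{g'',n}(\xi)=\xi$, and the $g''$-pasting already identifies $\xi$ in $\til_n$ with $\kappa_{g'',n}(\xi)\cdot a=\xi\cdot a=\eta\cdot a$. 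Conversely each listed pasting is itself an instance of $\sim$: in the within-copy case the two points differ by $\alpha,\beta$ or $\gamma$, all in $\img{f}$; in the between-copy case $\kappa_{g,n}(\xi)\cdot a=\xi g^{-1}a$ differs from $\xi$ by $g^{-1}a$, which lies in $\img{f}$ since $g\in a\img{f}$ and $a$ is an involution. Hence $\sim$ coincides with the equivalence relation generated by the described pastings, and $\M_n$ is their quotient, as claimed. The only delicate parts of the write-up are the fundamental-domain bookkeeping and this reduction of the $\nuke\setminus\mathfrak{A}$ case; the rest is a direct transcription of Proposition~\ref{prop:Kgn} and Theorem~\ref{th:pastingTn}.
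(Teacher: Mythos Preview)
Your proposal is correct and follows essentially the same approach as the paper's proof: take $\T_n\cup\T_n\cdot a$ as a fundamental domain for the $\img{f}$-action on $\Xi_n$, use Proposition~\ref{prop:Kgn} to reduce all identifications to elements of $\nuke$, and then use Lemma~\ref{lem:stabilizersA} to reduce further to elements of $\mathfrak{A}$, splitting according to the parity of the number of $a,b,c$ factors. Your case split (on whether $g'\in\mathfrak{A}$ or $g'\in\nuke\setminus\mathfrak{A}$) is equivalent to the paper's split (on whether $\xi_1=\xi_2$), and your explicit check of the converse direction is a small addition the paper leaves implicit.
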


\begin{proof}
The set $\T_0\cup\T_0\cdot a\subset\Xi$ is a fundamental domain of
the action of $\img{f}$ on $\Xi$. Consequently, the orbispace
$\M_n$ is obtained by identifying in the union $\T_0\cup\T_0\cdot
a$ any two points belonging to one $\img{f}$-orbit. Two different
points $\xi_1, \xi_2\in\T_0\cdot\unit$ belong to one
$\img{f}$-orbit if and only if there exists $g\in\nuke\cap\img{f}$
such that $\xi_1\cdot g=\xi_2$. By Lemma~\ref{lem:stabilizersA},
$g$ belongs to $\mathfrak{A}$. But $\{\alpha, \beta, \gamma\}$ are
the only elements of $\mathfrak{A}\cap\img{f}$. Two points
$\xi_1\cdot a, \xi_2\cdot a\in\T_0\cdot a$ belong to one
$\img{f}$-orbit if and only if $\xi_1, \xi_2\in\T_0$ belong to one
$\img{f}$-orbit. We have proved that two points inside one of the
copies of $\T_0$ are identified in $\M_n$ if and only if they are
either equal or are obtained from each other by application of one
of the transformations $\kappa_{\alpha, n}$, $\kappa_{\beta, n}$,
or $\kappa_{\gamma, n}$.

Suppose that $\xi_1\in\T_0\cdot\unit$ and $\xi_2\cdot
a\in\T_0\cdot a$ belong to one $\img{f}$-orbit, i.e., that
$\xi_1\cdot g=\xi_2\cdot a$ for some $g\in\img{f}$.

If $\xi_1=\xi_2$, then $ga$ belongs to the stabilizer $\G_{\xi_1}$
of $\xi_1$, which by Lemma~\ref{lem:stabilizersA} is generated by
elements of $\mathfrak{A}$. Since $ga\notin\img{f}$, one of these
generators $h$ does not belong to $\img{f}$. Then
$\xi_1=\xi_2\cdot h$ for $h\in\mathfrak{A}\setminus\img{f}$.

If $\xi_1\ne\xi_2$, then $h=ga\in\mathfrak{A}$ by
Lemma~\ref{lem:stabilizersA}, and we again have $\xi_1=\xi_2\cdot
h$ for $h\in\mathfrak{A}\setminus\img{f}$. Consequently, two
points belonging to different copies of $\T_0$ are identified by
transformations $\kappa_{g, n}$ for
$g\in\mathfrak{A}\setminus\img{f}$.
\end{proof}

In particular, the space $\M=\Xi/\img{f}$ is obtained by taking
two copies $\T_0$ and $\T_0\cdot a$ of $\T_0$ and pasting them
together by the maps $\xi\mapsto\xi\cdot a$ along $A_1AB$,
$A_1AC$, $B_1BA$, $B_1BC$, $C_1CA$, $C_1CB$. We get in this way
three solid balls (with surfaces equal to doubles of the faces
$A_1BC$, $AB_1C$ and $ABC_1$) with a common spherical hole (whose
surface is double of the triangle $ABC$). See a schematic diagram
of the complex on Figure~\ref{fig:complex2}.

\begin{figure}\centering
\includegraphics{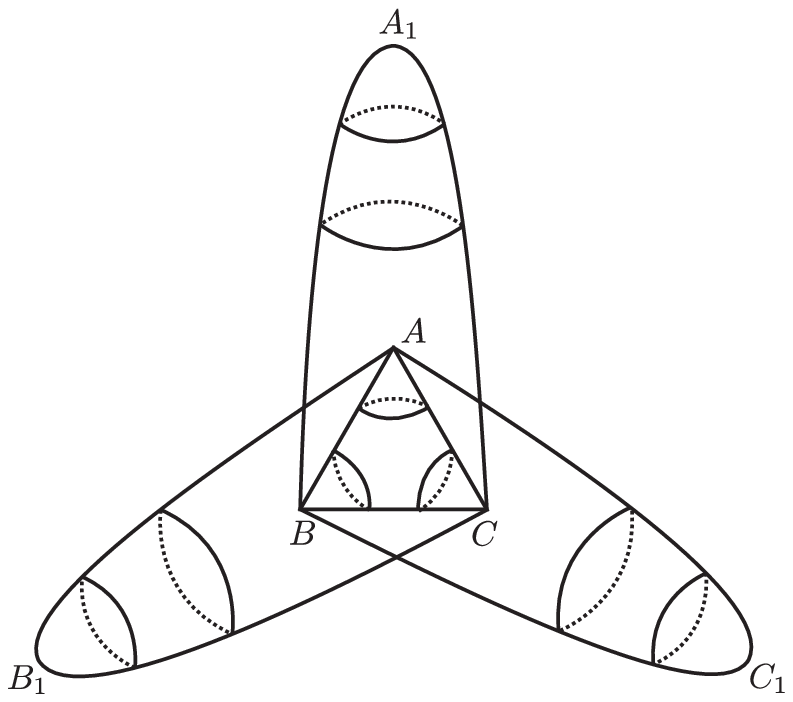}\\
\caption{Complex $\M=\Xi/\img{f}$} \label{fig:complex2}
\end{figure}

The covering $p_n:\M_{n+1}\arr\M_n$ is induced by the
correspondence $(\xi\otimes x)\mapsto\xi$, where
$\xi\in\Xi_n=\Xi\otimes\bim_f^{\otimes n}$ and $x\in\bim_f$.

We will denote by $[\xi]$ the image of $\xi\in\Xi_n$ in $\M_n$,
i.e., the $\img{f}$-orbit of $\xi$.

\begin{proposition}
\label{pr:coveringhatmn} The covering $p_n:\M_{n+1}\arr\M_n$ acts
by the rule
\begin{alignat*}{2}
p_n([\xi\otimes\one]) &=[\xi], &\quad p_n([\xi\otimes\two]) &= [\xi],\\
p_n([\xi\otimes\three]) &= [\xi\cdot a], &\quad
p_n([\xi\otimes\four]) &= [\xi\cdot a],
\end{alignat*} and
\begin{alignat*}{2}
p_n([\xi\otimes\one\cdot a]) &= [\xi\cdot a], &\quad
p_n([\xi\otimes\two\cdot a]) &= [\xi\cdot a],\\
p_n([\xi\otimes\three]) &= [\xi], &\quad p_n([\xi\otimes\four]) &=
[\xi].
\end{alignat*}
\end{proposition}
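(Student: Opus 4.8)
The plan is to unwind the definition of the covering map $p_n$ together with the pasting description of $\M_n$ from Proposition~\ref{pr:whMnpasting}, using the chosen basis $\mathsf{Y}=\{\one,\two,\three\cdot b,\four\cdot b\}$ of the bimodule $\bim_f$. Recall that $p_n:\M_{n+1}\arr\M_n$ is induced on orbispaces by the correspondence $\xi\otimes x\mapsto\xi$ for $\xi\in\Xi_n$ and $x\in\bim_f$. Concretely, a point of $\M_{n+1}$ is an $\img{f}$-orbit $[\eta]$ with $\eta\in\Xi_{n+1}=\Xi_n\otimes\bim_f$; we may write $\eta=\xi\otimes x$ with $\xi\in\Xi_n$ and $x$ ranging over the basis $\mathsf{Y}$, and then $p_n([\xi\otimes x])=[\xi]$. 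So the whole content of the proposition is to express $[\xi\otimes x]$, for each of the four basis letters and for $\xi$ lying in one of the two copies $\T_n$, $\T_n\cdot a$ of the fundamental domain, in the form $[\xi'\otimes y]$ with $\xi'$ again in one of these two copies and $y$ a basis letter, so that $p_n$ reads off $[\xi']$.

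First I would fix representatives. By Proposition~\ref{pr:whMnpasting} (and the remark that $\T_0\cup\T_0\cdot a$ is a fundamental domain for $\img{f}$ acting on $\Xi$), every point of $\M_n$ has a representative $\xi$ with $\xi\in\T_0$ or $\xi\in\T_0\cdot a$; I will write these as $[\xi]$ and $[\xi\cdot a]$ respectively with $\xi\in\T_0$. Now take a basis letter of $\bim_f$. For $\one$ and $\two$ there is nothing to do: $\one,\two\in\alb$ already, so $\xi\otimes\one$ and $\xi\otimes\two$ are literally points of $\Xi_{n+1}$ over $\xi$, giving $p_n([\xi\otimes\one])=[\xi]$ and $p_n([\xi\otimes\two])=[\xi]$; if the representative is $\xi\cdot a$ instead, we get $[\xi\cdot a]$. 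For the letters $\three\cdot b$ and $\four\cdot b$ one uses the tensor-product identification $\xi\otimes(\three\cdot b)=\xi\cdot\, ?$, but more directly: in $\Xi_{n+1}=\Xi\otimes_{\img f}\bim_f^{\otimes(n+1)}$ we have $\xi\otimes(\three\cdot b)$, and applying the defining relation of the tensor product, this is the same point as $(\xi\cdot g)\otimes\three$ only after moving $b$ across — but $b\notin\img{f}$, so $b$ cannot be absorbed on the $\Xi$-side within $\img f$. The correct move is to observe that $b$ \emph{does} lie in $\G$, and $\Xi_n$ is the \emph{same} space whether we tensor over $\G$ or over $\img f$ (by the Lemma just before Theorem~\ref{th:JuliasetMn}); so I work in $\Xi\otimes_{\G}\bim^{\otimes(n+1)}$, where $\xi\otimes(\three\cdot b)=\xi\otimes\three\cdot b=(\xi)\otimes\three$ after pushing $b$ to the right past the last tensor factor — i.e. $\xi\otimes(\three b)$ is the class of $(\xi,\three)$ acted on the right by $b$, which in $\Xi_{n+1}$ equals $(\xi\cdot b)\otimes\three$. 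Then the $\img f$-orbit of $\xi\cdot b$ is computed: $b=c\gamma\cdot\gamma c$ or, more simply, one checks from the relations $\alpha^b=\alpha$, $\beta^b=\beta$, $\gamma^b=\gamma^\beta$ that $b$ normalizes $\img f=\langle\alpha,\beta,\gamma,ab,bc\rangle$, and $\T_0\cdot b$ lands in $\T_0\cdot a$ (since $ba\in\img f$, as both $a,b\notin\img f$), so $[\xi\cdot b]=[\xi\cdot a]$. Hence $p_n([\xi\otimes(\three b)])=[\xi\cdot b]=[\xi\cdot a]$, and similarly for $\four\cdot b$; starting from a representative $\xi\cdot a$ in the other copy gives $[\xi\cdot a\cdot b]=[\xi\cdot ab]=[\xi]$ since $ab\in\img f$. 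This is exactly the table in the statement.

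\textbf{Main obstacle.} The delicate point is the bookkeeping with the two bases and the two groups: $\mathsf{Y}=\{\one,\two,\three\cdot b,\four\cdot b\}$ is the basis of $\bim_f$, but the complexes $\T_n$ and the maps $\kappa_{g,n}$ were built using the $\G$-basis $\alb=\{\one,\two,\three,\four\}$ and the $\G$-tensor powers. I would therefore state carefully, at the start of the proof, the identification $\Xi\otimes_{\img f}\bim_f^{\otimes n}\cong\Xi\otimes_{\G}\bim^{\otimes n}$ from the Lemma preceding Theorem~\ref{th:JuliasetMn}, note that under it the $\img f$-basis letter $\three\cdot b$ corresponds to $\three$ followed by a right twist by $b\in\G$, and then the whole computation reduces to: (i) pushing $b$ past the last tensor factor to act on the $\Xi$-factor, which is legitimate because $b\in\G$ even though $b\notin\img f$; and (ii) recognising that right multiplication by $b$ (respectively by $ab$) carries the fundamental copy $\T_0$ to the other copy $\T_0\cdot a$ (respectively back to $\T_0$), which follows from $ab\in\img f$ and $b\notin\img f$. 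Once these two facts are in place, each of the eight formulae is immediate. I expect essentially no computational content beyond this; the only thing to be careful about is not to conflate the $\img f$-orbit equivalence with the $\G$-orbit equivalence, which is why the parity criterion for membership in $\img f$ (number of occurrences of $a,b,c$ even) from the Lemma is the right tool to confirm $ba\notin\img f$ is false, i.e. $ba\in\img f$.
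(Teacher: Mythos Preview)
Your overall plan is the same as the paper's: rewrite each point of $\M_{n+1}$ in the form $[\xi'\otimes y]$ with $y\in\mathsf Y=\{\one,\two,\three\cdot b,\four\cdot b\}$, so that $p_n$ reads off $[\xi']$, and then use $ab,ac\in\img f$ to identify $[\xi\cdot b]=[\xi\cdot c]=[\xi\cdot a]$. That part is fine.

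The execution, however, has a genuine error. You write $\xi\otimes(\three\cdot b)=(\xi\cdot b)\otimes\three$, justifying it as ``pushing $b$ past the last tensor factor to act on the $\Xi$-factor''. This is not a valid move: the defining relation of the tensor product is $\xi\cdot g\otimes m=\xi\otimes g\cdot m$, which transfers $g$ between the \emph{right} of $\xi$ and the \emph{left} of $m$. A right factor on $m$ stays on the right; indeed $(\xi\otimes\three)\cdot b=\xi\otimes(\three\cdot b)\ne(\xi\cdot b)\otimes\three$ in general (the latter equals $\xi\otimes b\cdot\three=\xi\otimes\three\cdot c$, since $b|_\three=c$). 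The paper's computation goes the other way: one looks for $g\in\G$ with $g\cdot\three=\three\cdot b$, and the wreath recursion $c=(b\beta,b\beta,b,b)$ gives $c\cdot\three=\three\cdot b$, hence $\xi\otimes\three=(\xi\cdot c)\otimes(\three\cdot b)$ and $p_n([\xi\otimes\three])=[\xi\cdot c]=[\xi\cdot a]$. Similarly for $\four$.

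There is a second confusion. In the second block of the proposition, $[\xi\otimes\one\cdot a]$ means $[(\xi\otimes\one)\cdot a]$, i.e.\ the right $a$-translate inside $\Xi_{n+1}$ (these are the points of the second copy $\T_{n+1}\cdot a$); it does \emph{not} mean $[(\xi\cdot a)\otimes\one]$, which is a different point (equal to $[\xi\otimes\three]$, since $a\cdot\one=\three$). So ``replace $\xi$ by $\xi\cdot a$'' does not handle these cases. The paper instead finds, for each, an element $h\in\G$ with $h\cdot y=i\cdot a$ for some $y\in\mathsf Y$: e.g.\ $b\beta\cdot\one=\one\cdot a$ gives $\xi\otimes\one\cdot a=(\xi\cdot b\beta)\otimes\one$, so $p_n([\xi\otimes\one\cdot a])=[\xi\cdot b\beta]=[\xi\cdot a]$. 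The pattern is always the same: use the wreath recursion to produce the needed left-action identity, then pass to the $\img f$-orbit using $ab,ac\in\img f$.
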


\begin{proof}
We have for $\xi\in\Xi_n$:
\[\xi\otimes\three=\xi\cdot c\otimes\three\cdot b,\]
hence $p_n([\xi\otimes\three])=[\xi\cdot c]=[\xi\cdot a]$.
Similarly,
\[\xi\otimes\four=\xi\cdot c\otimes\four\cdot b,\]
implies that $p_n([\xi\otimes\four])=[\xi\cdot a]$.

Since
\[\xi\otimes\one\cdot a=\xi\cdot b\beta\otimes\one,
\]
$p_n([\xi\otimes\one\cdot a])=[\xi\cdot b\beta]=[\xi\cdot a]$.
Similarly, since
\[\xi\otimes\two\cdot a=\xi\cdot b\alpha\beta\alpha\otimes\two,\]
$p_n([\xi\otimes\two\cdot a])=[\xi\cdot
b\alpha\beta\alpha]=[\xi\cdot a]$.

Equalities
\[\xi\otimes\three\cdot a=\xi\otimes\three\cdot b\cdot ba,\quad\xi\otimes\four\cdot a=
\xi\otimes\four\cdot b\cdot ba\] show that $p_n([\xi\otimes i\cdot
a])=[\xi]$ for $i\in\{\three, \four\}$.
\end{proof}

Equivariance of $I_n:\Xi_{n+1}\arr\Xi_n$ and the fact that
$I_n(\T_{n+1})=\T_n$ imply that for any $\xi\in\T_{n+1}$
\[\iota_n([\xi])=[I_n(\xi)],\qquad\iota_n([\xi\cdot a])=[I_n(\xi)\cdot a].\]

\subsection{Spaces $\M_n$ as subsets of the Julia set}

Restriction of the map $I:\T_1\arr\T_0$ onto closure of
$\T_1\setminus (C_1ABC\otimes\{\two, \three\})$ is a homeomorphism
(see Figures~\ref{fig:covcomplexn},~\ref{fig:covcomplex} and
definition of $I$ in Subsection~\ref{ss:iota}). Let
$\Theta:\T_0\arr\T_1$ be its inverse.

It is checked directly that for every $g\in\mathfrak{A}$ we have
$\Theta(K_{g, 0})\subseteq K_{g, 1}$. Suppose that $\xi\cdot
g_1=\xi\cdot g_2$ for $g_1, g_2\in\G$ and $\xi\in\T_0$. Then
$\xi=\xi\cdot g_2g_1^{-1}$, hence $g_2g_1^{-1}$ belongs to the
stabilizer $\G_\xi$ of $\xi$. The stabilizer of $\Theta(\xi)$
contains $\G_\xi$, by Lemma~\ref{lem:stabilizersA}, since
$\Theta(K_{g, 0})\subseteq K_{g, 1}$ and all transformations
$\kappa_{g, 1}$ act trivially. Consequently, $\Theta(\xi)\cdot
g_1=\Theta(\xi)\cdot g_2$.

We have proved that $\Theta$ can be extended by the rule
$\Theta(\xi\cdot g)=\Theta(\xi)\cdot g$ to a $\G$-equivariant map
$\Theta:\Xi\arr\Xi\otimes\bim$. It will be a section of the map
$I:\Xi\otimes\bim\arr\Xi$, i.e., $I\circ\Theta:\Xi\arr\Xi$ is
identical.

Define
\[\Theta_n(\xi\otimes v)=\Theta(\xi)\otimes v\]
for $\xi\in\Xi$ and $v\in\bim^{\otimes n}$. The map $\Theta_n$ is
well defined by equivariance of $\Theta$. We get hence a sequence
$\Theta_n:\Xi_n\arr\Xi_{n+1}$ of sections of the maps
$I_n:\Xi_{n+1}\arr\Xi_n$.

Denote by $\theta_n:\M_n\arr\M_{n+1}$ the maps induced by
$\Theta_n$ on the orbispaces $\M_n=\Xi_n/\img{f}$. The maps
$\theta_n$ are sections of the maps $\iota_n:\M_{n+1}\arr\M_n$.

We get hence natural homeomorphism $\wt\theta_n$ of the space
$\M_n$ with a subset of the inverse limit
$\M_\infty\approx\lims[\img{f}]$. It is the limit of the maps
$\theta_{n+k}\circ\theta_{n+k-1}\circ\cdots\circ\theta_n:\M_n\arr\M_{n+k+1}$
as $k\to\infty$.

The next theorem is now straightforward.

\begin{theorem}
\label{th:wttheta} Let $\wt\theta_n:\M_n\arr\M_\infty$ be the
limit of the maps $\theta_{n+k}\circ\cdots\circ\theta_n$. Then
\[\wt\theta_{n+1}(\M_{n+1})\supset\wt\theta_n(\M_n),\quad p_\infty^{-1}(\wt\theta_n(\M_n))=\wt\theta_{n+1}(\M_{n+1}),\quad
p_n={\wt\theta}^{-1}_n\circ p_\infty\circ\wt\theta_{n+1},\] and
the set $\bigcup_{n\ge 1}\wt\theta_n(\M_n)$ is dense in
$\M_\infty$.
\end{theorem}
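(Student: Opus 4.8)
The plan is to argue entirely inside the inverse limit $\M_\infty=\varprojlim(\M_n,\iota_n)$, whose points are the coherent sequences $z=(z_m)_{m\ge0}$, $z_m\in\M_m$, with $\iota_m(z_{m+1})=z_m$; write $\mathrm{pr}_m\colon\M_\infty\to\M_m$ for the projections, so that $p_\infty$ is characterized by $\mathrm{pr}_m\circ p_\infty=p_m\circ\mathrm{pr}_{m+1}$. I will use only the following formal facts: (i) $\iota_m\circ\theta_m=\mathrm{id}_{\M_m}$, since the $\theta_m$ are sections; (ii) the commutations
\[\iota_m\circ p_{m+1}=p_m\circ\iota_{m+1},\qquad\theta_m\circ p_m=p_{m+1}\circ\theta_{m+1},\]
which hold because $I$ --- hence $\iota_m$ and its section $\theta_m$ --- only alters the leading factor $\Xi\otimes\bim$ of a tensor, whereas $p_n$ only deletes the last letter, so the two operations act on opposite ends of $\bim^{\otimes\bullet}$; (iii) the telescoping identity $\wt\theta_n=\wt\theta_{n+1}\circ\theta_n$, immediate from $\wt\theta_n=\lim_k\theta_{n+k}\circ\cdots\circ\theta_{n+1}\circ\theta_n$; and (iv) that $\wt\theta_n$ is a homeomorphism onto its image (continuous, injective with left inverse $\mathrm{pr}_n$, domain compact, target Hausdorff). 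From (i) and (iii) one reads off the coordinates of $\wt\theta_n(y)$: $\mathrm{pr}_m(\wt\theta_n(y))=\theta_{m-1}\circ\cdots\circ\theta_n(y)$ for $m\ge n$, and $\mathrm{pr}_m(\wt\theta_n(y))=\iota_m\circ\cdots\circ\iota_{n-1}(y)$ for $m\le n$; in particular $z=\wt\theta_{n+1}(z_{n+1})$ if and only if $z_{j+1}=\theta_j(z_j)$ for all $j\ge n+1$.

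The inclusion $\wt\theta_{n+1}(\M_{n+1})\supset\wt\theta_n(\M_n)$ is then immediate from (iii). Next I would prove the intertwining $\wt\theta_n\circ p_n=p_\infty\circ\wt\theta_{n+1}$: since both sides land in the inverse limit, it suffices to compare $\mathrm{pr}_m$ of each, and, using the coordinate formula for $\wt\theta_\bullet$ together with the two relations in (ii), this collapses by telescoping to an identity one checks separately for $m<n$, $m=n$, $m>n$. Granting it, the image of $p_\infty\circ\wt\theta_{n+1}$ is $\wt\theta_n(p_n(\M_{n+1}))=\wt\theta_n(\M_n)$ (surjectivity of $p_n$ is clear from Proposition~\ref{pr:coveringhatmn}), which already gives $p_\infty^{-1}(\wt\theta_n(\M_n))\supseteq\wt\theta_{n+1}(\M_{n+1})$; and post-composing the intertwining with $\wt\theta_n^{-1}$, legitimate on that image by (iv), yields $p_n=\wt\theta_n^{-1}\circ p_\infty\circ\wt\theta_{n+1}$.

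The remaining inclusion $p_\infty^{-1}(\wt\theta_n(\M_n))\subseteq\wt\theta_{n+1}(\M_{n+1})$ is the heart of the matter. Let $z=(z_m)$ satisfy $p_\infty(z)\in\wt\theta_n(\M_n)$, say $p_\infty(z)=\wt\theta_n(y)$; applying $\mathrm{pr}_n$ gives $y=p_n(z_{n+1})$, and comparing $\mathrm{pr}_k$ of both sides for $k\ge n+1$ and substituting $\theta_{k-1}\circ p_{k-1}=p_k\circ\theta_k$ turns these equalities into $p_j(z_{j+1})=p_j(\theta_j(z_j))$ for every $j\ge n+1$. At the same time $\iota_j(z_{j+1})=z_j=\iota_j(\theta_j(z_j))$, the first equality by coherence of $z$, the second by (i). Thus $z_{j+1}$ and $\theta_j(z_j)$ have equal images under $\iota_j$ and under $p_j$, and I would finish by invoking that the pair $(\iota_j,p_j)\colon\M_{j+1}\to\M_j\times\M_j$ is injective --- equivalently, that $p_j$ is injective on each fiber of $\iota_j$ --- whence $z_{j+1}=\theta_j(z_j)$ for all $j\ge n+1$, i.e.\ $z=\wt\theta_{n+1}(z_{n+1})$. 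This injectivity is the step I expect to be the real obstacle: it is not formal and has to be extracted from the explicit data, namely the description of $\Xi$, of the bimodule $\bim_f$, and of $I$ in Subsection~\ref{ss:iota}, together with the formulas for $p_j$ in Proposition~\ref{pr:coveringhatmn}. The underlying reason it should hold is once more the ``opposite ends'' structure: on a representative $\xi\otimes x_1\otimes\cdots\otimes x_{j+1}$ the map $\iota_j$ modifies only $\xi$ and $x_1$ while $p_j$ deletes only $x_{j+1}$, so the images under $\iota_j$ and under $p_j$ control complementary parts of the data; the delicate point is to verify that, after passing to the $\img{f}$-quotient, these two parts still jointly determine $[\zeta]$, which should reduce to a direct check against the wreath recursion and the sections $G|_x$ tabulated in Subsection~\ref{ss:sectionsofGs}.

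Finally, for density of $\bigcup_{n\ge1}\wt\theta_n(\M_n)$ in $\M_\infty$: given $z\in\M_\infty$ and $N\ge1$, the point $\wt\theta_N(z_N)$ lies in $\wt\theta_N(\M_N)$ and, by the coordinate formula and coherence of $z$, agrees with $z$ in all coordinates of index $\le N$; hence $\wt\theta_N(z_N)\to z$ as $N\to\infty$ in the inverse-limit topology, and under the homeomorphism $\M_\infty\approx\lims[\img{f}]$ of Theorem~\ref{th:approximationlimg} this is convergence in $\lims[\img{f}]$ (the contraction of $I$ from Proposition~\ref{pr:Icontracting} makes the estimate quantitative). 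So the union is dense, which is the last of the four claims.
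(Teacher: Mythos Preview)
The paper offers no proof here beyond the remark ``The next theorem is now straightforward,'' so there is nothing to compare your argument against directly. Your formal setup (points (i)--(iv), the coordinate description of $\wt\theta_n$, the intertwining $\wt\theta_n\circ p_n=p_\infty\circ\wt\theta_{n+1}$, and density) is correct and is presumably what the author had in mind.

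The one place where you overshoot is the ``hard'' inclusion $p_\infty^{-1}(\wt\theta_n(\M_n))\subseteq\wt\theta_{n+1}(\M_{n+1})$. You reduce it to injectivity of $(\iota_j,p_j):\M_{j+1}\to\M_j\times\M_j$, flag this as the real obstacle, and propose to extract it from the explicit data. That detour is unnecessary, and the injectivity claim is at best nontrivial (recall that $\iota_0$ collapses an entire tetrahedron). The ``opposite ends'' intuition you invoke actually gives the inclusion directly, without passing through injectivity, as follows. Set $N_m=\theta_{m-1}(\M_{m-1})\subset\M_m$. From your coordinate formula and (i) one has
\[
\wt\theta_n(\M_n)=\{z\in\M_\infty : z_m\in N_m\ \text{for all}\ m>n\},
\]
since $z_m\in N_m$ means $z_m=\theta_{m-1}(y)$ for some $y$, and then $y=\iota_{m-1}(z_m)=z_{m-1}$. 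Now $\Theta(\Xi)\subset\Xi_1$ is $\G$-invariant (because $\Theta$ was extended equivariantly), so $N_m$ is the image in $\M_m$ of the $\G$-invariant set $\Theta(\Xi)\otimes\bim^{\otimes(m-1)}$; membership in $N_m$ therefore depends only on the $\Xi_1$-component of a representative. Since $p_m$ deletes the last tensor factor and leaves the $\Xi_1$-component untouched (the $a$-twist in Proposition~\ref{pr:coveringhatmn} is harmless by $\G$-invariance), one gets $p_m^{-1}(N_m)=N_{m+1}$ immediately. Hence
\[
p_\infty(z)\in\wt\theta_n(\M_n)\iff p_m(z_{m+1})\in N_m\ \forall m>n\iff z_{m+1}\in N_{m+1}\ \forall m>n\iff z\in\wt\theta_{n+1}(\M_{n+1}),
\]
which is exactly the desired equality. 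This is the ``straightforward'' argument; your route via joint injectivity of $(\iota_j,p_j)$ would work if that injectivity holds, but it is an avoidable complication.
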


We will give later a natural description of the sets
$\wt\theta_n(\M_n)$ as subsets of the Julia set of $f$.

\section{Skew product decomposition}
\label{s:tripods}
\subsection{The projection $(z, w)\mapsto w$}
\label{ss:zpp} Projection $(z, w)\mapsto w$ is a semicojugacy of
$f$ with the rational function $\wh{f}=(1-2/w)^2$. By
functoriality of the iterated monodromy groups
(see~\cite{nek:filling}), the projection induces a group
homomorphism
\[\nu:\img{f}\arr\img{\wh f}\] and a morphism
\[\mu:\bim_f\arr\bim_{\wh f}\] of the corresponding self-similarity
bimodules such that \[\mu(g_1\cdot x\cdot
g_2)=\nu(g_1)\cdot\mu(x)\cdot\nu(g_2)\] for all $g_1,
g_2\in\img{f}$ and $x\in\bim_f$. The images of the generators
$\alpha, \beta, \gamma$ in $\img{\wh f}$ are trivial (since they
correspond to loops in which $w$ is constant). The images of the
generators $s$ and $t$ are generators of $\img{\wh f}$ (which we
will also denote $s$ and $t$) corresponding to the loops around
the post-critical points $0$ and $1$ of $\wh f$, respectively (see
Figure~\ref{fig:generators}). The basis elements $\one,
\two\in\alb$ will be mapped by $\mu$ to the same element of
$\bim_{\wh f}$, since the corresponding coset representatives
$\unit, \alpha$ are mapped to the same element $\unit$ of
$\img{\wh f}$. Similarly the elements $\three\cdot b, \four\cdot
b\in\bim_f$ corresponding to the last two coordinates of the
wreath recursion from Theorem~\ref{th:imgrecursion} will be mapped
to the same element by $\mu$.

Consequently, applying the maps $\nu$ and $\mu$ to the wreath
recursion of Theorem~\ref{th:imgrecursion}, we get the following
wreath recursion generating $\img{\wh f}$
\begin{equation}\label{eq:ts}t=(t^{-1}s^{-1}, t),\qquad s=\sigma,
\end{equation}
where $\sigma$ is the transposition. If $\{\onep', \twop'\}$ is
the basis of the bimodule $\bim_{\wh f}$ corresponding to this
wreath recursion, then
\[
\mu(\one)=\mu(\two)=\onep',\qquad\mu(\three\cdot b)=\mu(\four\cdot
b)=\twop'.
\]

Let ${\wh\G}$ be the group generated by
\[a=\sigma,\quad b=(a, c),\quad c=(b, b).\]
It is a quotient of the group $\G$ as a self-similar group. The
corresponding epimorphisms of groups and self-similarity bimodules
(which we will also denote by $\mu:\bim\arr\bim_{\wh\G}$ and
$\nu:\G\arr{\wh\G}$, as they are extensions of the maps $\mu$ and
$\nu$) are defined by
\[\mu(\one)=\mu(\two)=\onep,\qquad\mu(\three)=\mu(\four)=\twop,\]
and by $\nu(\alpha)=\unit$, $\nu(\beta)=\unit$,
$\nu(\gamma)=\unit$, $\nu(a)=a$, $\nu(b)=b$, $\nu(c)=c$, where
$\onep'=\onep$ and $\twop'=\twop\cdot b$.

It is proved in~\cite{nek:ssfamilies} that the group ${\wh\G}$ is
given by the presentation
\[{\wh\G}\cong\langle a, b, c\;|\;a^2=b^2=c^2=(ac)^2=(ab)^4=(bc)^4=\unit\rangle,\]
and hence is isomorphic to the group generated by reflections of
the Euclidean space with respect to the sides of an isosceles
rectangular triangle (so that $b$ is the reflection with respect
to the hypothenuse).

Let us take, for instance, triangle $\mathcal{D}\subset\R^2$ with
the vertices $A'=(1, 0)$, $B'=(0, 0)$, and $C'=(1, 1)$. Let the
generators $a$, $b$, and $c$ correspond to reflections with
respect to the lines $B'C'$, $A'C'$, and $B'A'$, respectively, as
it is shown on Figure~\ref{fig:triangle}.

\begin{figure}
\centering
\includegraphics{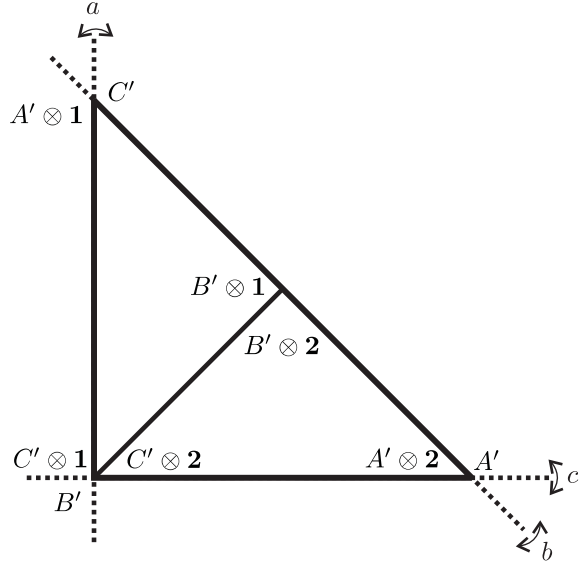}
\caption{Triangle $\mathcal{D}$} \label{fig:triangle}
\end{figure}

It follows from Proposition~\ref{pr:nucleusGamma} (and it is also
easy to prove directly, see~\cite{nek:ssfamilies}) that the
nucleus of ${\wh\G}$ is the union of the groups \begin{eqnarray*}
{\wh\G}_A&=&\langle b,
c\rangle=\nu(\G_A)=\nu(\G_{A_1})\cong D_4,\\
{\wh\G}_B&=&\langle a, c\rangle=\nu(\G_B)=\nu(\G_{B_1})\cong D_2,\\
{\wh\G}_C&=&\langle a, b\rangle=\nu(\G_C)=\nu(\G_{C_1})\cong D_4.
\end{eqnarray*}
The pairwise intersections of the subgroups of the nucleus are the
obvious ones:
\[{\wh\G}_{CB}=\langle a,
b\rangle\cap\langle a, c\rangle=\langle a\rangle,\quad
{\wh\G}_{CA}=\langle a, b\rangle\cap\langle b, c\rangle=\langle
b\rangle,\quad {\wh\G}_{AB}=\langle b, c\rangle\cap\langle a,
c\rangle=\langle c\rangle,\] and are groups of order two. Denote
\[\wh{\mathcal{G}}=\left\{{\wh\G}_A, {\wh\G}_B, {\wh\G}_C, {\wh\G}_{BC}, {\wh\G}_{CA}, {\wh\G}_{AB},
\{\unit\}\right\},\] and let ${\wh\Xi}$ be the image of the
complex $\Xi$ under the map $\nu:\G\arr{\wh\G}$, i.e., the complex
associated with the poset of sets of the form $G\cdot h$ for $h\in
{\wh\G}$ and $G\in\wh{\mathcal{G}}$. The complex ${\wh\Xi}$ is
isomorphic to the barycentric subdivision of the simplicial
complex obtained by tiling the Euclidean plane by the images of
the triangle $\mathcal{D}$ under the action of the group
${\wh\G}$. The groups ${\wh\G}_A, {\wh\G}_B, {\wh\G}_C$ correspond
to the vertices $A'$, $B'$, $C'$ of the triangle $\mathcal{D}$,
respectively. The groups ${\wh\G}_{AB}, {\wh\G}_{BC},
{\wh\G}_{AC}$ correspond to the edges $A'B'$, $B'C'$, $A'C'$,
respectively. Each group is the stabilizer of the corresponding
simplex of ${\wh\Xi}$. The triangle $\mathcal{D}$ is a fundamental
domain of the action of ${\wh\G}$ on ${\wh\Xi}$.

Define the map $P:\T_0\arr\mathcal{D}$ so that it is affine on the
tetrahedra $A_1ABC$, $B_1ABC$, $C_1ABC$ and acts on their vertices
by the rules
\begin{alignat}{3}
\label{eq:PABC1} P(A)&=A',&\quad P(B)&=B',&\quad P(C)&=C',\\
\label{eq:PABC2} P(A_1)&=A',&\quad P(B_1)&=B',&\quad P(C_1)&=C',
\end{alignat}
i.e., mapping a vertex corresponding to $\G_X\in\mathcal{G}$ to
the vertex corresponding to its image under $\nu$. Recall that we
have introduced in Subsection~\ref{ss:iota} a Euclidean structure
on $\T_0$ identifying it with a subset of $\R^5$. The points $A,
B, C$ become vertices of an isosceles right triangle after this
identification. The vectors $\overrightarrow{AA_1}$,
$\overrightarrow{BB_1}$, and $\overrightarrow{CC_1}$ are
orthogonal to the triangle $ABC$. Consequently, if we identify
$A', B', C'$ with the points points $A, B, C$ of $\R^5$, then $P$
will be the orthogonal projection of $\T_0$ onto the plain spanned
by $\vec{BA}$ and $\vec{BC}$.

The map $P:\T_0\arr\mathcal{D}$ can be extended to a continuous
map $P:\Xi\arr{\wh\Xi}$ such that
\begin{equation}
\label{eq:P} P(\xi\cdot g)=P(\xi)\cdot\nu(g)
\end{equation} for
every $\xi\in\Xi$ and $g\in\G$. The map $P$ will map the vertex
corresponding to a coset $\G_X\cdot g$ to the vertex corresponding
to the coset $\nu(\G_X\cdot g)$.

Denote by $L:{\wh\Xi}\otimes\bim_{\wh\G}\arr{\wh\Xi}$ the
${\wh\G}$-equivariant map induced by $I:\Xi\otimes\bim\arr\Xi$,
where $\bim_{\wh\G}$ is the self-similarity ${\wh\G}$-bimodule. It
is given by
\[L(P(\xi\otimes v))=P(I(\xi\otimes v)),\]
and is well defined by equivariance of $I$ and
property~\eqref{eq:P}.

Formulae defining $I$ imply that
\begin{alignat*}{3}
L(A'\otimes\onep)&=C',&\quad L(B'\otimes\onep)&=(A'C'),&\quad L(C'\otimes\onep)&=B',\\
L(A'\otimes\twop)&=A',&\quad L(B'\otimes\twop)&=(A'C'),&\quad
L(C'\otimes\twop)&=B'.
\end{alignat*}

One checks directly using equivariance of the map $L$ (see
also~\cite[4.8.3]{nek:filling}) that the maps $\xi\mapsto
L(\xi\otimes x)$ act on $\wh\Xi\approx\R^2$ by affine
transformations with the linear parts $\left(\begin{array}{rr}-1/2
& -1/2\\ 1/2 &
-1/2\end{array}\right)$ and $\left(\begin{array}{cc}1/2 & -1/2\\
-1/2 & -1/2\end{array}\right)$ for $x=\onep$ and $x=\twop$,
respectively. Compare these matrices with the top left corners of
the matrices $\mathcal{I}_x$ in the proof of
Proposition~\ref{pr:Icontracting}. Note that in both cases the
transformation $L(\cdot\otimes x)$ divides all distances of $\R^2$
by $\sqrt{2}$.

It is proved in~\cite{nek:filling} that the map
$L:{\wh\Xi}\otimes\bim_{\wh\G}\arr{\wh\Xi}$ and hence the maps
$L_n:{\wh\Xi}\otimes\bim_{\wh\G}^{\otimes
(n+1)}\arr{\wh\Xi}\otimes\bim_{\wh\G}^{\otimes n}$ are
homeomorphisms.

By equivariance of the maps $L_n$, the action of ${\wh\G}$ on
${\wh\Xi}\otimes\bim^{\otimes n}$ is obtained by conjugating the
action of ${\wh\G}$ on ${\wh\Xi}$ by the homeomorphism $L_n$. One
can show that the fundamental domains $\D_n=\D\otimes\{\onep,
\twop\}^n$ are rectangular isosceles triangles of area $2^{n-1}$
tiled by isometric copies of the triangle $\D$, as it is shown on
Figure~\ref{fig:dn}. The orbispaces ${\wh\Xi}\otimes\bim^{\otimes
n}/{\wh\G}$ can be identified with the fundamental domains $\D_n$
(i.e., the natural map from $\D_n$ to the orbispace is a
homeomorphism).

\begin{figure}
\centering
\includegraphics{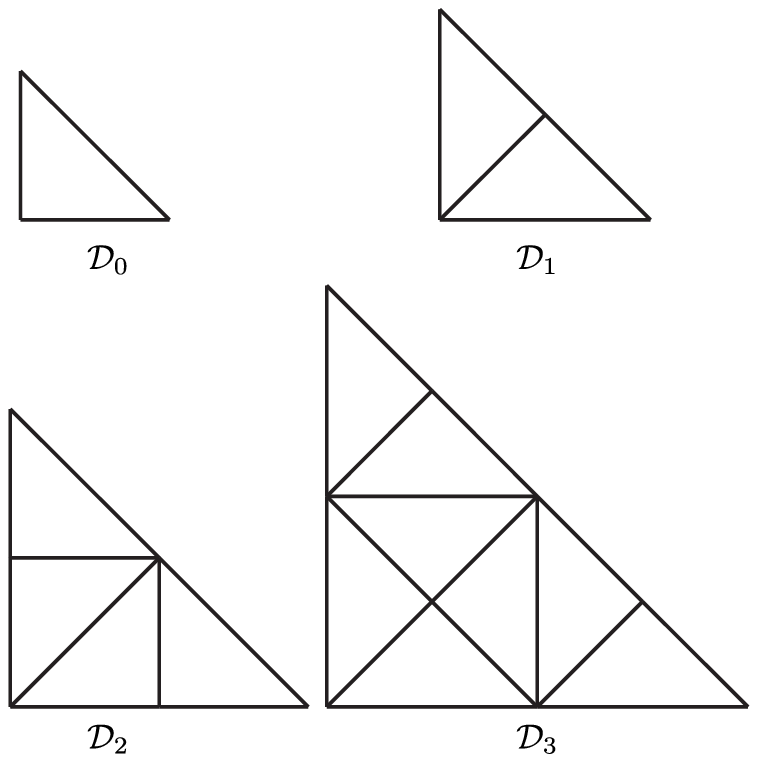}
\label{fig:dn} \caption{The sets $\D_n$}
\end{figure}

The spaces $\mathcal{S}_n={\wh\Xi}\otimes\bim^{\otimes n}/\img{\wh
f}$ are obtained by ``doubling'' the triangles: by taking two
copies $\D_n$ and $\D_n\cdot a$, and pasting them together along
the boundary.

Denote by
\[q_n:\mathcal{S}_{n+1}\arr\mathcal{S}_n,\quad\lambda_n:\mathcal{S}_{n+1}\arr\mathcal{S}_n\]
the covering induced by the projection $\zeta\otimes
vx\mapsto\zeta\otimes v$ and the map induced by $L_n$. We will
also denote $\mathcal{S}=\mathcal{S}_0$, $q=q_0$, and
$\lambda=\lambda_0$. The maps $\lambda_n$ act on each copy of
$\D_{n+1}$ in $\mathcal{S}_{n+1}$ as similitudes with coefficient
$\sqrt{2}/2$. The covering $q_n$ maps halves of each copy of
$\D_{n+1}$ isometrically to copies of $\D_n$ in $\mathcal{S}_n$
(according to the rules similar to the rules of
Proposition~\ref{pr:coveringhatmn}).

We can identify the spaces ${\wh\Xi}\otimes\bim_{\wh\G}^{\otimes
n}$ with $\C$ in such a way that the group ${\wh\G}$ acts on them
as the group of all isometries of the lattice $\Z[i]$, and the map
$L_n$ is identical (see~\cite[4.8.2--3]{nek:filling} for details).
The subgroup $\img{\wh f}$ of ${\wh\G}$ acts then on
${\wh\Xi}\otimes\bim_{\wh\G}^{\otimes n}\approx\C$ as the group of
affine transformations of the form $z\mapsto i^kz+z_0$, where
$k\in\Z$ and $z_0\in\Z[i]$. The coverings
$q_n:\mathcal{S}_{n+1}\arr\mathcal{S}_n$ are induced then by the
map $z\mapsto (1-i)z$.

It follows that the limit dynamical system of $\img{\wh f}$ is
conjugate with the map $F_{i-1}:\mathcal{S}_0\arr\mathcal{S}_0$
induced by the transformation $z\mapsto (1-i)z$ of $\C$. It is
well known (and follows from the general theory of iterated
monodromy groups) that the map $F_{1-i}$ is conjugate to the
rational function $\wh f$. The conjugating map is induced on
$\mathcal{S}_0$ by the map $z\mapsto\left(\wp(z)\right)^2$, where
$\wp$ is the Weierstrass function associated with the lattice
$\Z[i]$.

\subsection{Fibers of $\M_n\arr\mathcal{S}_n$}

Denote by $P_n:\Xi\otimes\bim^{\otimes
n}\arr{\wh\Xi}\otimes\bim_{\wh\G}^{\otimes n}$ the map given by
\[P_n(\xi\otimes v)=P(\xi)\otimes\mu(v),\]
where $\xi\in\Xi$ and $v\in\bim^{\otimes n}$. We also denote
$P_0=P$.

The next proposition follows directly from the definitions.

\begin{proposition}
\label{pr:rhon} The map $P_n:\Xi\otimes\bim^{\otimes
(n+1)}\arr{\wh\Xi}\otimes\bim_{\wh\G}^{\otimes n}$ induces maps
$\rho_n:\M_n\arr\mathcal{S}_n$ making the diagrams
\[\begin{array}{ccc}\M_{n+1} &\stackrel{p_n}{\arr} & \M_n\\
\mapdown{\rho_{n+1}} & & \mapdown{\rho_n}\\
\mathcal{S}_{n+1} & \stackrel{q_n}{\arr} & \mathcal{S}_n
\end{array},\qquad\begin{array}{ccc}\M_{n+1} &\stackrel{\iota_n}{\arr} & \M_n\\
\mapdown{\rho_{n+1}} & & \mapdown{\rho_n}\\
\mathcal{S}_{n+1} & \stackrel{\lambda_n}{\arr} & \mathcal{S}_n
\end{array}\]
commutative.
\end{proposition}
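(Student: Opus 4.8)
The plan is to prove everything straight from the definitions, in three stages: first that the formula $P_n(\xi\otimes v)=P(\xi)\otimes\mu(v)$ descends to a well-defined map on the tensor product over $\G$; then that this map is equivariant with respect to the homomorphism $\nu\colon\img{f}\arr\img{\wh f}$, so that it induces a continuous map $\rho_n\colon\M_n\arr\mathcal{S}_n$ of the orbispaces; and finally that the two squares commute. No new idea is needed beyond carefully tracking the morphism property of $\mu$ and property~\eqref{eq:P} of $P$.

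First I would check well-definedness. An identification defining $\Xi\otimes_{\G}\bim^{\otimes n}$ has the form $\xi\cdot g\otimes v=\xi\otimes g\cdot v$ with $g\in\G$. Applying the formula for $P_n$ to the left side gives $P(\xi\cdot g)\otimes\mu(v)=\bigl(P(\xi)\cdot\nu(g)\bigr)\otimes\mu(v)$ by~\eqref{eq:P}, and applying it to the right side gives $P(\xi)\otimes\mu(g\cdot v)=P(\xi)\otimes\bigl(\nu(g)\cdot\mu(v)\bigr)$ because $\mu$ is a morphism of bimodules over $\nu$, i.e.\ $\mu(g_1\cdot x\cdot g_2)=\nu(g_1)\cdot\mu(x)\cdot\nu(g_2)$. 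These two elements are identified in $\wh\Xi\otimes_{\wh\G}\bim_{\wh\G}^{\otimes n}$, so $P_n$ is well defined; for $n=0$ this is nothing but~\eqref{eq:P}.

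Second, for $g\in\img{f}\subset\G$ the same morphism property gives $P_n\bigl((\xi\otimes v)\cdot g\bigr)=P(\xi)\otimes\mu(v\cdot g)=P(\xi)\otimes\mu(v)\cdot\nu(g)=P_n(\xi\otimes v)\cdot\nu(g)$, and $\nu(\img{f})\subseteq\img{\wh f}$ since $\nu$ is the homomorphism of iterated monodromy groups induced by the semiconjugacy $(z,w)\mapsto w$. Hence $P_n$ carries $\img{f}$-orbits into $\img{\wh f}$-orbits and induces the desired continuous map $\rho_n\colon\M_n\arr\mathcal{S}_n$, which on representatives acts by $[\xi\otimes v]\mapsto[P(\xi)\otimes\mu(v)]$.

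Third, the two diagrams. The map $p_n$ (hence $q_n$) is induced by deleting the last tensor factor; since $\mu$ acts on $\bim^{\otimes n}$ letter by letter, $P_n$ intertwines the two deletion maps, giving $q_n\circ\rho_{n+1}=\rho_n\circ p_n$. For the right square, recall that $L$ was defined by the relation $L\bigl(P(\xi\otimes v)\bigr)=P\bigl(I(\xi\otimes v)\bigr)$; applying this on the last tensor factor and using $I_n(\xi\otimes x\otimes w)=I(\xi\otimes x)\otimes w$, $L_n(\zeta\otimes y\otimes u)=L(\zeta\otimes y)\otimes u$, one gets $P_n\circ I_n=L_n\circ P_{n+1}$ on the tensor powers, and then dividing by $\img{f}$ upstairs and $\img{\wh f}$ downstairs gives $\lambda_n\circ\rho_{n+1}=\rho_n\circ\iota_n$. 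I expect no genuine obstacle; the only thing needing care is the bookkeeping of the three nested layers of identifications (the tensor relations over $\G$, the further quotient by $\img{f}$, and the analogous structure downstairs), each of which is a one-line verification from~\eqref{eq:P} and the morphism property of $\mu$.
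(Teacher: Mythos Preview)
Your proposal is correct and is exactly the approach the paper intends: the paper gives no proof at all, merely stating that the proposition ``follows directly from the definitions,'' and your three-stage verification (well-definedness of $P_n$ on the tensor product via~\eqref{eq:P} and the morphism property of $\mu$, equivariance along $\nu$ to descend to orbispaces, and commutativity via the defining relation $L\circ P_1=P_0\circ I$) is precisely what that phrase unpacks to.
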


Denote by $Z_{\alpha, n}, Z_{\beta, n}, Z_{\gamma, n}$ the sets of
points of $\M_n$ of the form $[\xi]$ and $[\xi\cdot a]$, where
$\xi\in\T_n$ is a fixed point of the transformation
$\kappa_{\alpha, n}$, $\kappa_{\beta, n}$, $\kappa_{\gamma, n}$,
respectively.

\begin{theorem}
\label{th:fibersrhon} The set $\rho_n^{-1}(\xi)$ is a finite tree
for every $\xi\in\mathcal{S}_n$.

The map $p_n:\M_{n+1}\arr\M_n$ restricts for every
$\xi\in\M_{n+1}$ to a degree two branched covering
$\rho_{n+1}^{-1}(\xi)\arr\rho_n^{-1}(q_n(\xi))$. Denote by
$Z_{n+1}(\xi)$ its critical point.

Intersections of $Z_{\alpha, n}$, $Z_{\beta, n}$, and $Z_{\gamma,
n}$ with $\rho_n^{-1}(\xi)$ are singletons. Let us denote them by
$Z_{\alpha, n}(\xi)$, $Z_{\beta, n}(\xi)$, and $Z_{\gamma,
n}(\xi)$, respectively.

Let $\xi=[\zeta\otimes x]\in\mathcal{S}_{n+1}$, where
$\zeta\in\mathcal{D}_n$ and $x\in\{\onep, \onep\cdot a, \twop,
\twop\cdot a\}$. The tree $\rho_{n+1}^{-1}(\xi)$ is union of two
trees $T_1$ and $T_2$ such that
\begin{itemize}
\item[(i)] the common point of $T_1$ and $T_2$ is $Z_{n+1}(\xi)$;
\item[(ii)] restrictions of the map
$p_n:\rho_{n+1}^{-1}(\xi)\arr\rho_n^{-1}(q_n(\xi))$ onto $T_1$ and
$T_2$ are homeomorphisms;
\item[(iii)] $p_n(Z_{n+1}(\xi))=Z_{\alpha, n}(q_n(\xi))$;
\item[(iv)] $p_n^{-1}(Z_{\beta, n}(q_n(\xi)))\cap T_1=Z_{\alpha, n+1}(\xi)$, and
$p_n^{-1}(Z_{\beta, n}(q_n(\xi)))\cap T_2=Z_{\gamma, n+1}(\xi)$;
\item[(v)] if $x\in\{\onep, \onep\cdot a\}$, then $p_n^{-1}(Z_{\gamma, n}(q_n(\xi)))\cap
T_1=Z_{\beta, n+1}(\xi)$;
\item[(vi)]
if $x\in\{\twop, \twop\cdot a\}$, then $p_n^{-1}(Z_{\gamma,
n}(q_n(\xi)))\cap T_2=Z_{\beta, n+1}(\xi)$.
\end{itemize}
\end{theorem}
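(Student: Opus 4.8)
The plan is to prove all the assertions at once by induction on $n$, since the passage from the level-$n$ picture to the level-$(n+1)$ picture is exactly the content of (i)--(vi). I would strengthen the inductive statement slightly by also recording that inside a single fiber $\rho_n^{-1}(\xi)$ each of $K_{\alpha,n},K_{\beta,n},K_{\gamma,n}$ is met in a single point, on which $\kappa_{\alpha,n},\kappa_{\beta,n},\kappa_{\gamma,n}$ act trivially (these points being $Z_{\alpha,n}(\xi),Z_{\beta,n}(\xi),Z_{\gamma,n}(\xi)$). For the base case $n=0$ I argue directly from the explicit models: by Subsection~\ref{ss:zpp}, after the Euclidean identification of $\T_0\subset\R^5$ the map $P:\T_0\arr\mathcal{D}$ is the orthogonal projection of the three tetrahedra $A_1ABC,B_1ABC,C_1ABC$ onto the plane of their common face $ABC$, so the fiber of $P$ over $\zeta=t_AA+t_BB+t_CC\in\mathcal{D}$ is the tripod with center $\zeta$ in $ABC$ and legs ending in $t_AA_1+t_BB+t_CC\in A_1BC$, $t_AA+t_BB_1+t_CC\in AB_1C$, $t_AA+t_BB+t_CC_1\in ABC_1$, which degenerates over $\partial\mathcal{D}$ but is always a finite tree. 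Since the faces along which $\M=\Xi/\img{f}$ is obtained by doubling $\T_0$ all project by $P$ into $\partial\mathcal{D}$, the fiber $\rho_0^{-1}(\xi)$ over a point of $\mathcal{S}_0$ is one such tripod, its three legs meeting $Z_{\alpha,0},Z_{\beta,0},Z_{\gamma,0}$ precisely in their endpoints and the sets $K_{\alpha,0},K_{\beta,0},K_{\gamma,0}$ meeting it only there, while all $\kappa_{g,0}$ are the identity; this settles $n=0$.

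For the inductive step fix $\xi=[\zeta\otimes x]\in\mathcal{S}_{n+1}$, with $x$ lying over $\onep$ or over $\twop$. By Proposition~\ref{pr:rhon}, $p_n$ carries $\rho_{n+1}^{-1}(\xi)$ into $\rho_n^{-1}(q_n(\xi))$, a finite tree by induction. Using the cut-and-paste descriptions of $\M_{n+1}$ (Proposition~\ref{pr:whMnpasting}, Theorem~\ref{th:pastingTn}, Proposition~\ref{pr:coveringhatmn}) together with $\mu(\one)=\mu(\two)=\onep$ and $\mu(\three)=\mu(\four)=\twop$, one sees that only the two basis letters over $x$ contribute to the fiber, so $\rho_{n+1}^{-1}(\xi)$ is obtained from two ``sheets'', each a copy of $\rho_n^{-1}(q_n(\xi))$ (indexed by the letters $\one,\two$ when $x$ is over $\onep$ and by $\three,\four$ when $x$ is over $\twop$), by carrying out those identifications of Theorem~\ref{th:pastingTn} that descend to the $\img{f}$-quotient and respect the fiber. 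All these identifications come from elements of $\mathfrak{A}$; those attached to $\beta$ and $\gamma$ at a basis letter over $x$ are forced to be trivial, because the condition $g|_x=\unit$ of Theorem~\ref{th:pastingTn} fails for $\beta$ at its coordinates $\one,\three$ and for $\gamma$ at its coordinates $\one,\four$ (those sections are $\alpha$, resp.\ $\beta$), and the only element of $\mathfrak{A}\cap\img{f}$ carrying $\one\mapsto\two$ (resp.\ $\three\mapsto\four$) with trivial section there is $\alpha=(\one\two)(\three\four)$. Its identification glues the two sheets along $K_{\alpha,n}$, which by the strengthened induction meets the fiber in the single point $Z_{\alpha,n}(q_n(\xi))$, fixed by $\kappa_{\alpha,n}$; by Lemma~\ref{lem:stabilizersA} there is no further identification. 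Hence $\rho_{n+1}^{-1}(\xi)=T_1\cup T_2$ with $T_1\cap T_2=\{Z_{n+1}(\xi)\}$, $p_n(Z_{n+1}(\xi))=Z_{\alpha,n}(q_n(\xi))$, and $p_n$ a homeomorphism on each $T_i$; being two copies of a finite tree glued at a point, $\rho_{n+1}^{-1}(\xi)$ is a finite tree and a degree-two branched covering of $\rho_n^{-1}(q_n(\xi))$ with sole branch point $Z_{n+1}(\xi)$. This gives the finite-tree and branched-covering assertions and (i)--(iii).

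Locating $Z_{\alpha,n+1},Z_{\beta,n+1},Z_{\gamma,n+1}$ --- which also shows each meets the fiber in one point and is where the $\onep$-versus-$\twop$ dichotomy enters --- is read off the recursion for $K_{g,n+1}$ and $\kappa_{g,n+1}$ in Theorem~\ref{th:pastingTn} and the sections in $\mathfrak{A}$: from $\beta=(\alpha,\gamma,\alpha,\gamma)$ the map $\kappa_{\alpha,n+1}$ (resp.\ $\kappa_{\gamma,n+1}$) is $\kappa_{\beta,n}$ applied on the $\{\one,\three\}$- (resp.\ $\{\two,\four\}$-) tensor coordinates, all of which $\beta$ fixes; from $\gamma=(\beta,\unit,\unit,\beta)$ the map $\kappa_{\beta,n+1}$ is $\kappa_{\gamma,n}$ on the $\{\one,\four\}$-coordinates. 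Passing to fixed points, the $Z_{\beta,n}(q_n(\xi))$-point lifts to $Z_{\alpha,n+1}(\xi)$ in the sheet we label $T_1$ (the $\one$- or $\three$-sheet) and to $Z_{\gamma,n+1}(\xi)$ in $T_2$; the $Z_{\gamma,n}(q_n(\xi))$-point lifts to $Z_{\beta,n+1}(\xi)$, which falls in $T_1$ when $x$ is over $\onep$ (the surviving coordinate from $\{\one,\four\}$ being $\one$) and in $T_2$ when $x$ is over $\twop$ (it being $\four$). This is exactly (iv)--(vi).

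The only genuinely laborious part is the bookkeeping in the inductive step: checking precisely which of the identifications of Theorem~\ref{th:pastingTn} descend to the $\img{f}$-quotient --- this is where the parity argument from the lemma preceding Theorem~\ref{th:JuliasetMn} is used --- and propagating the strengthened inductive claim that within a single fiber $K_{\alpha,n},K_{\beta,n},K_{\gamma,n}$ reduce to one point and $\kappa_{\alpha,n},\kappa_{\beta,n},\kappa_{\gamma,n}$ are trivial there. Everything else is a mechanical reading of the automaton $\mathfrak{A}$ and of Lemma~\ref{lem:stabilizersA}.
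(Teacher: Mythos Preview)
Your overall strategy matches the paper's: induction on $n$, with Theorem~\ref{th:pastingTn} supplying the recursion for the sets $K_{g,n+1}$ and Lemma~\ref{lem:stabilizersA} controlling identifications by elements outside $\mathfrak{A}$. The base case and the identification of the branch point with $Z_{\alpha,n}$ proceed along the same lines.

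There is, however, a genuine gap in your treatment of $K_{\beta,n+1}$. You write that ``from $\gamma=(\beta,\unit,\unit,\beta)$ the map $\kappa_{\beta,n+1}$ is $\kappa_{\gamma,n}$ on the $\{\one,\four\}$-coordinates'', but Theorem~\ref{th:pastingTn} picks up every $h\in\mathfrak{A}$ with $h|_x=\beta$, and besides $h=\gamma$ there is $h=a\alpha\gamma=\pi\sigma(\beta,\unit,\unit,\beta)$, with $(a\alpha\gamma)|_\one=(a\alpha\gamma)|_\four=\beta$. Thus $K_{\beta,n+1}\supset K_{a\alpha\gamma,n}\otimes\{\one,\four\}$ in addition to $K_{\gamma,n}\otimes\{\one,\four\}$, and on this extra piece $\kappa_{\beta,n+1}$ acts via $\kappa_{a\alpha\gamma,n}$ together with the swap $\one\leftrightarrow\four$. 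The paper handles this case separately: a short case analysis using Theorem~\ref{th:pastingTn} and Lemma~\ref{lem:stabilizersA} shows that a fixed point of this branch of $\kappa_{\beta,n+1}$ still forces $\eta\cdot\gamma=\eta$, so the conclusion survives --- but it does require an argument, which you omit.

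This omission also breaks your strengthened inductive hypothesis. The claim that $K_{\beta,n}$ meets each fiber of $P_n$ in a single point is actually false once the $a\alpha\gamma$ contribution is included. Already $K_{a\alpha\gamma,1}=K_{b\beta,0}\otimes\two=C_1CA\otimes\two$, and for $\zeta$ on the hypotenuse $A'C'$ of $\D$ the face $C_1CA$ contains an entire leg of the tripod $P^{-1}(\zeta)$; hence $K_{\beta,2}$ meets the corresponding fiber in a segment, not a point. The paper avoids this by working directly with the fixed-point sets $Z_{\alpha,n},Z_{\beta,n},Z_{\gamma,n}$ rather than the full domains $K_{\alpha,n},K_{\beta,n},K_{\gamma,n}$, proving inductively only that each \emph{fixed-point} set meets every fiber in a singleton. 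You should reformulate your induction in those terms and supply the missing $a\alpha\gamma$ case.
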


\begin{proof}
The set $\rho_{n+1}^{-1}([\zeta\otimes x])$ is equal to the set of
points of the form $[\xi\otimes y]$, where $\xi\in\T_n$ and
$y\in\alb\cdot\{\unit, a\}$ are such that $P_n(\xi)=\zeta$ and
$\mu(y)=x$. Note that for every $x\in\{\onep, \twop\}$ there
exists exactly two elements $y_1, y_2\in\alb\cdot\{\unit, a\}$
such that $\mu(y)=x$.

Depending on $x$, restriction of the map $p_{n+1}$ onto
$\rho_{n+1}^{-1}([\zeta\otimes x])$ acts by the rule
\[[\eta\otimes y]\mapsto [\eta],\quad\text{or}\quad[\eta\otimes y]\mapsto [\eta\cdot a],\]
as it is described in Proposition~\ref{pr:coveringhatmn}.

It follows that the restriction is two-to-one except for the
points $\eta$ such that $[\eta\otimes y_1]=[\eta\otimes y_2]$,
where $\{y_1, y_2\}=\mu^{-1}(x)$. It is sufficient to consider the
case $x\in\{\onep, \twop\}$ and $y\in\alb$. The equality
$[\eta\otimes y_1]=[\eta\otimes y_2]$ is equivalent to existence
of an element $g\in\nuke$ such that $\eta\cdot g=\eta$ and $g\cdot
y_1=y_2\cdot h$ for $h\in\img{f}$. Note that since
$\mu(y_1)=\mu(y_2)=x$, the element $g\in\nuke$ has an even number
of factors $a$ in its decomposition into a product of generators.
It follows now from $h\in\img{f}$ that $g$ has an even total
number of factors $b$ and $c$, i.e., that $g$ is an element of the
set $\langle\alpha, \beta, \gamma\rangle\cdot\langle bc\rangle$.
Since $y_1\ne y_2$, the element $g$ contains an odd number of
factors $\alpha$. Looking through the groups $\G_A, \G_B, \G_C,
\G_{A_1}, \G_{B_1}, \G_{C_1}$, we conclude that
\[g\in\langle\alpha, \beta\rangle\cup\langle\alpha, \gamma\rangle\cup\alpha\cdot\langle
bc\rangle.\] Consequently, either $g=\alpha$, or
$g\in\nuke\setminus\mathfrak{A}$ and the stabilizer of the point
$\eta$ contains $\alpha$, by Lemma~\ref{lem:stabilizersA}. In both
cases $\eta\cdot\alpha=\eta$, i.e., $[\eta]\in Z_{\alpha, n}$. In
the other direction, if $[\eta]\in Z_{\alpha, n}$, then
$[\eta]=[\eta']$ for $\eta'\cdot\alpha=\eta'$, and then
$[\eta'\otimes y_1]=[\eta'\cdot\alpha\otimes y_1]=[\eta'\otimes
y_2]$.

We have shown that a point $[\eta]$ has one preimage under the
restriction of $\rho_{n+1}$ onto $\rho_{n+1}^{-1}([\zeta\otimes
x])$ if and only if $[\eta]\in Z_{\alpha, n}$. In all the other
cases it has two preimages.

Let us prove by induction on $n$ that the intersections of
$Z_{\alpha, n}$, $Z_{\beta, n}$, and $Z_{\gamma, n}$ with the
fibers of the map $\rho_n$ are singletons.

It is true for $n=0$. By Theorem~\ref{th:pastingTn}, a point
$\eta\otimes y\in\T_{n+1}$, for $\eta\in\T_n$ and $y\in\alb$,
belongs to $K_{\alpha, n+1}$ if and only if $\eta\in K_{\beta, n}$
and $y\in\{\one, \three\}$.

We have \[\kappa_{\alpha, n+1}(\eta\otimes y)=\kappa_{\beta,
n}(\eta)\otimes y=\eta\cdot\beta\otimes y.\] If $\eta\otimes
y=\eta\cdot\beta\otimes y$, then it follows from
Theorem~\ref{th:pastingTn} that $\eta\cdot\beta=\eta\cdot g$ and
$g\cdot y=y$ for $g\in\langle a, \alpha, \gamma\rangle$. But then
$\eta=\eta\cdot\beta g^{-1}$, which implies that $\beta
g^{-1}\in\nuke$, and then by Lemma~\ref{lem:stabilizersA} that
$\eta$ is fixed under $\beta$.

It follows now from the inductive assumption (since
$\mu(\one)\ne\mu(\three)$) that intersection of $Z_{\alpha, n+1}$
with the fibers of $\rho_{n+1}$ are singletons.

Similarly, $\eta\otimes y$ belongs to $K_{\gamma, n+1}$ if and
only if $\eta\in K_{\beta, n}$ and $y\in\{\two, \four\}$. By the
same arguments as above, $\eta\otimes y$ is fixed under
$\kappa_{\gamma, n+1}$ if and only if $\eta$ is fixed under
$\kappa_{\beta, n}$. Consequently, intersections of $Z_{\gamma,
n+1}$ with the fibers of $\rho_{n+1}$ are singletons.

A point $\eta\otimes y$ belongs to $K_{\beta, n+1}$ if and only if
$\eta\in K_{\gamma, n}$ and $y\in\{\one, \four\}$, or $\eta\in
K_{a\alpha\gamma, n}$ and $y\in\{\one, \four\}$.

Suppose that $\eta\in K_{\gamma, n}$. Then \[\kappa_{\beta,
n+1}(\eta\otimes y)=\eta\cdot\gamma\otimes y.\] Again, by
Theorem~\ref{th:pastingTn}, if $\eta\otimes
y=\eta\cdot\gamma\otimes y$, then $\eta\cdot\gamma=\eta\cdot g$
and $g\cdot y=y$ for some $g\in\langle\alpha, \gamma, a\rangle$.
The equality $g(y)=y$ implies that $g\in\{\unit, \gamma,
\gamma^\alpha\}$ (as $\gamma^a=\gamma^\alpha$ and $\langle a,
\alpha\rangle$ acts faithfully on $\alb$). The case $g=\gamma$
contradicts $g\cdot y=y$ and $y\in\{\one, \four\}$. If
$g=\gamma^\alpha$, then $\eta$ is fixed by $\gamma\gamma^\alpha$,
which implies by Lemma~\ref{lem:stabilizersA} that it is also
fixed by $\gamma$. If $g=\unit$, then $\eta\cdot
g=\eta\cdot\gamma$ implies that $\eta$ is fixed by $\gamma$.

Suppose that $\eta\in K_{a\alpha\gamma, n}$ and $y\in\{\one,
\four\}$. Then $\kappa_{\beta, n+1}(\eta\otimes y)=\eta\cdot
a\alpha\gamma\otimes \sigma\pi(y)$. If $\eta\otimes y=\eta\cdot
a\alpha\gamma\otimes\sigma\pi(y)$, then there exists
$g\in\langle\alpha, a, \gamma\rangle$ such that $\eta\cdot
g=\eta\cdot a\alpha\gamma$ and $g\cdot y=\sigma\pi(y)$. The
condition $g(y)=\sigma\pi(y)$ implies that $g\in\{a\alpha,
a\alpha\gamma, a\alpha\gamma^\alpha\}$. The condition $g|_y=\unit$
eliminates the case $g=a\alpha\gamma$. If $g=a\alpha$, then
$\eta\cdot g=\eta\cdot a\alpha\gamma$ implies
$\eta\cdot\gamma=\xi$ (since $a\alpha$ commutes with $\gamma$). If
$g=a\alpha\gamma^\alpha$, then $\eta=\eta\cdot\gamma^\alpha$,
which also by Lemma~\ref{lem:stabilizersA} implies that
$\eta=\eta\cdot\gamma$.

We see that in all cases $[\eta]\in Z_{\gamma, n}$, hence
intersections of $Z_{\beta, n+1}$ with the fibers of $\rho_{n+1}$
are singletons.

Note that we have already proved that \[p_n(Z_{\alpha,
n+1}(\xi))=p_n(Z_{\gamma, n+1}(\xi))=Z_{\beta, n}(q_n(\xi))\] and
$p_n(Z_{\beta, n+1}(\xi))=Z_{\gamma, n}(q_n(\xi))$. The equality
$p_n(Z_{n+1}(\xi))=Z_{\alpha, n}(q_n(\xi))$ was also proved
before.

By the proved above, the fiber $\rho_{n+1}^{-1}(\zeta\otimes x)$
is obtained by identifying in the sets $\rho_n^{-1}(\zeta)\otimes
y_1$ and $\rho_n^{-1}(\zeta)\otimes y_2$ the points $Z_{\alpha,
n}(\zeta)\otimes y_1$ with $Z_{\alpha, n}(\zeta)\otimes y_2$.
Since the fibers of $\rho_0$ are trees, we get by induction that
the fibers of $\rho_n$ are also trees. It also follows that the
point $Z_{n+1}(\zeta\otimes x)=Z_{\alpha, n}(\zeta)\otimes
y_1=Z_{\alpha, n}(\zeta)\otimes y_2$ separates the subtrees
$\rho_n^{-1}(\zeta)\otimes y_1$ and $\rho_n^{-1}(\zeta)\otimes
y_2$. Let $y_1\in\{\one, \one\cdot a, \three, \three\cdot a\}$ and
$y_2\in\{\two, \two\cdot a, \four, \four\cdot a\}$. Then denote
$T_1=\rho_n^{-1}(\zeta)\otimes y_1$ and
$T_2=\rho_n^{-1}(\zeta)\otimes y_2$.

The rest of the theorem follows now from the recurrent description
of the sets $Z_{\alpha, n}$, $Z_{\beta, n}$, and $Z_{\gamma, n}$,
and the sets $K_{\alpha, n}$, $K_{\beta, n}$, and $K_{\gamma, n}$.
\end{proof}

\subsection{Unfolding trees and the map $\iota_n$}
\label{ss:unfolding} Consider the embedding of the complex $\T_0$
into $\R^5$ described in Subsection~\ref{ss:iota} and the
embedding of $\D$ into $\R^2$ described in
Subsection~\ref{ss:zpp}. Then projection $P:\T_0\arr\mathcal{D}$
acts as the map $(x_1, x_2, x_3, x_4, x_5)\mapsto (x_1, x_2)$. The
fiber $P^{-1}(x_1, x_2)$ is a union of three segments with one
common end $(x_1, x_2, 0, 0, 0)$. The other ends of the segments
are $Z_\alpha=Z_{\alpha, 0}((x_1, x_2))$, $Z_\beta=Z_{\beta,
0}((x_1, x_2))$, $Z_\gamma=Z_{\gamma, 0}((x_1, x_2))$,
respectively.

The point $Z_\alpha$ is the intersection of $P^{-1}(x_1, x_2)$
with the plane $A_1BC$, hence it has coordinates $(x_1, x_2, x_1,
0, 0)$. The coordinates of the point
\[Z_\beta=P^{-1}(x_1, x_2)\cap B_1AC\] are $(x_1, x_2,
0, 1-x_1-x_2, 0)$. The coordinates of the point
\[Z_\gamma=P^{-1}(x_1, x_2)\cap C_1AB\] are $(x_1, x_2, 0, 0,
x_2)$.

Consequently, $P^{-1}(x_1, x_2)$ is a tripod with feet $Z_\alpha$,
$Z_\beta$, and $Z_\gamma$ and lengths of legs $x_1$, $1-x_1-x_2$,
and $x_2$, respectively.

We see that the set of the graphs $P^{-1}(\xi)$ for
$\xi\in\mathcal{D}$ coincides with the set of all tripods with
feet marked by $Z_\alpha, Z_\beta, Z_\gamma$ such that the sum of
lengths of legs (the \emph{mass} of the tripod) is equal to one.
The triangle $\mathcal{D}$ is interpreted as the configuration
space of such tripods.

Theorem~\ref{th:fibersrhon} is reformulated then as the following
recurrent procedure of constructing the fibers $\rho_n^{-1}(\xi)$.

A \emph{marked tree} is a finite tree together with a marking of
three points by the letters $Z_\alpha$, $Z_\beta$, and $Z_\gamma$.
We assume that not all marked points coincide.

\begin{proposition}
\label{pr:Phi12} Let $T$ be a marked tree. Take two copies $T_1$
and $T_2$ of $T$ and paste them together, identifying the copies
of the point marked by $Z_\alpha$. Mark in the obtained tree
$\Phi_i(T)$ (for $i=\onep, \twop$) the copy of the point $Z_\beta$
in $T_1$ by $Z_\alpha$, the copy of $Z_\beta$ in $T_2$ by
$Z_\gamma$. If $i=\onep$, then mark the copy of $Z_\gamma$ in
$T_1$ by $Z_\beta$. Otherwise, mark by $Z_\beta$ the copy of
$Z_\gamma$ in $T_2$.

Then for every $\xi\in\mathcal{D}$, $x_1x_2\ldots x_n\in\{\onep,
\twop\}^n$, and $\delta\in\{\unit, a\}$ the tree
$\rho_n^{-1}(\xi\otimes x_1x_2\ldots x_n\cdot\delta)$ is
isomorphic as a marked tree to
\[\Phi_{x_n}\circ\cdots\circ\Phi_{x_2}\circ\Phi_{x_1}(\rho^{-1}(\xi)).\]
The covering $p_n:\rho_{n+1}^{-1}(\xi\otimes x_1x_2\ldots
x_{n+1})\arr\rho_n^{-1}(\xi\otimes x_1x_2\ldots x_n)$ maps the
copies $T_1$ and $T_2$ of $\rho_n^{-1}(\xi\otimes x_1x_2\ldots
x_n)$ tautologically onto $\rho_n^{-1}(\xi\otimes x_1x_2\ldots
x_n)$.
\end{proposition}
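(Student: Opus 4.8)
The plan is to derive this directly from Theorem~\ref{th:fibersrhon} by induction on $n$, once one recognizes that the operation $\Phi_i$ is exactly the gluing‑and‑relabelling prescription encoded in items (i)--(vi) of that theorem, and that the identification of $\mathcal D$ with the space of tripods of mass one made just above the proposition turns $\rho^{-1}(\xi)$ into the marked tripod with feet $Z_\alpha,Z_\beta,Z_\gamma$.

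\emph{Base case.} For $n=0$ there is nothing to fold: the preceding discussion shows $\rho^{-1}(\xi)=P^{-1}(\xi)$ is precisely the tripod with feet $Z_\alpha,Z_\beta,Z_\gamma$ and leg–lengths $x_1,\,1-x_1-x_2,\,x_2$, which is the assertion for the empty composition. For $\delta=a$ one uses that $\rho_0$ is equivariant for the actions of $\langle a\rangle\cong\G/\img{f}$ and $\langle a\rangle\cong\wh\G/\img{\wh f}$ (since $P$ is $\nu$–equivariant and $\nu(a)=a$), so $\rho_0^{-1}(\xi\cdot a)=\rho_0^{-1}(\xi)\cdot a$; as the sets $Z_{\alpha,0},Z_{\beta,0},Z_{\gamma,0}$ are by definition stable under $a$, multiplication by $a$ is a marked‑tree isomorphism. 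The same remark applies at every level: $\rho_n^{-1}(\zeta\cdot a)\cong\rho_n^{-1}(\zeta)$ as marked trees for $\zeta\in\mathcal D_n$, so the marked‑tree isomorphism type in the statement is independent of $\delta$, and it suffices to treat $\delta=\unit$.

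\emph{Inductive step.} Assume the claim for $n$. A point of $\mathcal S_{n+1}$ of the form $\xi\otimes x_1\cdots x_{n+1}\cdot\delta$ rewrites as $[\zeta\otimes x]$ with $\zeta=\xi\otimes x_1\cdots x_n\in\mathcal D_n$ and $x=x_{n+1}\cdot\delta\in\{\onep,\onep\cdot a,\twop,\twop\cdot a\}$, so Theorem~\ref{th:fibersrhon} applies. Since $q_n$ deletes the \emph{last} bimodule letter, $q_n([\zeta\otimes x])=\zeta$, and the induction hypothesis gives a marked‑tree isomorphism $\rho_n^{-1}(q_n([\zeta\otimes x]))\cong T:=\Phi_{x_n}\circ\cdots\circ\Phi_{x_1}(\rho^{-1}(\xi))$. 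By (ii) the covering $p_n$ restricts to homeomorphisms $T_1\to\rho_n^{-1}(\zeta)$ and $T_2\to\rho_n^{-1}(\zeta)$; transporting the marking of $T$ back through these, $T_1$ and $T_2$ become two copies of $T$, glued by (i) and (iii) exactly along their $Z_\alpha$–marked points (the point $Z_{n+1}([\zeta\otimes x])$, sent by $p_n$ to $Z_{\alpha,n}(\zeta)$). Items (iv)--(vi) then state precisely that in the glued tree the $Z_\beta$–point of $T_1$ is re‑marked $Z_\alpha$, the $Z_\beta$–point of $T_2$ is re‑marked $Z_\gamma$, and the $Z_\gamma$–point of $T_1$ (when $x_{n+1}=\onep$) or of $T_2$ (when $x_{n+1}=\twop$) is re‑marked $Z_\beta$; that is, the fiber is $\Phi_{x_{n+1}}(T)=\Phi_{x_{n+1}}\circ\cdots\circ\Phi_{x_1}(\rho^{-1}(\xi))$, and by construction $p_n$ carries $T_1,T_2$ tautologically onto $\rho_n^{-1}(\xi\otimes x_1\cdots x_n)$. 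This is the assertion for $n+1$.

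The only delicate points are bookkeeping ones: that $p_n$ strips the last letter $x_{n+1}$, so the outermost fold in the composition corresponds to it and the case split $x\in\{\onep,\onep\cdot a\}$ versus $\{\twop,\twop\cdot a\}$ in (v)--(vi) matches the definition of $\Phi_{\onep}$ versus $\Phi_{\twop}$; the harmless role of $\delta\in\{\unit,a\}$, handled by the invariance remark in the base case; and checking that $\Phi_i$ preserves non‑degeneracy of the marking, which is immediate since $\Phi_i$ never identifies the new $Z_\alpha$– and $Z_\gamma$–points. No genuinely hard step remains; all the real work is in Theorem~\ref{th:fibersrhon}.
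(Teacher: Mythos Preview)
Your proof is correct and follows exactly the approach the paper intends: the text immediately preceding the proposition says ``Theorem~\ref{th:fibersrhon} is reformulated then as the following recurrent procedure,'' and offers no separate argument. You have simply spelled out that reformulation by induction on $n$, matching items (i)--(vi) of Theorem~\ref{th:fibersrhon} to the definition of $\Phi_{\onep},\Phi_{\twop}$ and handling the $\delta\in\{\unit,a\}$ ambiguity via equivariance, which is precisely what is needed.
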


\begin{figure} \centering
  \includegraphics{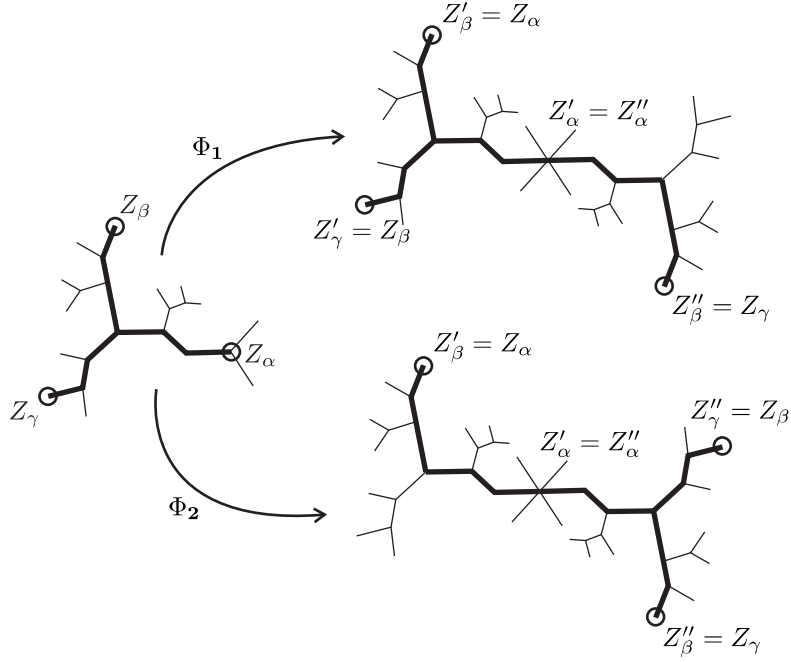}\\
  \caption{Unfolding transformations}\label{fig:unfolding}
\end{figure}

See Figure~\ref{fig:unfolding} for a description of the maps
$\Phi_\onep$ and $\Phi_\twop$. We denote by $Z_\alpha', Z_\beta',
Z_\gamma'$ the copies of $Z_\alpha, Z_\beta, Z_\gamma$ in $T_1$,
and by $Z_\alpha'', Z_\beta'', Z_\gamma''$ the copies of
$Z_\alpha, Z_\beta, Z_\gamma$ in $T_2$

If $T$ is a marked tree, then we call the tripod $Z_\alpha Z_\beta
Z_\gamma$ (i.e., the convex hull of the points $Z_\alpha$,
$Z_\beta$, and $Z_\gamma$) the \emph{Hubbard tripod} of $T$. Let
us denote by $O$ the common point of the legs of the Hubbard
tripod.

 Let us describe how the map $\iota:\M_1\arr\M$ acts on the
fibers of the maps $\rho_1$ and $\rho$. Denote by $x^{(1)}$ the
element $\one$, $\one\cdot a$, $\four$, $\four\cdot a$, if
$x=\onep$, $\onep\cdot a$, $\twop$, $\twop\cdot a$, respectively.
Denote $x^{(2)}=\two, \two\cdot a, \three, \three\cdot a$, if
$x=\onep, \onep\cdot a, \twop, \twop\cdot a$, respectively. Then
$\{x^{(1)}, x^{(2)}\}=\mu^{-1}(x)$, so that
$\rho_1^{-1}(\xi\otimes x)=\rho^{-1}(\xi)\otimes
x^{(1)}\cup\rho^{-1}(\xi)\otimes x^{(2)}$.

\begin{proposition}
\label{pr:iota0} Let $\xi, \zeta\in\mathcal{S}$ and $x\in\{\onep,
\onep\cdot a, \twop, \twop\cdot a\}$ be such that
$\lambda(\xi\otimes x)=\zeta$. Then $\iota:\rho_1^{-1}(\xi\otimes
x)\arr\rho^{-1}(\zeta)$ maps the edge connecting
$Z_\gamma(\xi)\otimes x^{(1)}$ and $O(\xi)\otimes x^{(1)}$
isometrically to $Z_\beta(\zeta)O(\zeta)$; collapses the edge
connecting $Z_\gamma(\xi)\otimes x^{(2)}$ and $O(\xi)\otimes
x^{(2)}$ to one point; and maps the path connecting
$Z_\beta(\xi)\otimes x^{(1)}$ and $Z_\beta(\xi)\otimes x^{(2)}$ to
the path connecting $Z_\alpha(\zeta)$ and $Z_\beta(\zeta)$
dividing all the distances in it by two.

If $x\in\{\onep, \onep\cdot a\}$, then $\iota(Z_\beta(\xi)\otimes
x^{(1)})=Z_\alpha(\zeta)$ and $\iota(Z_\beta(\xi)\otimes
x^{(2)})=Z_\gamma(\zeta)$, otherwise $\iota(Z_\beta(\xi)\otimes
x^{(1)})=Z_\gamma(\zeta)$ and $\iota(Z_\beta(\xi)\otimes
x^{(2)}=Z_\alpha(\zeta)$.
\end{proposition}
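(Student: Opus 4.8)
The plan is to compute the action of the map $I:\T_1\arr\T_0$ on the single fiber $P^{-1}(x_1,x_2)$ of the tripod projection, and then transport the result to the orbispace level via the equivariance relations $P(\xi\cdot g)=P(\xi)\cdot\nu(g)$ and the definition $\rho_n([\xi])=[P_n(\xi)]$. Everything is encoded in the explicit vertex formulae for $I$ given in Subsection~\ref{ss:iota}, combined with the coordinate description of the tripod $P^{-1}(x_1,x_2)$ just established (feet $Z_\alpha=(x_1,x_2,x_1,0,0)$, $Z_\beta=(x_1,x_2,0,1-x_1-x_2,0)$, $Z_\gamma=(x_1,x_2,0,0,x_2)$, center $O=(x_1,x_2,0,0,0)$), so the whole proof is a bookkeeping of which copy $\T_0\otimes x^{(j)}$ of $\T_0$ inside $\T_1$ contains which leg and where $I$ sends it.

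First I would fix $x\in\{\onep,\onep\cdot a,\twop,\twop\cdot a\}$ and unwind $\lambda(\xi\otimes x)=\zeta$ using Proposition~\ref{pr:rhon}: this says $\zeta=[L(P_0(\xi)\otimes\mu(x))]$, and since $L(\cdot\otimes\onep)$ and $L(\cdot\otimes\twop)$ are the two similitudes with ratio $\sqrt2/2$ recorded after the definition of $L$, this pins down the new tripod parameters of $\rho^{-1}(\zeta)$. Then, on the level of $\Xi_1$, the fiber $\rho_1^{-1}(\xi\otimes x)$ is $\rho^{-1}(\xi)\otimes x^{(1)}\cup\rho^{-1}(\xi)\otimes x^{(2)}$, and the map $\iota$ is induced by $I_0$, i.e.\ on a point $\eta\otimes x^{(j)}$ it returns $[I(\eta\otimes x^{(j)})]$ where now $x^{(j)}\in\alb\cdot\{\unit,a\}$ is an honest basis letter (or its $a$-translate). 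So the task reduces to: for each of the three legs $O Z_\alpha$, $O Z_\beta$, $O Z_\gamma$ of $\rho^{-1}(\xi)$ and each relevant letter, read off $\mathcal{I}_{x^{(j)}}$ (the linear parts computed in the proof of Proposition~\ref{pr:Icontracting}) applied to the leg, and identify the image leg inside $\T_0$. For instance, for $x=\onep$ one has $x^{(1)}=\one$, $x^{(2)}=\two$; the matrices $\mathcal{I}_\one$ and $\mathcal{I}_\two$ immediately give that on $\T_0\otimes\one$ the vector $\overrightarrow{CC_1}$ (i.e.\ the $Z_\gamma$-direction) goes to $\overrightarrow{BB_1}$ while on $\T_0\otimes\two$ it is killed — that is exactly the asserted ``$Z_\gamma\otimes x^{(1)}$-leg maps isometrically to $Z_\beta O$'' and ``$Z_\gamma\otimes x^{(2)}$-leg collapses''; similarly the $\overrightarrow{BB_1}$ and $\overrightarrow{AA_1}$ columns track the path between the two $Z_\beta$-copies, giving the halving-of-distances claim. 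The marking statement (which of $Z_\alpha(\zeta),Z_\gamma(\zeta)$ the endpoints hit) is then just matching the images of the vertices $A_1,B_1,C_1$ under $I$ against the vertex formulae in Subsection~\ref{ss:iota}, and checking that the branching behaviour is compatible with the $y_1\ne y_2$ analysis already done in the proof of Theorem~\ref{th:fibersrhon} (the point $Z_\alpha$ is where the two sheets are glued, $Z_\beta$ and $Z_\gamma$ are where the two $Z_\beta$-feet land), so consistency with (iii)--(vi) of that theorem is automatic.

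The case $x\in\{\twop,\twop\cdot a\}$, with $x^{(1)}=\four$, $x^{(2)}=\three$, is handled identically using $\mathcal{I}_\four$ and $\mathcal{I}_\three$: there the roles of $T_1$ and $T_2$ are swapped (it is $\T_0\otimes\four$ on which $\overrightarrow{CC_1}\mapsto\overrightarrow{BB_1}$ and $\T_0\otimes\three$ on which it is killed), which is precisely the difference between the two bulleted conclusions of the proposition. The $\cdot a$-translated cases follow from the already-established formulae, since $a$ acts by a self-homeomorphism of $\Xi$ intertwining everything and $\iota$, $\rho$, $\lambda$ are all defined on orbispaces; one only needs to note that $\mu(a)=a$ is compatible, which was recorded in Subsection~\ref{ss:zpp}. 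I expect the main obstacle to be purely organizational: keeping straight, simultaneously, the three distinct ``$\T_0\otimes x$'' copies that $\T_1$ is built from, the identifications among them coming from Theorem~\ref{th:pastingTn} (so that ``the leg $O(\xi)\otimes x^{(2)}$'' is a well-defined object in $\T_1$, not just in $\T_0\times\alb$), and the relabelling $Z_\alpha\leftrightarrow Z_\beta\leftrightarrow Z_\gamma$ forced by the unfolding $\Phi_{x_1}$ of Proposition~\ref{pr:Phi12} — i.e.\ making sure the marks I read off from the $I$-formulae agree with the marks the previous proposition assigns. Once the labelling conventions are nailed down, each individual verification is a one-line matrix computation with the $\mathcal{I}_x$ already in hand, so there is no real analytic content.
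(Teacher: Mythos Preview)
Your proposal is correct and follows essentially the same approach as the paper: both compute the action of $I$ on the tripod fibers via the explicit affine maps with linear parts $\mathcal{I}_x$ from the proof of Proposition~\ref{pr:Icontracting}, the paper by evaluating $I$ at the specific points $Z_\alpha, Z_\beta, Z_\gamma, O$ tensored with $\one$ and $\two$ (and declaring the case $x=\twop$ analogous), you by reading off the relevant columns of the $\mathcal{I}_x$. These are the same computation presented in two equivalent ways.
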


Here we wrote $Z_\alpha$, $Z_\beta$, and $Z_\gamma$ instead of
$Z_{\alpha, 0}$, $Z_{\beta, 0}$, and $Z_{\gamma, 0}$.

\begin{proof}
Let $\xi=(x_1, x_2)\in\mathcal{D}$ (the case
$\xi\in\mathcal{D}\cdot a$ is similar). Then $\rho^{-1}(\xi)$ is
the tripod with lengths of legs $(x_1, x_2, 1-x_1-x_2)$.

The map $\iota$ acts on $\rho^{-1}(\xi\otimes\onep)$ and on
$\rho^{-1}(\xi\otimes\onep\cdot a)$ by the rule (see the proof of
Proposition~\ref{pr:Icontracting}):
\[\iota(Z_\alpha\otimes\one)=
\left(\begin{array}{c}\frac 12\\ \frac 12\\ 0\\ 0\\
0\end{array}\right)+\mathcal{I}_\one\left(\begin{array}{c}x_1\\
x_2\\ x_1\\ 0\\ 0\end{array}\right)=\left(\begin{array}{c}(1-x_1-x_2)/2\\
(1+x_1-x_2)/2\\ 0\\ 0\\ x_1/2\end{array}\right),\]
\[\iota(Z_\beta\otimes\one)=\left(\begin{array}{c}(1-x_1-x_2)/2\\
(1+x_1-x_2)/2\\ (1-x_1-x_2)/2\\ 0\\
0\end{array}\right)=Z_\alpha(\zeta),\] similarly
\[\iota(Z_\gamma\otimes\one)=\left(\begin{array}{c}(1-x_1-x_2)/2\\
(1+x_1-x_2)/2\\ 0\\ x_2\\ 0\end{array}\right)=Z_\beta(\zeta),\]
\[\iota(O\otimes\one)=\left(\begin{array}{c}(1-x_1-x_2)/2\\
(1+x_1-x_2)/2\\ 0\\ 0\\ 0\end{array}\right)=O(\zeta),\] and
\[\iota(Z_\alpha\otimes\two)=\iota(Z_\alpha\otimes\one),\]
\[\iota(Z_\beta\otimes\two)=\left(\begin{array}{c}\frac 12\\ \frac 12\\ 0\\ 0\\
0\end{array}\right)+\mathcal{I}_\two\left(\begin{array}{c}x_1\\
x_2\\ 0\\ 1-x_1-x_2\\
0\end{array}\right)=\left(\begin{array}{c}(1-x_1-x_2)/2\\
(1+x_1-x_2)/2\\ 0\\ 0\\
(1+x_1-x_2)/2\end{array}\right)=Z_\gamma(\zeta)\]
\[\iota(Z_\gamma\otimes\two)=\left(\begin{array}{c}(1-x_1-x_2)/2\\
(1+x_1-x_2)/2\\ 0\\ 0\\
x_1\end{array}\right)=\iota(O\otimes\two).\] The statement of the
proposition follows now from the above formulae. The case
$x=\twop$ is analogical.
\end{proof}

\begin{corollary}
The map $\theta:\M\arr\M_1$ is isometric on the edges
$O(\zeta)Z_\beta(\zeta)$ of the trees $\rho^{-1}(\zeta)$ and
multiplies the lengths of the legs $O(\zeta)Z_\alpha(\zeta)$ and
$O(\zeta)Z_\gamma(\zeta)$ by two.
\end{corollary}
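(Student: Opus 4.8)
The plan is to obtain the corollary by inverting, fibre by fibre, the description of $\iota$ in Proposition~\ref{pr:iota0}. Recall from Subsection~\ref{ss:iota} and the paragraph preceding Theorem~\ref{th:wttheta} that $\theta=\theta_0$ is a section of $\iota=\iota_0$, induced by the $\G$-equivariant section $\Theta$ of $I$, which on every fibre restricts to the inverse of $\iota$ on the closure of $\T_1\setminus(C_1ABC\otimes\{\two,\three\})$ — the locus where $I$ is injective. First I would pin down into which fibre of $\rho_1$ the map $\theta$ sends a fibre $\rho^{-1}(\zeta)$: from Proposition~\ref{pr:rhon} we have $\rho\circ\iota=\lambda\circ\rho_1$, and $\lambda=\lambda_0$ is a bijection because $L=L_0$ is a homeomorphism (Subsection~\ref{ss:zpp}); composing with the section $\theta$ gives $\rho_1\circ\theta=\lambda^{-1}\circ\rho$, so $\theta(\rho^{-1}(\zeta))\subset\rho_1^{-1}(\xi\otimes x)$ where $\xi\otimes x:=\lambda^{-1}(\zeta)$. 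Then $\lambda(\xi\otimes x)=\zeta$, which is exactly the hypothesis of Proposition~\ref{pr:iota0}.

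Next I would simply invert the three clauses of Proposition~\ref{pr:iota0}. The part of $\rho_1^{-1}(\xi\otimes x)$ on which $\iota$ is injective — hence the image $\theta(\rho^{-1}(\zeta))$ — is what is left after deleting the leg of the copy $\rho^{-1}(\xi)\otimes x^{(2)}$ joining $O(\xi)\otimes x^{(2)}$ to $Z_\gamma(\xi)\otimes x^{(2)}$, which is precisely the part of the fibre inside the collapsed tetrahedron $C_1ABC\otimes\{\two,\three\}$; on this subtree $\theta=(\iota|_{\text{subtree}})^{-1}$. According to Proposition~\ref{pr:iota0}, $\iota$ sends the leg joining $O(\xi)\otimes x^{(1)}$ and $Z_\gamma(\xi)\otimes x^{(1)}$ isometrically onto $O(\zeta)Z_\beta(\zeta)$; sends the leg joining $O(\xi)\otimes x^{(1)}$ and $Z_\beta(\xi)\otimes x^{(1)}$ onto $O(\zeta)Z_\alpha(\zeta)$, halving lengths; and sends the path from $O(\xi)\otimes x^{(1)}$ through the gluing point $Z_\alpha(\xi)\otimes x^{(1)}=Z_\alpha(\xi)\otimes x^{(2)}$ and the centre $O(\xi)\otimes x^{(2)}$ to $Z_\beta(\xi)\otimes x^{(2)}$ onto $O(\zeta)Z_\gamma(\zeta)$, halving lengths (with the $\alpha$ and $\gamma$ marks interchanged when $x\in\{\twop,\twop\cdot a\}$, which does not affect the conclusion). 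Inverting these three statements: $\theta(O(\zeta))=O(\xi)\otimes x^{(1)}$; $\theta$ is isometric on the edge $O(\zeta)Z_\beta(\zeta)$; and $\theta$ multiplies lengths by $2$ on each of the legs $O(\zeta)Z_\alpha(\zeta)$ and $O(\zeta)Z_\gamma(\zeta)$, the latter being carried onto a union of three edges of $\M_1$ whose total length is twice that of $O(\zeta)Z_\gamma(\zeta)$. This is the assertion.

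The two things that need a check are: (a) that the subtree above really is the image of $\theta$, which is immediate from the construction of $\Theta$ as a section landing in $\overline{\T_1\setminus(C_1ABC\otimes\{\two,\three\})}$ together with the fact that $\iota$ is a homeomorphism of that set onto $\T_0$; and (b) the numerical factors, for which I would only quote the affine matrices $\mathcal{I}_\one,\mathcal{I}_\two$ already computed in the proofs of Propositions~\ref{pr:Icontracting} and~\ref{pr:iota0}. Concretely, taking $\xi=(x_1,x_2)\in\mathcal{D}$, so that the legs of $\rho^{-1}(\xi)$ have lengths $x_1$, $1-x_1-x_2$, $x_2$ for $Z_\alpha,Z_\beta,Z_\gamma$, one computes $\zeta=\lambda(\xi\otimes x)$ and finds the legs of $\rho^{-1}(\zeta)$ to have lengths $(1-x_1-x_2)/2$, $x_2$, $(1+x_1-x_2)/2$; these equal, respectively, half the $\beta$-leg of $\rho^{-1}(\xi)\otimes x^{(1)}$, the $\gamma$-leg of $\rho^{-1}(\xi)\otimes x^{(1)}$, and half the length $x_1+x_1+(1-x_1-x_2)=1+x_1-x_2$ of the three-edge path — confirming in particular that the $\beta$-leg is precisely the edge along which $\iota$, and hence $\theta$, is a genuine isometry. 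The cases $x\in\{\onep\cdot a,\twop,\twop\cdot a\}$ are entirely symmetric, as in Proposition~\ref{pr:iota0}. There is no real obstacle here; the corollary is a bookkeeping reformulation of Proposition~\ref{pr:iota0}.
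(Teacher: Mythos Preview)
Your argument is correct and is exactly the intended one: the paper offers no separate proof of the corollary, treating it as immediate from Proposition~\ref{pr:iota0} by inverting $\iota$ on the non-collapsed part of the fibre, which is precisely what you do. Your identification of $\rho_1\circ\theta=\lambda^{-1}\circ\rho$ and the explicit length check against the coordinates in the proof of Proposition~\ref{pr:iota0} make the deduction watertight.
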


\subsection{Folding tripods and fibers of $(z, w)\mapsto w$}
\label{ss:foldingtripods} Consider the map
$p_0\circ\theta:\M\arr\M$. Note that since
$\lambda:\mathcal{S}_1\arr\mathcal{S}$ is a homeomorphism, we have
$\rho(\theta(\xi))=\lambda^{-1}(\rho(\xi))$ for all $\xi\in\M$.

It follows from the description of the fiberwise action of $p_0$
and $\theta$ that restriction
$p_0\circ\theta:\rho^{-1}(\xi)\arr\rho^{-1}(q_0\circ\lambda^{-1}(\xi))$
acts in the following way.

Double all the distances inside the legs $Z_\alpha(\xi)O(\xi)$ and
$Z_\gamma(\xi)O(\xi)$ of $\rho^{-1}(\xi)$, and then fold the path
$Z_\alpha(\xi)Z_\gamma(\xi)$ in two. The common image of
$Z_\alpha(\xi)$ and $Z_\gamma(\xi)$ is $Z_\beta(\zeta)$, the image
of the middle of the path $Z_\alpha(\xi)Z_\gamma(\xi)$ is
$Z_\alpha(\zeta)$ and the image of the vertex $Z_\beta(\xi)$ is
$Z_\gamma(\xi)$ (see Figure~\ref{fig:map}).

\begin{figure}
\centering  \includegraphics{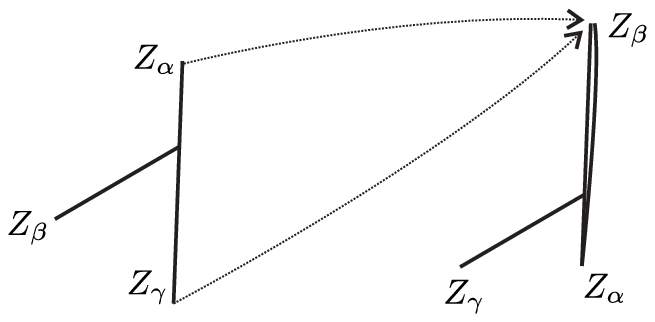} \caption{The folding map
$F$}\label{fig:map}
\end{figure}

If $(x_1, x_2, x_3)$ are the lengths of the legs $Z_\alpha O$,
$Z_\beta O$, and $Z_\gamma O$ in $\rho^{-1}(\xi)$, respectively,
then the lengths of the corresponding legs in the tripod
$\rho^{-1}(p_0\circ\theta(\zeta))$ are
\[F(x_1, x_2, x_3)=\Bigl(|x_1-x_3|,\quad 2\min(x_1, x_3),\quad x_2\Bigr),\]
respectively. Recall that $x_1+x_2+x_3=1$ and note that
$|x_1-x_3|+2\min(x_1, x_3)+x_2=x_1+x_2+x_3$.

\emph{Itinerary} of a triple $(x_1, x_2, x_3)$ is the sequences
$(k_1, k_2, \ldots)$, where $k_n=\onep$ if in $F^{n-1}(x_1, x_2,
x_3)=(y_1, y_2, y_3)$ we have $y_3>y_1$; otherwise, $k_n=\twop$.

Denote by $\rho_\infty:\M_\infty\arr\mathcal{S}_\infty$ the limit
of the maps $\rho_n:\M_n\arr\mathcal{S}_n$, where
$\M_\infty\approx\lims[\img{f}]$ and
$\mathcal{S}_\infty\approx\lims[\img{\wh f}]$ are the projective
limits of $\M_n$ and $\mathcal{S}_n$ (with respect to the maps
$\iota_n$ and $\lambda_n$), respectively. We denote $q^n=q_0\circ
q_1\circ\cdots\circ q_{n-1}$ and
$\lambda^n=\lambda_0\circ\lambda_1\circ\cdots\circ\lambda_{n-1}$.
We have $q_n\circ\lambda_{n+1}=\lambda_n\circ q_{n+1}$ for all
$n\ge 0$.

\begin{proposition}
\label{pr:fiberinvlim} Let $(\zeta_0,
\zeta_1=\lambda^{-1}(\zeta_0), \zeta_2=\lambda_1^{-1}(\zeta_1),
\ldots)$ be a point of $\mathcal{S}_\infty$ and suppose that
$\rho^{-1}(\zeta_0)$ is a tripod with the lengths of legs $(x_1,
x_2, x_3)$. Let $(k_1, k_2, \ldots)$ be the itinerary of the
tripod $\rho^{-1}(\zeta_0)$. Then the trees $\rho_n^{-1}(\zeta_n)$
are isometric to $\Phi_{k_1}\circ\cdots\circ\Phi_{k_n}(T_n)$,
where $T_n$ is the tripod with lengths of legs $F^n(x_1, x_2,
x_3)$.

The maps
$\iota_n:\rho_{n+1}^{-1}(\zeta_{n+1})\arr\rho_n^{-1}(\zeta_n)$
project each of the $2^n$ copies of the graph
$\Phi_{k_{n+1}}(T_{n+1})$ in
$\Phi_{k_1}\circ\cdots\circ\Phi_{k_n}(\Phi_{k_{n+1}}(T_{n+1}))$
onto its Hubbard tripod, divides the lengths of the legs $Z_\alpha
O$ and $Z_\gamma O$ by two, and then identifies each copy with the
corresponding copy of $T_n$ in
$\Phi_{k_1}\circ\cdots\circ\Phi_{k_n}(T_n)$.
\end{proposition}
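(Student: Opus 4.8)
The plan is to prove both assertions by a single induction on $n$, feeding in the unfolding description of the fibers of $p_n$ and of the operators $\Phi_{\onep},\Phi_{\twop}$ (Proposition~\ref{pr:Phi12}), the explicit fiberwise action of $\iota=\iota_0$ and $\theta=\theta_0$ (Proposition~\ref{pr:iota0} and the corollary following it), the folding description of $p_0\circ\theta$ on tripods together with the map $F$ (Subsection~\ref{ss:foldingtripods}), and the commuting squares of Proposition~\ref{pr:rhon}.

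First I would unwind the coordinates of the given point of $\mathcal{S}_\infty$. Since $\mathcal{S}_\infty$ is the inverse limit of the $\mathcal{S}_n$ along $\lambda_n$, and $\lambda_n$ is induced by $L_n(\eta\otimes x\otimes v)=L(\eta\otimes x)\otimes v$, each coordinate $\zeta_n$ can be written as $[\xi_n\otimes w_n]$ with $\xi_n\in\D$ (up to the $a$-action, which does not affect fibers) and $w_n\in\{\onep,\twop\}^n$, the words nested as $w_{n+1}=c_{n+1}w_n$ for a new letter $c_{n+1}$, and with $\xi_n=L(\xi_{n+1}\otimes c_{n+1})$ in $\wh{\Xi}$ modulo $\img{\wh f}$. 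Proposition~\ref{pr:Phi12} applied to $\zeta_n=[\xi_n\otimes w_n]$ then gives an isomorphism of marked trees $\rho_n^{-1}(\zeta_n)\cong\Phi_{c_1}\circ\cdots\circ\Phi_{c_n}(\rho^{-1}(\xi_n))$, the outermost operator corresponding to the last letter of $w_n$. It remains to prove that $\rho^{-1}(\xi_n)$ is the tripod $T_n$ with leg lengths $F^n(x_1,x_2,x_3)$ and that $c_i=k_i$ for every $i$.

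For the base tripod I would induct again, the case $n=0$ being the hypothesis that $\rho^{-1}(\zeta_0)=\rho^{-1}(\xi_0)$ has legs $(x_1,x_2,x_3)$. From $\xi_n=L(\xi_{n+1}\otimes c_{n+1})$ we get $\lambda_0^{-1}([\xi_n])=[\xi_{n+1}\otimes c_{n+1}]$, and $q_0$ sends this to $[\xi_{n+1}]$ or $[\xi_{n+1}\cdot a]$; so $q_0\circ\lambda_0^{-1}([\xi_n])$ has the same fiber tripod as $\xi_{n+1}$. On the other hand Subsection~\ref{ss:foldingtripods} shows that $p_0\circ\theta$ realizes on $\rho^{-1}(\xi_n)$ exactly the folding $F$ of leg lengths and maps it onto $\rho^{-1}(q_0\circ\lambda_0^{-1}(\xi_n))$; hence the tripod of $\xi_{n+1}$ has legs $F(F^n(x_1,x_2,x_3))=F^{n+1}(x_1,x_2,x_3)$, i.e.\ it is $T_{n+1}$. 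The same computation pins down $c_{n+1}$: this letter records which of the two branches of $\rho^{-1}(\xi_n)$ is collapsed by the fold, and by $F(y_1,y_2,y_3)=(|y_1-y_3|,2\min(y_1,y_3),y_2)$ this depends on the sign of $y_1-y_3$ for $(y_1,y_2,y_3)=F^n(x_1,x_2,x_3)$; matching this against cases (v) and (vi) of Theorem~\ref{th:fibersrhon} (equivalently the two cases of Proposition~\ref{pr:iota0}) gives $c_{n+1}=\onep$ exactly when $y_3>y_1$, i.e.\ $c_{n+1}=k_{n+1}$. This yields the first assertion.

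For the maps $\iota_n$, recall that $\iota_n$ is induced by $I_n(\eta\otimes y\otimes v)=I(\eta\otimes y)\otimes v$, so it absorbs the \emph{first} tensor factor through $I$ and leaves the remaining word $v$ untouched. Under the identification $\rho_{n+1}^{-1}(\zeta_{n+1})\cong\Phi_{c_1}\circ\cdots\circ\Phi_{c_n}\bigl(\Phi_{c_{n+1}}(T_{n+1})\bigr)$, which is glued from $2^n$ copies of the innermost block $\Phi_{c_{n+1}}(T_{n+1})$ along their $Z_\alpha$-points, this first tensor factor is precisely the one producing the innermost block, so on each copy $\iota_n$ acts as $\iota_0$ acts on $\Phi_{c_{n+1}}(\rho^{-1}(\xi_{n+1}))$. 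By Proposition~\ref{pr:iota0} and its corollary this amounts to: project $\Phi_{c_{n+1}}(T_{n+1})$ onto its Hubbard tripod (collapsing one $Z_\gamma$-leg and mapping the other isometrically onto the $OZ_\beta$ arc), divide the legs $OZ_\alpha$ and $OZ_\gamma$ by two, and paste the result as the corresponding copy of $T_n$ inside $\Phi_{c_1}\circ\cdots\circ\Phi_{c_n}(T_n)$. This is the second assertion. The step I expect to cost the most care is the bookkeeping here and in the preceding paragraph: keeping straight that ascending the inverse limit prepends a letter to $w_n$ while the $\Phi$'s accumulate on the outside and $F$ is iterated from the base, and checking that $\iota_n$ matches the $2^n$ inner blocks with the $2^n$ copies of $T_n$ compatibly with the relabelling of marked points in Proposition~\ref{pr:Phi12}; each individual verification is a direct instance of Propositions~\ref{pr:Phi12} and~\ref{pr:iota0}, so the difficulty is organizational rather than substantive.
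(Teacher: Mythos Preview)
Your proposal is correct and follows essentially the same route as the paper's proof: both establish by induction that $\zeta_n$ lies over the word $k_n k_{n-1}\cdots k_1$ with base tripod having legs $F^n(x_1,x_2,x_3)$, invoking Proposition~\ref{pr:Phi12} for the tree structure and Proposition~\ref{pr:iota0} (together with the folding description of $p_0\circ\theta$) for the identification of $c_{n+1}$ with $k_{n+1}$ and for the action of $\iota_n$. The paper packages the induction slightly differently---first isolating the commutation identity $q_0\circ\lambda_0^{-1}\circ q^n\circ(\lambda^n)^{-1}=q^{n+1}\circ(\lambda^{n+1})^{-1}$ and the shift property of itineraries, then deducing $\zeta_n\in\D\otimes k_nk_{n-1}\cdots k_1\cdot\delta$---but the substance and the cited inputs are the same.
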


\begin{proof}
If $(k_1, k_2, \ldots)$ is the itinerary of the tripod
$\rho^{-1}(\zeta)$, then $(k_2, k_3, \ldots)$ is the itinerary of
the tripod
$\rho^{-1}(q_0\circ\lambda_0^{-1}(\zeta))=\rho^{-1}(q_0(\zeta_1))$,
and $\zeta_1\in\D\otimes k_1\cdot\delta$ for $\delta\in\{\unit,
a\}$ (see the description of the action of
$p_0\circ\theta:\rho^{-1}(\zeta)\arr\rho^{-1}(\lambda_0^{-1}(\zeta))$
and Proposition~\ref{pr:Phi12}).

Note that
\[\lambda_0^{-1}\circ (q_0\circ q_1\circ\cdots q_{n-1})=(q_1\circ q_2\circ\cdots\circ q_n)\circ
\lambda_n^{-1},\]
which implies
\[q_0\circ\lambda_0^{-1}\circ q^n\circ(\lambda^n)^{-1}=
q_0\circ (q_1\circ q_2\circ\cdots\circ
q_n)\circ\lambda_n^{-1}\circ (\lambda^n)^{-1}=q^{n+1}\circ
(\lambda^{n+1})^{-1}.\]

It follows now by induction that $(k_{n+1}, k_{n+2}, \ldots)$ is
the itinerary of the tripod $\rho^{-1}(q^n\circ
(\lambda^n)^{-1}(\zeta))=\rho^{-1}(q^n(\zeta_n))$, which has
lengths of legs $F^n(x_1, x_2, x_3)$, and that
$q_1\circ\cdots\circ q_{n-1}(\zeta_n)\in\D\otimes k_n\cdot\delta$
for some $\delta\in\{\unit, a\}$.

We know that
\[\lambda_k(\D_k\otimes x)=\D_{k-1}\otimes x\]
for all $k\ge 1$ and $x\in\{\onep, \onep\cdot a, \twop, \twop\cdot
a\}$.

Therefore, if $\zeta_1\in\D\otimes k_1\cdot\delta_1$ for
$\delta_1\in\{\unit, a\}$, then
\[\zeta_n=(\lambda_1\circ\cdots\circ\lambda_{n-1})^{-1}(\zeta_1)\in\D_{n-1}\otimes
k_1\cdot\delta_1\] for all $n$.

Similarly, if $\delta_m\in\{\unit, a\}$ is such that
$q_1\circ\cdots\circ q_{m-1}(\zeta_m)\in\D\otimes
x_m\cdot\delta_m$, then for all $n>m$ we have
\begin{multline*}q_{n-m+1}\circ\cdots\circ q_{n-1}(\zeta_n)=\\
q_{n-m+1}\circ\cdots\circ q_{n-1}\circ
\lambda_{n-1}^{-1}\circ\cdots\circ\lambda_m^{-1}(\zeta_m)=\\
\lambda_{n-m}^{-1}\circ\cdots\circ\lambda_1^{-1}\circ
q_1\circ\cdots\circ q_{m-1}(\zeta_m)\in\D_{n-m}\otimes
x_m\cdot\delta_m,
\end{multline*}
since $q_{i+1}\circ\lambda_{i+1}^{-1}=\lambda_i^{-1}\circ q_i$ for
all $i$.

It follows that $\zeta_n\in\D\otimes k_nk_{n-1}\ldots
k_1\cdot\delta_1$. The first paragraph of the proposition follows
now from Proposition~\ref{pr:Phi12}. The second paragraph follows
from the definition of the map $\iota_n$ and
Proposition~\ref{pr:iota0}.
\end{proof}

\subsection{The spaces $\M_n$ as subsets of the Julia set}

A \emph{dendrite} is a path connected and locally path connected
space without simple closed curves.

\begin{proposition}
\label{pr:dendrites} The fibers of the map
$\rho_\infty:\lims[\img{f}]\arr\lims[\img{\wh f}]$ are dendrites.
\end{proposition}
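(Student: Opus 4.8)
The plan is to exhibit each fiber $\rho_\infty^{-1}(\xi)$ as an inverse limit of the finite trees $\rho_n^{-1}(\zeta_n)$ along the maps $\iota_n$ described in Proposition~\ref{pr:fiberinvlim}, and then to verify that this inverse limit is a dendrite. First I would fix a point $\xi = (\zeta_0, \zeta_1, \zeta_2, \ldots) \in \mathcal{S}_\infty = \lims[\img{\wh f}]$ and observe that, since $\rho_\infty$ is by definition the limit of the maps $\rho_n : \M_n \arr \mathcal{S}_n$ and the diagrams in Proposition~\ref{pr:rhon} commute, we have a natural homeomorphism
\[
\rho_\infty^{-1}(\xi) \;\cong\; \varprojlim\bigl(\rho_0^{-1}(\zeta_0) \xleftarrow{\iota_0} \rho_1^{-1}(\zeta_1) \xleftarrow{\iota_1} \rho_2^{-1}(\zeta_2) \xleftarrow{\iota_2} \cdots\bigr).
\]
By Theorem~\ref{th:fibersrhon} each $\rho_n^{-1}(\zeta_n)$ is a finite tree, and by Proposition~\ref{pr:fiberinvlim} the bonding map $\iota_n$ on each of the $2^n$ copies of $\Phi_{k_{n+1}}(T_{n+1})$ collapses onto the Hubbard tripod, rescales the two legs $Z_\alpha O$ and $Z_\gamma O$ by a factor $1/2$, and then identifies the result with the corresponding copy of $T_n$. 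In particular each $\iota_n$ is a continuous surjection of finite trees.

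Next I would check the two standard properties that make such an inverse limit a dendrite. An inverse limit of a sequence of finite trees along surjective bonding maps is always a compact, connected, locally connected metrizable continuum (a Peano continuum), since each $\rho_n^{-1}(\zeta_n)$ is a Peano continuum and surjective bonds between Peano continua pass the property to the limit; a Peano continuum is path connected and locally path connected. So the only thing left to rule out is the presence of a simple closed curve. For this I would argue that the bonding maps are \emph{monotone}, i.e., have connected point-preimages: the only non-degenerate identifications made by $\iota_n$ are (a) collapsing each copy of $\Phi_{k_{n+1}}(T_{n+1})$ onto its Hubbard tripod — a monotone retraction of a tree onto a subtree — and (b) gluing two copies of $T_n$ along the single point $Z_{n+1}$, which is exactly the inverse of an arc-preserving branch identification and again has connected (indeed at most two-point, tree-connected) fibers. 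An inverse limit of trees along monotone surjections contains no simple closed curve: if a simple closed curve $S$ were in the limit, its image in some $\rho_n^{-1}(\zeta_n)$ would have to be an arc or a point (a tree has no circle), and monotonicity would force a circle in that tree — a contradiction.

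The step I expect to be the main obstacle is the verification that the bonding maps $\iota_n$ are genuinely monotone — that is, carefully tracking, from the recursive description in Proposition~\ref{pr:fiberinvlim} together with the combinatorial identifications of Theorem~\ref{th:pastingTn}, that no two \emph{separated} subtrees of $\rho_{n+1}^{-1}(\zeta_{n+1})$ get sent to the same point of $\rho_n^{-1}(\zeta_n)$. The content here is that the self-similar structure never folds the tree back on itself in a way that would create a loop in the limit; concretely one must see that each copy of $\Phi_{k_{n+1}}(T_{n+1})$ meets its neighbors only along the branch points $Z_{n+1}(\cdot)$ and that the collapse onto the Hubbard tripod identifies only the ``dangling'' subtrees hanging off the tripod's legs, each of which is connected. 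Once monotonicity is established, the dendrite conclusion is immediate from the inverse-limit facts quoted above, and I would simply cite the standard characterization of a dendrite as a Peano continuum containing no simple closed curve.
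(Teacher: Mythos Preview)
Your overall strategy---realize the fiber as an inverse limit of finite trees and then check the dendrite axioms---is the same as the paper's, but there is a genuine gap in the step you treat as automatic. The assertion that ``surjective bonds between Peano continua pass the property to the limit'' is false: local connectedness does \emph{not} in general survive inverse limits, even when the bonding maps are monotone surjections between finite trees. Inverse limits of dendrites along monotone surjections are dendroids (arcwise connected, hereditarily unicoherent), but they need not be locally connected; standard examples of non--locally-connected dendroids arise exactly this way. So the Hahn--Mazurkiewicz shortcut you invoke does not apply, and path connectedness and local path connectedness both need an argument.

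The paper handles this directly, using the special form of the bonding maps. Each $\iota_n$ is a retraction of the tree $A_{n+1}$ onto a subtree $A_{n+1}'$, followed by a homeomorphism $A_{n+1}'\to A_n$. From this one shows: (i) the unique arcs $\gamma_n$ joining the projections $x_n,y_n$ are coherent (the retraction maps $\gamma_{n+1}$ onto a sub-arc containing $\gamma_n$), so their inverse limit is an arc joining $x$ and $y$; (ii) every basic open set is the preimage of an open subset of some $A_n$, hence contains an inverse limit of subtrees, which is path connected by the same argument; (iii) any arc from $x$ to $y$ must project onto a set containing $\gamma_n$, so it contains the inverse limit of the $\gamma_n$, giving unique arcs. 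Note that the property you flagged as the ``main obstacle''---monotonicity of $\iota_n$---is immediate once you observe that a retraction of a tree onto a subtree has connected point-preimages; the real work is local connectedness, which you skipped. Your argument for excluding simple closed curves (``monotonicity would force a circle in that tree'') is also not correct as written, though the conclusion is salvageable via hereditary unicoherence of inverse limits of trees with monotone bonds; the paper's unique-arc argument in (iii) is more self-contained.
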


\begin{proof}
By Proposition~\ref{pr:fiberinvlim}, every fiber of $\rho_\infty$
is homeomorphic to the inverse limit $A_\infty$ of a sequence of
finite trees $A_n$ with respect to maps $\iota_n:A_{n+1}\arr A_n$,
where $\iota_n$ is composition of a projection of $A_{n+1}$ onto a
subtree $A_{n+1}'$ and a homeomorphism $A_{n+1}'\arr A_n$.

Let us show that $A_\infty$ is path connected. Let $x, y\in
A_\infty$ be arbitrary points, and let $x_n, y_n$ be their images
in $A_n$. Let $\gamma_n$ be the unique arc connecting $x_n$ to
$y_n$ in $A_n$. The projection of $A_n$ onto $A_n'$ maps
$\gamma_n$ to its sub-arc $\gamma_n'$ by mapping the connected
components of $\gamma_n\setminus\gamma_n'$ to the endpoints of
$\gamma_n'$. It follows that the inverse limit of the arcs
$\gamma_n$ is an arc in $A_\infty$ connecting $x$ to $y$.

Every open neighborhood of $x\in A_\infty$ contains a
neighborhood which is the inverse image of an open subset of $A_n$ for some
$n$, hence it contans an open neighborhood which is the inverse limit
of subtrees of the trees $A_n$. By the argument above, this inverse
limit is path connected. Consequently, $A_\infty$ is locally path connected.

Suppose that $\gamma$ is an arc with endpoints $x, y\in
A_\infty$. Denote by $\gamma_{(n)}$ its image in $A_n$, and let $x_n, y_n$
be the images of $x, y$ in $A_n$. The set $\gamma_{(n)}$ is connected,
hence it
has to contain the unique arc $\gamma_n$ connecting $x_n$ and $y_n$. It follows
that $\gamma$ contains the inverse limit of the arcs $\gamma_n$.
But we have proved that the inverse limit of $\gamma_n$ is an arc
connecting $x$ and $y$. Consequently, there exists only one arc
connecting $x$ and $y$ in $A_\infty$.
\end{proof}

Recall that $\wt\theta_n:\M_n\arr\M_\infty$ denotes the limit of
$\theta_{n+k-1}\circ\cdots\theta_n:\M_n\arr\M_{n+k}$ and is a
homeomorphisms of $\M_n$ with $\wt\theta_n(\M_n)$. The spaces
$\M_n$ can be hence identified with subsets of the Julia set of
$f$. Let us denote by $\wt\M_n$ the image of $\wt\theta_n(\M_n)$
under the natural homeomorphism of
$\M_\infty\approx\lims[\img{f}]$ with the Julia set of $f$.

Denote by $\wt\iota_n:\wt\M_{n+1}\arr\wt\M_n$ the map
obtained from $\iota_n$ after identification of the spaces $\M_n$ with
the sets $\wt\M_n$.

\begin{proposition}
\label{pr:MninJulia} Intersections $\mathcal{J}_{w_0}$ of the
Julia set of $f$ with the lines $w=w_0$ are dendrites. The set
$\wt\M=\wt\M_0$ is the union of the convex hulls of the sets
$\{(0, w_0), (1, w_0), (w_0, w_0)\}$ inside the dendrite
$\mathcal{J}_{w_0}$.

We have $\wt\M_n=f^{-n}(\wt\M)$. The map
$\wt\iota_n:\wt\M_{n+1}\arr\wt\M_n$ acts as projection of the
trees $\wt\M_{n+1}\cap\mathcal{J}_{w_0}$ onto their sub-trees
$\wt\M_n\cap\mathcal{J}_{w_0}$.
\end{proposition}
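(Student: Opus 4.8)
The plan is to read off the four assertions, in turn, from the functoriality of Section~\ref{ss:zpp}, from Proposition~\ref{pr:dendrites}, from Theorem~\ref{th:wttheta}, and from Proposition~\ref{pr:fiberinvlim}. Everything is understood modulo the sub-hyperbolicity Conjecture of Section~\ref{s:thefunction}, which via Theorems~\ref{th:main} and~\ref{th:JuliasetMn} identifies $(\M_\infty,p_\infty)$ with the action of $f$ on $J$ and identifies $\lims[\img{\wh f}]$ with the Julia set $\julia{\wh f}$ of $\wh f$.

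\emph{Dendrites.} The semiconjugacy $(z,w)\mapsto w$ induces (Section~\ref{ss:zpp}) a map $\rho_\infty:\lims[\img{f}]\arr\lims[\img{\wh f}]$ which, under the identifications above, is the projection $P:J\arr\julia{\wh f}$; its fibres are the slices $\mathcal{J}_{w_0}=P^{-1}(w_0)\cap J$ (Theorem~\ref{th:fornsibon}). Proposition~\ref{pr:dendrites} says these are dendrites.

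\emph{Identification of $\wt\M$.} Since every $\lambda_n$ is a homeomorphism, the canonical identification $\mathcal{S}_0\approx\mathcal{S}_\infty\approx\julia{\wh f}$ is a homeomorphism; combining the square $\lambda_n\circ\rho_{n+1}=\rho_n\circ\iota_n$ of Proposition~\ref{pr:rhon} with $\iota_n\circ\theta_n=\mathrm{id}$ gives $\rho_{n+1}\circ\theta_n=\lambda_n^{-1}\circ\rho_n$, and in the limit $\rho_\infty\circ\wt\theta_0$ is the composition of $\rho_0$ with the canonical embedding $\mathcal{S}_0\hookrightarrow\mathcal{S}_\infty$. Hence $\wt\M\cap\mathcal{J}_{w_0}=\wt\theta_0(\rho_0^{-1}(\zeta))$, where $\zeta\in\mathcal{S}_0$ corresponds to $w_0$; by Section~\ref{ss:unfolding} this fibre is a tripod with feet $Z_\alpha(\zeta),Z_\beta(\zeta),Z_\gamma(\zeta)$. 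The main obstacle is to show these feet are the post-critical points $(0,w_0)$, $(1,w_0)$, $(w_0,w_0)$ under $\M_\infty\approx J$. For this I would use that $\alpha,\beta,\gamma$ are the loops around the post-critical lines $z=0$, $z=1$, $z=w$ in a $z$-slice (Section~\ref{s:imgcomputation}), that the foot $Z_\alpha(\zeta)$ has point-stabilizer $\G_{A_1}\cap\G_B\cap\G_C=\langle\alpha\rangle$ (and cyclically for the others), and the standard dictionary between the limit $\img{f}$-space and the ramification of $f$: a point of $J$ whose stabilizer contains the class of the small loop around $z=0$ must lie on the line $z=0$. Since $\{z=0\}\cap\{w=w_0\}$ is the single point $(0,w_0)$, which lies in $J$ because its forward orbit cycles through bounded post-critical points (hence does not converge to $[1:0:0]$), this forces $Z_\alpha(\zeta)=(0,w_0)$, and likewise $Z_\beta(\zeta)=(1,w_0)$, $Z_\gamma(\zeta)=(w_0,w_0)$. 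In a dendrite the tripod spanned by three points is the union of the three pairwise arcs, i.e.\ their convex hull, giving the stated description of $\wt\M$. Making the dictionary step rigorous --- carrying the geometric meaning of $\alpha,\beta,\gamma$ through all the intervening identifications --- is where the real work lies.

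\emph{The formulas $\wt\M_n=f^{-n}(\wt\M)$ and the shape of $\wt\iota_n$.} By Theorem~\ref{th:wttheta}, $p_\infty^{-1}(\wt\theta_n(\M_n))=\wt\theta_{n+1}(\M_{n+1})$; transported through the conjugacy $(\M_\infty,p_\infty)\approx(J,f)$, which by definition carries $\wt\theta_n(\M_n)$ to $\wt\M_n$, this becomes $f^{-1}(\wt\M_n)=\wt\M_{n+1}$, and induction from $\wt\M_0=\wt\M$ gives $\wt\M_n=f^{-n}(\wt\M)$. Finally, $\wt\theta_{n+1}\circ\theta_n=\wt\theta_n$ and $\iota_n\circ\theta_n=\mathrm{id}$ show that, transported through the homeomorphism $\wt\theta_{n+1}$, the inclusion $\wt\M_n\subseteq\wt\M_{n+1}$ of Theorem~\ref{th:wttheta} becomes $\theta_n(\M_n)\subseteq\M_{n+1}$ and $\wt\iota_n$ becomes a retraction of $\M_{n+1}$ onto $\theta_n(\M_n)$. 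Reading the fibrewise effect of $\iota_n$ from Proposition~\ref{pr:fiberinvlim} (project the unfolded trees onto their Hubbard tripods, rescale the two legs, and identify), this retraction collapses each complementary component to its attaching vertex, hence is monotone; a monotone retraction of a dendrite onto a finite subtree is the nearest-point retraction, and it restricts on the larger finite tree $\wt\M_{n+1}\cap\mathcal{J}_{w_0}$ to the nearest-point retraction onto $\wt\M_n\cap\mathcal{J}_{w_0}$, which is exactly the asserted projection of trees onto sub-trees.
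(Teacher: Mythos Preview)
Your outline for the dendrite claim, for $\wt\M_n=f^{-n}(\wt\M)$, and for the description of $\wt\iota_n$ is correct and essentially matches the paper (the paper cites Proposition~\ref{pr:iota0} and Theorem~\ref{th:wttheta} for the first and last of these, and your use of Proposition~\ref{pr:dendrites} and Proposition~\ref{pr:fiberinvlim} amounts to the same thing).

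The one place you diverge from the paper is the identification of the feet $Z_\alpha,Z_\beta,Z_\gamma$ with $(0,w_0),(1,w_0),(w_0,w_0)$, and here the paper's argument is considerably simpler than your stabilizer dictionary. You propose to use that $Z_\alpha$ has isotropy containing $\alpha$ and then invoke the correspondence between isotropy in $\limg$ and the post-critical locus; as you say yourself, carrying this through the chain of identifications is where the real work would be. The paper instead argues purely dynamically from Theorem~\ref{th:fibersrhon}: item~(iii) says $Z_{\alpha,n}$ is the \emph{critical value} of the fibrewise restriction of $p_n$, and the critical value of $z\mapsto(1-2z/w_0)^2$ is $0$, so under the conjugacy $Z_\alpha$ must go to $z=0$. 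Then items~(iv)--(vi) give $p_n(Z_{\alpha,n+1})=Z_{\beta,n}$ and $p_n(Z_{\beta,n+1})=Z_{\gamma,n}$; on the analytic side $f_{w_0}(0)=1$ and $f_{w_0}(1)=(1-2/w_0)^2=w_1$, so following the orbit forces $Z_\beta\leftrightarrow z=1$ and $Z_\gamma\leftrightarrow z=w_0$. This uses only the internally proved combinatorics of Theorem~\ref{th:fibersrhon} plus the obvious fact that a topological conjugacy matches critical values and their forward orbits, and avoids the delicate isotropy bookkeeping entirely.
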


If $T$ is a tree and $T'\subset T$ is a sub-tree, then projection
of $T$ onto $T'$ maps a point $t\in T$ to the end of the unique
path $\gamma$ starting at $t$, ending in a point $t'$ of $T'$ and
such that the only common point of $\gamma$ with $T'$ is $t'$.

\begin{proof}
The fact that intersections of the Julia set of $f$ with the lines
$w=w_0$ are dendrites and the last paragraph of the theorem follow
directly from Proposition~\ref{pr:iota0} and
Theorem~\ref{th:wttheta}.

By Theorem~\ref{th:fibersrhon}, for every
$\xi\in\mathcal{S}_{n+1}$ the point $Z_{\alpha, n}(q_n(\xi))$ is
the image of the critical point of the restriction of the covering
$p_n$ onto the fiber $\rho_{n+1}^{-1}(\xi)$. The critical point of
the restriction $(1-2z/w_0)^2$ of $f$ onto the fiber $w=w_0$ is
$w_0/2$, and its image is $0$. Consequently, the points $Z_\alpha$
correspond to the points $z=0$ of the respective slices of the
Julia set of $f$. Since $p_n(Z_{\alpha, n+1}(\xi))=Z_{\beta,
n}(q_n(\xi))$, the points corresponding to $Z_\beta$ are $z=1$;
since $p_n(Z_{\beta, n+1}(\xi))=Z_{\gamma, n}(q_n(\xi))$, the
points corresponding to $Z_\gamma$ in the slice $w=w_0$ is the
point $z=w_0$.

Since the maps $\theta_n$ are homeomorphic embeddings of trees, the
set $\theta_{n-1}\circ\cdots\circ\theta_0(\M)\subset\M_n$ is the union of the
convex hulls of the points $Z_\alpha(\xi), Z_\beta(\xi),
Z_\gamma(\xi)$ inside the fibers $\rho_n^{-1}(\xi)$. It follows that
$\wt\M$ is the union of the hulls of the points $(0, w_0)$, $(1,
w_0)$, $(w_0, w_0)$ inside the intersections of the Julia set with the
lines $w=w_0$, which are dendrites by Theorem~\ref{pr:dendrites}.
\end{proof}

Note that the set $\rho_0^{-1}(\D\cap\D\cdot a)\subset\M$ is the
union of tripods in which at least one leg has length zero. The
map $(z, w)\mapsto (\overline z, \overline w)$ is an automorphism
of the dynamical system $(\CP, f)$ and it changes the orientation
of the Hubbard tripod of $J_{w_0}\subset\C$ to the opposite one
(here the Hubbard tripod of $J_{w_0}$ is the convex hull of the
points $\{0, 1, w_0\}$ inside the dendrite $J_{w_0}$, i.e.,
intersection of $J_{w_0}$ with $\M$). It follows that the Hubbard
tripods of $J_{w_0}$ for real values of $w_0$ have only one
orientation, i.e., that they have at least one leg of length zero.
The real line of the Riemann sphere is homeomorphic to a circle.
The set $\D\cap\D\cdot a$ is also homeomorphic to a circle (it is
the boundary of the triangle $\D$). It follows that the subset of
the Julia set of $f$ corresponding to $\rho_0^{-1}(\D\cap\D\cdot
a)$ is precisely the union of the sets $J_{w_0}$ for
$w_0\in\R\cup\{\infty\}$.

The virtual endomorphism $\phi$ from
Proposition~\ref{pr:imgvirtend} corresponds to the letter
$\one\in\alb$ and is computed using the fixed point $(z,
w)=(0.3002\ldots + 0.3752\ldots i, 2i)$. The Hubbard tripod
$Z_\alpha Z_\beta Z_\gamma$ of the slice $J_{2i}$ is oriented
counterclockwise (see Figure~\ref{fig:generators}).

It follows that the sets $\T_0=\rho_0^{-1}(\D_0)$ and $\T_0\cdot
a=\rho_0^{-1}(\D_0\cdot a)$ correspond to intersections of the set
$\wt\M$ with the half-spaces $\Im(w)\ge 0$ and $\Im(w)\le 0$,
respectively. The Hubbard tripod of $J_{w_0}$ is oriented
counterclockwise if $\Im(w_0)>0$ and clockwise if $\Im(w_0)<0$.

\section{Miscellany}

\subsection{A metric on the fibers of the Julia set}
\label{ss:metric}

The folding transformation $F$ used in
Proposition~\ref{pr:fiberinvlim} stretches the legs of tripods in
a non-uniform way: two are stretched twice, one is not stretched
at all. A modified transformation might be more natural. Let us
show that it is essentially equivalent to the transformation $F$
and use this fact to construct a metric on the dendrite slices of
the Julia set of $f$.

Let $T$ be a tripod with feet labeled by $Z_\alpha, Z_\beta$, and
$Z_\gamma$. Denote by $F_1(T)$ the tripod obtained by folding the
path $Z_\alpha Z_\gamma$ in two and labeling the common image of
$Z_\alpha$ and $Z_\beta$ by $Z_\beta$, the image of $Z_\beta$ by
$Z_\gamma$, and the image of the midpoint of $Z_\alpha Z_\gamma$
by $Z_\alpha$.

If $(x, y, z)$ are lengths of the legs $Z_\alpha O$, $Z_\beta O$,
and $Z_\gamma O$, respectively, then the corresponding lengths of
legs of $F_1(T)$ are
\[\left(\frac{|x-z|}2,\quad\min(x, z),\quad y\right).\]

\begin{defi}
Denote $k(T)=\onep$ if $x\le z$, and $k(T)=\twop$ if $x>z$. Then
the \emph{$F_1$-itinerary} of a tripod $T$ is the sequence $k_0,
k_1, \ldots$, where $k_n=k(F^n_1(T))$.
\end{defi}

The fibers of the map $\rho:\M\arr\mathcal{S}$ are
\emph{normalized} tripods, i.e., tripods of mass (sum of lengths
of the legs) equal to one. Consider therefore the transformation
$\wt F$ of tripods equal to $F_1$ followed by division of all
distances in $F_1(T)$ by the mass of $F_1(T)$.

We will also denote by $F_1$ and $\wt F$ the corresponding
transformations of triples of lengths of legs. Let, as before, $F$
be the action of the map $p_0\circ\theta_0$ on the fibers of
$\rho_0$, also seen as a map on the triples of lengths. We have
\[F(x, y, z)=\left(|x-z|,\quad 2\min(x, z),\quad y\right).\]

The map $\wt F$ is given by
\begin{equation}\wt F(x, y, z)=\left(\frac{|x-z|}{1+y},\quad
\frac{2\min(x, z)}{1+y},\quad \frac{2y}{1+y}\right).
\end{equation}

\begin{lemma}
\label{l:seconditeration} The second iteration of $\wt F$ is
uniformly expanding on every tripod by a factor not less than $2$.
\end{lemma}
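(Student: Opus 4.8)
The plan is to reduce the statement to an explicit estimate on the piecewise-linear self-map $\wt F$ of the open $2$-simplex of normalized triples $(x,y,z)$, $x+y+z=1$, and to exploit the combinatorial structure of the ``fold the path $Z_\alpha Z_\gamma$ in two'' operation. First I would record that $\wt F$ acts on a normalized tripod by doubling the distances inside the two legs $Z_\alpha O$ and $Z_\gamma O$ relative to the leg $Z_\beta O$ (which is kept fixed), folding, and renormalizing; so the only issue is the renormalization factor $1/(1+y)$, which shrinks when $y$ is close to $1$. Thus a \emph{single} application of $\wt F$ need not be expanding precisely when $y$ is large, i.e.\ when the kept leg $Z_\beta O$ already carries almost all of the mass. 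The whole point of iterating twice is that after one step the new $y$-coordinate equals $2\min(x,z)/(1+y)$, which can only be large if \emph{both} $x$ and $z$ were comparable to $1/2$ and $y$ was small — and in that regime the first step was already strongly expanding. So the natural argument is a case split on the size of $y$.

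The key steps, in order, would be: (1) Fix the correspondence between $\wt F$ and the metric behaviour. On each of the three legs $\wt F$ acts by multiplication by a constant depending on which leg: the $Z_\beta O$-leg (mass $y$) is scaled by $2/(1+y)$ (the doubling comes from the fact that $Z_\beta O$ becomes one half of the new $Z_\alpha Z_\gamma$ path after it is unfolded into the next step's picture — more directly, read off from the formula $\wt F(x,y,z)=(|x-z|/(1+y),\ 2\min(x,z)/(1+y),\ 2y/(1+y))$ that the leg of mass $y$ maps to a leg of mass $2y/(1+y)$), while the $Z_\alpha O$ and $Z_\gamma O$ legs contribute, after folding, a single leg; distances along them before folding are multiplied by $2/(1+y)$, and the folded path $Z_\alpha Z_\gamma$ has length $2(x+z)/(1+y)$ mapping onto a path of length $|x-z|/(1+y)$ plus two superimposed halves. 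The worst local expansion factor of $\wt F$ on the whole tripod is therefore $\min(2/(1+y), 2/(1+y))=2/(1+y)$ at points away from the fold, degenerating only near $y=1$. (2) Compose. For the composite $\wt F\circ\wt F$, the local expansion factor at a point of the original tripod is the product of the factor $2/(1+y)$ from the first step and the factor $2/(1+y')$ from the second, where $y'$ is the $Z_\beta O$-mass of $\wt F(T)$, namely $y'=2\min(x,z)/(1+y)$. (3) Estimate the product
\[
\frac{2}{1+y}\cdot\frac{2}{1+y'} \;=\;\frac{4}{(1+y)\bigl(1+\tfrac{2\min(x,z)}{1+y}\bigr)}\;=\;\frac{4}{1+y+2\min(x,z)}.
\]
Since $2\min(x,z)\le x+z=1-y$, the denominator is $\le 1+y+1-y=2$, hence the product is $\ge 2$. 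This is exactly the claimed factor. (4) Address the fold points: at the preimages of the fold the local map is not a single affine map, but one checks that the fold is isometric on each of the two halves being identified, so the expansion factor there is still at least the minimum of the factors along the two legs, and the bound $\ge 2$ persists; one also checks that the renormalization does nothing to this since it is a global homothety.

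The main obstacle I anticipate is step (4): being careful about what ``uniformly expanding on a tripod'' means at the branch point $O$ and at the fold vertex, where $\wt F$ is not locally a single linear map. The honest way to handle this is to observe that $\wt F$ (hence $\wt F\circ\wt F$) maps a tripod onto a tripod by a map that is isometric-up-to-the-stated-scalars on each of the finitely many edges of a subdivision, and ``expanding by a factor $\ge \lambda$'' should be read as: for any two points $p,q$ of $T$, $d_{\wt F^2(T)}(\wt F^2(p),\wt F^2(q))\ge \lambda\, d_T(p,q)$. Since a path between $p$ and $q$ in a tripod is a concatenation of at most three sub-paths lying inside single legs, and each leg is stretched by a factor $\ge 2$ under $\wt F^2$ by the computation above, while folding only identifies points and never decreases distances (the image of $Z_\alpha Z_\gamma$ has length $|x-z|/(1+y)\ge 0$, and the two superimposed half-paths are stretched copies of segments of equal length), the inequality for the geodesic distance follows by summing the edgewise estimates. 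I would present steps (1)–(3) as the substantive computation and dispatch (4) in a sentence, since it is routine once one fixes the convention that folding is $1$-Lipschitz and distance along each leg is governed by the scalar factors recorded in step (1).
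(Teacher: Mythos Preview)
Your steps (1)--(3) are correct and in fact constitute the whole proof. Your key identity
\[
\frac{2}{1+y}\cdot\frac{2}{1+y'}=\frac{4}{1+y+2\min(x,z)}\ge\frac{4}{1+y+(x+z)}=2
\]
is exactly equivalent to the paper's computation: the paper observes that $\wt F^2$ equals $F_1^2$ followed by a single renormalization, lists the four possible outcomes of $F_1^2$, and checks that the resulting mass is $\frac{3x+2y+z}{4}$ (if $x\le z$) or $\frac{x+2y+3z}{4}$ (if $x\ge z$), each $\le\frac12$. Your denominator $1+y+2\min(x,z)$ is precisely $3x+2y+z$ or $x+2y+3z$ in the two cases, so the arguments are the same; your product-of-two-factors organisation is slightly cleaner than the case split.

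Your step (4), however, is both unnecessary and wrong. The lemma, as the paper proves and uses it (see Proposition~\ref{prop:disttn}), is the statement that the renormalization factor in $\wt F^2$ is $\ge 2$, equivalently $\mathrm{mass}(F_1^2(T))\le\tfrac12\,\mathrm{mass}(T)$. It is \emph{not} the global inequality $d_{\wt F^2(T)}(\wt F^2 p,\wt F^2 q)\ge 2\,d_T(p,q)$ you write down, and that inequality is false: under $F_1$ the feet $Z_\alpha$ and $Z_\gamma$ are identified, so their distance drops from $x+z>0$ to $0$. Your sentence ``folding only identifies points and never decreases distances'' is self-contradictory---identification is the extreme case of decreasing distance. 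The correct reading of ``uniformly expanding'' here is the local one (every arc has its length multiplied by the constant factor $\ge 2$, since $F_1$ is a local isometry and the rescaling is global), and that is exactly what your step (3) already establishes. Delete step (4) and the muddled bits of step (1) about individual legs; what remains is a clean proof.
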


\begin{proof}
The second iteration of $\wt F$ is equal to $F^2_1$ followed by
dividing all distances by the mass of the obtained tripod. The
lengths of the legs after one folding are either $((z-x)/2, x,
y)$, or $((x-z)/2, z, y)$. Consequently, the triple of lengths of
the legs of the tripod after two folding belongs to the list
\[
\left(\frac{y-\frac{z-x}2}2, \frac{z-x}2, x\right)=
\left(\frac{x+2y-z}4, \frac{z-x}2, x\right),
\]
\[
\left(\frac{y-\frac{x-z}2}2, \frac{x-z}2, z\right)=
\left(\frac{-x+2y+z}4, \frac{x-z}2, z\right)
\]
\[
\left(\frac{\frac{z-x}2-y}2, y, x\right)= \left(\frac{-x-2y+z}4,
y, x\right),\]
\[
\left(\frac{\frac{x-z}2-y}2, y, z\right)= \left(\frac{x-2y-z}4, y,
z\right).
\]
The mass of the folded tripod is $\frac{3x+2y+z}4$ if $x\le z$,
and $\frac{x+2y+3z}4$ if $x\ge z$. In each case the number is not
more than $\frac{2x+2y+2z}4=1/2$.
\end{proof}

The branches of ${\wt F}^{-1}$ are the functions $\wt\Phi_\onep$,
$\wt\Phi_\twop$ acting on the triples of lengths of legs by
\begin{equation}
\label{eq:Phi1}\wt\Phi_\onep(x, y,
z)=\left(\frac{y}{1+x+y},\quad\frac z{1+x+y},\quad\frac
{2x+y}{1+x+y}\right)
\end{equation}
and
\begin{equation}
\label{eq:Phi2}\wt\Phi_\twop(x, y,
z)=\left(\frac{2x+y}{1+x+y},\quad\frac z{1+x+y},\quad\frac
y{1+x+y}\right).
\end{equation}

\begin{proposition}
\label{pr:conjugatef0} The maps $F$ and $\wt F$ acting on the
triangle $\Delta=\{(x, y, z)\;:\;0\le x,\;0\le y,\;0\le
z,\;x+y+z=1\}$ are topologically conjugate.
\end{proposition}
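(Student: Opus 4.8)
The plan is to realise both $F$ and $\wt F$ as expanding dynamical systems coded by the full one‑sided shift on two symbols, and then to read the conjugacy off the coding. First I would note that $F$ and $\wt F$ have \emph{the same} pair of injectivity domains $\Delta_\onep=\{(x,y,z)\in\Delta: x\le z\}$ and $\Delta_\twop=\{(x,y,z)\in\Delta: x\ge z\}$, that each of $F,\wt F$ restricts to a homeomorphism of $\Delta_\onep$ and of $\Delta_\twop$ onto all of $\Delta$, and that $\Delta_\onep\cap\Delta_\twop$ is the segment $C=\{x=z\}$. The two inverse branches of $F$ are the affine maps $(a,b,c)\mapsto(b/2,\,c,\,a+b/2)$ and $(a,b,c)\mapsto(a+b/2,\,c,\,b/2)$, which contract the Euclidean metric of the $(x,z)$-chart of $\Delta$ by the exact factor $1/\sqrt{2}$; the two inverse branches of $\wt F$ are the maps $\wt\Phi_\onep,\wt\Phi_\twop$ of~\eqref{eq:Phi1} and~\eqref{eq:Phi2}, and by Lemma~\ref{l:seconditeration} every twofold composition of them contracts distances by a factor at most $1/2$. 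In each case the associated iterated function system is uniformly contracting, so there are continuous surjections $\pi,\tilde\pi:\{\onep,\twop\}^{\mathbb N}\to\Delta$ with $\pi\circ\sigma=F\circ\pi$ and $\tilde\pi\circ\sigma=\wt F\circ\tilde\pi$, where $\sigma$ is the shift. Since $\Delta$ is compact Hausdorff, $\pi$ and $\tilde\pi$ are quotient maps, and it is enough to show that they have the same fibres: then $h:=\tilde\pi\circ\pi^{-1}$ is a well-defined continuous bijection of $\Delta$, hence a homeomorphism, and $h\circ F=\wt F\circ h$ follows from surjectivity of $\pi$.

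Two codes are identified by $\pi$ precisely when, at every coordinate where they disagree, the corresponding shifted point lies in the overlap $C$; equivalently, $\ker\pi$ is the smallest closed shift‑compatible equivalence relation on $\{\onep,\twop\}^{\mathbb N}$ containing the pairs $(\onep\underline j,\twop\underline j)$ with $\pi(\underline j)\in F(C)$, and likewise for $\ker\tilde\pi$ (this is the usual combinatorial description of limit spaces of expanding self‑coverings, compare the formalism recalled above and in~\cite{nek:models}). So the proof reduces to the identity $\pi^{-1}(F(C))=\tilde\pi^{-1}(\wt F(C))$ of subsets of $\{\onep,\twop\}^{\mathbb N}$. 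To get at this I would track the forward orbit of the critical data: a direct check gives $F(C)=\wt F(C)=\{x=0\}$, that both $F$ and $\wt F$ carry $\{x=0\}$ into the edge $\{y=0\}$, that both interchange the edges $\{y=0\}$ and $\{z=0\}$, and that both fix $Z_\alpha=(1,0,0)$, send $Z_\beta=(0,1,0)\mapsto Z_\gamma=(0,0,1)\mapsto Z_\alpha$, and send the midpoint $(1/2,0,1/2)$ of $\{y=0\}$ to $Z_\beta$. Moreover $\{x=0\}$ and $\{z=0\}$ each lie in a single one of $\Delta_\onep,\Delta_\twop$ away from the point where they meet $C$, while $\{y=0\}$ is split by $C$ into two halves, one in each; and this partition pattern, being literally the same sets for $F$ and for $\wt F$, is identical in the two cases. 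Running the symbolic bookkeeping along this edge cycle then shows that the codes of the points of $\{x=0\}$ — hence of $F(C)$ — are completely determined by that partition pattern together with the symbolic identifications of the first‑return map $F^2|_{\{y=0\}}$, and the analogous description holds for $\wt F^2|_{\{y=0\}}$.

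The main obstacle is this last point: to complete $\pi^{-1}(F(C))=\tilde\pi^{-1}(\wt F(C))$ it remains to compare the code identifications of the two interval maps $F^2|_{\{y=0\}}$ and $\wt F^2|_{\{y=0\}}$ and to feed the comparison back into the two‑dimensional picture. In the natural parameter on the edge $\{y=0\}$, $F^2|_{\{y=0\}}$ is the full tent map $s\mapsto 1-|1-2s|$, while $\wt F^2|_{\{y=0\}}$ is obtained from it by post‑composition with the increasing self‑homeomorphism $t\mapsto 2t/(1+t)$ of $[0,1]$; both are expanding unimodal maps whose critical point $1/2$, critical value $1$, and further orbit $1\mapsto 0\mapsto 0$ coincide, so they have the same kneading data and therefore the same symbolic identifications (an elementary fact for expanding interval maps, whose inverse‑branch systems then overlap in the same combinatorial way). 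Combining this with the previous paragraph gives $\ker\pi=\ker\tilde\pi$ and hence the conjugating homeomorphism $h$. A secondary point to make precise is the general principle invoked above — that for an expanding two‑to‑one map with overlapping injectivity domains the code identifications are generated by the overlap together with its forward orbit — but this is exactly the sort of statement supplied by the limit‑space machinery of the preceding sections.
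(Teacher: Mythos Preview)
Your coding-map strategy is sound in outline, but the step where you invoke Lemma~\ref{l:seconditeration} to conclude that the inverse branches $\wt\Phi_\onep,\wt\Phi_\twop$ (or their twofold compositions) are uniform Euclidean contractions on $\Delta$ does not hold. That lemma is a statement about \emph{mass}: it says the total leg length of $F_1^2(T)$ is at most half that of $T$, i.e.\ the normalisation factor appearing in $\wt F^2$ is pointwise at least $2$. This is a scalar attached to each single point of $\Delta$ and says nothing about Euclidean distances between distinct points of $\Delta$. In fact the inverse branches are not Euclidean contractions: in any affine chart on $\Delta$ the derivative of $\wt\Phi_\onep$ (and of $\wt\Phi_\twop$) at the vertex $(0,0,1)$ has both singular values equal to $\sqrt 2$, and a direct computation shows that every twofold composition $\wt\Phi_{i_1}\wt\Phi_{i_2}$ still has operator norm equal to $1$ at that vertex. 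So the uniform contraction you need, with factor $\le 1/2$, is false, and the existence and continuity of the coding map $\tilde\pi$ are not established by your argument.

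This is precisely the difficulty the paper's proof is built to overcome. Instead of seeking a metric in which the branches contract, the paper uses that the $\wt\Phi_i$ are \emph{projective}: each nested set $\Delta_{i_1\ldots i_n}=\wt\Phi_{i_1}\circ\cdots\circ\wt\Phi_{i_n}(\Delta)$ is a genuine straight-edged triangle whose vertices are the normalised columns of $B_{i_1}\cdots B_{i_n}$. Lemma~\ref{l:quotients} controls the ratios of the column sums and forces each bisection of $\Delta_{i_1\ldots i_n}$ into its two children to cut the relevant side in a proportion between $1{:}3$ and $3{:}1$; a short planar-geometry argument then gives $\mathrm{diam}\,\Delta_{i_1\ldots i_{n+3}}\le\tfrac34\,\mathrm{diam}\,\Delta_{i_1\ldots i_n}$. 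Once this shrinking is in hand, the equality $\ker\pi=\ker\tilde\pi$ is almost immediate from projectivity: for both $F$ and $\wt F$ the level-$n$ pieces are straight-edged triangles with the same vertex combinatorics and the common fold line $\{x=z\}$, hence identical intersection patterns at every level. Your separate analysis of the critical orbit and of the first-return map on $\{y=0\}$ would eventually recover the same conclusion, but it is more than is needed once one observes that projective maps take lines to lines.
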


\begin{proof}
Both maps fold the triangle $R$ along the bisectrix of the angle
with vertex $(0, 0, 1)$, and act on the vertices by the rule
\[(1, 0, 0)\mapsto (1, 0, 0),\quad (0, 1, 0)\mapsto (0, 0,
1),\quad (0, 0, 1)\mapsto (1, 0, 0).\] Both maps are projective
(the map $F$ is affine), hence they map straight lines to straight
lines.

It is sufficient hence to prove that for every sequence $i_1, i_2,
i_3, \ldots$ of symbols $\onep$ and $\twop$ the diameters of the
nested triangles
\[\Delta_{i_1i_2\ldots i_n}=
\wt\Phi_{i_1}\circ\wt\Phi_{i_2}\circ\cdots\circ\wt\Phi_{i_n}(\Delta)\]
exponentially converge to zero as $n$ grows.

The vertices of the triangle $\Delta_{i_1i_2\ldots i_n}$ are the
normalized (i.e., divided by the sum of their coordinates) columns
of the matrix
\[B_{i_1i_2\ldots i_n}=B_{i_1}B_{i_2}\cdots B_{i_n},\]
where
\[B_\onep=\left(\begin{array}{ccc} 0 & 1 & 0\\
0 & 0 & 1\\ 2 & 1 & 0\end{array}\right),\quad
B_\twop=\left(\begin{array}{ccc} 2 & 1 & 0\\ 0 & 0 & 1\\ 0 & 1 &
0\end{array}\right).\]

Let $\vec a_n, \vec b_n, \vec c_n$ be first, second, and third
columns of the matrix $B_{i_1i_2\ldots i_n}$, respectively and let
$a_n, b_n, c_n$ be the sums of the coordinates of the vectors
$\vec a_n, \vec b_n, \vec c_n$.

\begin{lemma}
\label{l:quotients} For every $n\ge 1$ we have
\[\frac 13<\frac{a_n}{c_n}<3,\quad \frac 13<\frac{a_n}{b_n}<\frac 32,\quad
\frac 13<\frac{c_n}{b_n}<\frac 32.\]
\end{lemma}

\begin{proof}
Let us prove the lemma by induction. We have $(a_1, b_1, c_1)=(1,
1, 1)$, which satisfies the conditions of the lemma.

The sequence $(a_{n+1}, b_{n+1}, c_{n+1})$ is equal to one of the
sequences
\[(2c_n,
a_n+c_n, b_n),\quad (2a_n, a_n+c_n, b_n).\] Since $a_n$ and $c_n$
play a symmetric role in our lemma, it is sufficient to check only
the second case $a_{n+1}=2a_n$, $b_{n+1}=a_n+c_n$, $c_{n+1}=b_n$.

Then
\[\frac{a_{n+1}}{c_{n+1}}=\frac{2a_n}{b_n}\in\left(\frac 23, 3\right)\subset
\left(\frac 13, 3\right).\]

We also have
\begin{gather*}\frac{a_{n+1}}{b_{n+1}}=\frac{2a_n}{a_n+c_n}<\frac{2a_n}{a_n+a_n/3}
=\frac 32,\\
\frac{a_{n+1}}{b_{n+1}}=\frac{2a_n}{a_n+c_n}>\frac{2a_n}{a_n+3a_n}=
\frac 12>\frac 13,\end{gather*} and
\begin{gather*}\frac{c_{n+1}}{b_{n+1}}=\frac{b_n}{a_n+c_n}<\frac{b_n}{b_n/3+b_n/3}=\frac 32,\\
\frac{c_{n+1}}{b_{n+1}}=\frac{b_n}{a_n+c_n}>\frac{b_n}{3b_n/2+3b_n/2}=
\frac 13,
\end{gather*}
which finishes the inductive argument.
\end{proof}

Denote by $\vec\alpha_n=\frac{\vec a_n}{a_n}$,
$\vec\beta_n=\frac{\vec b_n}{b_n}$ and $\vec\gamma_n=\frac{\vec
c_n}{c_n}$ the vertices of the triangle $\Delta_{i_1i_2\ldots
i_n}$.

We have that either
\[\vec a_{n+1}=2\vec a_n,\quad\vec b_{n+1}=\vec a_n+\vec c_n,\quad
\vec c_{n+1}=\vec b_n,\] or
\[\vec a_{n+1}=2\vec c_n,\quad\vec b_{n+1}=\vec a_n+\vec c_n,\quad
\vec c_{n+1}=\vec b_n.\]

Consequently, the vertices of the triangle $\Delta_{i_1i_2\ldots
i_n1}$ are equal to
\[\vec\alpha_{n+1}=\vec\alpha_n,\quad
\vec\beta_{n+1}=\frac{a_n\vec\alpha_n+c_n\vec\gamma_n}{a_n+c_n},
\quad\vec\gamma_{n+1}=\vec\beta_n.\] The vertices of the triangle
$\Delta_{i_1i_2\ldots i_n2}$ are equal to
\[\vec\alpha_{n+1}=\vec\gamma_n,\quad
\vec\beta_{n+1}=\frac{a_n\vec\alpha_n+c_n\vec\gamma_n}{a_n+c_n},
\quad\vec\gamma_{n+1}=\vec\beta_n.\]

Hence, the triangle $\Delta_{i_1i_2\ldots i_n}$ is divided into
the triangles $\Delta_{i_1i_2\ldots i_n\onep}$ and
$\Delta_{i_1i_2\ldots i_n\twop}$ by the line connecting the vertex
$\vec\beta_n$ with the point
$\frac{a_n\vec\alpha_n+c_n\vec\gamma_n}{a_n+c_n}$ on the opposite
side of the triangle. Note that by Lemma~\ref{l:quotients}
\begin{multline*}\left|\vec\alpha_n-\frac{a_n\vec\alpha_n+c_n\vec\gamma_n}{a_n+c_n}\right|=
\frac{c_n}{a_n+c_n}\left|\vec\alpha_n-\vec\gamma_n\right|<\\
<\frac{c_n}{c_n/3+c_n}\left|\vec\alpha_n-\vec\gamma_n\right|=\frac
34\left|\vec\alpha_n-\vec\gamma_n\right|.\end{multline*}
Similarly,
\[\left|\vec\gamma_n-\frac{a_n\vec\alpha_n+c_n\vec\gamma_n}{a_n+c_n}\right|<
\frac 34\left|\vec\alpha_n-\vec\gamma_n\right|.\] Hence the line
dividing $\Delta_{i_1i_2\ldots i_n}$ into the triangles
$\Delta_{i_1i_2\ldots i_n\onep}$ and $\Delta_{i_1i_2\ldots
i_n\twop}$ divides the side $[\vec\alpha_n, \vec\gamma_n]$ in
proportion between $1:3$ and $3:1$.

Let us prove that diameter of the triangles $\Delta_{i_1\ldots
i_{n+1}i_{n+2}i_{n+3}}$ is less than $\frac 34$ of diameter of the
triangle $\Delta_{i_1i_2\ldots i_n}$. Figure~\ref{fig:triangles}
shows how the triangle $\Delta_{i_1i_2\ldots i_n}=\triangle ABC$
is subdivided into the 8 triangles $\Delta_{i_1i_2\ldots
i_ni_{n+1}i_{n+2}i_{n+3}}$ for different $i_{n+1}, i_{n+2},
i_{n+3}$ (ignore the dashed lines and shading for a while).
\begin{figure}
  \centering
  \includegraphics{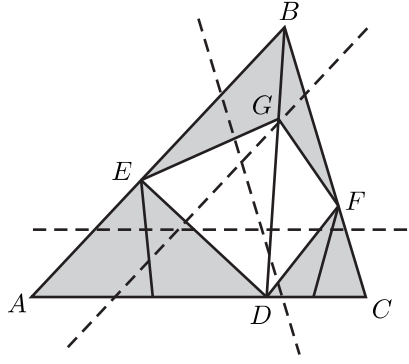}\\
  \caption{Triangles $\Delta_{i_1\ldots i_{n+3}}$}\label{fig:triangles}
\end{figure}

By the proved above, the points $D$, $E$ and $F$ divide the sides
of the triangle $\triangle ABC$ in proportion between $1:3$ to
$3:1$. Similarly the point $G$ divides the segment $BD$ in a
proportion belonging to the same range. It follows that the shaded
triangles $\triangle AED$, $\triangle CDF$ and quadrilateral
$BFGE$ are subsets of the images of the triangle $\triangle ABC$
under the homotheties with coefficient $\frac 34$ and centers in
the points $A$, $C$, and $B$, respectively. The images of the
lines $BC$, $AB$, and $AC$ under these homotheties are shown as
dashed lines on Figure~\ref{fig:triangles}.

It follows that diameters of the shaded triangles and
quadrilateral are less that three quarters of the diameter of
$\triangle ABC$. The diagonal $GD$ of the quadrilateral $EDFG$ is
less than $\frac 34$ times the length of the segment $BD$. The
other diagonal and the sides of $EDFG$ belong to one of the shaded
triangles or quadrilateral, hence their lengths are also less than
three quarters of the diameter of $\triangle ABC$. Consequently,
diameter of the quadrilateral $EDFG$ is also less than $\frac 34$
times the diameter of $\triangle ABC$.

Each of the triangles $\Delta_{i_1\ldots i_{n+1}i_{n+2}i_{n+3}}$
is a subset of one of the triangles and quadrilaterals for which
we have proved that their diameter is less than $\frac 34$ times
the diameter of $\triangle ABC=\Delta_{i_1i_2\ldots i_n}$, which
finishes the proof.
\end{proof}

\begin{defi}
Let $(k_1, k_2, \ldots)$ be the $F_1$-itinerary of the tripod $T$.
Denote by $T^{(n)}$ the tree
$\Phi_{k_1}\circ\Phi_{k_2}\circ\cdots\circ\Phi_{k_n}(F^n_1(T))$.
\end{defi}

It follows from the description of the transformations $F_1$ and
$\Phi_i$ that the Hubbard tripod of $T^{(n)}$ is isometric to the
tripod $T$. In particular, $T$ is naturally identified with the
Hubbard tripod of $T^{(1)}=\Phi_{k_1}(F_1(T))$ (see
Figure~\ref{fig:T1}).

\begin{figure}
\centering\includegraphics{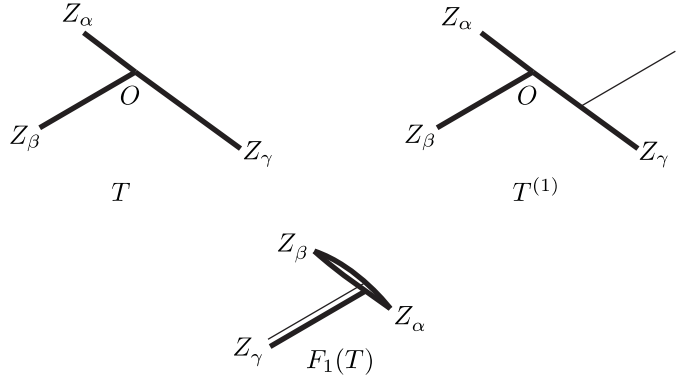}\\
  \caption{Extension of $F_1$ to a covering}\label{fig:T1}
\end{figure}

The tree $T^{(1)}$ is obtained from $T$ by attaching a copy of the
segment $OZ_\beta$ to the point of the geodesic $Z_\alpha
Z_\gamma$ symmetric to $O$ with respect to the center of $Z_\alpha
Z_\gamma$, i.e., by making the tree symmetric with respect to the
midpoint of $Z_\alpha Z_\gamma$. The covering $T^{(1)}\arr F_1(T)$
folds then the segment $Z_\alpha Z_\gamma$ twice and identifies
the leg $OZ_\beta$ with the added copy of it.

The midpoint of $Z_\alpha Z_\gamma$ divides $T^{(1)}$ into two
copies of $F_1(T)$ such that the covering $T^{(1)}\arr F_1(T)$ is
an isometry of these copies with $F_1(T)$.

By the description of the transformations $\Phi_i$, the tree
$T^{(n+1)}$ is obtained then as two copies of $F_1(T)^{(n)}$
pasted together along the copies of the vertex $Z_\alpha$ of
$F_1(T)$, which will be the midpoint of $Z_\alpha Z_\gamma$ in
$T^{(n+1)}$. This shows by induction that the tree $T^{(n)}$ is
naturally identified with a subtree of $T^{(n+1)}$ and that the
map $F_1:T\arr F_1(T)$ is extended to a branched two-to-one
coverings $T^{(n+1)}\arr F_1(T)^{(n)}$.

We get then for every $n$ a branched covering $T^{(n)}\arr
F^n_1(T)$ of degree $2^n$ equal to the composition of the degree
two coverings
\[T^{(n)}\arr F_1(T)^{(n-1)}\arr F_1^2(T)^{(n-2)}\arr\cdots\arr F_1^n(T).\]

Consequently, $T^{(n)}$ consists of $2^n$ copies of the tripod
$F^n_1(T)$ connected to each other along their feet in some way.
Then the tree $T^{(n+1)}$ is obtained from $T^{(n)}$ by making
each of these copies of $F^n_1(T)$ symmetric with respect to the
midpoint of the copy of the segment $Z_\alpha Z_\gamma$ of
$F^n_1(T)$.

See on Figure~\ref{fig:growth} the sequence $T^{(n)}$, $n=0,
\ldots, 5$ for a concrete tripod $T$. On the first three trees
$T^{(k)}$ the labels come from the labelling of the copies of the
tripod $F^k_1(T)$, i.e., the labels of the images of the
corresponding points under the covering $F^k_1:T^{(k)}\arr
F^k_1(T)$. In each of the trees $T^{(n)}$ the sub-tree $T^{(n-1)}$
is shown by thicker lines.

\begin{figure}
  \centering
  \includegraphics{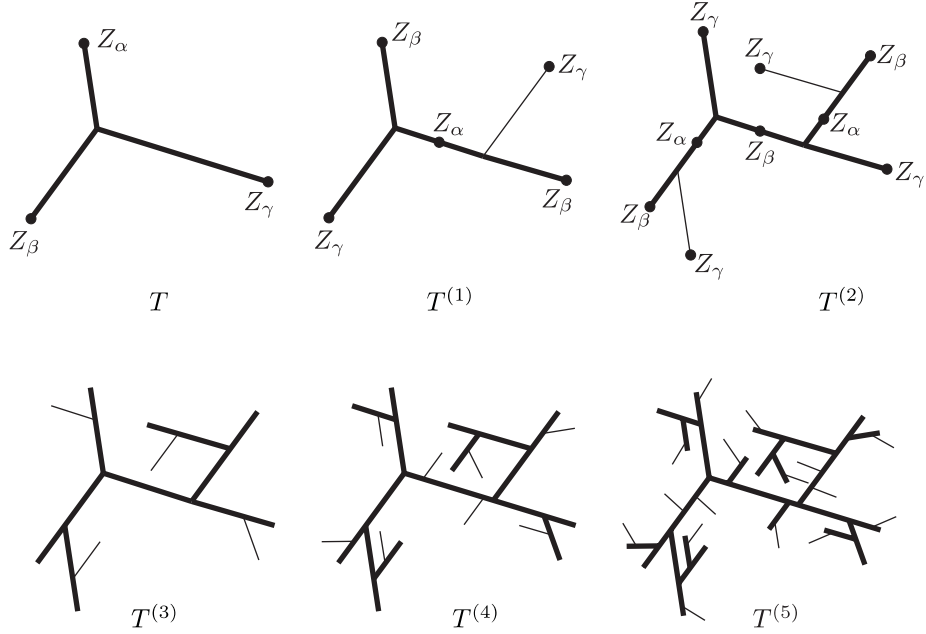}\\
  \caption{Growing the trees $T^{(n)}$}\label{fig:growth}
\end{figure}

\begin{proposition}
\label{prop:disttn} Let $d_n$ be the Hausdorff distance between
$T^{(n)}$ and $T^{(n+1)}$ inside $T^{(n+1)}$, i.e., maximum over
$\xi\in T^{(n+1)}$ of the distance of $\xi$ to $T^{(n)}$. Then
$d_n<\frac d{2^{(n-1)/2}}$, where $d$ is mass of $T$.
\end{proposition}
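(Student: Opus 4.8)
The plan is to reduce the claim to an elementary estimate on the leg lengths of the iterated tripods $F_1^n(T)$, using the explicit description established just above of how $T^{(n+1)}$ is obtained from $T^{(n)}$. By that description, $T^{(n)}$ is a union of $2^n$ isometric copies of the tripod $F_1^n(T)$, and $T^{(n+1)}$ is obtained by making each such copy symmetric with respect to the midpoint $M$ of its segment $Z_\alpha Z_\gamma$. This symmetrization fixes $Z_\alpha Z_\gamma$ setwise, so the only material it adds to a given copy is a copy of the leg $OZ_\beta$, attached at the point $O'$ symmetric to the center $O$ with respect to $M$; and $O'$ lies on $Z_\alpha Z_\gamma\subset T^{(n)}$ (in degenerate configurations --- $O$ already the midpoint, or a leg of length zero --- the point $O'$ is $O$ itself or an endpoint of $Z_\alpha Z_\gamma$, but still lies in $T^{(n)}$ and the added leg still has length $|OZ_\beta|$). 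Hence $T^{(n+1)}\setminus T^{(n)}$ is a disjoint union of $2^n$ half-open arcs, each of length $y_n := |OZ_\beta|$ in $F_1^n(T)$, glued to $T^{(n)}$ at a single endpoint; and since $T^{(n+1)}$ is a tree, a point at distance $t\in(0,y_n]$ along such an arc is at distance exactly $t$ from $T^{(n)}$. Therefore $d_n = y_n$.

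Now I would estimate $y_n$. Write $(x_n,y_n,z_n) := F_1^n(x,y,z)$ for the lengths of the legs $Z_\alpha O$, $Z_\beta O$, $Z_\gamma O$ of $F_1^n(T)$ --- where $(x,y,z)$ are the leg lengths of $T$ and $F_1(a,b,c) = \bigl(|a-c|/2,\ \min(a,c),\ b\bigr)$ --- and let $m_n := x_n + y_n + z_n$ be the mass of $F_1^n(T)$, so $m_0 = d$. From the formula for $F_1$ we read off $y_n = \min(x_{n-1},z_{n-1})$ for $n\ge1$, whence $2y_n\le x_{n-1}+z_{n-1}\le m_{n-1}$, i.e.\ $d_n = y_n\le m_{n-1}/2$ for $n\ge1$ (and $d_0 = y_0\le m_0 = d$). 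Also $m_{n+1} = \tfrac12(m_n+y_n)\le m_n$, and the mass computation in the proof of Lemma~\ref{l:seconditeration} --- that the mass of $F_1^2$ of any tripod is at most half the original --- gives $m_{k+2}\le m_k/2$ for all $k\ge0$. An immediate induction gives $m_k\le m_0/2^{\lfloor k/2\rfloor}$, hence $d_n\le m_0/2^{\lceil n/2\rceil}$ for all $n\ge0$. Since $\lceil n/2\rceil>(n-1)/2$ and $d = m_0>0$ (the points $Z_\alpha,Z_\beta,Z_\gamma$ are not all equal), we conclude $d_n\le d/2^{\lceil n/2\rceil}<d/2^{(n-1)/2}$, which is the assertion.

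The only step that needs genuine care is the first one: reading off from the inductive construction that $T^{(n+1)}\setminus T^{(n)}$ is \emph{precisely} a union of $2^n$ arcs of length $y_n$ hanging off points of $T^{(n)}$, so that the manner in which the $2^n$ copies of $F_1^n(T)$ are glued to one another plays no role and $d_n$ is literally $y_n$ rather than anything larger. The degenerate tripods require only a one-line verification. Once Step 1 is pinned down, the rest is the elementary bookkeeping above together with the exponential mass decay already supplied by Lemma~\ref{l:seconditeration}.
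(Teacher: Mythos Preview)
Your proof is correct and follows essentially the same route as the paper's: identify $d_n$ with the length $y_n$ of the leg $OZ_\beta$ in $F_1^n(T)$ via the symmetrization description of $T^{(n+1)}$, then bound $y_n$ by the mass of an iterated tripod and invoke the halving of mass under $F_1^2$ from Lemma~\ref{l:seconditeration}. You are slightly more careful than the paper in the second step --- using $y_n\le m_{n-1}/2$ rather than $y_n\le m_n$ --- which yields the sharper bound $d_n\le d/2^{\lceil n/2\rceil}$ and makes the strict inequality immediate; the paper simply cites $y_n\le m_n\le d/2^{(n-1)/2}$.
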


\begin{proof}
The tree $T^{(n+1)}$ is obtained from $T^{(n)}$ by making each of
the $2^n$ copies of $F^n_1(T)$ in $T^{(n)}$ symmetric with respect
to the midpoint of $Z_\alpha Z_\gamma$, i.e., by attaching a copy
of the segment $OZ_\beta$ of $F^n_1(T)$. It follows that the
Hausdorff distance between $T^{(n)}$ and $T^{(n+1)}$ is equal to
the length of the segment $OZ_\beta$ in $F^n_1(T)$. But it is not
more than mass of $F^n_1(T)$, which is not more than $\frac
d{2^{(n-1)/2}}$, by Lemma~\ref{l:seconditeration}.
\end{proof}

\begin{defi}
Let $T$ be a tripod. Then the \emph{limit dendrite} of $T$,
denoted $T^{(\infty)}$ is completion of the inductive limit of the
metric trees $T^{(n)}$ as $n$ goes to infinity.
\end{defi}

Denote by $\wt\iota_n:T^{(n+1)}\arr T^{(n)}$ the projection of
$T^{(n+1)}$ onto its subtree $T^{(n)}$.

\begin{proposition}
\label{pr:dendroids} The limit dendrite $T^{(\infty)}$ is
homeomorphic to the inverse limit of the trees $T^{(n)}$ with
respect to the maps $\wt\iota_n$.
\end{proposition}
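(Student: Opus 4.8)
The plan is to realize both $T^{(\infty)}$ and the inverse limit $L=\varprojlim\bigl(T^{(n)},\wt\iota_n\bigr)$ as ``the same completion of $\bigcup_{n\ge 0}T^{(n)}$'', by writing down an explicit continuous bijection $h\colon L\arr T^{(\infty)}$. Throughout I use that each $T^{(n)}$ is a finite metric tree isometrically contained in $T^{(m)}$ for $m\ge n$, as is clear from the construction of $T^{(m)}$ from $T^{(n)}$ by attaching segments (the description recalled in the proof of Proposition~\ref{prop:disttn}).

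First I would assemble a compatible family of \emph{retractions}. For $n\le m$, nearest-point projection $r_{m,n}\colon T^{(m)}\arr T^{(n)}$ onto the subtree $T^{(n)}$ is well defined, is $1$-Lipschitz, equals $\wt\iota_n\circ\wt\iota_{n+1}\circ\cdots\circ\wt\iota_{m-1}$, and is transitive: $r_{k,n}\circ r_{m,k}=r_{m,n}$ for $n\le k\le m$. Moreover, for $\xi\in T^{(m)}\subseteq T^{(m')}$ the point $r_{m,n}(\xi)$ is the same whether computed in $T^{(m)}$ or in $T^{(m')}$, since the arc from $\xi$ to $T^{(n)}$ already lies in $T^{(m)}$. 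Hence these maps glue to a single $1$-Lipschitz map $r_n\colon\bigcup_m T^{(m)}\arr T^{(n)}$, which extends by uniform continuity to a $1$-Lipschitz retraction $r_n\colon T^{(\infty)}\arr T^{(n)}$ still satisfying $\wt\iota_n\circ r_{n+1}=r_n$ and $r_n\to\mathrm{id}$ pointwise (because $\bigcup_m T^{(m)}$ is dense, so $d(\xi,T^{(n)})\arr 0$).

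Next I would define $h$. For a thread $(x_n)_{n\ge 0}\in L$, viewing $x_n,x_{n+1}$ as points of $T^{(n+1)}\subseteq T^{(\infty)}$ and using $x_n=\wt\iota_n(x_{n+1})$ gives
\[ d\bigl(x_n,x_{n+1}\bigr)=d\bigl(x_{n+1},T^{(n)}\bigr)\le d_n<\frac{d}{2^{(n-1)/2}} \]
by Proposition~\ref{prop:disttn}, so $(x_n)$ is Cauchy in the complete space $T^{(\infty)}$; set $h\bigl((x_n)\bigr)=\lim_n x_n$. Then $h$ is the uniform limit of the maps $(x_n)\mapsto x_N$ (each continuous, being a coordinate projection of $L$ composed with the isometric inclusion $T^{(N)}\hookrightarrow T^{(\infty)}$), with error at most $\sum_{n\ge N}d_n\arr 0$, so $h$ is continuous.

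Finally I would check bijectivity and conclude. Applying $r_n$ and using $r_n(x_m)=x_n$ for $m\ge n$ yields $r_n\bigl(h((x_n))\bigr)=\lim_m r_n(x_m)=x_n$, hence $h$ is injective. Conversely, given $\xi\in T^{(\infty)}$, the compatibility $\wt\iota_n\circ r_{n+1}=r_n$ shows $\bigl(r_n(\xi)\bigr)_n\in L$, and $r_n(\xi)\arr\xi$ gives $h\bigl((r_n(\xi))\bigr)=\xi$, so $h$ is surjective. Since $L$ is compact (an inverse limit of compact trees) and $T^{(\infty)}$ is Hausdorff, the continuous bijection $h$ is a homeomorphism. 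Together with the argument in the proof of Proposition~\ref{pr:dendrites}, this also identifies $T^{(\infty)}$ as a dendrite. The only real content is the tree-geometric input of the second paragraph — that nearest-point projection onto a subtree is a well-defined $1$-Lipschitz map, transitive for nested subtrees and stable under enlarging the ambient tree — so that the $r_n$ exist and extend to the completion; everything else is a routine completion/compactness argument, and I expect no genuine obstacle beyond bookkeeping.
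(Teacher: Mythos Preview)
Your proof is correct and follows essentially the same route as the paper's: both define the map from the inverse limit to $T^{(\infty)}$ by sending a thread $(x_n)$ to its limit (Cauchy by Proposition~\ref{prop:disttn}), then verify bijectivity using the tree projections and conclude via compactness. Your packaging is slightly cleaner---you set up the retractions $r_n$ on the completion first and use the identity $r_n\circ h=\text{pr}_n$ to get injectivity and surjectivity at once, whereas the paper argues injectivity via a tree-distance inequality and builds the inverse thread for surjectivity by hand from an approximating Cauchy sequence---but the underlying ideas are identical.
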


\begin{proof}
It follows directly from Proposition~\ref{prop:disttn} that the
inductive limit of the spaces $T^{(n)}$ is completely bounded
(i.e., has a finite $\epsilon$-net for every positive $\epsilon$).
It follows then that the completion $\D(T)$ is compact.

Note that the projections $\wt\iota_n$ are idempotent maps (i.e.,
$\wt\iota_n(x)=x$ for all $x\in T^{(n)}$). By
Proposition~\ref{prop:disttn}, the map $\wt\iota_n$ moves points
not more than by $\frac d{2^{(n-1)/2}}$.

Let $x_n\in T^{(n)}$ be a sequence representing a point $x$ of the
inverse limit, i.e., such that $\wt\iota_n(x_{n+1})=x_n$. It
follows from Proposition~\ref{prop:disttn} that the corresponding
sequence $x_n\in T^{(\infty)}$ is fundamental, hence it converges
to a point $\delta(x)$ of $T^{(\infty)}$.

Let us show that the map $\delta$ from the inverse limit to
$T^{(\infty)}$ is a homeomorphism. It is continuous by
Proposition~\ref{prop:disttn}. If $x_n$ and $y_n$ are sequences
representing different points of the inverse limit, then $d(x_k,
y_k)>0$ for some $k$, which implies that $d(x_n, y_m)\ge d(x_k,
y_k)$ for all $n, m\ge k$, by the elementary properties of trees
and projections $\wt\iota_n$. It follows that the map $\delta$ is
injective.

Let $x\in T^{(\infty)}$ be arbitrary. It is a limit of a
fundamental sequence $x_n$ in the inductive limit of the trees
$T^{(n)}$. Passing to a subsequence, and then repeating the
entries of the subsequence, if necessary, we may assume that
$x_n\in T^{(n)}$. Consider for every $k$ the sequence $x_{k,
n}=\wt\iota_k\circ\cdots\circ\wt\iota_{n-1}(x_n)$ for $n>k$. As
above, we have $d(x_n, x_m)\ge d(x_{k, n}, x_{k, m})$ for all $m,
n>k$, which implies that the sequence $x_{k, n}$ converges to a
point $y_k$ in $T^{(k)}$. We have $\wt\iota(y_{k+1})=y_k$. Limit
of the sequence $y_n$ is equal to the limit of $x_n$, since
$\wt\iota_n$ moves points not more than by $\frac d{2^{(n-1)/2}}$.
Consequently, $\delta$ is onto, and hence a homeomorphism (since
the inverse limit and $T^{(\infty)}$ are compact).
\end{proof}

\begin{theorem}
Denote by $\psi:\Delta\arr\Delta$ the homeomorphism conjugating
the maps $F$ and $\wt F$, i.e., such a homeomorphism that
$\psi\circ F=\wt F\circ\psi$.

If the image of $\zeta\in\mathcal{S}_\infty$ in $\mathcal{S}_0$ is
$\zeta_0$ and $(x_1, x_2, x_3)$ are the lengths of legs of the
tripod $\rho^{-1}(\zeta_0)$, then denote by $T_\zeta$ the tripod
with the lengths of legs $\psi(x_1, x_2, x_3)$.

Then there exists a family of homeomorphisms $\tau_\zeta:
T_\zeta^{(\infty)}\arr\rho_\infty^{-1}(\zeta)$ conjugating the
maps
$p_\infty:\rho_\infty^{-1}(\zeta)\arr\rho_\infty^{-1}(q_\infty(\zeta))$
with the coverings $\wt F:T_\zeta^{(\infty)}\arr
T_{q_\infty(\zeta)}^{(\infty)}$.
\end{theorem}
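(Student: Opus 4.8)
The plan is to put the inverse--limit description of $\rho_\infty^{-1}(\zeta)$ supplied by Proposition~\ref{pr:fiberinvlim} side by side with the inverse--limit description of $T_\zeta^{(\infty)}$ supplied by Proposition~\ref{pr:dendroids}, and to show that the two inverse systems are isomorphic. Concretely, write $\zeta_n$ for the image of $\zeta$ in $\mathcal{S}_n$ and $(k_1,k_2,\dots)$ for the itinerary of the normalized tripod $\rho^{-1}(\zeta_0)$ under $F$. By Proposition~\ref{pr:fiberinvlim}, $\rho_\infty^{-1}(\zeta)$ is the inverse limit of the finite trees $\Phi_{k_1}\circ\cdots\circ\Phi_{k_n}(T_n)$, where $T_n$ is the tripod with leg lengths $F^n(x_1,x_2,x_3)$, along the bonding maps $\iota_n$ that collapse each of the $2^n$ top unfoldings onto its Hubbard tripod. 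By Proposition~\ref{pr:dendroids} together with the discussion preceding it, $T_\zeta^{(\infty)}$ is the inverse limit of the finite trees $T_\zeta^{(n)}=\Phi_{j_1}\circ\cdots\circ\Phi_{j_n}(F_1^n(T_\zeta))$, where $(j_1,j_2,\dots)$ is the $F_1$-itinerary of $T_\zeta$, along the projections $\wt\iota_n\colon T_\zeta^{(n+1)}\to T_\zeta^{(n)}$, and those projections collapse exactly the same unfoldings. So it suffices to produce homeomorphisms $\tau_n\colon T_\zeta^{(n)}\to\rho_n^{-1}(\zeta_n)$ with $\iota_n\circ\tau_{n+1}=\tau_n\circ\wt\iota_n$, and then to check that the induced homeomorphism of inverse limits conjugates $\wt F$ with $p_\infty$.

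The crux is that these two towers are built from \emph{the same} combinatorial data, i.e. $j_i=k_i$ for all $i$. Since renormalization does not change which of the two legs $Z_\alpha O$, $Z_\gamma O$ of a tripod is the shorter, the $F_1$-itinerary of $T_\zeta$ equals the itinerary of its leg triple $\psi(x_1,x_2,x_3)$ computed with $\wt F$ in place of $F$. Now recall from Proposition~\ref{pr:conjugatef0} that $\psi$ is the homeomorphism identifying the address of a point for the $F$-system with the address of a point for the $\wt F$-system; since $F$ and $\wt F$ fold $\Delta$ along a common line and their first address symbol records on which side of that line a point sits, $\psi$ carries the two half-triangles for $F$ onto the two half-triangles for $\wt F$. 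Hence the itinerary symbol read off $\wt F^{n}(\psi(x_1,x_2,x_3))=\psi(F^{n}(x_1,x_2,x_3))$ equals the one read off $F^{n}(x_1,x_2,x_3)$, so $j_{n+1}=k_{n+1}$; on the single arc $\{x_1=x_3\}$ where the two itinerary conventions differ the corresponding unfolding is degenerate and does not affect the homeomorphism type. Finally, being a self-homeomorphism of the disk $\Delta$ that, by the address description, fixes the three vertices, $\psi$ preserves each open edge and the interior of $\Delta$; therefore $\psi(x_1,x_2,x_3)$, and more generally $F_1^n(T_\zeta)$, has the same pattern of vanishing legs as $(x_1,x_2,x_3)$, respectively $T_n$, so the relevant tripods are homeomorphic as marked tripods.

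With this the homeomorphisms $\tau_n$ are constructed by induction on $n$. For $n=0$ one takes any marking-preserving homeomorphism of the tripod $T_\zeta$ onto $\rho^{-1}(\zeta_0)$, which exists because the two have the same vanishing-leg pattern. Given $\tau_n$, both $T_\zeta^{(n+1)}$ and $\rho_{n+1}^{-1}(\zeta_{n+1})$ are obtained from their level-$n$ predecessors by replacing each of the $2^n$ constituent tripods --- which $\tau_n$ already matches, compatibly with the gluing pattern dictated by $k_1,\dots,k_n$ --- by the unfolding $\Phi_{k_{n+1}}$ of $F_1^{n+1}(T_\zeta)$, respectively $T_{n+1}$, while the bonding map collapses that unfolding back onto the tripod it replaces. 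Choosing a marking-preserving homeomorphism $F_1^{n+1}(T_\zeta)\to T_{n+1}$ and carrying it through the unfolding extends $\tau_n$ to a homeomorphism $\tau_{n+1}$ for which the collapsing descriptions of $\iota_n$ and $\wt\iota_n$ give $\iota_n\circ\tau_{n+1}=\tau_n\circ\wt\iota_n$ on the nose. Passing to inverse limits (Propositions~\ref{pr:dendroids} and~\ref{pr:fiberinvlim}) produces the desired homeomorphism $\tau_\zeta\colon T_\zeta^{(\infty)}\to\rho_\infty^{-1}(\zeta)$.

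It remains to see that $\tau_\zeta$ conjugates $\wt F$ with $p_\infty$, and this is again a level-by-level matching of coverings. By Theorem~\ref{th:fibersrhon} and Proposition~\ref{pr:Phi12} the restriction of $p_n$ to $\rho_{n+1}^{-1}(\zeta_{n+1})$ is a degree-two branched covering onto $\rho_n^{-1}(q_n(\zeta_{n+1}))$ mapping each of its two halves tautologically, and the degree-two covering $T_\zeta^{(n+1)}\to F_1(T_\zeta)^{(n)}$ described in Subsection~\ref{ss:metric} does the same; since $\wt F$ is $F_1$ followed by the very renormalization that $\psi$ absorbs --- so that $\wt F(T_\zeta)$ has leg triple $\wt F(\psi(x_1,x_2,x_3))=\psi(F(x_1,x_2,x_3))$, i.e. $\wt F(T_\zeta)^{(\infty)}$ is canonically $T_{q_\infty(\zeta)}^{(\infty)}$ --- taking the limit yields $\tau_{q_\infty(\zeta)}\circ\wt F=p_\infty\circ\tau_\zeta$. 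The main obstacle here is the inductive step for the $\tau_n$: one has to track, through the iterated unfolding $\Phi_{k_1}\circ\cdots\circ\Phi_{k_n}$ and the relabelings of $Z_\alpha, Z_\beta, Z_\gamma$ effected by $\Phi_\onep$ and $\Phi_\twop$, precisely where each copy of the current tripod lies and how the bonding map acts on it, and to make the homeomorphisms \emph{strictly} (not merely up to homotopy) compatible; the degenerate tripods occurring over the real locus $\D\cap\D\cdot a$ need a little additional attention.
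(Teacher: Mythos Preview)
Your proposal is correct and follows essentially the same approach as the paper: match the $F$-itinerary of $\rho^{-1}(\zeta_0)$ with the $F_1$-itinerary of $T_\zeta$ via the conjugacy $\psi$, build level homeomorphisms $\tau_n$ between $T_\zeta^{(n)}$ and $\rho_n^{-1}(\zeta_n)$ compatible with the bonding maps, and pass to the inverse limit. The only cosmetic difference is that the paper constructs each $\tau_n$ directly by fixing one marking-preserving homeomorphism $F_1^n(T_\zeta)\to T_n$ and transporting it to all $2^n$ copies, whereas you build $\tau_{n+1}$ inductively from $\tau_n$; both yield the required commutation with $\iota_n$ and $\wt\iota_n$ for the same reason, namely the description of $\iota_n$ in Proposition~\ref{pr:fiberinvlim}.
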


Here $\wt F$ is covering of limit dendrites obtained as the limit
of the coverings $T^{(n)}\arr{\wt F}_1(T)^{(n-1)}$.

\begin{proof}
Let $(\zeta_0, \zeta_1=\lambda^{-1}(\zeta_0),
\zeta_2=\lambda_1^{-1}(\zeta_1), \ldots)$ be a point of
$\mathcal{S}_\infty$. Let $(x_1, x_2, x_3)$ be the lengths of legs
of the tripod $\rho^{-1}(\zeta_0)$. Let $(k_1, k_2, \ldots)$ be
the itinerary of the tripod $\rho^{-1}(\zeta_0)$ (with respect to
the folding map $F$).

Then $(k_1, k_2, \ldots)$ is the $F_1$-itinerary of the tripod
$T_\zeta$ with the lengths of legs $\psi(x_1, x_2, x_3)$. The
triple of lengths of legs of the tripod $F_1^n(T_\zeta)$ is
$F_1^n(\psi(x_1, x_2, x_3))$. If we divide the triple
$F_1^n(\psi(x_1, x_2, x_3))$ by the sum of its entries, then we
get ${\wt F}^n(\psi(x_1, x_2, x_3))=\psi(F^n(x_1, x_2, x_3))$.

It follows that the tripod $T_n$ with the lengths of legs
$F^n(x_1, x_2, x_3)$ is homeomorphic to the tripod
$F_1^n(T_\zeta)$. Let us fix some homeomorphism
$F_1^n(T_\zeta)\arr T_n$. Applying it to the copies of
$F_1^n(T_\zeta)$ in
$T_\zeta^{(n)}=\Phi_{k_1}\circ\Phi_{k_2}\circ\cdots\circ\Phi_{k_n}(F_1^n(T_\zeta))$,
we get a homeomorphism
\[\tau_n:T_\zeta^{(n)}\arr\rho_n^{-1}(\zeta_n),\]
since the tree $\rho_n^{-1}(\zeta_n)$ is homeomorphic to
$\Phi_{k_1}\circ\Phi_{k_2}\circ\cdots\circ\Phi_{k_n}(T_n)$ by
Proposition~\ref{pr:fiberinvlim}.

It follows from the description of the action of
$\iota_n:\rho_{n+1}^{-1}(\zeta_{n+1})\arr\rho_n^{-1}(\zeta_n)$
given in Proposition~\ref{pr:fiberinvlim} that the homeomorphisms
$\tau_n$ conjugate $\iota_n$ with the projection
$\wt\iota_n:T_\zeta^{(n+1)}\arr T_\zeta^{(n)}$. Therefore, the
limit of the homeomorphisms $\tau_n$ is a homeomorphism
$\tau_\zeta:T_\zeta^{(\infty)}\arr\rho_\infty^{-1}(\zeta)$.

The statement about the map $p_n$ follows now from
Proposition~\ref{pr:Phi12}.
\end{proof}

We get in this way a family of length metrics on the slices of the
Julia set of $f$ such that for every $w_0$ there exists a constant
$c\ge 1$ such that the map $f:J_{w_0}\arr J_{(1-2/w_0)^2}$
multiplies every curve of $J_{w_0}$ by $c$.

\subsection{Triangle-filling}

The inductive unfolding procedures $\Phi_i$, described in
Subsection~\ref{ss:unfolding} can be realized in a way leading to
a family of surjections from the dendrites $T^{(\infty)}$ (i.e.,
from the slices of the Julia set of $f$) onto an isosceles right
triangle.

Draw a tripod $T=Z_\alpha Z_\beta Z_\gamma$ inside an isosceles
right triangle in such a way that its foot $Z_\beta$ belongs to
the hypothenuse and the feet $Z_\alpha$ and $Z_\gamma$ are
symmetric points on the catheti, see top of
Figure~\ref{fig:righttriangle}.

\begin{figure}
\centering
  \includegraphics{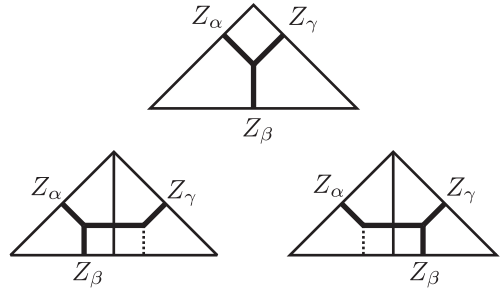}\\
  \caption{Unfolding the tripods inside of a triangle}\label{fig:righttriangle}
\end{figure}

We can now paste two copies of the tripod $T$ together with the
circumscribed triangle in such a way that the copies are
reflections of each other and the union of the two triangles is
again an isosceles right triangle.

If we label three vertices of tree $\Phi_i(T)$, accordingly to the
unfolding rule, the vertices $Z_\alpha$ and $Z_\gamma$ will be
again symmetric points on the catheti and the vertex $Z_\beta$
will belong to the hypothenuse. See bottom of
Figure~\ref{fig:righttriangle}, where both case I (on the left
hand side) and case II (on the right hand side) are shown.

We can iterate now the process (choosing one of the two cases on
each step). For better visualization (in order the vertices of the
trees $\Phi_{i_1}\circ\cdots\circ\Phi_{i_n}(T)$ not to collide),
we may on each step shorten (or delete) the edge containing the
copy of the vertex $Z_\gamma$ of
$\Phi_{i_2}\circ\cdots\circ\Phi_{i_n}(T)$ that did not become the
vertex $Z_\beta$ of $\Phi_{i_1}\circ\cdots\circ\Phi_{i_n}(T)$. In
this case only three vertices $Z_\alpha, Z_\beta, Z_\gamma$ of
$\Phi_{i_1}\circ\cdots\circ\Phi_{i_n}(T)$ will belong to the
perimeter of the triangle.

See, for instance Figure~\ref{fig:tntr}, where the result of
application of this procedure ten times is shown. Here always the
transformation $\Phi_\onep$ is applied and we have deleted the
edges containing the unlabeled copies of $Z_\gamma$. We get in
this way the graph of the action of $\img{z^2+i}$ on the tenth
level of the tree, approximating the Julia set of $z^2+i$.

\begin{figure}
\centering
  \includegraphics{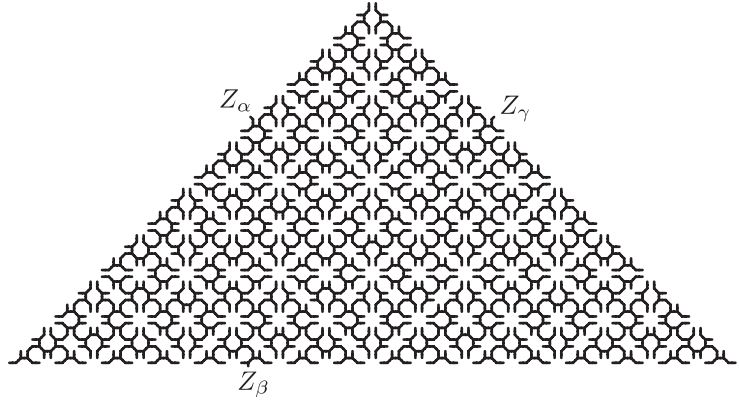}\\
  \caption{A Schreier graph of $\img{z^2+i}$}\label{fig:tntr}
\end{figure}

If we apply $\Phi_\twop$ each time, then we get in the limit the
well known \emph{Sierpi\'nski plane filling curve},
see~\cite{sierpinski:curve,mandelbrot}.

Figure~\ref{fig:schreiergraphs} shows all possible graphs obtained
in this way by applying seven transformations $\Phi_i$.

\begin{figure}
\includegraphics{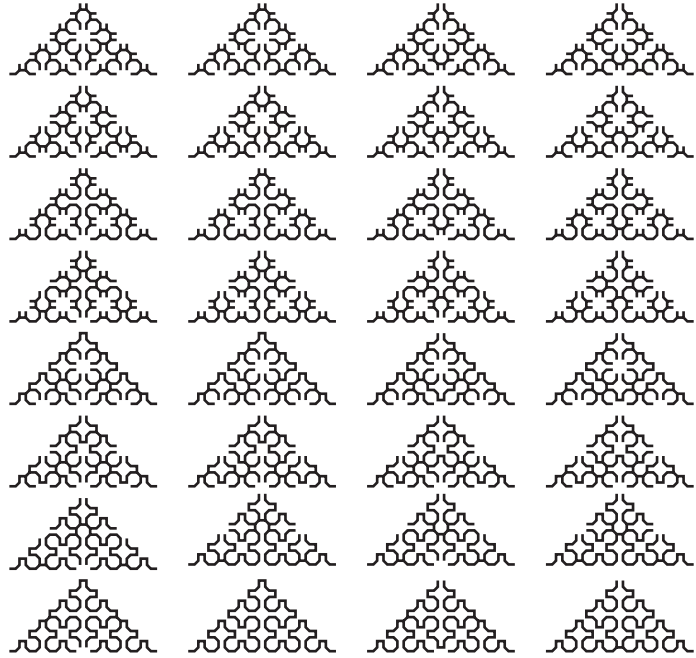}
\caption{Schreier graphs of $\G_0$}\label{fig:schreiergraphs}
\end{figure}

\subsection{External rays}
A standard tool in dynamics of complex polynomials are
\emph{external rays} (see~\cite{DH:orsayI,DH:orsayII}). If $K$ is
the \emph{filled-in Julia set} of a monic polynomial $f$ ($K$ is
equal to the Julia set, if it is a dendrite), then there exists a
bi-holomorphic isomorphism of the complement of the unit disc
$\C\setminus\mathbb{D}$ with $\C\setminus K$, conjugating the
action of $z\mapsto z^{\deg f}$ on $\C\setminus\mathbb{D}$ with
the action of $f$ on $\C\setminus K$. The images $R_\theta$ of the
rays $\{r\exp{i\theta}\;:\;r>1\}\subset\C\setminus\mathbb{D}$ in
$\C\setminus K$ under this isomorphism are called the
\emph{external rays} to the Julia set of $f$.

Since our rational function $f(z,
w)=\left(\left(1-\frac{2z}{w}\right)^2,\quad
\left(1-\frac{2}{w}\right)^2\right)$ is a polynomial on the first
coordinate, we also may define external rays to the Julia set of
$f$.

Namely, for every $w_0\in\CS$ let $J_{w_0}$ be the intersection of
the Julia set of $f$ with the line $w=w_0$. We know that $J_{w_0}$
is a dendrite. The complement of $J_{w_0}$ in the line $w=w_0$
(without the superattracting point $[1:0:0]$) is an annulus,
bi-holomorphically isomorphic to the complement of a disc in $\C$.
We define the rays $R_{\theta, w_0}$ in the line $w=w_0$ in the
same way as for complex polynomials. The angle $\theta$ of a ray
is not canonically defined, since the bi-holomorphic isomorphism
is defined up to a rotation. The set of all external rays
$R_{\theta, w_0}$ is in a bijective correspondence with the
boundary of a small ball around $[1:0:0]$, i.e., with a 3-sphere.
We introduce a topology on the set $\mathcal{R}$ of external rays
$R_{\theta, w}$ using this bijection. The function $f$ induces
then a self-map $f_\mathcal{R}:\mathcal{R}\arr\mathcal{R}$, since
the image of a ray $R_{\theta, w}$ under the action of $f$ will be
a ray $R_{\theta', \wh f}$. The aim of this section is to
understand the action of $f_\mathcal{R}$ on $\mathcal{R}$.

The action of $f$ on $\mathcal{R}$ is expanding (i.e.,
sub-hyperbolic), since it is a double covering of the circle of
rays corresponding to $(1-2/w)^2$ by the circle or rays
corresponding to $w$, and the rational function $(1-2/w)^2$ is
sub-hyperbolic. It follows that the dynamical system
$(\mathcal{R}, f_\mathcal{R})$ is topologically conjugate to the
limit dynamical system of the iterated monodromy group of
$f_\mathcal{R}:\mathcal{R}\arr\mathcal{R}$. This group is the
subgroup of $\img{f}$ generated by the loops not intersecting the
Julia set of $f$, i.e.,
\[\img{f_\mathcal{R}}=\langle\alpha\gamma\beta, s, t\rangle.\]

By Theorem~\ref{th:imgrecursion}, we have
\[\alpha\gamma\beta=\sigma(\beta, 1, 1, \beta)(\alpha, \gamma,
\alpha, \beta\gamma\beta)=\sigma(\beta\alpha, \gamma, \alpha,
\gamma\beta).\] Compose the wreath recursion with conjugation by
$(\alpha, \beta, \alpha, 1)$. We get then in the new wreath
recursion:
\[\alpha\gamma\beta=\sigma(\beta\cdot\beta\alpha\cdot\alpha,
\alpha\cdot\gamma\cdot\beta, \alpha\cdot\alpha,
\alpha\cdot\gamma\beta)=\sigma(1, \alpha\gamma\beta, 1,
\alpha\beta\gamma),\]
\[s=\pi(\alpha\alpha, \beta\beta, \alpha\alpha, \beta\beta)=\pi,\]
and
\[t=(r^\alpha, r^\beta, t^\alpha, t).\]

We have, by relations~\eqref{eq:conj1}--\eqref{eq:conj3}
\begin{multline*}r^\alpha=\alpha\beta\alpha\beta\gamma\beta
t^{-1}s^{-1}\alpha=\\ t^{-1}\alpha\cdot \gamma\beta\gamma\cdot
\alpha\cdot\gamma\beta\gamma\cdot\gamma\beta s^{-1}\alpha=
t^{-1}\alpha\gamma\beta\gamma\alpha\gamma s^{-1}\alpha=\\
t^{-1}s^{-1}\alpha\gamma\cdot\beta\cdot\alpha\gamma\alpha\cdot
\alpha\gamma\alpha=t^{-1}s^{-1}\alpha\gamma\beta,
\end{multline*}
and $r^\beta=r^{(\alpha\beta)\beta}=r^\alpha$.

Denote $\alpha\gamma\beta=\tau$. We see that the subgroup
generated by $\tau, s, t$ is self-similar and is given by the
recursion
\begin{eqnarray}\label{eq:tST1}
\tau &=& \sigma(1, \tau, 1, \tau),\\
\label{eq:tST2} s &=& \pi,\\
\label{eq:tST3} t &=& (t^{-1}s^{-1}\tau, t^{-1}s^{-1}\tau, t, t).
\end{eqnarray}

Note that $\tau$ commutes with $s$ and $t$. We also have $s^2=1$
and
\[(st)^2=(s^{-1}\tau, s^{-1}\tau, t^{-1}s^{-1}t\tau, t^{-1}s^{-1}t\tau),\] hence
\[(st)^4=(\tau^2, \tau^2, \tau^2, \tau^2)=\tau^4,\] and
\[t^4=(1, 1, t^4, t^4)=1.\]

Consider the elements $X=t^2s$ and $Y=tst$. We have
\[X=\pi(t^2, t^2, t^{-1}st^{-1}s\tau^2, t^{-1}st^{-1}s\tau^2)\]
and
\[Y=\pi(s\tau, s\tau, t^{-1}st\tau, t^{-1}st\tau).\]
Composing the wreath recursion with conjugation by $(s, s, 1, 1)$,
we get the recursion
\begin{eqnarray*}X &=& \pi(X, X, Y\tau^{-2}, Y\tau^{-2}),\\
Y &=& \pi(\tau, \tau, X^{-1}Y\tau, X^{-1}Y\tau),\\
\tau &=& \sigma(1, \tau, 1, \tau). \end{eqnarray*}

\begin{proposition}
\label{prop:heisenberg} The group generated by $X, Y, \tau$ is
isomorphic to the group
generated by the matrices $\varphi(X)=\left(\begin{array}{ccc}1 & 1 & 0\\
0 & 1 & 0\\ 0 & 0 & 1\end{array}\right),
\varphi(Y)=\left(\begin{array}{ccc}1 & 0 & 0\\ 0 & 1 & 1\\ 0 & 0 &
1\end{array}\right), \varphi(\tau)=\left(\begin{array}{ccc}1 & 0 & 1/4\\
0 & 1 & 0\\ 0 & 0 & 1\end{array}\right)$. In particular, the group
generated by $X, Y, \tau^4$ is isomorphic to the Heisenberg group
over integers.
\end{proposition}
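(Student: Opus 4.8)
The plan is to identify $\langle X,Y,\tau\rangle$ with the abstract group
\[G_0=\langle x,y,z\mid [x,y]=z^4,\ [x,z]=[y,z]=1\rangle,\]
a nilpotent group of class two, and to recognize the matrix group on the right as a faithful copy of $G_0$. First I would record the normal form in $G_0$: moving every $z^{\pm1}$ to the right (it is central) and replacing each $yx$ by $xyz^{-4}$ shows every element of $G_0$ is $x^ay^bz^c$ for some integers $a,b,c$; uniqueness of this form is witnessed by the assignment $x\mapsto\varphi(X)$, $y\mapsto\varphi(Y)$, $z\mapsto\varphi(\tau)$, under which $x^ay^bz^c$ maps to the unitriangular matrix whose $(1,2)$-, $(2,3)$- and $(1,3)$-entries are $a$, $b$ and $ab+c/4$. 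Hence this is an injective homomorphism with image exactly $\langle\varphi(X),\varphi(Y),\varphi(\tau)\rangle$, so the matrix group is a copy of $G_0$, and its subgroup $\langle\varphi(X),\varphi(Y),\varphi(\tau)^4\rangle$ is the group of all integer unitriangular $3\times3$ matrices, i.e.\ the integer Heisenberg group.

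It then remains to prove $\langle X,Y,\tau\rangle\cong G_0$. The group $\langle X,Y,\tau\rangle$ acts faithfully on the tree $\xs$, being a subgroup of $\img{f}$, so I may argue with the wreath recursions \eqref{eq:tST1}--\eqref{eq:tST3} together with the relations already established for $\langle s,t,\tau\rangle$. Since $\tau$ commutes with $s$ and $t$, and both $X=t^2s$ and $Y=tst$ are words in $s,t$, we get $[X,\tau]=[Y,\tau]=1$ at once. For the third relation, concatenate $[X,Y]=XYX^{-1}Y^{-1}=(t^2s)(tst)(s^{-1}t^{-2})(t^{-1}s^{-1}t^{-1})$ and use $s^2=t^4=1$ to reduce it to $t^2(st)^3st^{-1}$; then $st^{-1}=(st)t^{-2}$ and $(st)^4=\tau^4$ collapse this to $t^2(st)^4t^2=\tau^4t^4=\tau^4$. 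Therefore $x\mapsto X$, $y\mapsto Y$, $z\mapsto\tau$ defines a surjective homomorphism $\pi\colon G_0\to\langle X,Y,\tau\rangle$.

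The crux is injectivity of $\pi$. The decisive point is that $\tau$ has infinite order in $\langle X,Y,\tau\rangle$: from $\tau=\sigma(1,\tau,1,\tau)$ a direct computation of sections gives $\tau^2=(\tau,\tau,\tau,\tau)$, so $\tau^{2m}=1$ forces $\tau^m=1$, whereas for odd $m$ the automorphism $\tau^m$ acts on the first level as $\sigma\neq 1$; together these yield $\tau^m\neq 1$ for every $m\ge1$. Thus $\ker\pi\cap\langle z\rangle=\{1\}$. Now let $g=x^ay^bz^c\in\ker\pi$. Since $G_0$ is nilpotent of class two with $[x,y]=z^4$ central, a routine commutator calculation gives $[g,x]=[y,x]^b=z^{-4b}$ and $[g,y]=[x,y]^a=z^{4a}$; both lie in $\ker\pi$, hence in $\ker\pi\cap\langle z\rangle=\{1\}$, so $a=b=0$, and then $g=z^c\in\ker\pi\cap\langle z\rangle=\{1\}$, i.e.\ $c=0$. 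So $\ker\pi$ is trivial, $\pi$ is an isomorphism, and restricting it to $\langle x,y,z^4\rangle$ yields the statement about the Heisenberg group.

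The step I expect to require the most care is not any single calculation but the bookkeeping that makes $\pi$ well defined --- namely checking that $[X,\tau]=[Y,\tau]=1$ and $[X,Y]=\tau^4$ are genuine consequences of $s^2=t^4=1$, $(st)^4=\tau^4$ and the centrality of $\tau$ established earlier --- together with the verification that $\tau$ has infinite order, since that is exactly what prevents any further collapse of $G_0$. Everything else is the standard class-two commutator calculus.
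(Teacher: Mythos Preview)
Your proof is correct. Both you and the paper identify the abstract presentation
\[
G_0=\langle x,y,z\mid [x,y]=z^4,\ [x,z]=[y,z]=1\rangle,
\]
verify that the relations hold for $X,Y,\tau$ (the computation $[X,Y]=(st)^4=\tau^4$ and centrality of $\tau$), and observe that the matrix group is a faithful copy of $G_0$. The difference lies in how injectivity of $\pi:G_0\to\langle X,Y,\tau\rangle$ is established.

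The paper passes to a quotient action on the binary tree $\{\onep,\twop\}^*$ (via $\one,\two\mapsto\onep$ and $\three,\four\mapsto\twop$), in which $\tau$ becomes trivial and $X,Y$ satisfy the recursion $X=\sigma(X,Y)$, $Y=\sigma(1,X^{-1}Y)$; it then cites external results (\cite{neksid}, \cite[Proposition~2.9.2]{nek:book}) to conclude that the image of $\langle X,Y\rangle$ is free abelian of rank two, so $\ker\pi\subset\langle z\rangle$, and finishes with the infinite order of $\tau$. Your argument is more self-contained: you use only that $\tau$ has infinite order (which you prove directly from $\tau^2=(\tau,\tau,\tau,\tau)$), together with class-two commutator calculus inside $G_0$ to force $a=b=0$ and then $c=0$ for any $x^ay^bz^c\in\ker\pi$. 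This avoids the auxiliary binary-tree action and the literature references entirely. The paper's route has the side benefit of exhibiting the abelianization map onto $\Z^2$ as an explicit tree action, which fits its broader self-similar-groups framework, but for the bare statement your approach is cleaner.
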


\begin{proof} Note that we have
\[[X, Y]=st^2t^{-1}st^{-1}t^2stst=(st)^4=\tau^4.\]
It follows that the group $\langle X, Y, \tau\rangle$ is a
homomorphic image of the group given by the presentation
\[G_1=\langle X, Y, \tau\;:\;[X, Y]=\tau^4, [X, \tau]=[Y,
\tau]=1\rangle.\] Let us prove that it is isomorphic to this
group, which will finish the proof, since the group generated by
the matrices listed in the proposition is given by this
presentation.

Consider the homomorphism $\{\one, \two, \three,
\four\}^*\arr\{\onep, \twop\}^*$ of the free monoids given by
$\one\mapsto \onep, \two\mapsto\onep, \three\mapsto\twop,
\four\mapsto\twop$. It is easy to see that this homomorphism
agrees with the action of the group $G_0=\langle X, Y,
\tau\rangle$ on the tree $\{\one, \two, \three, \four\}$, so that
after taking projection we get an action of $G_0$ on the tree
$\{\onep, \twop\}^*$. This action is given by the wreath recursion
\[X=\sigma(X, Y),\quad Y=\sigma(1, X^{-1}Y),\quad \tau=1.\]
We get hence an epimorphism from $G_0$ to the given group acting
on the binary tree. It follows from the recursion
(see~\cite{neksid} and~\cite[Proposition~2.9.2]{nek:book}) that
$X$ and $Y$ generated in a free abelian group in this epimorphic
image. Consequently, the kernel of the natural epimorphism
$G_1\arr G_0$ is contained in $\langle\tau\rangle$. But the group
$\langle\tau\rangle$ acts faithfully on the tree $\{\one, \two,
\three, \four\}^*$, hence the kernel is trivial.
\end{proof}

It is checked directly that the map
\[\phi\left(\begin{array}{ccc} 1 & a & c\\ 0 & 1 & b\\ 0 & 0 &
1\end{array}\right)=\left(\begin{array}{ccc} 1 & \frac{a-b}2 &
\frac c2+\frac{a^2-b^2}8-\frac{ab}4-\frac a2\\
0 & 1 & \frac{a+b}2\\ 0 & 0 & 1\end{array}\right)\] is an
automorphism of the Heisenberg group.

The first level stabilizer of the group $\langle X, Y,
\tau\rangle$ is generated (since $X$ and $Y$ commute with $\tau$
and $YX=XY\tau^{-4}$) by $\tau^2, XY, Y^2$, which are equal to
\begin{eqnarray*}
\tau^2 &=& (\tau, \tau, \tau, \tau),\\
XY   &=& (Y\tau^{-1}, Y\tau^{-1}, Y\tau, Y\tau),\\
Y^2 &=& (X^{-1}Y\tau^2, X^{-1}Y\tau^2, X^{-1}Y\tau^2,
X^{-1}Y\tau^2).
\end{eqnarray*}

Direct computations show that the isomorphism $\varphi$ from
Proposition~\ref{prop:heisenberg} conjugates the virtual
endomorphism associated with the first coordinate of the wreath
recursion with the action of $\phi$ on the lattice generated by
$\varphi(X), \varphi(Y)$, and $\varphi(\tau)$. This gives another
proof of Proposition~\ref{prop:heisenberg}, but it also gives a
description of the limit space of the group $\langle X, Y,
\tau\rangle$.

\begin{proposition}
The limit $G$-space $\limg[\langle X, Y, \tau\rangle]$ of the
group $\langle X, Y, \tau\rangle$ is the real Heisenberg group
with the right action of $\langle X, Y, \tau\rangle$ given by the
isomorphism $\varphi$ from Proposition~\ref{prop:heisenberg}. The
limit space $\lims[\langle X, Y, \tau\rangle]$ is hence the
quotient of the Heisenberg group by the action of the lattice
$\langle\varphi(X), \varphi(Y), \varphi(\tau)\rangle$.
\end{proposition}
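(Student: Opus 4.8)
The plan is to apply Theorem~\ref{th:approximationlimg} with the locally compact metric space $\X$ taken to be the real Heisenberg group $H$ (the group of real $3\times 3$ upper unitriangular matrices), on which $G=\langle X,Y,\tau\rangle$ acts on the right through the isomorphism $\varphi$ of Proposition~\ref{prop:heisenberg} as the discrete subgroup $\Lambda=\langle\varphi(X),\varphi(Y),\varphi(\tau)\rangle$; since $\Lambda$ contains the integral Heisenberg group $\langle\varphi(X),\varphi(Y),\varphi(\tau)^{4}\rangle$ with finite index, it is a cocompact lattice in $H$. This is the exact analogue of the identification of the approximating spaces of $\wh\G$ with copies of $\C$ carried out in Subsection~\ref{ss:zpp}, with $\C$, the lattice $\Z[i]$ and the similitude $z\mapsto(1-i)z$ replaced by $H$, the lattice $\Lambda$ and the automorphism $\phi$ written down before the proposition.

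First I would collect the routine preliminaries. The group $G$ is self-replicating: the virtual endomorphism $\phi_\one$ read off the first coordinate of the wreath recursion $X=\pi(X,X,Y\tau^{-2},Y\tau^{-2})$, $Y=\pi(\tau,\tau,X^{-1}Y\tau,X^{-1}Y\tau)$, $\tau=\sigma(\unit,\tau,\unit,\tau)$ carries the first-level stabiliser $\langle\tau^{2},XY,Y^{2}\rangle$ onto $\langle\tau,Y\tau^{-1},X^{-1}Y\tau^{2}\rangle=G$, and its domain has index $4$ in $G$ (the action of $G$ on $\alb$ being transitive of degree $4$); and $G$ is contracting, since after the changes of basis made above it is a state-closed subgroup of the contracting group $\img f$, hence contracting with nucleus $\nuke\cap G$. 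The automorphism $\phi$ of $H$ has differential acting on the abelianisation $\mathfrak h/[\mathfrak h,\mathfrak h]\cong\R^{2}$ by $\tfrac12\left(\begin{smallmatrix}1&-1\\1&1\end{smallmatrix}\right)$ (spectral radius $1/\sqrt2$) and on the centre $[\mathfrak h,\mathfrak h]\cong\R$ by multiplication by $\tfrac12$, so all of its eigenvalues have modulus less than $1$; by the remark immediately preceding the proposition $\varphi$ intertwines $\phi$ with $\phi_\one$, and consequently $\phi^{-1}(\Lambda)$ is a subgroup of $\Lambda$ of index $4$.

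Next I would produce the contracting equivariant map. Equipping $H$ with a right-invariant Riemannian metric makes it a locally compact metric space with a cocompact proper right $\Lambda$-action by isometries, hence a right $G$-space through $\varphi$. Choosing coset representatives of $\phi^{-1}(\Lambda)$ in $\Lambda$ compatibly with the basis $\alb$ of $\bim$, the construction recalled in Subsection~\ref{ss:polyhedralmodels}, with $\overline\Xi$ replaced by $H$ and the multiplication by sections replaced by the contracting automorphism $\phi$, yields a well-defined $G$-equivariant continuous map $I:H\otimes\bim\to H$ with $I^{(n)}(\eta\otimes v)=\phi^{n}(\eta)\cdot c_{v}$ for an element $c_{v}\in\Lambda$ depending only on $v\in\alb^{n}$; since $\phi$ is an automorphism, each $I_{n}:H\otimes\bim^{\otimes(n+1)}\to H\otimes\bim^{\otimes n}$ is a homeomorphism, exactly as the maps $L_{n}$ in Subsection~\ref{ss:zpp}. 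The one genuinely non-formal point is that $I$ is contracting: since right translations are isometries, $d\bigl(I^{(n)}(\eta_{1}\otimes v),I^{(n)}(\eta_{2}\otimes v)\bigr)=d(\phi^{n}\eta_{1},\phi^{n}\eta_{2})=d\bigl(\phi^{n}(\eta_{1}\eta_{2}^{-1}),e\bigr)$, so one needs a power of $\phi$ to contract the displacement function $g\mapsto d(g,e)$ on $H$. This is a standard fact about contracting automorphisms of simply connected nilpotent Lie groups, following from the fact that $\phi$ contracts each quotient of the lower central series of $H$: choosing the inner product on $\mathfrak h$ adapted to $\phi$ makes $\phi^{n}$ contract near the identity, and the polynomial control of the exponential coordinates on $H$ promotes this to a global contraction for $n$ large. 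Granting this, Theorem~\ref{th:approximationlimg} identifies $\limg[G]$ with the inverse limit of $H\xleftarrow{I_{1}}H\otimes\bim\xleftarrow{I_{2}}H\otimes\bim^{\otimes 2}\xleftarrow{I_{3}}\cdots$, which, all bonding maps being homeomorphisms, is $H$ itself, with $G$ acting through $\varphi$; passing to $G$-orbits gives $\lims[G]=\limg[G]/G\cong H/\Lambda=H/\langle\varphi(X),\varphi(Y),\varphi(\tau)\rangle$, as claimed. I expect the main obstacle to be the last contraction estimate for $\phi$ on $H$; everything else is bookkeeping with the wreath recursion and with the machinery of the Techniques section.
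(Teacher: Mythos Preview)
Your proposal is correct and follows the same line as the paper. The paper does not give a separate proof of this proposition; it simply records, in the paragraph immediately preceding it, that the isomorphism $\varphi$ conjugates the virtual endomorphism $\phi_\one$ (computed on the generators $\tau^2,XY,Y^2$ of the first-level stabilizer) with the contracting automorphism $\phi$ of the real Heisenberg group, and then states the proposition as the resulting description of the limit space---implicitly invoking the general machinery from \cite{nek:book} on self-similar groups whose virtual endomorphism lifts to a contracting Lie-group automorphism. Your argument makes this invocation explicit by running it through Theorem~\ref{th:approximationlimg}: the action of $G$ on $H$ via $\varphi$ is proper and cocompact because $\Lambda$ is a lattice, the map $I$ built from $\phi$ and the coset representatives is equivariant and (after the eigenvalue check you carry out) contracting, and the bonding maps $I_n$ are homeomorphisms, so the inverse limit collapses to $H$. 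The only point the paper takes for granted and you flag as non-formal---that a Lie-group automorphism with spectral radius $<1$ on each graded piece of the lower central series contracts some right-invariant metric---is indeed standard and is exactly what underlies the cited general theory.
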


The next theorem gives us a description of the action of the map
$f$ on the space of the external rays, i.e., the action of $f$ on
the neighborhood of ``infinity'', i.e., of the superattracting
point $[1:0:0]$. It would be interesting to deduce
Theorem~\ref{th:externalHeisenberg} analytically.

\begin{theorem}
\label{th:externalHeisenberg} The subgroup $\langle X, Y,
\tau\rangle$ has index $4$ in $\langle s, t, \tau\rangle$. The
limit $G$-space $\limg[\img{f_{\mathcal{R}}}]$ of
$\img{f_{\mathcal{R}}}=\langle s, t, \tau\rangle$ is the real
Heisenberg group together with the action of the group given by
\begin{gather*}\left(\begin{array}{ccc}1 & x & z\\ 0 & 1 & y\\ 0 & 0 &
1\end{array}\right)\cdot s=\left(\begin{array}{ccc}1 & -x-1 & z+2y\\
0 & 1 & -y\\ 0 & 0 & 1\end{array}\right),\\
\left(\begin{array}{ccc}1 & x & z\\ 0 & 1 & y\\ 0 & 0 &
1\end{array}\right)\cdot t=\left(\begin{array}{ccc}1 & -y-1 & z-xy-x-1\\
0 & 1 & x+1\\ 0 & 0 & 1\end{array}\right),\end{gather*} and
\[\left(\begin{array}{ccc}1 & x & z\\ 0 & 1 & y\\ 0 & 0 &
1\end{array}\right)\cdot\tau=\left(\begin{array}{ccc}1 & x & z+1/4\\
0 & 1 & y\\ 0 & 0 & 1\end{array}\right).\]

The space of external rays $\mathcal{R}$ of the function $f$ is
homeomorphic to the quotient of the real Heisenberg group by the
described action. The action
$f_{\mathcal{R}}:\mathcal{R}\arr\mathcal{R}$ of $f$ on it is
induced by the automorphism
\[\phi^{-1}:\left(\begin{array}{ccc} 1 & a & c\\ 0 & 1 & b\\ 0 & 0 &
1\end{array}\right)\mapsto\left(\begin{array}{ccc} 1 & a+b & 2c+a+b-ab-\frac{a^2-b^2}2\\
0 & 1 & b-a\\ 0 & 0 & 1\end{array}\right)\] on the Heisenberg
group.
\end{theorem}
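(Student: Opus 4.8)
The plan is to deduce the theorem from Proposition~\ref{prop:heisenberg}, from the preceding identification of $\limg[\langle X, Y, \tau\rangle]$ with the real Heisenberg group $\mathcal{H}$, and from the fact already recorded above that $f_{\mathcal R}$ is expanding and hence $(\mathcal R, f_{\mathcal R})$ is topologically conjugate to the limit dynamical system of $\img{f_{\mathcal R}}=\langle s, t, \tau\rangle$. Thus everything reduces to passing from the finite-index subgroup $\langle X, Y, \tau\rangle$ to $\langle s, t, \tau\rangle$ and transporting the action through the isomorphism $\varphi$ of Proposition~\ref{prop:heisenberg}.

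For the index, note first that $X = t^2 s$ and $Y = tst$ show $\langle X, Y, \tau\rangle\le\langle s, t, \tau\rangle$. Modulo $\langle X, Y, \tau\rangle$ one has $t^2 s\equiv tst\equiv\tau\equiv\unit$; together with $s^2=t^4=\unit$ this forces $\bar s\equiv\bar t^{-2}$ and leaves a cyclic quotient generated by $\bar t$ of order dividing $4$. I would then check that $\langle X, Y, \tau\rangle$ is normal (a short direct computation from the recursion, e.g.\ $sXs^{-1}=X^{-1}$ and the analogous conjugates of $Y$ and $\tau$ again lie in the subgroup) and that the order is exactly $4$; the latter also falls out of the explicit affine action on $\mathcal{H}$ constructed below, whose holonomy group is $C_4$ with $\bar s=\bar t^2$.

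For the limit $G$-space: since $\langle X, Y, \tau\rangle$ is state-closed and of finite index in $\langle s, t, \tau\rangle$ (both acting on $\xs$ via the recursion~\eqref{eq:tST1}--\eqref{eq:tST3}), the limit $G$-space $\limg[\langle s, t, \tau\rangle]$ is the same topological space $\mathcal{H}$, now carrying a right action of $\langle s, t, \tau\rangle$ extending the lattice action of $\langle X, Y, \tau\rangle$ — this follows either from the general theory relating limit spaces of a self-similar group to those of its state-closed finite-index subgroups, or from a direct computation of the nucleus of $\langle s, t, \tau\rangle$. Using $\varphi$ and the description of the action of a group element $g$ on $\limg$ by the maps $\xi\mapsto\xi\otimes x\cdot g$, I would compute the action of each of $s$, $t$, $\tau$ on $\mathcal{H}$ straight from~\eqref{eq:tST1}--\eqref{eq:tST3}: the relation $\tau=\sigma(\unit,\tau,\unit,\tau)$ gives the central translation $z\mapsto z+1/4$, and $s=\pi$, $t=(t^{-1}s^{-1}\tau, t^{-1}s^{-1}\tau, t, t)$ give the two remaining affine maps. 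This yields the three displayed formulas, and then $\lims[\langle s, t, \tau\rangle]=\limg[\langle s, t, \tau\rangle]/\langle s, t, \tau\rangle=\mathcal{H}/(\text{that action})$ is, by the cited conjugacy, homeomorphic to $\mathcal R$ (with $\lims[\langle X, Y, \tau\rangle]=\mathcal{H}/\varphi$ its degree-$4$ orbifold cover).

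Finally, for the dynamics: the excerpt records that $\varphi$ conjugates the virtual endomorphism of $\langle X, Y, \tau\rangle$ coming from the first coordinate of the recursion with the automorphism $\phi$ acting on the lattice. The shift map $\si$ on $\lims$ is the map induced on $\limg=\mathcal{H}$ inverse to the embeddings $\xi\mapsto\xi\otimes x$, hence is induced by $\phi^{-1}$; the displayed matrix formula for $\phi^{-1}$ then follows by inverting the formula for $\phi$ stated before the theorem. I expect the main obstacle to be the previous paragraph: both pinning down that $\limg$ is genuinely unchanged by the finite-index passage, and the (lengthier) bookkeeping that turns the wreath recursion~\eqref{eq:tST1}--\eqref{eq:tST3} together with $\varphi$ into the three explicit affine formulas for the action of $s$, $t$, $\tau$ — and, correspondingly, into the explicit form of $\phi^{-1}$. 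The remaining steps are either cited (Proposition~\ref{prop:heisenberg}, the conjugacy via Theorem~\ref{th:main}) or short group-theoretic verifications.
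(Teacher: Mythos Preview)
Your plan is correct and follows essentially the same route as the paper: reduce to Proposition~\ref{prop:heisenberg} and the already-established identification of $\limg[\langle X,Y,\tau\rangle]$ with the real Heisenberg group $\mathcal H$ via $\varphi$, then extend across the finite-index inclusion and check compatibility with $\phi$. The paper carries this out slightly more concretely than you describe: rather than invoking a general finite-index principle, it writes down generators $\tau^2,\ t,\ sts$ of the first-level stabilizer of $\langle s,t,\tau\rangle$ (whence index $4$), records their sections in the conjugated recursion, and then verifies by direct matrix computation that the stated affine action of $s,t,\tau$ on $\mathcal H$ satisfies $\phi(M\cdot g)=\phi(M)\cdot g|_\one$ for $g\in\{t,\ sts\}$ (the case $\tau^2$ being immediate). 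That single intertwining check simultaneously pins down the action formulas and confirms that $\phi$ realizes the virtual endomorphism on the extended group, so your two anticipated obstacles collapse into one calculation; your remaining remarks on the index, on $\lims$ as the quotient, and on $f_{\mathcal R}$ being induced by $\phi^{-1}$ match the paper's (largely implicit) treatment.
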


Note that the automorphism $\phi^{-1}$ coincides (up to an inner
automorphism of the real Heisneberg group) with the automorphism
used in~\cite{gelbrich:heisenberg}, see
also~\cite{petenek:scaleinv}.

\begin{proof}
The first level stabilizer of the group $\langle s, t,
\tau\rangle$ is generated by $\tau^2, t, sts$. They are equal (in
the wreath recursion~\eqref{eq:tST1}--\eqref{eq:tST3} conjugated
by $(s, s, 1, 1)$) to
\begin{eqnarray*}
\tau^2 &=& (\tau, \tau, \tau, \tau),\\
t      &=& (st^{-1}\tau, st^{-1}\tau, t, t),\\
sts    &=& (sts, sts, t^{-1}s\tau, t^{-1}s\tau).
\end{eqnarray*}

The following equalities are checked directly:
\[\left(\begin{array}{ccc}1 & x & z\\ 0 & 1 & y\\ 0 & 0 &
1\end{array}\right)\cdot sts=\left(\begin{array}{ccc}1 & -y & z-xy-x+y\\
0 & 1 & x\\ 0 & 0 & 1\end{array}\right),\] and
\[\left(\begin{array}{ccc}1 & x & z\\ 0 & 1 & y\\ 0 & 0 &
1\end{array}\right)\cdot st^{-1}\tau=
\left(\begin{array}{ccc}1 & -1-y & z-xy-x+y+1/4\\ 0 & 1 & x\\
0 & 0 & 1\end{array}\right).\]

We have
\begin{multline*}
\phi\left(\left(\begin{array}{ccc}1 & x & z\\ 0 & 1 & y\\ 0 & 0 &
1\end{array}\right)\cdot sts\right)= \left(\begin{array}{ccc}1 &
\frac{-x-y}2 & \frac{z-xy-x+y}2+\frac{y^2-x^2}8+
\frac{xy}4+\frac y2\\
0 & 1 & \frac{x-y}2\\ 0 & 0 & 1\end{array}\right)=\\
\left(\begin{array}{ccc}1 & \frac{-x-y}2 & \frac z2+\frac{y^2-x^2}8-\frac{xy}4-\frac x2+y\\
0 & 1 & \frac{x-y}2\\ 0 & 0 & 1\end{array}\right),
\end{multline*}
and
\begin{multline*}
\phi\left(\begin{array}{ccc}1 & x & z\\ 0 & 1 & y\\ 0 & 0 &
1\end{array}\right)\cdot sts=\left(\begin{array}{ccc}1 &
\frac{x-y}2 & \frac z2+\frac{x^2-y^2}8-\frac{xy}4-\frac x2\\ 0 & 1 & \frac{x+y}2\\
0 & 0 & 1\end{array}\right)\cdot sts=\\
\left(\begin{array}{ccc}1 & \frac{-x-y}2 & \frac z2+\frac{y^2-x^2}8-\frac{xy}4-\frac x2+y\\
0 & 1 & \frac{x-y}2\\ 0 & 0 & 1\end{array}\right).
\end{multline*}

\begin{multline*}
\phi\left(\left(\begin{array}{ccc}1 & x & z\\ 0 & 1 & y\\ 0 & 0 &
1\end{array}\right)\cdot t\right)=\\
\left(\begin{array}{ccc}1 &
\frac{-x-y}2-1 & \frac{z-xy-x-1}2+\frac{(1+y)^2-(x+1)^2}8+\frac{(x+1)(y+1)}4+\frac{1+y}2\\
0 & 1 & \frac{x-y}2\\
0 & 0 & 1\end{array}\right)=\\
\left(\begin{array}{ccc}1 &
\frac{-x-y}2-1 & \frac z2-\frac{xy}4+\frac{y^2-x^2}8-\frac x2+y+\frac 14\\
0 & 1 & \frac{x-y}2\\
0 & 0 & 1\end{array}\right),
\end{multline*}
while
\begin{multline*}
\phi\left(\begin{array}{ccc}1 & x & z\\ 0 & 1 & y\\ 0 & 0 &
1\end{array}\right)\cdot st^{-1}\tau=\left(\begin{array}{ccc}1 &
\frac{x-y}2 & \frac z2+\frac{x^2-y^2}8-\frac{xy}4-\frac x2\\ 0 & 1 & \frac{x+y}2\\
0 & 0 & 1\end{array}\right)\cdot st^{-1}\tau=\\
\left(\begin{array}{ccc}1 &
\frac{-x-y}2-1 & \frac z2-\frac{xy}4+\frac{y^2-x^2}8-\frac x2+y+\frac 14\\
0 & 1 & \frac{x-y}2\\
0 & 0 & 1\end{array}\right).
\end{multline*}

We see that in both cases $\phi$ conjugates the transformations to
the action of the first coordinate of the wreath recursion, which
finishes the proof.
\end{proof}

\subsection{Smooth cases} Let $A', B', C'$ be the vertices of the
triangle $\D\subset\mathcal{S}$, as in Subsection~\ref{ss:zpp}.
Note that $A'\cdot a=A'$, $B'\cdot a=B'$, and $C'\cdot a=C'$ in
$\mathcal{S}$, since these points belong to the boundary of $\D$,
which is identified in $\mathcal{S}$ with the boundary of $\D\cdot
a$. Note also that since $L(A'\otimes\two)=A'$, the point $A'$ is
invariant under the map
$q\circ\lambda^{-1}:\mathcal{S}\arr\mathcal{S}$. Consequently, the
fiber $\rho^{-1}(A')$ is invariant under the map
$p\circ\theta:\M\arr\M$.

The set $\rho^{-1}(A')$ is the tripod with the lengths of legs
$(1, 0, 0)$, so that the points $Z_\gamma$ and $Z_\beta$ coincide,
i.e., it is a segment with the ends $Z_\alpha$ and
$Z_\gamma=Z_\beta$. The map
$p\circ\theta:\rho^{-1}(A')\arr\rho^{-1}(A')$ doubles the
distances inside the segment $\rho^{-1}(A')$ and then folds it in
two. The common image of the endpoints $Z_\alpha$ and
$Z_\gamma=Z_\beta$ is $Z_\beta=\Z_\gamma$, the image the midpoint
is $Z_\alpha$ (and the image of $Z_\beta=Z_\gamma$ is
$Z_\gamma=Z_\beta)$, see Subsection~\ref{ss:foldingtripods}.
Hence, the map $p\circ\theta:\rho^{-1}(A')\arr\rho^{-1}(A')$ is
the tent map.

The itinerary of the tripod $\rho^{-1}(A')$ is therefore $(\twop,
\twop, \twop, \ldots)$. Consequently, the fibers $\rho_n^{-1}(A')$
and $\rho_\infty^{-1}(A')$ are also segments, and the maps
$\iota_n:\rho_{n+1}^{-1}(A')\arr\rho_n^{-1}(A')$ are
homeomorphisms.

The corresponding slice of the Julia set of $f$ is $J_{w_0}$ for
$w_0=1$, where the map $f:J_1\arr J_1$ is equal to the polynomial
$z\mapsto (1-2z)^2$. This is the Ulam-von~Neumann map (see, for
instance~\cite[Section~7, Example~2]{milnor}), its Julia set is
the interval $[0, 1]$, and it is topologically conjugate on the
Julia set with the tent map.

Since the extension of $F$ onto the limit dendrites are branched
coverings, we immediately get a set of limit dendrites that are
graphs.

\begin{figure}
  \centering\includegraphics{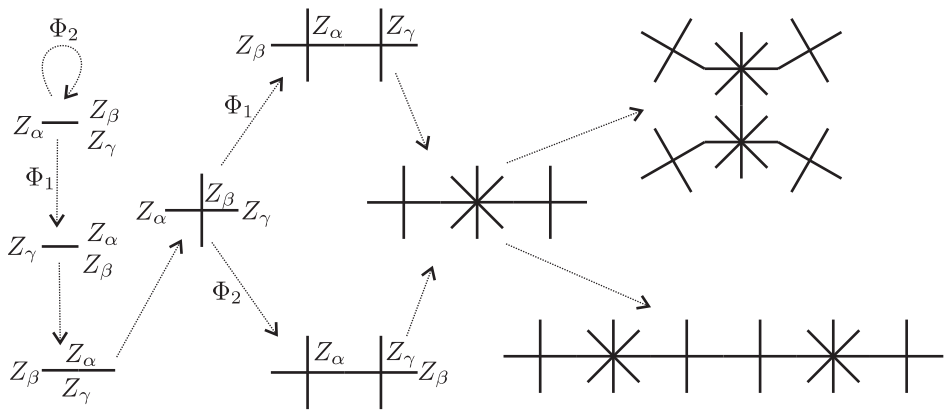}\\
  \caption{Dendrites $\rho_\infty^{-1}(\zeta)$ that are graphs}\label{fig:graphs}
\end{figure}

\begin{proposition}
The intersection $J_{w_0}$ of the Julia set of $f$ with the line
$w=w_0$ is homeomorphic to a finite tree if and only if $w_0$ is
mapped onto the fixed point $1$ by some iteration of $\wh
f(w)=(1-2/w)^2$.
\end{proposition}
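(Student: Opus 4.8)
The plan is to characterize, purely combinatorially, when the limit dendrite $\rho_\infty^{-1}(\zeta)$ is a finite tree, and then translate this back to the dynamics of $\wh f$ on the $w$-sphere via Proposition~\ref{pr:MninJulia}. Recall from the theorem in Subsection~\ref{ss:metric} (together with Proposition~\ref{pr:fiberinvlim}) that a fiber $\rho_\infty^{-1}(\zeta)$, for $\zeta$ lying over $\zeta_0\in\mathcal{S}_0$ with $\rho^{-1}(\zeta_0)$ a tripod of leg-lengths $(x_1,x_2,x_3)$, is the limit dendrite $T_\zeta^{(\infty)}$ built by the unfolding procedure driven by the $F_1$-itinerary $(k_1,k_2,\ldots)$ of $T_\zeta$. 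By Proposition~\ref{prop:disttn}, the Hausdorff distance $d_n$ between $T^{(n)}$ and $T^{(n+1)}$ is bounded by the length of the leg $OZ_\beta$ of the tripod $F_1^n(T_\zeta)$, which equals the second coordinate of $F_1^n(\psi(x_1,x_2,x_3))$ up to normalization. Hence the limit dendrite stabilizes — i.e. $T^{(n)}=T^{(n+1)}=\cdots$ from some $n$ on, which for a tree built by these unfoldings is equivalent to being a finite tree — exactly when this leg-length becomes and stays $0$, i.e. when $F_1^n(T_\zeta)$ is a degenerate tripod (a segment) with a zero $OZ_\beta$-leg for all large $n$. A brief inspection of the formula $F_1(x,y,z)=\bigl(\tfrac{|x-z|}{2},\ \min(x,z),\ y\bigr)$ (unnormalized) shows that the $OZ_\beta$-leg of $F_1^n$ is the old $OZ_\gamma$-leg's partner two steps back, so the condition ``$OZ_\beta$-leg of $F_1^n(T)=0$ eventually'' unwinds to: the orbit of $(x_1,x_2,x_3)$ under $F_1$ (equivalently under $F$, since $\psi$ conjugates them, Proposition~\ref{pr:conjugatef0}) eventually lands on the fixed segment $(1,0,0)$.

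The second, and main, step is to identify which $\zeta_0\in\mathcal{S}_0$ have this property. By Proposition~\ref{pr:MninJulia}, the fiber $\rho^{-1}(\zeta_0)$ is the Hubbard tripod of $J_{w_0}$, i.e. the convex hull of $\{(0,w_0),(1,w_0),(w_0,w_0)\}$ inside $J_{w_0}$, where $w_0$ is the point of $\CS$ corresponding to $\zeta_0$ under the conjugacy $\mathcal{S}_0\approx\julia{\wh f}$ from Subsection~\ref{ss:zpp}. Under this identification the folding map $F$ (acting as $p_0\circ\theta$ on fibers) corresponds to the action of $\wh f$ on the $w$-sphere: the commuting squares of Proposition~\ref{pr:rhon} say $\rho_n\circ p_n=q_n\circ\rho_{n+1}$, and $q_n$ descends (via the conjugacy) to $\wh f$. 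Therefore the $F$-orbit of the tripod $\rho^{-1}(\zeta_0)$ reaches the degenerate segment $(1,0,0)$ precisely when the $\wh f$-orbit of $w_0$ reaches the fixed point $w=1$ (whose fiber is exactly the tripod $(1,0,0)$, since the slice $J_1$ is the interval $[0,1]$ with $(1,1)$ and $(w_0,w_0)$ collapsing to the same endpoint — this is the ``smooth case'' $w_0=1$ discussed just above). So $J_{w_0}$ is a finite tree iff $\wh f^{\,n}(w_0)=1$ for some $n\ge 0$.

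The direction ``$\Leftarrow$'' is then immediate: if $\wh f^{\,n}(w_0)=1$, the $F$-orbit of the tripod is eventually $(1,0,0)$, the itinerary is eventually $(\twop,\twop,\ldots)$ as in the $A'$-computation above, and the unfolding stabilizes to a finite tree (indeed, as in that computation, an arc). For ``$\Rightarrow$'' one must rule out that the leg-length sequence could tend to $0$ without ever equaling $0$; here I would use that $\wh f$ is a post-critically finite rational map whose only fixed point that is a critical value (equivalently: whose fiber tripod is already degenerate) is $w=1$ — the other fixed points $2i,-2i$ give the non-degenerate tripods corresponding to $z^2+i$-type Julia slices — so the orbit of $w_0$ lands on $w=1$ in finitely many steps or else stays in the region where $F$-iterates have strictly positive $OZ_\beta$-leg, in which case the $d_n$ do not vanish and the dendrite has infinitely many branch points (each unfolding step $\Phi_{k}$ adds genuinely new edges). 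The subtle point — the step I expect to be the main obstacle — is making the equivalence ``finite tree $\iff$ $F$-orbit eventually at $(1,0,0)$'' airtight: one needs that whenever the $OZ_\beta$-leg of $F_1^n(T)$ is positive for all $n$, the trees $T^{(n)}$ strictly grow (new edges of positive length are attached at step $n+1$, by the explicit description of $\Phi_\onep,\Phi_\twop$), so that $T^{(\infty)}$ has infinitely many endpoints and hence is not a finite tree. This is a direct but careful bookkeeping argument on the unfolding formulas \eqref{eq:Phi1}--\eqref{eq:Phi2} and the structure of $T^{(n)}$ as $2^n$ copies of $F_1^n(T)$ glued at feet, described in Subsection~\ref{ss:metric}.
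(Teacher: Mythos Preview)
Your overall strategy matches the paper's: reduce to the combinatorial criterion ``the $F$-orbit of the leg-length triple eventually reaches $(1,0,0)$'', and then identify this with ``$\wh f^{\,n}(w_0)=1$ for some $n$'' via the correspondence between $\mathcal{S}_0$ and the $w$-sphere.

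Where you diverge is in how you establish the equivalence ``finite tree $\iff$ $F$-orbit reaches $(1,0,0)$''. You argue via stabilization of the sequence $T^{(n)}$ and the Hausdorff distances $d_n$, and then propose to count new edges/endpoints added at each unfolding. The paper instead counts \emph{branch points} (degree-$3$ vertices) directly: if $T_n=F^n(T)$ has all three legs positive, then $\rho_n^{-1}(\zeta_n)=\Phi_{k_1}\circ\cdots\circ\Phi_{k_n}(T_n)$ contains $2^n$ copies of the center $O$, each of degree $3$; if exactly one leg of $T_n$ vanishes, one checks that after three unfoldings the Hubbard tripod has all legs positive, so one still gets $\ge 2^{n-3}$ degree-$3$ vertices. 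Either way the number of branch points is unbounded, so the limit cannot be a finite tree. This sidesteps the bookkeeping you flagged as the ``main obstacle'': a degree-$3$ vertex, once present, clearly persists under every further $\Phi_k$, so there is nothing to track.

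One passage in your argument is a genuine detour: invoking that $\wh f$ is post-critically finite and that $w=1$ is ``the only fixed point that is a critical value'' is irrelevant to the $\Rightarrow$ direction. The dichotomy you need is purely about the $F$-dynamics on leg-length triples (either the orbit eventually has two zero coordinates, and then $F(0,1,0)=(0,0,1)$, $F(0,0,1)=(1,0,0)$ finish the job; or it always has at least two nonzero coordinates, and the branch-point count applies). No properties of the other fixed points $\pm 2i$ enter.
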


\begin{proof}
Let $\zeta=(\zeta_0, \zeta_1, \ldots)\in\mathcal{S}_\infty$, where
$\zeta_{n+1}=\lambda_n^{-1}(\zeta_n)$. Let $(x_1, x_2, x_3)$ be
the lengths of legs of the tripod $\rho^{-1}(\zeta_0)$. We have to
show that $\rho_\infty^{-1}(\zeta)$ is a finite tree if and only
if the point $\zeta$ is mapped onto $A'$ by some iteration of the
map $q\circ\lambda^{-1}$, i.e., that $F^n(x_1, x_2, x_3)=(1, 0,
0)$ for some $n$.

By Proposition~\ref{pr:fiberinvlim}, the tree
$\rho_n^{-1}(\zeta_n)$ is obtained by pasting together $2^n$
copies of the tripod $T_n$ with the lengths of legs $F^n(x_1, x_2,
x_3)$. If $T_n$ has all legs of positive length, then
$\rho_n^{-1}(\zeta_n)$ has at least $2^n$ vertices of degree 3
(the copies of the common point of the legs). If $T_n$ has one leg
of length zero, but all three feet are different, then the Hubbard
tripod of
$\Phi_{k_{n-2}}\circ\Phi_{k_{n-1}}\circ\Phi_{k_n}(F^n(T))$ has all
legs of positive lengths for any triple of indices $k_n, k_{n-1},
k_{n-2}$, hence the tree $\rho_n^{-1}(\zeta_n)$ has at least
$2^{n-3}$ vertices of degree 3.

In any case, the number of vertices of $\rho_n^{-1}(\zeta_n)$ goes
to infinity and $\rho_\infty^{-1}(\zeta)$ can not be a finite
graph, if $F^n(x_1, x_2, x_3)$ never has less than two non-zero
coordinates. Note that $F(0, 1, 0)=(0, 0, 1)$ and $F(0, 0, 1)=(1,
0, 0)$, which implies that if $F^n(x_1, x_2, x_3)$ has only one
non-zero coordinate, then $F^{n+2}(x_1, x_2, x_3)=(1, 0, 0)$.

If, on the other hand, $F^n(x_1, x_2, x_3)=(1, 0, 0)$, then
$F^m(x_1, x_2, x_3)=(1, 0, 0)$ for all $m\ge n$, and all graphs
$\rho_m^{-1}(\rho_m)$ are isomorphic for $m\ge n$ (with respect to
the homeomorphisms
$\iota_m:\rho_{m+1}^{-1}(\rho_{m+1})\arr\rho_m^{-1}(\rho_m)$) and
are obtained by gluing together a finite number of segments, i.e.,
are finite simplicial graphs.
\end{proof}

See the first six generations of the graphs
$\Phi_{i_1}\circ\cdots\circ\Phi_{i_k}(\rho^{-1}(A'))$ on
Figure~\ref{fig:graphs}. The corresponding slices of the Julia set
of the rational map $f$ are shown on Figure~\ref{fig:smoothj}.

\begin{figure}
\centering
  \includegraphics{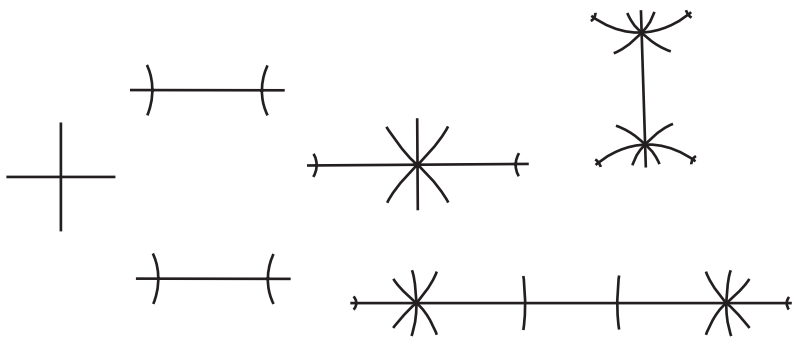}\\
  \caption{Piecewise smooth slices of the Julia set}\label{fig:smoothj}
\end{figure}

\providecommand{\bysame}{\leavevmode\hbox
to3em{\hrulefill}\thinspace}


\begin{thebibliography}{Nek08b}

\bibitem[Bir74]{birman:book}
Joan~S. Birman, \emph{Braids, links, and mapping class groups},
Annals of
  Mathematics Studies, vol.~82, Princeton University Press, Princeton, N.J.,
  1974.

\bibitem[BN06]{bartnek:rabbit}
Laurent Bartholdi and Volodymyr~V. Nekrashevych, \emph{Thurston
equivalence of
  topological polynomials}, Acta Math. \textbf{197} (2006), no.~1, 1--51.

\bibitem[DH84]{DH:orsayI}
Adrien Douady and John~H. Hubbard, \emph{{\'Etude dynamiques des
polyn\^{o}mes
  complexes. (Premi\`ere partie)}}, Publications Mathematiques d'Orsay,
  vol.~02, Universit\'e de Paris-Sud, 1984.

\bibitem[DH85]{DH:orsayII}
\bysame, \emph{{\'Etude dynamiques des polyn\^{o}mes complexes.
(Deuxi\`eme
  partie)}}, Publications Mathematiques d'Orsay, vol.~04, Universit\'e de
  Paris-Sud, 1985.

\bibitem[DH93]{DH:Thurston}
Adrien Douady and John~H. Hubbard, \emph{A proof of {Thurston's}
topological
  characterization of rational functions}, Acta Math. \textbf{171} (1993),
  no.~2, 263--297.

\bibitem[FS92]{fornsibon:crfin}
J.~E. Forn{\ae}ss and N.~Sibony, \emph{Critically finite rational
maps on
  {$\mathbb{P}^2$}}, The Madison Symposium on Complex Analysis (Madison, WI,
  1991), Contemp. Math., vol. 137, Amer. Math. Soc., Providence, RI, 1992,
  pp.~245--260.

\bibitem[Gel94]{gelbrich:heisenberg}
G{\"o}tz Gelbrich, \emph{Self-similar periodic tilings on the
{H}eisenberg
  group}, J. Lie Theory \textbf{4} (1994), no.~1, 31--37.

\bibitem[IS08]{ishiismillie}
Yutaka Ishii and John Smillie, \emph{Homotopy shadowing},
(preprint
  arXiv:0804.4629v1), 2008.

\bibitem[Koc07]{koch:french}
S.~Koch, \emph{{Teichm\"uller} theory and endomorphisms of
{$\mathbb{P}^n$}},
  Ph.D. thesis, Universit\'e de Provence, 2007.

\bibitem[Man82]{mandelbrot}
Benoit~B. Mandelbrot, \emph{{The fractal geometry of nature. Rev.
ed. of
  "Fractals", 1977}}, {San Francisco: W. H. Freeman and Company.}, 1982.

\bibitem[Mil99]{milnor}
John Milnor, \emph{Dynamics in one complex variable.
{I}ntroductory lectures},
  Wiesbaden: Vieweg, 1999.

\bibitem[Mil04]{milnor:dragons}
\bysame, \emph{Pasting together {J}ulia sets: a worked out example
of mating},
  Experiment. Math. \textbf{13} (2004), no.~1, 55--92.

\bibitem[Nek05]{nek:book}
Volodymyr Nekrashevych, \emph{Self-similar groups}, Mathematical
Surveys and
  Monographs, vol. 117, Amer. Math. Soc., Providence, RI, 2005.

\bibitem[Nek07]{nek:ssfamilies}
\bysame, \emph{A minimal {Cantor} set in the space of 3-generated
groups},
  Geometriae Dedicata \textbf{124} (2007), no.~2, 153--190.

\bibitem[Nek08a]{nek:models}
\bysame, \emph{Combinatorial models of expanding dynamical
systems}, (preprint
  arXiv:0810.4936), 2008.

\bibitem[Nek08b]{nek:filling}
\bysame, \emph{Symbolic dynamics and self-similar groups},
Holomorphic dynamics
  and renormalization. {A volume in honour of John Milnor's 75th birthday}
  (Mikhail Lyubich and Michael Yampolsky, eds.), Fields Institute
  Communications, vol.~53, A.M.S., 2008, pp.~25--73.

\bibitem[Nek09]{nek:polynom}
\bysame, \emph{Combinatorics of polynomial iterations}, Complex
Dynamics --
  Families and Friends (D.~Schleicher, ed.), A K Peters, 2009, pp.~169--214.

\bibitem[Nek10]{nek:nonuniform}
\bysame, \emph{A group of non-uniform exponential growth locally
isomorphic to
  {$IMG(z^2+i)$}}, Transactions of the AMS. \textbf{362} (2010), 389--398.

\bibitem[NS04]{neksid}
Volodymyr Nekrashevych and Said Sidki, \emph{Automorphisms of the
binary tree:
  state-closed subgroups and dynamics of $1/2$-endomorphisms}, {Groups:
  Topological, Combinatorial and Arithmetic Aspects} (T.~W. {M\"uller}, ed.),
  LMS Lecture Notes Series, vol. 311, 2004, pp.~375--404.

\bibitem[PN08]{petenek:scaleinv}
G\'abor Pete and Volodymyr Nekrashevych, \emph{Scale-invariant
groups},
  (preprint arXiv:0811.0220), 2008.

\bibitem[Poi93]{poirer:classification2}
Alfredo Poirier, \emph{The classification of postcritically finite
polynomials
  {II: Hubbard} trees}, Stony Brook IMS Preprint, 1993.

\bibitem[Sie12]{sierpinski:curve}
W.~Sierpi\'nski, \emph{Sur une nouvelle courbe continue qui
remplit toute une
  aire plane}, Bull. l'Acad. des Sciences Cracovie A (1912), 462--478.

\bibitem[Wil04]{wilson:nonuniform}
John~S. Wilson, \emph{On exponential growth and uniform
exponential growth for
  groups}, Inventiones Mathematicae \textbf{155} (2004), 287--303.

\end{thebibliography}
\end{document}